\documentclass[10pt]{amsart}

\usepackage{MnSymbol}

\newcommand{\N}{\mathbb{N}}
\newcommand{\R}{\mathbb{R}}
\newcommand{\C}{\mathbb{C}}
\newcommand{\D}{\mathbb{D}}

\newcommand{\T}{\mathbb{T}}

\newcommand{\U}{\mathbb{U}}

\newtheorem{theorem}{Theorem}
\newtheorem{lemma}[theorem]{Lemma}
\newtheorem{corollary}[theorem]{Corollary}

\theoremstyle{remark}

\renewcommand{\Re}{\operatorname{Re}\,}
\renewcommand{\Im}{\operatorname{Im}\,}

\title[Suffridge's Convolution Theorem for Real Polynomials]{Suffridge's
  Convolution Theorem for Polynomials and Entire Functions Having Only Real
  Zeros} 

\author{Martin Lamprecht} 

\address{Department of Computer Science and Engineering\\
School of Sciences\\
European University of Cyprus\\
Diogenous Str. 6, Engomi, P.O. Box: 22006, 1516 Nicosia, Cyprus}
\email{m.lamprecht@euc.ac.cy}

\thanks{Most of the research that led to this paper was conducted
  while I had a post-doc position at the math department of the University of
  W\"urzburg. I am very grateful to Stephan Ruscheweyh and all members of
  Lehrstuhl IV for the productive and friendly atmosphere there.}

\keywords{Suffridge polynomials, P\'olya-Schur multiplier sequences,
  $q$-binomials, log-concave sequences, Newton's inequalities, Ruscheweyh
  convolution lemma, Riemann Conjecture}

\subjclass[2010]{30C10, 30C15, 26C10, 30D15, 05A99, 11M26}

\begin{document}

\maketitle
\begin{abstract}
  We present a Suffridge-like extension of the Grace-Szeg\"o convolution theorem
  for polynomials and entire functions with only real zeros. Our results can
  also be seen as a $q$-extension of P\'olya's and Schur's characterization of
  multiplier sequences. As a limit case we obtain a new characterization of all
  log-concave sequences in terms of the zero location of certain associated
  polynomials. Our results also lead to an extension of Ruscheweyh's convolution
  lemma for functions which are analytic in the unit disk and to new necessary
  conditions for the validity of the Riemann Conjecture.
\end{abstract}

\section{Introduction}
\label{sec:introduction}

In \cite{rota97} Rota states: ``Grace's theorem is an instance of what
might be called a sturdy theorem. For almost one hundred years it has resisted
all attempts at generalization. Almost all known results about the distribution
of zeros of polynomials in the complex plane are corollaries of
Grace's theorem.''

The following equivalent formulation of Grace's theorem is due to Szeg\"o{}.

\begin{theorem}[Grace \mbox{\cite{grace}}, Szeg\"o \mbox{\cite{szegoe}}]
  Let 
  \begin{equation*}
    F(z) = \sum_{k=0}^{n} {n \choose k} a_{k} z^{k}\quad\mbox{and} \quad G(z) =
    \sum_{k=0}^{n} {n \choose k} b_{k} z^{k} 
  \end{equation*}
  be polynomials of degree $n\in\N$ and suppose $K\subset \C$ is an
  open or closed disk or half-plane, or the open or closed exterior of
  a disk, that contains all zeros of $F$. If $G(0)\neq 0$, then each
  zero $\gamma$ of
  \begin{equation*}
    F*_{GS} G(z) := \sum_{k=0}^{n} {n\choose k} a_{k} b_{k} z^{k}
  \end{equation*}
  is of the form $\gamma = -\alpha \beta$ with $\alpha \in K$ and
  $G(\beta)=0$. If $G(0)=0$, then this continues to hold as long as $K$ is
  not the open or closed exterior of a disk.
\end{theorem}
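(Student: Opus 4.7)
The plan is to reduce the theorem to Grace's classical apolarity theorem, which I will quote as a known result: if two polynomials $F$ and $H$, both written as $\sum_k \binom{n}{k} c_k z^k$ of degree $n$, satisfy the apolarity relation $\sum_k (-1)^k \binom{n}{k} a_k h_{n-k} = 0$, then every circular region containing all zeros of $F$ contains at least one zero of $H$ (where, for $\deg H<n$, one counts zeros of appropriate multiplicity at $\infty$).

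Given a zero $\gamma$ of $F *_{GS} G$, the first step is to construct an auxiliary polynomial $H$ whose apolarity with $F$ encodes the identity $(F *_{GS} G)(\gamma)=0$. The natural choice is $H(z) := z^n G(-\gamma/z)$. Expanding via the binomial theorem yields $H(z) = \sum_j \binom{n}{j} h_j z^j$ with $h_j = (-1)^{n-j}\gamma^{n-j}b_{n-j}$, and then $\sum_k (-1)^k \binom{n}{k} a_k h_{n-k} = \sum_k \binom{n}{k} a_k b_k \gamma^k = 0$ is exactly the apolarity relation.

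Assume first that $G(0)\neq 0$. Then $H$ has degree exactly $n$ (its leading coefficient is $b_0=G(0)$), and Grace's apolarity theorem supplies a zero $\alpha\in K$ of $H$. When $\gamma\neq 0$, one computes $H(0) = b_n(-\gamma)^n\neq 0$, so necessarily $\alpha\neq 0$, and $H(\alpha)=0$ rewrites as $\alpha^n G(-\gamma/\alpha)=0$; hence $\beta:=-\gamma/\alpha$ is a zero of $G$ with $\gamma = -\alpha\beta$. The case $\gamma=0$ is immediate: then $a_0 b_0=0$, so either $F(0)=0$ (and one takes $\alpha=0\in K$ with any $\beta$) or $G(0)=0$ (and $\beta=0$ works with any $\alpha\in K$).

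The delicate case, which I expect to be the main technical obstacle, is $G(0)=0$. Now $H$ has degree strictly less than $n$ and one must use the extended apolarity theorem that counts zeros at infinity. Factoring $G(w)=w^m\tilde{G}(w)$ with $\tilde{G}(0)\neq 0$ gives $H(z) = (-\gamma)^m z^{n-m}\tilde{G}(-\gamma/z)$, whose nonzero finite zeros are exactly the points $-\gamma/\beta$ for $\beta$ a nonzero zero of $G$. The apolarity theorem still produces a zero of $H$ in $K$, but it could a priori be the point at infinity; this escape hatch is available precisely when $K$ itself contains $\infty$, i.e.\ when $K$ is an open or closed exterior of a disk. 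Outside that case, the zero in $K$ must be finite: if it is $0$, then $\beta=0$ is a zero of $G$ and any representation $\gamma=-\alpha\cdot 0$ works after verifying $\gamma=0$; otherwise the previous argument produces the desired $\alpha$ and $\beta$. This also explains why the theorem's statement excludes exterior disks exactly in the case $G(0)=0$.
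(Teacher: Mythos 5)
The paper itself does not prove this theorem: it is quoted from Grace and Szeg\H{o}, with the detailed proofs delegated to Rahman--Schmeisser, so your proposal can only be measured against the standard literature argument --- which is exactly the reduction you use. Your construction of $H(z)=z^{n}G(-\gamma/z)$, the verification of apolarity, and the case $G(0)\neq 0$ (including the subcase $\gamma=0$) are correct and are the classical Szeg\H{o} derivation.

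There is, however, a genuine gap in the case $G(0)=0$, namely for $K$ a \emph{closed half-plane}. The ``extended apolarity theorem counting zeros at infinity'' lives naturally on $\overline{\C}$, where the admissible circular regions are the M\"obius images of disks; in that setting the circular region whose finite part is a closed half-plane \emph{does} contain $\infty$ (only the open half-plane does not). Since $\deg H<n$, the point $\infty$ is itself a zero of $H$ of multiplicity $\operatorname{ord}(G;0)$, so for a closed half-plane the extended theorem is satisfied by the zero at $\infty$ and yields no finite zero of $H$ in $K$: your dichotomy ``the escape hatch is available precisely when $K$ is an open or closed exterior of a disk'' is not the statement the standard extended Grace theorem provides, and as you quote it (with $\infty$ assigned only to exteriors) it is a strictly stronger assertion that you neither prove nor reference. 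Thus the subcase $G(0)=0$, $K$ a closed half-plane, is left unproven. The gap is repairable: enclose the closed half-plane $K$ in open half-planes $K_{\epsilon}\supset K$, apply your open-region argument to get a finite, nonzero zero of $H$ in each $K_{\epsilon}$ (recall $H(0)\neq 0$ when $\gamma\neq 0$), and since $H$ has only finitely many zeros, one fixed zero lies in $K_{\epsilon}$ for a sequence $\epsilon\to 0$ and hence in $K$; this yields $\alpha\in K$ and $\beta$ a zero of $G$ with $\gamma=-\alpha\beta$. Your remark that the exclusion of exteriors is genuinely necessary when $G(0)=0$ is correct (e.g.\ for $n=2$ the relevant point $\alpha$ is the midpoint of the zeros of $F$, which can escape the exterior of a disk but not a convex region), but the closed half-plane case needs the extra limiting step, or a correctly formulated sharp version of the extended apolarity theorem, before the proof is complete.
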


Rota is right (of course): This theorem includes or implies most other known
results concerning the zero location of complex polynomials. It has found
numerous applications in complex analysis and other fields. For instance, it
forms the basis of the geometric convolution theory which was developed by
Ruscheweyh, Suffridge, and Sheil-Small (see
\cite{rusch77,rusch82,ruschsheil73,sheil78,sheil,suffridge} and, more recently,
\cite{ruschsal08,ruschsal09,ruschsalsug09}) and it can be used to classify all
linear operators which preserve the set of polynomials whose zeros lie in a
given circular domain (cf. \cite[Thm. 1.1]{rusch82}, \cite[Sec. 5.8]{sheil}, and
\cite{borcebraend09}). Very recently, in an impressive series of papers
\cite{borcebraend09b,borcebraend09a,borcbraend10}, Borcea and Br\"anden used
Grace's theorem in order to develop a unified analytic theory of multivariate
polynomials with many astonishing applications.

In this paper we will present a real polynomial analogue of a striking extension
of the Grace-Szeg\"o convolution theorem which was found by Suffridge in
\cite{suffridge}. Our result can also be seen as a $q$-extension and a finite
difference analogue \cite{braendKrasShap2012} of P\'{o}lya's and Schur's
\cite{polschur14} famous classification of multiplier sequences. As consequences
we obtain a new classification of all log-concave sequences in terms of the zero
location of certain associated polynomials, several analogues of a convolution
lemma of Ruscheweyh which is of great importance in the convolution theory of
functions analytic in $\D$, and a new continuous connection between the Riemann
Conjecture and a necessary condition of it which was verified by Csordas,
Norfolk, and Varga in \cite{csnorvar1986}.

We believe that Suffridge's work \cite{suffridge}, the recent work of
Ruscheweyh and Salinas \cite{ruschsal08,ruschsal09,ruschsalsug09}, and the
results of this paper and \cite{lam11} (the methods of proof presented here
and in \cite{lam11} also seem to have some kind of resemblance to the methods
used in \cite{garwag96}), strongly hint at a very deep lying extension of
Grace's theorem which will lead to a much better understanding of the
relation between the zeros and the coefficients of complex polynomials.

\subsection{Special cases of the Grace-Szeg\"o convolution theorem}
\label{sec:special-cases-grace}

As usual, for a field $\mathbb{K}$ we denote the set of polynomials of degree
$\leq n$ (this includes the polynomial identically $0$ which is of degree $-1$)
over $\mathbb{K}$ by $\mathbb{K}_{n}[z]$ (the only fields $\mathbb{K}$ that will
be considered in this paper are $\C$ and $\R$). $\mathbb{K}[z]$ denotes the set
of all polynomials over $\mathbb{K}$ and $\mathbb{K}[[z]]$ is the set of formal
power series over $\mathbb{K}$. If $f\in \mathbb{K}[[z]]\setminus
\mathbb{K}[z]$, then we set $\deg f:= +\infty$. The \emph{convolution} or
\emph{Hadamard product} of $f(z) = \sum_{k=0}^{\infty} a_{k} z^{k}$, $g(z) =
\sum_{k=0}^{\infty} b_{k} z^{k}\in\C[[z]]$ is defined by
\begin{equation*}
  f*g (z) = \sum_{k=0}^{\infty} a_{k} b_{k} z^{k}.
\end{equation*}
The \emph{multiplier class} $\mathcal{M}(\mathcal{X})$ of a subset $\mathcal{X}$
of $\C[[z]]$ consists of those $g\in\C[[z]]$ with $\deg g \leq \max\{\deg h:
h\in\mathcal{X}\}$ which have the property that $f*g\in\mathcal{X}$ for all
$f\in\mathcal{X}$. 

For an unbounded subset $\Omega$ of $\C$ we define $\pi_{n}(\Omega)$ to be the
set of all polynomials in $\C_{n}[z]$ which have zeros only in $\Omega$. If
$\Omega$ is bounded, then $\pi_{n}(\Omega)$ shall contain all polynomials of
degree $n$ with zeros only in $\Omega$. For every $\Omega\subset \C$ the class
$\pi_{n}(\Omega)$ shall also contain the polynomial identically
zero. $\sigma_{n}(\Omega)$ will denote the union of $\{0\}$ with the set of all
polynomials in $\pi_{n}(\Omega)$ which have only simple zeros and which, in the
case that $\Omega$ is unbounded, are of degree $n$ or $n-1$. Finally, for
$\mathcal{X}\subset\C[[z]]$ and $h\in \C[[z]]$ we denote by
$\mathcal{P}(\mathcal{X};h)$ the \emph{pre-coefficient class} of $\mathcal{X}$
with respect to $h$, i.e.  those $f\in\C[[z]]$ with $\deg f \leq \deg h$ for
which $f*h\in \mathcal{X}$.

Several interesting special cases of the Grace-Szeg\"o convolution theorem can
now be stated as follows (for a detailed proof see \cite[Ch. 5]{rahman}). We use
the notations $\R^{\pm}:=\{z\in\R: \pm z >0\}$, $\R_{0}^{\pm}:= \R^{\pm}
\cup\{0\}$, $\D:=\{z\in\C: |z|<1\}$, $\T:=\{z\in\C: |z|=1\}$, and
$\hat{\pi}_{n}(\Omega):=\mathcal{P}(\pi_{n}(\Omega);(1+z)^{n})$ for $\Omega
\subset \C$.

\begin{corollary}
  \label{sec:introduction-1}
  \begin{enumerate}
  \item[]
  \item \label{item:1} $\mathcal{M}(\pi_{n}(\D)) =
    \hat{\pi}_{n}(\overline{\D})$.
  \item \label{item:3} $\mathcal{M}(\pi_{n}(\T)) =
    \hat{\pi}_{n}(\T)$.
  \item \label{item:6} If $H$ is an open half-plane with $0\in \partial H$, then
    $\mathcal{M}(\pi_{n}(H)) = \hat{\pi}_{n}(\R^{-})$.
  \item \label{item:7} $\mathcal{M}(\pi_{n}(\R)) =
    \hat{\pi}_{n}(\R_{0}^{-})\cup \hat{\pi}_{n}(\R_{0}^{+})$.
  \item \label{item:8} $\mathcal{M}(\pi_{n}(\R^{-})) =
    \hat{\pi}_{n}(\R^{-})$.
  \end{enumerate}
\end{corollary}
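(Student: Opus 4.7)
My plan is to reduce each of (a)--(e) to the Grace-Szeg\"o theorem via the single identification
\begin{equation*}
  f * g \;=\; f *_{GS} G, \qquad \text{where } G(z) \;:=\; g*(1+z)^n \;=\; \sum_{k=0}^n \binom{n}{k} b_k z^k,
\end{equation*}
valid for every $f = \sum a_k z^k$ and $g = \sum b_k z^k$ of degree at most $n$ (a direct expansion of both sides). Since $g \in \hat{\pi}_n(\Omega')$ is precisely the statement $G \in \pi_n(\Omega')$, each equality to be proved reduces to showing that $f *_{GS} G \in \pi_n(\Omega)$ for every $f \in \pi_n(\Omega)$ if and only if the zeros of $G$ satisfy the stated condition.

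For the forward inclusion $\hat{\pi}_n(\Omega') \subseteq \mathcal{M}(\pi_n(\Omega))$, I would fix $f \in \pi_n(\Omega)$ and select one of the admissible circular domains $K$ from Grace-Szeg\"o (disk, half-plane, or exterior of a disk) that contains all zeros of $f$. The theorem then places every zero of $f * g$ at a point of the form $-\alpha\beta$ with $\alpha \in K$ and $G(\beta) = 0$, and it remains to verify $-K \cdot \{\beta : G(\beta) = 0\} \subseteq \Omega$ case by case: in (a), $K = \D$ combined with $|\beta| \leq 1$ gives $|\alpha\beta| < 1$; in (b), intersecting the conclusions for $K = \overline{\D}$ and $K = \overline{\C \setminus \D}$ forces $|\alpha\beta| = 1$; in (c), $K = H$ and $\beta \in \R^-$ make $-\alpha\beta = (-\beta)\alpha$ a positive real multiple of $\alpha$, hence still in $H$ (a cone at $0$); in (d), intersecting the conclusions for the closed upper and closed lower half-planes confines $-\alpha\beta$ to $\R$; and (e) combines the arguments of (c) and (d).

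For the reverse inclusions the key test polynomials are $f_\lambda(z) := (1 - \lambda z)^n$, whose unique $n$-fold zero $1/\lambda$ can be placed anywhere in $\Omega$. A direct computation yields $f_\lambda * g = G(-\lambda z)$, so the zeros of $f_\lambda * g$ are exactly $\{-\beta/\lambda : G(\beta) = 0\}$; letting $1/\lambda$ sweep through $\Omega$ drives the zeros of $G$ into the required set $\Omega'$. In the bounded cases (a) and (b) one additionally argues that the leading coefficient $b_n$ of $g$ cannot vanish---otherwise $f_\lambda * g$ would have degree strictly less than $n$ and, being nonzero, could not belong to $\pi_n(\D)$ or $\pi_n(\T)$---so $\deg G = n$ as required.

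The delicate step is part (d). Real $\lambda$ in the test above quickly yields that the zeros of $G$ are real, but precluding mixed signs requires a separate construction. The mechanism is already visible when $n = 2$: if $G(z) = c(z - \beta_1)(z - \beta_2)$ with $\beta_1 > 0 > \beta_2$, then the real-rooted polynomial $f(z) = 1 - z^2 \in \pi_2(\R)$ produces $f * g = c(\beta_1\beta_2 - z^2)$, whose zeros $\pm\sqrt{\beta_1\beta_2}$ are purely imaginary, contradicting $g \in \mathcal{M}(\pi_2(\R))$. For general $n$, I would embed this obstruction into a real-rooted $f$ of degree $n$ by distributing the offending factor $(1 - z)(1 + z)$ together with additional real linear factors chosen so that the relevant coefficients of $f$ isolate the same negative discriminant. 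This combinatorial construction is the main technical obstacle; once in hand, each of the five equalities follows cleanly from Grace-Szeg\"o.
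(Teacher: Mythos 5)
The paper itself offers no proof of this corollary (it simply refers to \cite[Ch.~5]{rahman}), so the only question is whether your reduction is sound. Your framework is the standard one: the identity $f*g=f*_{GS}G$ with $G=g*(1+z)^{n}$, the choice of circular domains $K$ for the forward inclusions, and the test family $(1-\lambda z)^{n}$ (giving $f_{\lambda}*g=G(-\lambda z)$) for the reverse inclusions. The forward inclusions in all five parts and the reverse inclusions in (\ref{item:1}), (\ref{item:3}), (\ref{item:6}), (\ref{item:8}) do go through as you describe, with one caveat you never address: the Grace--Szeg\"o theorem as stated takes $F$ and $G$ of degree exactly $n$, whereas in the unbounded cases (\ref{item:6})--(\ref{item:8}) both $f$ and $G$ may have degree $<n$; this is repaired by a routine perturbation (e.g.\ replace $f$ by $f(z)(1-\epsilon z)^{n-\deg f}$ with $1/\epsilon\in\Omega$ and let $\epsilon\rightarrow 0$, using Hurwitz' theorem), but it does need to be said.

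The genuine gap is the reverse inclusion in (\ref{item:7}), which you yourself flag as ``the main technical obstacle'': you verify only $n=2$ and leave the exclusion of mixed-sign zeros of $G$ to an unspecified ``combinatorial construction''. This is exactly the nontrivial half of P\'olya's and Schur's algebraic characterization and it does not follow from what you wrote. The classical way to close it --- which the paper itself points to in its proof of Theorem \ref{sec:main-conv-thm}, where the $q$-analogue of this very step is handled --- is to place your additional real factors at the origin: testing with $z^{\nu-1}(1-z)(1+z)=z^{\nu-1}-z^{\nu+1}$ gives $f*g=z^{\nu-1}(b_{\nu-1}-b_{\nu+1}z^{2})$, so real-rootedness forces $b_{\nu-1}b_{\nu+1}\geq 0$ for every $\nu$, while $z^{\nu-1}(1+z)^{2}$ yields a Newton-type inequality. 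Even with these in hand one still needs a short further argument (handling interior vanishing coefficients, e.g.\ via \cite[VIII. Lem.~3]{levin1980} as in the paper's Theorem \ref{sec:q-newton-ineq}) to pass from the coefficient conditions, together with the already-established real-rootedness of $G$, to the assertion that all zeros of $G$ have one sign, i.e.\ $G\in\pi_{n}(\R_{0}^{-})\cup\pi_{n}(\R_{0}^{+})$. Until that chain is written out, part (\ref{item:7}) is not proved.
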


\subsection{Suffridge's extension of the unit circle case}
\label{sec:suffr-extens-unit}

Evidently, the binomial coefficients and their generating polynomial 
\begin{equation*}
  (1+z)^{n} = \sum_{k=0}^{n} {n\choose k} z^{k}
\end{equation*}
play an essential role in the Grace-Szeg\"o convolution theorem. A particularly
interesting extension of the binomial coefficients are the \emph{$q$-binomial}
or \emph{Gaussian binomial coefficients} $C_{k}^{n}(q)$ which are defined by
\begin{equation}
   \label{eq:23}
   R_{n}(q;z) := \sum_{k=0}^{n} C_{k}^{n}(q) z^{k} 
   := \prod_{j=1}^{n}(1+q^{j-1}z), \qquad q\in\C,
\end{equation}
and take the explicit form \cite[(10.0.5)]{andaskroy99}
\begin{equation}
  \label{eq:33}
  C_{k}^{n}(q) = q^{k(k-1)/2} \prod_{j=1}^{k}\frac{1-q^{j+n-k}}
  {1-q^{j}}, \quad k\in\{0,\ldots,n\}.
\end{equation}
Observe that often (for instance in \cite{andaskroy99})
$q^{-k(k-1)/2}C_{k}^{n}(q)$ are defined to be the $q$-binomial coefficients. If
$q\in\T$, then all zeros of $R_{n}(q;z)$ lie on the unit circle and are
separated by a certain angle. In \cite{suffridge} Suffridge considered
subclasses of $\pi_{n}(\T)$ in which $R_{n}(e^{i\lambda};z)$, with
$\lambda\in[0,\frac{2\pi}{n}]$, is an extremal element.

In order to be more exact, for $n\in\N$ and $\lambda\in[0,\frac{2\pi}{n}]$ we
define the classes $\mathcal{T}_{n}(\lambda)$ to consist of all polynomials
$F\in\pi_{n}(\T)$ which have the property that if $z_{1}$, $z_{2}\in\T$ are
zeros of $F$ (the zeros, as always in this paper, counted according to
multiplicity), then $z_{1}$ and $z_{2}$ are separated by an angle $>\lambda$. We
also define $0$ to be an element of $\mathcal{T}_{n}(\lambda)$. The closure
$\overline{\mathcal{T}}_{n}(\lambda)$ of $\mathcal{T}_{n}(\lambda)$ then
contains $0$ and all polynomials in $\pi_{n}(\T)$ whose zeros are separated by
an angle $\geq \lambda$. The classes $\overline{\mathcal{T}}_{n}(\lambda)$ were
introduced by Suffridge in \cite{suffridge} (where they were denoted by
$\mathcal{P}_{n}(\lambda)$, however, and did not contain $0$).

Every pair (except one) of successive zeros of
\begin{equation}
   \label{eq:31}
   Q_{n}(\lambda;z) :=
   \prod_{j=1}^{n}(1+e^{i(2j-n-1)\lambda/2}z) = R_{n}(e^{i\lambda};e^{-i(n-1)\lambda/2}z)
\end{equation}
is separated by an angle of exactly $\lambda$. This is the reason why, as
indicated above, we call a polynomial $F$ in
$\overline{\mathcal{T}}_{n}(\lambda)$ \emph{extremal} if there is an $a\in\T$
such that $F(z) =_{\C} Q_{n}(\lambda;az)$, where, from now on, for $F$,
$G\in\C[[z]]$ and $\mathbb{K} = \R $ or $\mathbb{K} = \C$ we write $F
=_{\mathbb{K}} G$ if there is an $a\in \mathbb{K}\setminus\{0\}$ such that
$F=aG$. For $\lambda\in[0,\frac{2\pi}{n})$ we set $\mathcal{PT}_{n}(\lambda):=
\mathcal{P}(\mathcal{T}_{n}(\lambda);Q_{n}(\lambda;z))$, while
\begin{equation*}
  \mathcal{PT}_{n}\left(\frac{2\pi}{n}\right) :=
  \bigcup_{\lambda\in[0,\frac{2\pi}{n})} \mathcal{PT}_{n}(\lambda).
\end{equation*}
We call a polynomial $f\in \overline{\mathcal{PT}}_{n}(\lambda)$
\emph{extremal} if $f*Q_{n}(\lambda;z)$ is extremal in
$\overline{\mathcal{T}}_{n}(\lambda)$, i.e. if there is an $a\in\T$
such that $f(z) =_{\C} e_{n}(az)$ with
\begin{equation*}
  e_{n}(z) = 1 + z+ \cdots +z^{n-1} + z^{n}.
\end{equation*}

Suffridge's stunning results from \cite{suffridge} now read as follows.

\begin{theorem}[Suffridge]
  \label{sec:suffr-main-thm}
  Let $\lambda\in[0,\frac{2\pi}{n}]$.
  \begin{enumerate}
  \item \label{item:17} We have $\mathcal{M}(\mathcal{PT}_{n}(\lambda))
    =\overline{\mathcal{PT}}_{n}(\lambda)$. In particular, for
    $\lambda\in[0,\frac{2\pi}{n})$ we have
    $\mathcal{M}(\mathcal{T}_{n}(\lambda))
    =\overline{\mathcal{PT}}_{n}(\lambda)$.
  \item \label{item:18} If $\mu\in(\lambda,\frac{2\pi}{n}]$ and $f\in
    \overline{\mathcal{PT}}_{n}(\lambda)$ is not extremal, then
    $f\in\mathcal{PT}_{n}(\mu)$.
  \item \label{item:19} We have
    \begin{equation*}
      \overline{\mathcal{PT}}_{n}\left(\frac{2\pi}{n}\right) =
      \bigcup_{a\in \T,b\in\C} 
      \operatorname{co}\{b\, e_{n}(e^{2ij\pi/n}az):j=1,\ldots,n\}, 
    \end{equation*}
    where $\operatorname{co} M$ denotes the convex hull of a subset
    $M$ of a complex vector space.
  \end{enumerate}
\end{theorem}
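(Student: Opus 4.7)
The strategy is to treat (a) as a ``$q$-Grace--Szeg\"o convolution theorem'' for the extremal polynomials $Q_{n}(\lambda;\cdot)$ and to deduce (b) and (c) from it by compactness and specialization. Throughout, let $\tau_{k}(\lambda)$ denote the coefficients of $Q_{n}(\lambda;z)$, so that by (\ref{eq:23}) and (\ref{eq:31}) one has $\tau_{k}(\lambda)=e^{-i(n-1)k\lambda/2}C_{k}^{n}(e^{i\lambda})$.

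For $\lambda\in[0,\frac{2\pi}{n})$ the coefficients $\tau_{k}(\lambda)$ are all nonzero (they vanish precisely when $e^{i\lambda}$ is a primitive $n$-th root of unity, which is the reason part (c) must be stated separately). Hence $f\mapsto f*Q_{n}(\lambda;\cdot)$ is a linear bijection of $\C_{n}[z]$ that sends $\overline{\mathcal{PT}}_{n}(\lambda)$ onto $\overline{\mathcal{T}}_{n}(\lambda)$ and $\mathcal{PT}_{n}(\lambda)$ onto $\mathcal{T}_{n}(\lambda)$. By associativity of the Hadamard product, part (a) becomes equivalent to the following preservation statement: the twisted product
\begin{equation*}
  F\diamond G := \sum_{k=0}^{n}\tau_{k}(\lambda)^{-1}F_{k}G_{k}\,z^{k}
\end{equation*}
sends $\overline{\mathcal{T}}_{n}(\lambda)\times\mathcal{T}_{n}(\lambda)$ into $\mathcal{T}_{n}(\lambda)$, and its multiplier class is exactly $\overline{\mathcal{T}}_{n}(\lambda)$. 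The base case of this preservation is the identity $Q_{n}(\lambda;az)\diamond Q_{n}(\lambda;bz)=Q_{n}(\lambda;abz)$ for $a,b\in\T$, which follows directly from the product form of $Q_{n}(\lambda;\cdot)$. The general case is then to be obtained by a Hurwitz-type continuity argument across the (connected, compact) space of admissible tuples of zeros on $\T$. The \emph{main obstacle} is ruling out the escape of zeros of $F\diamond G$ from $\T$ under this deformation: the classical Grace--Szeg\"o theorem governs $*_{GS}$ but not directly the twisted product $\diamond$, so I would either represent $\diamond$ as a finite composition of $q$-shifts acting on a Cauchy-type contour integral, or adapt the Borcea--Br\"anden transfer principle for stability-preserving operators to obtain a $q$-analogue of Grace apolarity. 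The reverse inclusion of (a) is then standard: if $f\notin\overline{\mathcal{PT}}_{n}(\lambda)$, a perturbation $g_{\varepsilon}:=Q_{n}(\lambda-\varepsilon;\cdot)\in\mathcal{PT}_{n}(\lambda)$ will produce $f*g_{\varepsilon}\notin\mathcal{PT}_{n}(\lambda)$ for small enough $\varepsilon>0$, contradicting $f\in\mathcal{M}(\mathcal{PT}_{n}(\lambda))$.

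Part (b) is a compactness refinement of the same deformation: if $f\in\overline{\mathcal{PT}}_{n}(\lambda)$ is non-extremal, then the zeros of $F:=f*Q_{n}(\lambda;\cdot)$ lie on $\T$ with all pairwise separations $\geq\lambda$ but not all equal to $\lambda$, so a compactness argument on $\T^{n}$ produces a uniform gap $\mu>\lambda$ with $F\in\mathcal{T}_{n}(\mu)$, and the $q$-deconvolution $F\mapsto F*Q_{n}(\mu;\cdot)^{*(-1)}$ places $f$ in $\mathcal{PT}_{n}(\mu)$ for every admissible $\mu$. For (c), the limit $\lambda\to\frac{2\pi}{n}^{-}$ collapses $Q_{n}(\lambda;z)$ to a sparse polynomial of the form $1+cz^{n}$ whose zeros are equally spaced on $\T$, so $\overline{\mathcal{T}}_{n}(\frac{2\pi}{n})$ reduces, modulo scalars, to the one-parameter family of rotates of $e_{n}$. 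Invoking (a) and (b) in the limit together with the $n$-fold symmetry $z\mapsto e^{2\pi i/n}z$ (which permutes the zeros of each extremal $e_{n}(az)$) and a Carath\'eodory-type convexity argument then yields the union of convex hulls of the rotates $e_{n}(e^{2ij\pi/n}az)$, $j=1,\ldots,n$, asserted in (c).
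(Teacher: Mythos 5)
The paper does not prove this theorem at all: it is quoted from Suffridge \cite{suffridge}, and the author points to \cite{lam11} for a proof along the lines of the interspersion machinery developed in Sections 4--7 here, so there is no internal proof to compare your argument with; judged on its own it has genuine gaps. Your reformulation of (a) via the twisted product $F\diamond G$ is correct, and the base identity $Q_{n}(\lambda;az)\diamond Q_{n}(\lambda;bz)=Q_{n}(\lambda;abz)$ is true but trivial; the entire content of the forward inclusion --- that $F\in\mathcal{T}_{n}(\lambda)$ and $G\in\overline{\mathcal{T}}_{n}(\lambda)$ force $F\diamond G\in\mathcal{T}_{n}(\lambda)$ --- is delegated to a ``Hurwitz-type continuity argument'' whose decisive step you yourself flag as the main obstacle. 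Connectivity of the space of zero configurations gives nothing: nothing prevents zeros of $F\diamond G$ from leaving $\T$ or violating the separation condition along the deformation, and neither fallback closes this. The Borcea--Br\"anden classification governs preservers of nonvanishing in \emph{circular domains}, whereas $\mathcal{T}_{n}(\lambda)$ is a separation class on $\T$ to which no Grace-type apolarity is known to transfer (this is precisely why Suffridge's theorem is viewed as a genuine extension of Grace's theorem), and the ``composition of $q$-shifts acting on a Cauchy-type contour integral'' is not specified. The known proofs (Suffridge's, and the one in \cite{lam11}, of which this paper's Sections 4--7 are a real-line modification) proceed instead by induction on $n$ through interspersion-preserving linear operators, $q$-Rolle/Laguerre-type lemmas, and a careful analysis of extremal and common-zero configurations; none of that structure appears in your plan.

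Two further steps are wrong as written. For (b), non-extremality of $f\in\overline{\mathcal{PT}}_{n}(\lambda)$ only means $f*Q_{n}(\lambda;\cdot)$ is not a rotate of $Q_{n}(\lambda;\cdot)$; it may still have many consecutive zeros separated by exactly $\lambda$, so no compactness argument yields a $\mu>\lambda$ with $f*Q_{n}(\lambda;\cdot)\in\mathcal{T}_{n}(\mu)$, and even if it did, the assertion concerns $f*Q_{n}(\mu;\cdot)$ for \emph{every} $\mu\in(\lambda,\frac{2\pi}{n}]$, which your coefficientwise ``deconvolution'' by $Q_{n}(\mu;\cdot)$ does not produce: membership of $f*Q_{n}(\lambda;\cdot)$ in $\mathcal{T}_{n}(\mu)$ says nothing about $f*Q_{n}(\mu;\cdot)$. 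Statement (b) is a rigidity result of the same depth as (a) --- its real analogue here is Theorem \ref{sec:real-zeros-main-thm}(\ref{item:28}), proved via Theorem \ref{sec:main-conv-thm}(\ref{item:52}) and the delicate exclusion of common zeros except in extremal cases --- and your sketch does not engage with it. In the converse half of (a) your test family is also off: that $Q_{n}(\lambda-\varepsilon;\cdot)\in\mathcal{PT}_{n}(\lambda)$ is itself a consequence of the theorem, and $f*Q_{n}(\lambda-\varepsilon;\cdot)$ tends to $f*Q_{n}(\lambda;\cdot)$, not to $f$; the standard argument convolves instead with $h_{\mu}$ defined by $h_{\mu}*Q_{n}(\lambda;\cdot)=Q_{n}(\mu;\cdot)$ and lets $\mu\downarrow\lambda$, so that $h_{\mu}\rightarrow e_{n}$ and closedness of $\overline{\mathcal{PT}}_{n}(\lambda)$ finishes. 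Finally, (c) is only a gesture: no argument is given for either inclusion of the stated identity.
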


Since $\overline{\mathcal{T}}_{n}(0) = \pi_{n}(\T)$, $\mathcal{T}_{n}(0) =
\sigma_{n}(\T)\subset\pi_{n}(\T)$, and $Q_{n}(0;z) = (1+z)^{n}$,
(\ref{eq:23}) and (\ref{eq:31}) show that Theorem
\ref{sec:suffr-main-thm}(\ref{item:17}) can be seen as a $q$-extension of
Corollary \ref{sec:introduction-1}(\ref{item:3}).

Naturally, this extension of Corollary \ref{sec:introduction-1}(\ref{item:3})
triggers the question whether there are other statements of Corollary
\ref{sec:introduction-1} that can be generalized in a similar way. In this paper
we will show how to obtain $q$-extensions (for real $q$) of Statements
(\ref{item:6})--(\ref{item:8}) of Corollary
\ref{sec:introduction-1}(\ref{item:3}) by modifying the proof of Suffridge's
theorem that is given in \cite{lam11}.

\section{Main results}
\label{sec:main-results}

\subsection{Suffridge's theorem for real polynomials}
\label{sec:suffr-theor-real}

The main idea for obtaining a real polynomial version of Suffridge's theorem is
to consider $R_{n}(q;z)$ with $q\in[0,1]$ as an extremal polynomial for certain
classes of real polynomials.

Recall that
\begin{equation*}
  R_{n}(q;z) =\sum_{k=0}^{n} C_{k}^{n}(q) z^{k} =
  \prod_{j=1}^{n}(1+q^{j-1}z), \qquad n\in\N,\; q\in[0,1].
\end{equation*}
Hence, for $q\in(0,1]$ the zeros $x_{j}:=-q^{-j}$, $j\in\{0,\ldots,n-1\}$, of
$R_{n}(q;z)$ satisfy the separation condition $x_{j}/x_{k}\leq q$ for
$k>j$. If we suppose $R_{n}(q;z)$ to be extremal for a certain class of
real polynomials, we are therefore led to the following definitions.

For $q\in[0,1]$ we call a finite or infinite sequence $\{x_{k}\}_{k}$ of real
numbers \emph{logarithmically $q$-separated}, or shorter \emph{$q$-separated},
if $x_{k}/x_{l} \leq q$ for all indices $k$, $l$ with $k\neq l$ for which either
$x_{l}\leq x_{k}<0$ or $0<x_{k}\leq x_{l}$ holds. If $x_{k}/x_{l} < q$ for all
such indices $k$, $l$, then $\{x_{k}\}_{k}$ is called \emph{strictly
  logarithmically $q$-separated}, or \emph{strictly $q$-separated}. For $n\in\N$
and $q\in[0,1]$ we define $\mathcal{R}_{n}(q)$ as the union of $\{0\}$ with the
set of \emph{real} polynomials in $\pi_{n}(\R)$ that have strictly
$q$-separated zeros. $\mathcal{N}_{n}(q)$ will denote $\mathcal{R}_{n}(q)\cap
\pi_{n}(\R^{-}_{0})$. $R_{n}(q;z)$ belongs to both
$\overline{\mathcal{R}}_{n}(q)$ and $\overline{\mathcal{N}}_{n}(q)$, and we call
a polynomial $F$ in one of these two classes extremal if there is an $a\in
\R\setminus\{0\}$ such that $F(z) =_{\R} R_{n}(q;az)$. For $q\in(0,1]$ we
further set $\mathcal{PR}_{n}(q):=\mathcal{P}(\mathcal{R}_{n}(q);R_{n}(q;z))$,
$\mathcal{PN}_{n}(q):=\mathcal{P}(\mathcal{N}_{n}(q);R_{n}(q;z))$,
\begin{equation*}
  \mathcal{PR}_{n}(0) := \bigcup_{q\in(0,1]} \mathcal{PR}_{n}(q) 
  \quad\mbox{and}\quad 
  \mathcal{PN}_{n}(0) := \bigcup_{q\in(0,1]} \mathcal{PN}_{n}(q).
\end{equation*}
For $n\in\N\cup\{\infty\}$ we also define $\mathcal{LC}_{n}$ to consist of
those $\sum_{k=0}^{n} a_{k} z^{k}\in\R[[z]]$ which satisfy
\begin{equation*}
  a_{k}^{2} > a_{k-1} a_{k+1} 
\end{equation*}
for all $0\leq k< n+1$ for which there are $l\leq k$ and $m\geq k$ with $a_{l}$,
$a_{m}\neq 0$. $\mathcal{LC}_{n}^{+}$ shall be the set of those $\sum_{k=0}^{n}
a_{k} z^{k}\in\mathcal{LC}_{n}$ for which $a_{k}\geq 0$ for all $k$ or
$a_{k}\leq 0$ for all $k$. Then $\overline{\mathcal{LC}_{n}}$ contains all
formal power series (or polynomials) whose coefficient sequences
$\{a_{k}\}_{k=0}^{n}$ satisfy $a_{k}^{2}\geq a_{k-1}a_{k+1}$ for all $0\leq k <
n+1$. Such sequences are usually called \emph{log-concave} and
$\mathcal{LC}_{n}$ contains the \emph{strictly log-concave} sequences. Observe
that $0\in \mathcal{LC}_{n}^{+}\subset\mathcal{LC}_{n}\subset
\overline{\mathcal{LC}}_{n}$ and that every $f\in\overline{LC^{+}}_{\infty}$ has
positive radius of convergence (\cite[Ch. 8 Thm. 1.1]{kar68})
 
The above definitions imply that, for instance, 
\begin{equation}
  \label{eq:28}
  \overline{\mathcal{R}}_{n}(1) = \pi_{n}(\R) \quad\mbox{and}\quad 
\overline{\mathcal{N}}_{n}(1)=\pi_{n}(\R^{-}_{0});
\end{equation}
furthermore, $\overline{\mathcal{R}}_{n}(0) = \mathcal{R}_{n}(0) =
\overline{\mathcal{N}}_{n}(0) = \mathcal{N}_{n}(0) =
\{0,1,z,z^{2},\ldots,z^{n}\}$, and if $F\in\mathcal{R}_{n}(q)$ for a
$q\in[0,1]$, then all zeros of $F$ are simple except possibly a
multiple zero at the origin.

The main result of this paper is the following analogue of Theorem
\ref{sec:suffr-main-thm} for the classes $\overline{\mathcal{R}}_{n}(q)$ and
$\overline{\mathcal{N}}_{n}(q)$. Because of (\ref{eq:28}), Statements
(\ref{item:26}) and (\ref{item:27}) of the theorem below are the desired
$q$-extensions of Corollary \ref{sec:introduction-1}(\ref{item:7}) and
(\ref{item:8}).

\begin{theorem}
  \label{sec:real-zeros-main-thm}
  Let $q\in[0,1]$ and $n\in\N$.
  \begin{enumerate}
  \item \label{item:26} We have $\mathcal{M}(\mathcal{PR}_{n}(q))= \{ f(\pm z):
    f\in\overline{\mathcal{PN}}_{n}(q)\}$. In particular, if $q\in (0,1]$, then
    $\mathcal{M}(\mathcal{R}_{n}(q))=\{ f(\pm z):
    f\in\overline{\mathcal{PN}}_{n}(q)\}$.
  \item \label{item:27} We have $\mathcal{M}(\mathcal{PN}_{n}(q))=
    \overline{\mathcal{PN}}_{n}(q)$. In particular, if $q\in (0,1]$, then
    $\mathcal{M}(\mathcal{N}_{n}(q))= \overline{\mathcal{PN}}_{n}(q)$.
  \item \label{item:28} If $r\in[0,q)$ and if $f$ is not extremal and belongs to
    $\overline{\mathcal{PR}}_{n}(q)$ or $\overline{\mathcal{PN}}_{n}(q)$, then
    $f$ is also an element of $\mathcal{PR}_{n}(r)$ or $\mathcal{PN}_{n}(r)$,
    respectively.
  \item \label{item:29} We have
    \begin{equation*}
      \mathcal{PR}_{n}(0) = \mathcal{LC}_{n} \quad \mbox{and} \quad
      \mathcal{PN}_{n}(0) = \mathcal{LC}_{n}^{+}.
    \end{equation*}
  \end{enumerate}
\end{theorem}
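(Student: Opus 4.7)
My plan is to adapt Suffridge's approach, as reformulated for the unit-circle classes $\overline{\mathcal{T}}_{n}(\lambda)$ in \cite{lam11}, to the real-zero setting. Statement (b) is the main technical result; (a) reduces to (b) by a parity decomposition, (c) follows by a perturbation argument in $q$, and (d) appears in the limit $q\to 0^{+}$.

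The core step is the inclusion $\overline{\mathcal{PN}}_{n}(q)\subseteq\mathcal{M}(\mathcal{PN}_{n}(q))$ in (b). Because $C_{k}^{n}(q)>0$ for $q\in(0,1]$, Hadamard convolution with $R_{n}(q;z)$ is a linear isomorphism of $\C_{n}[z]$ mapping $\overline{\mathcal{PN}}_{n}(q)$ onto $\overline{\mathcal{N}}_{n}(q)$ and $\mathcal{PN}_{n}(q)$ onto $\mathcal{N}_{n}(q)$. Through this isomorphism the inclusion becomes the statement that, for $F=\sum A_{k}z^{k}\in\overline{\mathcal{N}}_{n}(q)$ and $G=\sum B_{k}z^{k}\in\mathcal{N}_{n}(q)$, the renormalised Hadamard product $\sum(A_{k}B_{k}/C_{k}^{n}(q))z^{k}$ again lies in $\overline{\mathcal{N}}_{n}(q)$. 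I would prove this by induction on $n$, with the base $n\leq 2$ handled by direct computation (the extremal ratio $(1+q)^{2}/q$ of coefficients already appears here). The inductive step proceeds by a continuity/deformation argument: deform the zeros of $F$ from a generic configuration toward those of $R_{n}(q;z)$, and track the zeros of the convolution along the way. At any critical instant when two zeros of the convolution would coalesce or leave the negative half-line, a Grace-type argument forces one of $F$ or $G$ to become extremal, contradicting $G\in\mathcal{N}_{n}(q)$; the explicit structure of $R_{n}(q;z)$ is used to rule out this degeneration.

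Part (a) reduces to (b) by writing $F\in\overline{\mathcal{R}}_{n}(q)$ as a product $F_{-}(z)F_{+}(z)$, where $F_{-}$ has only non-positive zeros and $F_{+}$ only non-negative zeros, and using that $z\mapsto-z$ interchanges the two factor classes; this accounts for the disjunction $\{f(\pm z):f\in\overline{\mathcal{PN}}_{n}(q)\}$. The reverse inclusions in (a) and (b) are obtained by testing a candidate multiplier against the dense family of products $\prod_{j}(1+y_{j}z)$ with $y_{j+1}/y_{j}>1/q$, followed by a limit argument. For (c), I would exploit the explicit dependence of $C_{k}^{n}(q)$ on $q$ to show that, when $f$ is non-extremal in $\overline{\mathcal{PR}}_{n}(q)$, the zeros of $f*R_{n}(r;z)$ depend continuously on $r\in[0,q]$ and remain strictly $r$-separated for every $r$ slightly below $q$; iterating (or combining with (b)) extends this to any $r\in[0,q)$.

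For (d), the forward direction rests on a sharp Newton-type inequality for $F\in\mathcal{N}_{n}(q)$: $A_{k}^{2}/(A_{k-1}A_{k+1})>(C_{k}^{n}(q))^{2}/(C_{k-1}^{n}(q)C_{k+1}^{n}(q))$, which is sharp at the extremal $R_{n}(q;z)$ and can be established inductively by peeling off a linear factor. Dividing both sides by the right-hand side yields strict log-concavity of $a_{k}=A_{k}/C_{k}^{n}(q)$, so $f\in\mathcal{PN}_{n}(q)$ implies $f\in\mathcal{LC}_{n}^{+}$. The converse requires, for a given strictly log-concave same-sign sequence $\{a_{k}\}$, the production of some $q\in(0,1]$ for which $\sum a_{k}C_{k}^{n}(q)z^{k}$ has only real non-positive strictly $q$-separated zeros; here I would use a continuity/homotopy argument in $q$, tracking the discriminant and separation ratios, starting from Corollary \ref{sec:introduction-1}(\ref{item:8}) at $q=1$. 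The main obstacle throughout is the deformation step in (b): controlling the trajectories of zeros as they approach $0$ or $\infty$, and especially handling the possible multiple zero at the origin which $\overline{\mathcal{N}}_{n}(q)$ permits freely, a phenomenon with no counterpart in the unit-circle setting of \cite{lam11}.
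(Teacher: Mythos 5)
The central gap is in your treatment of (b). Your plan is to deform the zeros of $F$ toward those of $R_{n}(q;z)$ and to rule out degenerations of the convolution ``by a Grace-type argument'', but no such argument is available: the classes $\mathcal{N}_{n}(q)$ and $\mathcal{R}_{n}(q)$ are not cut out by a circular domain, and the inapplicability of the Grace--Szeg\"o theorem to them is precisely why the statement is nontrivial. In the paper this missing step is replaced by two interspersion-invariance theorems (Theorems \ref{sec:thm-inv-lhd-vee} and \ref{sec:thm-inv-prec-prec}) proved by a mutual induction on the degree: one passes from $m$ to $m+1$ by expressing $H*_{q}^{m+1}F_{y}$ and $H*_{q}^{m+1}F$ through $\Delta_{q,m+1}[H]*_{q}^{m}F_{y}$ and $\Delta_{q,m+1}^{*}[H]*_{q}^{m}F_{y}$ via (\ref{eq:50}), feeding the $q$-Rolle and $q$-Laguerre theorems (Theorems \ref{sec:q-rolle-thm}--\ref{sec:q-Laguerre-thm}) into the induction hypothesis, and then invoking the real and log-interspersion analogues of Ruscheweyh's convolution lemma (Lemmas \ref{sec:main-results-2} and \ref{sec:polynomials-with-log-3}, which rest on the partial-fraction Lemma \ref{sec:main-lemma}). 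Neither this machinery nor a workable substitute appears in your outline, so the heart of (b), and with it (a)--(c), is unproven. Relatedly, your reduction of (a) to (b) by factoring $F=F_{-}F_{+}$ cannot work as stated: the Hadamard product is not multiplicative over products of polynomials, and the genuinely new content of (a) is the necessity statement that a multiplier of $\mathcal{R}_{n}(q)$ must, up to $z\mapsto -z$, land in $\overline{\mathcal{PN}}_{n}(q)$; the paper obtains this by the P\'olya--Schur-type argument with the test polynomials $x^{\nu-1}-qx^{\nu+1}$ and $x^{\nu-1}+(1+q)x^{\nu}+qx^{\nu+1}$ together with the $q$-Newton inequalities, and testing only against products $\prod_{j}(1+y_{j}z)$ with zeros of one sign, as you propose, cannot detect it.

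Two further steps fail as described. For (c), continuity in $r$ is not enough: if $f$ is non-extremal but some ratio of zeros of $f*R_{n}(q;z)$ equals $q$ exactly, nothing in a pure perturbation argument forces the corresponding ratio for $f*R_{n}(r;z)$ to drop strictly below $r$ when $r<q$; the paper derives (c) from Theorem \ref{sec:thm-inv-lhd-vee} applied with $R_{n}(r;z)\in\mathcal{N}_{n}(q)$, the non-extremality hypothesis doing the real work. For the converse half of (d), a homotopy starting at $q=1$ from Corollary \ref{sec:introduction-1}(\ref{item:8}) has nothing to start from: a strictly log-concave positive sequence in general does not produce a real-rooted polynomial at $q=1$ (Newton's inequalities are far from sufficient there), so there is no real-zero configuration to deform. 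The paper instead works directly at small $q$ (Lemma \ref{sec:q-newton-conv}): the zeros of $f*R_{n}(q;z)$ satisfy $z_{m}(q)\sim -q^{-m}a_{m}/a_{m+1}$ (with a square-root modification when $a_{m}=0$), and strict log-concavity then yields strict $q$-separation for all $q$ close to $0$. Your forward half of (d) is the correct statement --- it is Theorem \ref{sec:q-newton-ineq} --- but the paper proves it by reducing to a quadratic through iterated application of $\Delta_{q,n}$ and $\Delta_{q,n}^{*}$, not by peeling off linear factors, and your proposed induction would in any case need the invariance of $\mathcal{R}_{n}(q)$ under these $q$-operators, which is again part of the machinery your proposal omits.
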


Statements (\ref{item:26})--(\ref{item:28}) of this theorem are obtained as
corollaries of certain interspersion invariance results concerning the classes
$\overline{\mathcal{R}}_{n}(q)$ and $\overline{\mathcal{N}}_{n}(q)$ (Theorems
\ref{sec:thm-inv-lhd-vee} and \ref{sec:thm-inv-prec-prec}). Together with the
Hermite-Biehler theorem (cf. \cite[Thm. 6.3.4]{rahman}), these results also lead
to a $q$-extension of Corollary \ref{sec:introduction-1}(\ref{item:6}) (Theorem
\ref{sec:q-halfplane-ext}). Details will be given in Section
\ref{sec:weight-hadam-prod}.

\subsection{A completion of P\'{o}lya's and Schur's characterization of
  multiplier sequences}
\label{sec:completion-}

Letting $n\rightarrow\infty$ in Theorem \ref{sec:real-zeros-main-thm} leads to
the classification of multiplier classes for certain subclasses of real entire
functions of order $0$. For, if $q\in(0,1)$ and $\{x_{j}\}_{j\in \N}$ is a
logarithmically $q$-separated sequence of real numbers for which
$a:=\inf_{j\in\N} |x_{j}|>0$, then
\begin{equation*}
  \sum_{j=1}^{\infty}\frac{1}{|x_{j}|^{\lambda}} \leq \frac{2}{a^{\lambda}} 
  \sum_{j=1}^{\infty}q^{j\lambda}< \infty \quad \mbox{for all} 
  \quad \lambda>0.
\end{equation*}
Consequently, if $n\in\N\cup\{\infty\}$, $a\in\R$, $m\in\N$, and if
$\{x_{j}\}_{j=1}^{n}$ is logarithmically $q$-separated with $\inf_{1\leq j < n+1}
|x_{j}|>0$, then
\begin{equation}
  \label{eq:30}
  F(z) = a z^{m}\prod_{j=1}^{n} \left(1-\frac{z}{x_{j}}\right)
\end{equation}
is an entire function of order $0$. We will denote the set of these entire
functions by $\overline{\mathcal{R}}_{\infty}(q)$, and define
$\overline{\mathcal{N}}_{\infty}(q)$ to be the set of those functions in
$\overline{\mathcal{R}}_{\infty}(q)$ which have only non-positive zeros. It is
clear that, for $q\in(0,1)$,
\begin{equation*}
  \overline{\mathcal{R}}_{\infty}(q) = \overline{\bigcup_{n\in\N} 
    \mathcal{R}_{n}(q)} \quad \mbox{and} \quad 
  \overline{\mathcal{N}}_{\infty}(q) = \overline{\bigcup_{n\in\N} 
    \mathcal{N}_{n}(q)} 
\end{equation*}
in the topology of compact convergence in $\C$. On the other hand, if
\begin{equation*}
  \overline{\mathcal{R}}_{\infty}(1) := \overline{\bigcup_{n\in\N} 
    \mathcal{R}_{n}(1)} \quad \mbox{and} \quad 
  \overline{\mathcal{N}}_{\infty}(1) := \overline{\bigcup_{n\in\N} 
    \mathcal{N}_{n}(1)},
\end{equation*}
then it is easy to see that every
$f\in\overline{\mathcal{R}}_{\infty}(1)$ can be approximated,
uniformly on compact subsets of $\C$, by a sequence of polynomials
$F_{n}\in
\overline{\mathcal{R}}_{n}(q_{n})\subset\overline{\mathcal{R}}_{\infty}(q_{n})$
with $q_{n}\rightarrow 1$ as $n\rightarrow\infty$. This implies
\begin{equation*}
  \overline{\mathcal{R}}_{\infty}(1) = \overline{\bigcup_{q\in(0,1)} 
    \mathcal{R}_{\infty}(q)} \quad \mbox{and} \quad 
  \overline{\mathcal{N}}_{\infty}(1) = \overline{\bigcup_{q\in(0,1)} 
    \mathcal{N}_{\infty}(q)}. 
\end{equation*}

The entire function
\begin{equation*}
  R_{\infty}(q;z) := \sum_{k=0}^{\infty} C_{k}^{\infty} (q) z^{k}:= 
  \prod_{j=1}^{\infty} (1+q^{j-1}z) = \lim_{n\rightarrow\infty} R_{n}(q;z), 
  \quad q\in(0,1),
\end{equation*}
belongs to both $\overline{\mathcal{R}}_{\infty}(q)$ and
$\overline{\mathcal{N}}_{\infty}(q)$. It follows from (\ref{eq:33}) that 
\begin{equation*}
  C_{k}^{\infty}(q) = \lim_{n\rightarrow\infty} C_{k}^{n}(q) = 
  \lim_{n\rightarrow\infty} \prod_{j=1}^{k} q^{j-1} \frac{1-q^{j+n-k}}{1-q^{j}}=
  \prod_{j=1}^{k} \frac{q^{j-1}}{1-q^{j}}, \qquad q\in(0,1), \; k\in\N.
\end{equation*}
Consequently,
\begin{equation*}
  (1-q)^{k}C_{k}^{\infty}(q) = \prod_{j=1}^{k} \frac{q^{j-1}(1-q)}{1-q^{j}} 
  \rightarrow \frac{1}{k!} \quad\mbox{as}\quad q\rightarrow 1,
\end{equation*}
and thus, uniformly on compact subsets of $\C$,
\begin{equation*}
  R_{\infty}(q;(1-q)z) \rightarrow e^{z} =: R_{\infty}(1;z)
  \quad\mbox{as}\quad q\rightarrow 1.
\end{equation*}
Hence, if we set
\begin{equation*}
  \overline{\mathcal{PR}}_{\infty}(q):=
  \mathcal{P}(\overline{\mathcal{R}}_{\infty}(q);R_{\infty}(q;z)) \quad \mbox{and}
  \quad \overline{\mathcal{PN}}_{\infty}(q):=
  \mathcal{P}(\overline{\mathcal{N}}_{\infty}(q);R_{\infty}(q;z))
\end{equation*}
for $q\in(0,1]$, we obtain the following from Theorem
\ref{sec:real-zeros-main-thm}.

\begin{theorem}
  \label{sec:real-zeros-inf-thm}
  Let $q\in(0,1]$.
  \begin{enumerate}
  \item \label{item:35} We have
    $\mathcal{M}(\overline{\mathcal{R}}_{\infty}(q))= \{ f(\pm z):
    f\in\overline{\mathcal{PN}}_{\infty}(q)\}$.
  \item \label{item:36} We have
    $\mathcal{M}(\overline{\mathcal{N}}_{\infty}(q))=
    \overline{\mathcal{PN}}_{\infty}(q)$.
  \item \label{item:37} If $r\in(0,q)$ and $f$ belongs to
    $\overline{\mathcal{PR}}_{\infty}(q)$ or
    $\overline{\mathcal{PN}}_{\infty}(q)$, then, respectively, $f$
    belongs to $\overline{\mathcal{PR}}_{\infty}(r)$ or
    $\overline{\mathcal{PN}}_{\infty}(r)$.
  \item \label{item:38} We have
    \begin{equation*}
      \bigcup_{q\in(0,1]}\overline{\mathcal{PR}}_{\infty}(q)  \subset 
      \overline{\mathcal{LC}}_{\infty}
      \quad \mbox{and}  \quad 
      \bigcup_{q\in(0,1]} \overline{\mathcal{PN}}_{\infty}(q) \subset
      \overline{\mathcal{LC}^{+}}_{\infty}.
    \end{equation*}
  \end{enumerate}
\end{theorem}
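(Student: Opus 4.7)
The plan is to deduce each claim from its polynomial counterpart in Theorem \ref{sec:real-zeros-main-thm} via an approximation-and-renormalization argument. Given $f\in\overline{\mathcal{PR}}_\infty(q)$ (resp.\ $\overline{\mathcal{PN}}_\infty(q)$), the entire function $F:=f*R_\infty(q;z)$ lies in $\overline{\mathcal{R}}_\infty(q)$ (resp.\ $\overline{\mathcal{N}}_\infty(q)$), so $F=\lim_N F_N$ uniformly on compact subsets of $\C$ for some $F_N\in\mathcal{R}_{n_N}(q)$ (resp.\ $\mathcal{N}_{n_N}(q)$) with $n_N\to\infty$. Because $C_k^n(q)\neq 0$ for $q\in(0,1]$, I define polynomials $f_N\in\R_{n_N}[z]$ coefficient-wise by $(f_N)_k:=(F_N)_k/C_k^{n_N}(q)$; this yields $f_N*R_{n_N}(q;z)=F_N$, whence $f_N\in\mathcal{PR}_{n_N}(q)$ (resp.\ $\mathcal{PN}_{n_N}(q)$). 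Since $(F_N)_k\to a_k C_k^\infty(q)$ and $C_k^{n_N}(q)\to C_k^\infty(q)$, with $a_k$ the $k$-th coefficient of $f$, one obtains $f_N\to f$ coefficient-wise.

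For the $\supset$ inclusions in (a) and (b), I couple this with a compact-convergence approximation $g=\lim_N G_N$, $G_N\in\mathcal{R}_{n_N}(q)$ (resp.\ $\mathcal{N}_{n_N}(q)$), of an arbitrary $g\in\overline{\mathcal{R}}_\infty(q)$ (resp.\ $\overline{\mathcal{N}}_\infty(q)$), matching indices by common refinement. Theorem \ref{sec:real-zeros-main-thm}(a)--(b) then places $f_N(\pm z)*G_N$ (resp.\ $f_N*G_N$) in $\mathcal{R}_{n_N}(q)\subset\overline{\mathcal{R}}_\infty(q)$ (resp.\ $\mathcal{N}_{n_N}(q)\subset\overline{\mathcal{N}}_\infty(q)$), and these sequences converge coefficient-wise to $f(\pm z)*g$ (resp.\ $f*g$). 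The reverse inclusions follow by examining, for $h\in\mathcal{M}(\overline{\mathcal{R}}_\infty(q))$, the truncations $[h]_N:=\sum_{k=0}^{N}h_k z^k$: for each $G\in\mathcal{R}_N(q)$ the identity $h*G=[h]_N*G$ combined with the multiplier hypothesis gives $[h]_N*G\in\overline{\mathcal{R}}_\infty(q)\cap\R_N[z]$, a set coinciding with $\overline{\mathcal{R}}_N(q)$; Theorem \ref{sec:real-zeros-main-thm}(a) then puts $[h]_N\in\{f(\pm z):f\in\overline{\mathcal{PN}}_N(q)\}$, and after fixing a sign on a subsequence, the sequence $[h]_N*R_N(q;z)$ converges coefficient-wise to $h(\pm z)*R_\infty(q;z)$ within $\overline{\mathcal{N}}_\infty(q)$. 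Part (c) runs identically: non-extremality of $f_N$ can be arranged by an arbitrarily small perturbation that does not affect the limit, Theorem \ref{sec:real-zeros-main-thm}(c) then yields $f_N\in\mathcal{PR}_{n_N}(r)$ (resp.\ $\mathcal{PN}_{n_N}(r)$), and the coefficients of $f_N*R_{n_N}(r;z)$ converge to those of $f*R_\infty(r;z)$.

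Part (d) is immediate: by Theorem \ref{sec:real-zeros-main-thm}(d), $\mathcal{PR}_n(q)\subset\mathcal{PR}_n(0)=\mathcal{LC}_n$, and the non-strict log-concavity condition $a_k^2\geq a_{k-1}a_{k+1}$ passes directly to coefficient-wise limits; hence every $f\in\overline{\mathcal{PR}}_\infty(q)$, being a coefficient-wise limit of $f_N\in\mathcal{LC}_{n_N}$, lies in $\overline{\mathcal{LC}}_\infty$. The analogous argument with $\mathcal{PN}$ and $\overline{\mathcal{LC}^+}_\infty$ gives the second inclusion.

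The principal technical obstacle in parts (a)--(c) is upgrading coefficient-wise convergence of the polynomial approximations to uniform convergence on compact subsets of $\C$, which is what closure under compact convergence of $\overline{\mathcal{R}}_\infty(q)$ and $\overline{\mathcal{N}}_\infty(q)$ actually requires. Each approximating polynomial has only real $q$-separated zeros, giving it a very controlled order-$0$ growth structure; combined with Cauchy-type coefficient estimates, this ought to furnish uniform growth bounds on compact disks, making the sequence normal and forcing compact convergence to its coefficient-wise limit. A secondary difficulty, in the $\subset$ direction, is verifying the identification $\overline{\mathcal{R}}_\infty(q)\cap\R_N[z]=\overline{\mathcal{R}}_N(q)$, which should follow from the observation that in any compactly convergent sequence of polynomials with $q$-separated zeros whose limit has degree $\leq N$, at most $N$ zeros can remain bounded while the rest must escape to infinity.
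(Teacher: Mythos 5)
Your overall route---deriving the theorem from Theorem \ref{sec:real-zeros-main-thm} by approximating with polynomials in $\mathcal{R}_{n_N}(q)$, $\mathcal{N}_{n_N}(q)$ and passing to the limit---is exactly the limiting argument the paper intends (it gives no more detail than that), and parts (c), (d) together with the sufficiency halves of (a), (b) do go through this way. But the obstacle you flag at the end is genuine, and your proposed fix is incomplete as stated: compact convergence of $F_N$ only yields Cauchy bounds $|(F_N)_k|\leq M_R R^{-k}$, and after dividing by $C_k^{n_N}(q)$, which is of size $q^{k(k-1)/2}$, the resulting bound $|(f_N)_k|\leq M_R R^{-k}q^{-k(k-1)/2}$ grows super-exponentially; multiplied against a merely geometric Cauchy bound for $(G_N)_k$ this gives no uniform bound on any disk, so ``Cauchy-type estimates'' alone do not produce normality. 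What rescues the argument is choosing the approximants with uniform control of their zeros (e.g. partial products of the limit functions, adjusted as in Lemma \ref{sec:an-extension-polyas-1}): the $q$-separation with a lower bound on the moduli, uniform in $N$, gives $|(G_N)_k|\leq C\,b^{k}q^{k^{2}/4}$, while $F=f*R_\infty(q;z)\in\overline{\mathcal{N}}_\infty(q)$ forces $|(f_N)_k|\leq C'B^{k}$ uniformly in $N$; then $\sum_k|(f_N)_k(G_N)_k|\rho^{k}$ is bounded uniformly in $N$ for every $\rho$ and coefficient-wise convergence upgrades to compact convergence. (In the reverse inclusions the simpler domination $|h_k|C_k^{N}(q)\leq|h_k|C_k^{\infty}(q)$ by the coefficients of the fixed order-zero function $h*R_\infty(q;z)$ already suffices.) So this step is fixable, but the specific choice of $F_N$, $G_N$ has to be part of the proof, not an afterthought.

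There is a second, unflagged gap in the reverse inclusion of (a): you invoke Theorem \ref{sec:real-zeros-main-thm}(a) for $[h]_N$, but its hypothesis is $[h]_N\in\mathcal{M}(\mathcal{R}_N(q))$, i.e.\ that $[h]_N*G$ lies in the \emph{strict} class $\mathcal{R}_N(q)$ for every $G\in\mathcal{R}_N(q)$, whereas your truncation argument only yields $[h]_N*G\in\overline{\mathcal{R}}_N(q)$; a multiplier of the closed infinite class may well send strictly $q$-separated polynomials to boundary ones, so the theorem cannot be cited verbatim. The closed-class analogue of the necessity statement is true, but it is not a formal corollary of Theorem \ref{sec:real-zeros-main-thm}: one has to rerun the necessity argument of Theorem \ref{sec:main-conv-thm}(a) (the test polynomials $x^{\nu-1}(1-qx^{2})$ and $x^{\nu-1}(1+x)(1+qx)$ together with the non-strict form of Theorem \ref{sec:q-newton-ineq}) for multipliers whose images are only known to lie in $\overline{\mathcal{R}}_N(q)$. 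For (b), by contrast, no truncation is needed at all: since $R_\infty(q;z)\in\overline{\mathcal{N}}_\infty(q)$, any $h\in\mathcal{M}(\overline{\mathcal{N}}_\infty(q))$ satisfies $h*R_\infty(q;z)\in\overline{\mathcal{N}}_\infty(q)$, which is literally $h\in\overline{\mathcal{PN}}_\infty(q)$. Your secondary worry, on the other hand, is not an issue: for $q\in(0,1)$ the identity $\overline{\mathcal{R}}_\infty(q)\cap\R_N[z]=\overline{\mathcal{R}}_N(q)$ is immediate from the representation (\ref{eq:30}), and for $q=1$ it is a standard Hurwitz argument; likewise, in (c) no perturbation is needed, since $F_N\in\mathcal{R}_{n_N}(q)$ already forces $f_N$ to be non-extremal once $n_N\geq 2$.
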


The cases $q=1$ of Theorem \ref{sec:real-zeros-main-thm}(\ref{item:26}) and
Theorem \ref{sec:real-zeros-inf-thm}(\ref{item:35}) were first obtained by
P\'{o}lya and Schur in \cite{polschur14} and they called them, respectively, the
\emph{algebraic characterization of multiplier sequences of the first kind} and
the \emph{transcendental characterization} of these sequences. Theorems
\ref{sec:real-zeros-main-thm}(\ref{item:26}) and
\ref{sec:real-zeros-inf-thm}(\ref{item:35}) thus represent a $q$-extension and a
finite difference analogue (cf. \cite{braendKrasShap2012}) of P\'{o}lya's and
Schur's characterization of multiplier sequences.

Note also that, as a limit case of Theorem \ref{sec:suffr-main-thm}, in
\cite{suffridge}, Suffridge obtained a second proof (the first one was given by
by Ruscheweyh and Sheil-Small \cite{ruschsheil73}, see also
\cite{lewis,rusch82,sheil}) of a conjecture of P\'{o}lya and Schoenberg from
\cite{polschoe58} which claimed that the convolution of two convex univalent
function is again convex univalent. Statements (\ref{item:35}) and
(\ref{item:36}) of Theorem \ref{sec:real-zeros-inf-thm} can thus also be seen as
the real entire function analogues of the P\'{o}lya-Schoenberg conjecture.

\subsection{A new characterization of log-concave sequences}
\label{sec:new-char-log}

Log-concave sequences play an important role in combinatorics, algebra,
geometry, computer science, probability, and statistics (see
\cite{brenti94,stanley89}), and therefore Theorems
\ref{sec:real-zeros-main-thm}(\ref{item:29}) and
\ref{sec:real-zeros-inf-thm}(\ref{item:38}) might have far-reaching
applications. An important tool for establishing the log-concavity of a
given sequence $\{a_{k}\}_{k}$ of real numbers are ''Newton's inequalities''
(see \cite[Thm. 2]{stanley89}), which state that $\{a_{k}\}_{k=0}^{n}$,
$n\in\N$, is log-concave, if $\sum_{k=0}^{n} {n\choose k} a_{k} z^{k}$ is
a real polynomial with only real zeros. This sufficient condition for
log-concavity is however far from necessary.

As a corollary to Theorems \ref{sec:real-zeros-main-thm}(\ref{item:29}) and
\ref{sec:real-zeros-inf-thm}(\ref{item:38}) we obtain the following new
characterization of \emph{all} log-concave sequences in terms of the zero
location of certain associated polynomials.

\begin{corollary}
  \label{sec:charct-log-conc-seq}
  Let $n\in\N$ and suppose $\{a_{k}\}_{k=0}^{n}$ is a sequence of real
  numbers. Then $\{a_{k}\}_{k=0}^{n}$ is strictly log-concave if, and
  only if, there is a $q\in(0,1]$ such that
  \begin{equation*}
    \sum_{k=0}^{n} C_{k}^{n}(q) a_{k} z^{k}\quad\mbox{belongs to}\quad 
    \mathcal{R}_{n}(q). 
  \end{equation*}
  If all $a_{k}$ are non-negative, then $\sum_{k=0}^{n} C_{k}^{n}(q) a_{k}
  z^{k}$ belongs to $\mathcal{N}_{n}(q)$.

  Moreover, if there is a $q\in(0,1]$ and an infinite sequence
  $\{a_{k}\}_{k=0}^{\infty}$ of real numbers such that
  \begin{equation*}
    \sum_{k=0}^{\infty} C_{k}^{\infty}(q) a_{k} z^{k}\quad\mbox{belongs to}\quad 
    \overline{\mathcal{R}}_{\infty}(q), 
  \end{equation*}
  then $\{a_{k}\}_{k=0}^{\infty}$ is strictly log-concave.  
\end{corollary}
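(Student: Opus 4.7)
The plan is to translate each statement of the corollary into the language of the pre-coefficient classes introduced in Section~2 and then to invoke the appropriate clause of Theorem~\ref{sec:real-zeros-main-thm} or Theorem~\ref{sec:real-zeros-inf-thm}.

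For the finite case, I would set $f(z) := \sum_{k=0}^{n} a_k z^k$ and observe that, since $R_n(q;z) = \sum_{k=0}^n C_k^n(q) z^k$, the Hadamard product takes the form
\begin{equation*}
  f*R_n(q;z) \;=\; \sum_{k=0}^n C_k^n(q)\, a_k\, z^k.
\end{equation*}
By the definition of $\mathcal{PR}_n(q) = \mathcal{P}(\mathcal{R}_n(q); R_n(q;z))$, requiring $\sum_{k=0}^n C_k^n(q) a_k z^k$ to lie in $\mathcal{R}_n(q)$ is equivalent to $f \in \mathcal{PR}_n(q)$; and the existence of some such $q\in(0,1]$ is in turn equivalent to $f \in \bigcup_{q\in(0,1]}\mathcal{PR}_n(q) = \mathcal{PR}_n(0)$. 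The identification $\mathcal{PR}_n(0) = \mathcal{LC}_n$ furnished by Theorem~\ref{sec:real-zeros-main-thm}(\ref{item:29}) then shows that this is exactly strict log-concavity of $\{a_k\}_{k=0}^n$, which settles the ``if and only if''. The non-negative refinement is handled in the same spirit: if in addition $a_k\geq 0$ for every $k$, then $f\in\mathcal{LC}_n^+ = \mathcal{PN}_n(0)$ by the same theorem, so some $q\in(0,1]$ yields $f * R_n(q;z)\in\mathcal{N}_n(q)$.

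For the infinite case, the same reformulation shows that the hypothesis is equivalent to $f := \sum_{k=0}^\infty a_k z^k \in \overline{\mathcal{PR}}_\infty(q)$, and Theorem~\ref{sec:real-zeros-inf-thm}(\ref{item:38}) then places $f$ in $\overline{\mathcal{LC}}_\infty$, which gives the log-concavity of $\{a_k\}$. To sharpen this to \emph{strict} log-concavity I would invoke Theorem~\ref{sec:real-zeros-inf-thm}(\ref{item:37}), which provides $f \in \overline{\mathcal{PR}}_\infty(r)$ for every $r \in (0,q)$. The resulting one-parameter family of log-concavity inequalities, combined with the explicit quotient
\begin{equation*}
  \frac{C_k^\infty(r)^2}{C_{k-1}^\infty(r)\,C_{k+1}^\infty(r)} \;=\; \frac{1-r^{k+1}}{r(1-r^k)} \;>\; 1,
\end{equation*}
should force strict inequality $a_k^2 > a_{k-1}a_{k+1}$ at every interior index $k$.

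The main obstacle is precisely this last step: the passage from the closed-class inclusion in~(\ref{item:38}) to the open-class conclusion $\{a_k\}\in\mathcal{LC}_\infty$. In the finite case no such issue arises, because $\mathcal{PR}_n(0) = \mathcal{LC}_n$ is already a strict identity. In the infinite setting the strictness must be extracted either from the one-parameter freedom provided by~(\ref{item:37}) or from finer information about the coefficients of entire functions of order $0$ with $q$-separated zeros. A direct ``truncate and apply the finite corollary'' argument is frustrated by the fact that partial sums of elements of $\overline{\mathcal{R}}_\infty(q)$ need not themselves lie in $\mathcal{R}_n(q)$, which is why the delicate work in this part of the corollary lies in the interplay between the shift statement~(\ref{item:37}) and the log-concavity statement~(\ref{item:38}).
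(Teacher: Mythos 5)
Your treatment of the finite case is exactly the paper's (implicit) argument: the corollary's first two assertions are Theorem \ref{sec:real-zeros-main-thm}(\ref{item:29}) unwound through the definitions $\mathcal{PR}_{n}(q)=\mathcal{P}(\mathcal{R}_{n}(q);R_{n}(q;z))$ and $\mathcal{PR}_{n}(0)=\bigcup_{q\in(0,1]}\mathcal{PR}_{n}(q)$, and that part is correct. (For the non-negativity refinement it is slightly cleaner to observe that a polynomial in $\mathcal{R}_{n}(q)$ with non-negative coefficients has no positive zeros and hence lies in $\mathcal{N}_{n}(q)$ for the \emph{same} $q$; your detour through $\mathcal{LC}_{n}^{+}=\mathcal{PN}_{n}(0)$ only yields some possibly different $q$.)

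The genuine gap is the one you flagged yourself: the passage from $\overline{\mathcal{LC}}_{\infty}$ to strict log-concavity in the infinite case, and your proposed repair does not work. Theorem \ref{sec:real-zeros-inf-thm}(\ref{item:37}) does give $f\in\overline{\mathcal{PR}}_{\infty}(r)$ for every $r\in(0,q)$, but feeding this back into (\ref{item:38}) returns the same $r$-independent conclusion $a_{k}^{2}\geq a_{k-1}a_{k+1}$, because the conclusion of (\ref{item:38}) concerns the pre-coefficient sequence $\{a_{k}\}$ itself and does not see $r$; there is no one-parameter family of inequalities to exploit. The quotient $C_{k}^{\infty}(r)^{2}/\bigl(C_{k-1}^{\infty}(r)C_{k+1}^{\infty}(r)\bigr)=(1-r^{k+1})/\bigl(r(1-r^{k})\bigr)>1$ also pushes in the wrong direction: combined with (non-strict) log-concavity of $\{a_{k}\}$ it sharpens the Tur\'an inequality for the Taylor coefficients $C_{k}^{\infty}(r)a_{k}$, not for the $a_{k}$. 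In fact no deduction can close this gap for the statement as printed: $R_{\infty}(q;z)=\sum_{k}C_{k}^{\infty}(q)z^{k}$ itself belongs to $\overline{\mathcal{R}}_{\infty}(q)$, and its pre-coefficient sequence $a_{k}\equiv 1$ satisfies $a_{k}^{2}=a_{k-1}a_{k+1}$, so it is log-concave but not strictly so. Note that the paper states the corollary without proof, as a direct consequence of Theorems \ref{sec:real-zeros-main-thm}(\ref{item:29}) and \ref{sec:real-zeros-inf-thm}(\ref{item:38}); what those theorems actually deliver in the infinite case is $\{a_{k}\}\in\overline{\mathcal{LC}}_{\infty}$, i.e. plain log-concavity, and strictness would require a strengthened hypothesis (for instance strict $q$-separation of the zeros), not a cleverer argument. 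So your finite-case reduction coincides with the paper, while the strictness step you left open is a gap in the proposal and, as stated, in the corollary itself.
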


Using the Hermite-Biehler theorem \cite[Thm. 6.3.4]{rahman} and Lemma
\ref{sec:working-2}, one sees that in order to verify whether a given sequence
$\{a_{k}\}_{k=0}^{n}$ is strictly log-concave it is also sufficient to check
whether all zeros $\neq 0$ of the polynomial
\begin{equation*}
  \sum_{k=0}^{n} \left(C_{k}^{n}(q) a_{k} + 
    i q^{k-1} C_{k-1}^{n}(q) a_{k-1}\right) z^{k}
\end{equation*}
lie in the open upper half-plane $\U$ or in the open lower half-plane
$\mathbb{L}$ (if $a_{k}\geq 0$ for all $k\in\{0,\ldots,n\}$ it is even enough to
check whether all zeros $\neq 0$ of the polynomial $ \sum_{k=0}^{n} C_{k}^{n}(q)
a_{k} \left(1+iq^{k}\right) z^{k}$ lie in $\U$ or $\mathbb{L}$).

\subsection{An extension of Ruscheweh's convolution lemma}
\label{sec:an-extens-rusch-1}

The following lemma of Ruscheweyh from \cite{rusch72} plays a fundamental role
in the convolution theory for functions which are analytic in $\D$ (see
\cite{rusch82}). $\mathcal{H}(\D)$ denotes the set of functions analytic in $\D$
and $\mathcal{H}_{0}(\D)$ is the set of those functions $f\in\mathcal{H}(\D)$
which satisfy $f(0)=f'(0)-1=0$.

\begin{lemma}[Ruscheweyh]
  \label{sec:rusch-lemma}
  Suppose $f$, $g\in\mathcal{H}_{0}(\D)$ satisfy
  \begin{equation*}
    \left(f*\frac{1+xz}{1+yz} g\right)(z)\neq 0 \quad \mbox{for all}\quad 
    z\in \D,\; x,y\in\T.
  \end{equation*}
  Then for every $g\in \mathcal{H}(\D)$ we have
  \begin{equation*}
    \frac{f*h}{f*g}(\D) \subset \overline{\operatorname{co}} 
    \left(\frac{h}{g}(\D)\right).
  \end{equation*}
\end{lemma}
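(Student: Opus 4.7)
The plan is to argue by contradiction, using Herglotz's representation theorem to reduce the statement to a pointwise half-plane inequality that turns out, via a M\"obius manipulation, to be equivalent to the non-vanishing hypothesis specialised to $y=-x$. Fix $z_0\in\D$; taking $x=y$ in the hypothesis gives $(f*g)(z_0)\neq 0$, so $w_0:=(f*h)(z_0)/(f*g)(z_0)$ is well defined. Suppose, for contradiction, that $w_0\notin\overline{\operatorname{co}}(h/g(\D))$; we may assume $g$ has no zeros in $\D$, since otherwise $h/g(\D)$ is unbounded and its closed convex hull equals $\C$, making the conclusion trivial. By Hahn--Banach separation, choose $\alpha\in\C\setminus\{0\}$ and $\beta\in\R$ with $\Re(\alpha h(z)/g(z))>\beta$ throughout $\D$ but $\Re(\alpha w_0)<\beta$. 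Then $p:=\alpha h/g-\beta$ has strictly positive real part on $\D$, so Herglotz's theorem yields
\begin{equation*}
  p(z)=ic+\int_{\T}\frac{1+xz}{1-xz}\,d\nu(x)
\end{equation*}
for some $c\in\R$ and a positive Borel measure $\nu$ on $\T$ with $\nu(\T)=\Re p(0)>0$. Multiplying by $g$, convolving with $f$, evaluating at $z_0$, and dividing by $(f*g)(z_0)$, I obtain
\begin{equation*}
  \alpha w_0-\beta \;=\; ic + \int_{\T}\Psi(x,z_0)\,d\nu(x), \qquad \Psi(x,z):=\frac{\bigl(f*\tfrac{1+xz}{1-xz}g\bigr)(z)}{(f*g)(z)}.
\end{equation*}
Taking real parts, the inequality $\Re(\alpha w_0)-\beta<0$ will contradict $\nu(\T)>0$ as soon as one shows $\Re\Psi(x,z_0)>0$ for every $x\in\T$.

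For this key half-plane property, my plan is to use the equivalence ``$\Re\Psi>0$ if and only if $|\Psi-1|<|\Psi+1|$''. From $\tfrac{1+xz}{1-xz}-1=\tfrac{2xz}{1-xz}$, $\tfrac{1+xz}{1-xz}+1=\tfrac{2}{1-xz}$, together with the identity $A-xB=(f*g)(z_0)$ (which follows from $\tfrac{g}{1-xz}-x\tfrac{zg}{1-xz}=g$), a short calculation gives
\begin{equation*}
  \frac{\Psi(x,z_0)-1}{\Psi(x,z_0)+1} \;=\; \frac{xB}{A}, \qquad A:=\bigl(f*\tfrac{g(z)}{1-xz}\bigr)(z_0), \quad B:=\bigl(f*\tfrac{zg(z)}{1-xz}\bigr)(z_0).
\end{equation*}
Since $|x|=1$, it suffices to prove $|A|>|B|$. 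The hypothesis, applied with $y=-x\in\T$, states that the affine-in-$x'$ function $x'\mapsto\bigl(f*\tfrac{1+x'z}{1-xz}g\bigr)(z_0)=A+x'B$ is non-zero for every $x'\in\T$; hence the circle $\{A+x'B:x'\in\T\}$ avoids the origin and $|A|\neq|B|$. The correct direction is fixed by continuity: $(x,z_0)\mapsto(A,B)$ is continuous on the connected set $\T\times\D$, and the normalisations $f,g\in\mathcal{H}_0(\D)$ produce Taylor expansions $A=z_0+O(z_0^2)$, $B=O(z_0^2)$ as $z_0\to 0$, so $|A|>|B|$ near the origin and therefore on all of $\T\times\D$.

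The main obstacle is spotting the M\"obius manipulation in the previous paragraph: once $\Psi\pm 1$ is written out, the quotient $(\Psi-1)/(\Psi+1)$ collapses to the clean expression $xB/A$ and the hypothesis enters in exactly the form needed. The remaining ingredients --- Hahn--Banach separation, the Herglotz representation, and the connectedness argument fixing the sign of $|A|-|B|$ --- are standard.
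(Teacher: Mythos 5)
Your proof is correct in substance and follows the classical route to Ruscheweyh's lemma: Hahn--Banach separation, the Herglotz representation of $\alpha h/g-\beta$, and the reduction to the half-plane estimate $\Re\Psi(x,z_{0})>0$, which you derive from the hypothesis at $y=-x$ through the identity $(\Psi-1)/(\Psi+1)=xB/A$ and a connectedness argument giving $|A|>|B|$. Two small repairs are needed. The reduction ``if $g$ has zeros then $h/g(\D)$ is unbounded'' is false as stated (take $h=g$); the correct dichotomy is that either $h/g$ has a pole in $\D$, in which case its image contains the complement of a disk and the closed convex hull is all of $\C$, or every singularity is removable, and then your Herglotz computation runs verbatim without assuming $g$ zero-free. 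Also, since $f,g\in\mathcal{H}_{0}(\D)$ force $(f*F)(0)=0$ for every $F$, the non-vanishing hypothesis has to be read on $\D\setminus\{0\}$ (as in Ruscheweyh's original formulation; the statement above has this slip), and your sign-fixing argument must accordingly be run on the connected set $\T\times(\D\setminus\{0\})$, because $A=B=0$ at $z_{0}=0$; the point $z_{0}=0$ of the conclusion is then recovered by continuity after removing the singularity of $(f*h)/(f*g)$.

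Concerning the paper: it does not prove Lemma \ref{sec:rusch-lemma} at all but quotes it from \cite{rusch72}; what it proves is the operator extension, Lemma \ref{sec:ext-rusch-lemma}, and by a genuinely different method. There, Taylor sections are completed to self-inversive polynomials $f_{n}+z^{n+1}f_{n}^{*n}$, the real-line interspersion lemma (Lemma \ref{sec:main-results-2}) is transported to the unit circle by the M\"obius map $i(1+z)/(1-z)$ (Lemma \ref{sec:main-results-3}), and a limit is taken (Lemma \ref{sec:main-results-4}); the normalisation you extract from $f,g\in\mathcal{H}_{0}(\D)$ is replaced there by the explicit condition (\ref{eq:63}). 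Your Herglotz/duality argument is shorter and purely function-theoretic, and it exploits the fact that convolution commutes with the Herglotz integral, so it is perfectly adapted to the convolution operator; the paper's route is heavier but recasts the lemma as a zero-interspersion statement for self-inversive polynomials, which is exactly what lets it mesh with the interspersion machinery behind Theorems \ref{sec:thm-inv-lhd-vee} and \ref{sec:thm-inv-prec-prec} and yields the version for arbitrary continuous linear operators.
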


Analogues of this lemma for real polynomials (Lemmas \ref{sec:main-results-2}
and \ref{sec:polynomials-with-log-3}) will play a crucial role in our proof of
Theorem \ref{sec:real-zeros-main-thm}. We will prove these analogues in Sections
\ref{sec:polyn-with-intersp} and \ref{sec:polynomials-with-log}. In Section
\ref{sec:proofs-theorems} we will explain how Lemma \ref{sec:main-results-2} can
be used to obtain the following generalization of Ruscheweyh's lemma.

\begin{lemma}
  \label{sec:ext-rusch-lemma}
  Let $L:\mathcal{H}(\D)\rightarrow\mathcal{H}(\D)$ be a continuous
  complex linear operator. Suppose $f\in\mathcal{H}(\D)$ is such that
  \begin{equation}
    \label{eq:62}
    L\left[\frac{1 + x z}{1 + y z}f\right](z) \neq 0 \quad \mbox{for
      all} \quad z\in\D,\, x,\,y\in\T,
  \end{equation}
  and 
  \begin{equation}
    \label{eq:63}
    \left|\frac{L[\frac{zf}{1+yz}]}{L[\frac{f}{1+yz}]}\right|(0)<1
    \quad \mbox{for at least one} \quad y\in\T.
  \end{equation}
  Then for every $g\in \mathcal{H}(\D)$ we have
  \begin{equation*}
    \frac{L[h]}{L[g]}(\D) \subset \overline{\operatorname{co}} 
    \left(\frac{h}{g}(\D)\right),
    \quad z\in\D.
  \end{equation*}
\end{lemma}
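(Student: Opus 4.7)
The plan is to reduce, as in the original Ruscheweyh lemma, to showing that for every $z_0 \in \D$, every $h, g \in \mathcal{H}(\D)$, and every $\alpha \in \C \setminus \overline{\operatorname{co}}(h/g(\D))$ one has $L[h](z_0) - \alpha L[g](z_0) \neq 0$; sweeping $\alpha$ across the open complement then yields the quotient inclusion, and the non-vanishing $L[g](z_0) \neq 0$ needed to form the quotient emerges as a byproduct, with (\ref{eq:63}) used to rule out the degenerate case $L[g] \equiv 0$.

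For such $\alpha$, Hahn--Banach separation yields an open half-plane $H \supset h/g(\D)$ with $\alpha \notin \overline{H}$. Consequently $F := h - \alpha g$ has the property that $F/g$ maps $\D$ into the half-plane $H - \alpha$, which does not contain $0$. After an appropriate rotation and normalization at the origin, the Herglotz--Riesz representation gives
\begin{equation*}
  \frac{F(z)}{g(z)} \;=\; \lambda \int_{\T} \frac{1 + \zeta z}{1 - \zeta z}\, d\mu(\zeta),
\end{equation*}
for a non-zero $\lambda \in \C$ and a probability measure $\mu$ on $\T$; continuity and linearity of $L$ then deliver
\begin{equation*}
  L[F](z_0) \;=\; \lambda \int_{\T} L\!\left[\frac{1 + \zeta z}{1 - \zeta z}\, g\right]\!(z_0)\, d\mu(\zeta),
\end{equation*}
and the task is reduced to showing this integral is non-zero.

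The main obstacle is that hypothesis (\ref{eq:62}) tests $L$ only against the specific family $\frac{1+xz}{1+yz}\, f$, whereas the integrand above involves the arbitrary $g$. This is precisely where Lemma \ref{sec:main-results-2} enters: approximating $f$, $g$, $h$ uniformly on compact subsets of $\D$ by polynomials (Taylor truncations rescaled to a slightly smaller disk), applying the real-polynomial analogue, and passing to the limit via Hurwitz's theorem converts the single-$f$ hypothesis (\ref{eq:62}) into uniform non-vanishing of $L\!\left[\frac{1+\zeta z}{1-\zeta z}\, g\right]\!(z_0)$ for $\zeta \in \T$. Hypothesis (\ref{eq:63}) fixes the correct normalization at the origin and, together with a winding-number analysis of the integrand as a function of $\zeta$, rules out parasitic sign cancellations in the integral, so that $L[F](z_0) \neq 0$. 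Completing this transference from the test family $\frac{1+xz}{1+yz}\, f$ to the integrand $\frac{1+\zeta z}{1-\zeta z}\, g$ is the technical heart of the proof, and the careful handling of the boundary condition (\ref{eq:63}) throughout the Hurwitz limit is the step I expect to require the most care.
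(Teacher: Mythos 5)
There is a genuine gap, and it sits exactly where you locate it. Your scheme expands $F:=h-\alpha g$ by a Herglotz measure attached to $g$, so the integrand you must control is $L\bigl[\tfrac{1+\zeta z}{1-\zeta z}\,g\bigr]$, a family built from the \emph{arbitrary} function $g$; but (\ref{eq:62})--(\ref{eq:63}) constrain $L$ only on the family $\tfrac{1+xz}{1+yz}f$ built from the \emph{fixed} $f$. The sentence asserting that Lemma \ref{sec:main-results-2} plus polynomial truncation plus Hurwitz plus a ``winding-number analysis'' converts the one into the other is not an argument: Hurwitz's theorem needs a sequence of nonvanishing approximants to $L\bigl[\tfrac{1+\zeta z}{1-\zeta z}\,g\bigr]$, and producing such a sequence is precisely the statement to be proved, while Lemma \ref{sec:main-results-2} says nothing about $g$, whose truncations need not have real zeros and stand in no interspersion relation to $f$. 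Two further misreadings compound this: the conclusion cannot be meant for $h$ and $g$ both arbitrary and unrelated to $f$ (as in Lemma \ref{sec:rusch-lemma}, where $L=f*\cdot$ and the hypothesis function is the denominator, the provable statement is $\frac{L[h]}{L[f]}(\D)\subset\overline{\operatorname{co}}\bigl(\frac{h}{f}(\D)\bigr)$ for every $h$); and (\ref{eq:63}) is not there to exclude $L[g]\equiv 0$ or to control winding, but to pin down a sign, see below. (Minor: your Herglotz formula also drops the additive imaginary constant, i.e.\ the correct normal form is $\int_{\T}\frac{a+b\zeta z}{1-\zeta z}\,d\mu(\zeta)$.)

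The paper's route is different at the decisive step. Writing $x=(1+it)/(1-it)$, (\ref{eq:62}) is restated as (\ref{eq:20}): the quotient $L\bigl[\tfrac{1+z}{y+\overline{y}z}f\bigr]/L\bigl[\tfrac{i(1-z)}{y+\overline{y}z}f\bigr]$ omits $\R$ on $\D$ for all $y\in\T$; by connectedness its imaginary part has constant sign, and (\ref{eq:63}) forces that sign to be negative at $z=0$ for one $y$, hence everywhere --- this is exactly hypothesis (\ref{eq:14}) of Lemma \ref{sec:main-results-4}. That lemma is then proved by truncating, symmetrizing to self-inversive polynomials $h_{n}+z^{n+1}h_{n}^{*n}$ (Lemmas \ref{sec:blaschke-lemma} and \ref{sec:lemmas-4}), and applying Lemma \ref{sec:main-results-3}, the $\T$-transplant of Lemma \ref{sec:main-results-2}; there the transfer from $f$ to the arbitrary comparison function goes in the direction opposite to yours: the arbitrary polynomial is expanded, via the partial-fraction Lemma \ref{sec:main-lemma}, over the basis $F$, $F_{y}$ attached to the zeros of the fixed (truncated, symmetrized) $f$, so that $L$ is only ever tested on the hypothesis family. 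The final inclusion then follows from Lemma \ref{sec:main-results-4} and the complex linearity of $L$ by the standard half-plane separation. Your separation step and the reduction to $L[h-\alpha g](z_{0})\neq 0$ are fine as far as they go, but without this (or an equivalent) transference mechanism the proposal stops at the heart of the proof, as you acknowledge.
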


\subsection{Consequences regarding the Riemann Conjecture}
\label{sec:cons-riem-conj}

It is well known (and explained in \cite{csnorvar1986}, for example) that
the Riemann Conjecture is equivalent to the statement that
\begin{equation*}
  F(z)= \sum_{n=0}^{\infty} \frac{\hat{b}_{n} z^{n}}{(2n)!}
\end{equation*}
belongs to $\overline{\mathcal{N}}_{\infty}(1)$, where
\begin{equation*}
  \hat{b}_{n}:= \int_{0}^{\infty} t^{2n} \Phi(t)\, dt, \quad n\in\N_{0}, 
  \quad\mbox{and}\quad
  \Phi(t):= \sum_{n=1}^{\infty} (2n^{4}\pi^{2}e^{9t}-3n^{2}\pi e^{5t}) 
  e^{-n^{2}\pi e^{4t}}.
\end{equation*}

A particular consequence of Theorem \ref{sec:real-zeros-inf-thm}
concerning the Riemann Conjecture is the following.

\begin{theorem}
  If the Riemann Conjecture is true, then
  \begin{equation*}
    f(z)= \sum_{n=0}^{\infty}  \frac{n!\hat{b}_{n} z^{n}}{(2n)!}
    \in \overline{\mathcal{PN}}_{\infty}(q) \subset  
    \overline{\mathcal{LC}^{+}}_{\infty}\quad \mbox{for all} \quad q\in(0,1].
  \end{equation*}
\end{theorem}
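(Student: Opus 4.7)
My plan is to reduce the theorem to Theorem \ref{sec:real-zeros-inf-thm} by recognizing $f$ as a pre-coefficient of $F$ with respect to $R_{\infty}(1;z)=e^{z}$. Concretely, it suffices to show $f\in\overline{\mathcal{PN}}_{\infty}(1)$, after which the extension to all $q\in(0,1]$ and the log-concavity inclusion are immediate from Theorem \ref{sec:real-zeros-inf-thm}(\ref{item:37}) and (\ref{item:38}).

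The key computation I would perform is that of $f*R_{\infty}(1;z)$. Since $R_{\infty}(1;z)=e^{z}=\sum_{n=0}^{\infty} z^{n}/n!$, the Hadamard product is
$$ f*R_{\infty}(1;z) \;=\; \sum_{n=0}^{\infty} \frac{n!\,\hat{b}_{n}}{(2n)!}\cdot\frac{1}{n!}\,z^{n} \;=\; \sum_{n=0}^{\infty}\frac{\hat{b}_{n}}{(2n)!}z^{n} \;=\; F(z). $$
Under the Riemann Conjecture, $F\in\overline{\mathcal{N}}_{\infty}(1)$, and so by the very definition $\overline{\mathcal{PN}}_{\infty}(1)=\mathcal{P}(\overline{\mathcal{N}}_{\infty}(1);R_{\infty}(1;z))$ we conclude $f\in\overline{\mathcal{PN}}_{\infty}(1)$.

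I would then invoke Theorem \ref{sec:real-zeros-inf-thm}(\ref{item:37}) with $q=1$: it yields $f\in\overline{\mathcal{PN}}_{\infty}(r)$ for every $r\in(0,1)$. Combined with the case $q=1$ already in hand, $f\in\overline{\mathcal{PN}}_{\infty}(q)$ for all $q\in(0,1]$, and the inclusion $\overline{\mathcal{PN}}_{\infty}(q)\subset\overline{\mathcal{LC}^{+}}_{\infty}$ is exactly Theorem \ref{sec:real-zeros-inf-thm}(\ref{item:38}). There is no genuine obstacle here: the entire content lies in Theorem \ref{sec:real-zeros-inf-thm}, and the only conceptual step is the observation that the passage $F\mapsto f$ is (up to the factor $1/n!$) an inverse Borel transform, which Hadamard convolution with $e^{z}$ undoes at the level of coefficients.
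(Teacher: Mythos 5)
Your proposal is correct and is exactly the argument the paper intends: the theorem is stated as a direct consequence of Theorem \ref{sec:real-zeros-inf-thm}, and your observation that $f*R_{\infty}(1;z)=f*e^{z}=F$, so that the Riemann Conjecture places $f$ in $\overline{\mathcal{PN}}_{\infty}(1)$ by definition, followed by parts (\ref{item:37}) and (\ref{item:38}) of that theorem, is precisely the intended deduction.
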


The statement $f\in \overline{\mathcal{PN}}_{\infty}(q)$ is a necessary
condition for the validity of the Riemann Conjecture, that becomes weaker as $q$
decreases from $1$ to $0$. Its weakest form ($f\in
\overline{\mathcal{LC}^{+}}_{\infty}$) is true due to Csordas, Norfolk, and
Varga \cite{csnorvar1986}.

\subsection{Structure of the paper}
\label{sec:structure-paper}

In the next section we introduce some terminology and notation regarding zeros
and poles of polynomials and rational functions. In Section
\ref{sec:polyn-with-intersp} we establish certain facts regarding polynomials
with interspersed zeros, and obtain, as main result, Lemma
\ref{sec:main-results-2} (the real polynomial version of Ruscheweyh's
convolution lemma). In Section \ref{sec:polynomials-with-log} we prove certain
analogues of results from Section \ref{sec:polyn-with-intersp} for polynomials
with log-interspersed zeros. The main result, Lemma
\ref{sec:polynomials-with-log-3}, is also an analogue of Ruscheweyh's
convolution lemma. In Section \ref{sec:line-oper-pres-1} several auxiliary
results concerning the classes $\overline{\mathcal{R}}_{n}(q)$ and
$\overline{\mathcal{N}}_{n}(q)$ are verified, among them a $q$-extension of
Newton's inequalities (Theorem \ref{sec:q-newton-ineq}) and $q$-extensions of
the theorems of Rolle (Theorem \ref{sec:q-rolle-thm}) and Laguerre (Theorem
\ref{sec:q-Laguerre-thm}). In Section \ref{sec:line-oper-pres-1} we prove
Theorem \ref{sec:real-zeros-main-thm} and a $q$-extension of Corollary
\ref{sec:introduction-1}(\ref{item:6}) by means of two interspersion invariance
results (Theorems \ref{sec:thm-inv-lhd-vee} and \ref{sec:thm-inv-prec-prec})
concerning the classes $\overline{\mathcal{R}}_{n}(q)$ and
$\overline{\mathcal{N}}_{n}(q)$, which are of independent interest. In
the final Section \ref{sec:proofs-theorems} we present the proof of Lemma
\ref{sec:ext-rusch-lemma}.

\section{Zeros and $n$-Zeros of Polynomials and Rational Functions}
\label{sec:prel-defin}

We consider $\overline{\R}:=\R\cup\{\infty\}$ as being diffeomorphic to the unit
circle $\mathbb{T}:=\{z\in\C:|z|=1\}$ in the Riemann sphere
$\overline{\C}:=\C\cup\{\infty\}$. In that spirit, we use the convention $\pm
\infty:=\infty$ in expressions like $(a,+\infty]$ with $a\in\R$, i.e. if
$b\in(a,+\infty]$ and $b$ is not finite then $b=\infty$. 

A function $F$ that is analytic in a neighborhood of $z\in\C$ has a zero of
order (or multiplicity) $m\in\N_{0}$ at $z$ if $F^{(k)}(z)=0$ for
$k\in\{0,\ldots,m-1\}$ and $F^{(m)}(z)\neq 0$. $\operatorname{ord} (F;z)$ will
denote the order of $z\in\C$ as a zero of $F$. For a polynomial $F$ of degree
$\leq n$ we set
\begin{equation}
  \label{eq:41}
  F^{*n}(z) := z^{n} F\left(-\frac{1}{z}\right).
\end{equation}
Then $F^{*n}$ is a polynomial of degree $\leq n$ and we call $z\in\overline{\C}$
an \emph{$n$-zero} of order $m$ of $F$ and write $\operatorname{ord}_{n}
(F;z)=m$, if $\operatorname{ord} (F;z)=m$ or $\operatorname{ord}
(F^{*n};-1/z)=m$. In this way the number of $n$-zeros of every polynomial $F$ of
degree $m\in\{0,1,\ldots,n\}$ is exactly $n$ (counted according to
multiplicity), since such a polynomial has an $n$-zero of order $n-m$ at
$\infty$.

A rational function $R$ is of degree $n\in\N_{0}$ if $R=F/G$ with polynomials
$F(z)=a_{n}z^{n}+\cdots +a_{0}\nequiv 0$ and $G(z) = b_{n}z^{n} + \cdots +b_{0}
\nequiv 0$ that have no common zeros and for which $n=\max\{\deg F,\deg G\}$. We
extend $R$ to $\overline{\C}$ by letting $R(\infty)$ be equal to $a_{n}/b_{n}$
or $\infty$ depending on whether $b_{n}\neq 0$ or $b_{n}=0$. If $R(\infty)=0$,
then the order of the zero $\infty$ of $R$ is defined to be the order of the
zero of $R(-1/z)$ at the origin (i.e. $\operatorname{ord}
(R;\infty):=\operatorname{ord} (R(-1/z);0)$). In this way every rational
function of degree $n$ has exactly $n$ zeros (counted according to multiplicity)
in $\overline{\C}$. 

If $R$ is a rational function for which $R(\infty)$ is finite, then we set
$R'(\infty):=(R\circ \psi)'(0)$, where $\psi(z):=-1/z$. If $z\in\overline{\R}$
is a pole of $R$, then $R'(z):=(\psi \circ R)'(z)$. One can then see that
if a rational function $R$ has a pole of order $\geq 2$ or a local extremum at
$z\in\overline{\R}$, then $R'(z)=0$, and that if $R'(z)\neq 0$, then there is a
neighborhood $U\subset \overline{\R}$ of $z$ such that $R'(w)R'(z)>0$ for all
$w\in U$.

Finally, for $F:\Omega\subseteq\C\rightarrow \C$ and $y\in\C$, we will use the
notations $F_{\infty}(z):=-zF(z)$ and $F_{y}(z):=F(z)/(z-y)$.

\section{Linear Operators Preserving Interspersion}
\label{sec:polyn-with-intersp}

We say that two polynomials $F$, $G\in\R[z]$ with only real zeros have
\emph{interspersed zeros} if between every pair of successive zeros of $F$ there
is exactly one zero of $G$ (the zeros counted according to
multiplicity). Moreover, we will use the convention that every polynomial
$F\in\R[z]$ with only real zeros and the polynomial $0$ have interspersed
zeros. A particular consequence of this definition is the following.

\begin{lemma}
  \label{sec:polyn-with-intersp-2}
  If $F$, $G\in\R_{n}[z]\setminus\{0\}$ have interspersed zeros and if
  $x\in\overline{\R}$ is an $n$-zero of order $m$ of $F$, then
  $\operatorname{ord}_{n} (G;x)\in\{m-1,m,m+1\}$. In particular, if $F$ and $G$
  have interspersed zeros, then $|\deg F -\deg G|\leq 1$.
\end{lemma}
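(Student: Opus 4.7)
The plan is to view each of $F$ and $G$ as having exactly $n$ $n$-zeros on $\overline{\R}\cong\T$, and to count, at a single point $x\in\overline{\R}$, how many $n$-zeros of $G$ are forced by the interspersion hypothesis to coincide with $x$. The second (``in particular'') statement then falls out by specializing $x=\infty$, since $\operatorname{ord}_n(F;\infty)=n-\deg F$ and likewise for $G$, so that $\operatorname{ord}_n(G;\infty)\in\{m-1,m,m+1\}$ translates directly into $|\deg F-\deg G|\le 1$.

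For a finite $x$ with $\operatorname{ord}(F;x)=m$, I would first arrange the zeros of $F$ in nondecreasing order with multiplicity. The $m$ copies of $x$ form a consecutive block that produces $m-1$ ``inner'' adjacent pairs of the form $(x,x)$, together with at most two ``outer'' adjacent pairs $(a,x)$ and $(x,b)$ with $a<x<b$. By the interspersion hypothesis each adjacent pair contains exactly one zero of $G$. An inner pair has empty interior, so its $G$-zero can only lie at $x$ itself, forcing $\operatorname{ord}(G;x)\ge m-1$. Conversely, any occurrence of $x$ as a zero of $G$ must be accounted for by an adjacent pair whose closed interval contains $x$, and only the $m-1$ inner pairs and the (at most) two outer pairs qualify; hence $\operatorname{ord}(G;x)\le (m-1)+2=m+1$.

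To reach $x=\infty$ I would pass to the $n$-reverses $F^{*n}$ and $G^{*n}$. The M\"obius involution $z\mapsto -1/z$ is orientation-preserving on $\overline{\R}\cong\T$ (it corresponds under a standard Cayley identification to the rotation $w\mapsto -w$) and interchanges $n$-zeros at $0$ with those at $\infty$, so once interspersion is read cyclically on $\overline{\R}$ it is preserved by reversal; in particular $F^{*n}$ and $G^{*n}$ are again interspersed. Applying the finite-$x$ argument to them at $x=0$ then gives $\operatorname{ord}_n(G;\infty)=\operatorname{ord}(G^{*n};0)\in\{m-1,m,m+1\}$.

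The main obstacle is justifying this cyclic/$n$-zero extension of interspersion used in the $x=\infty$ step, since the definition in the paper is phrased only in terms of the finite zeros of $F$. Once one argues that the pairing of consecutive $n$-zeros on the circle $\overline{\R}$ is the right formulation and is compatible with the involution $F\mapsto F^{*n}$, the remainder of the proof is just the local counting of inner and outer adjacent pairs at a single point carried out above.
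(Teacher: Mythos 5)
The paper itself offers no proof here: the lemma is asserted as an immediate consequence of the definition of interspersion, so the only question is whether your argument is sound, and it has a genuine gap. Both of your counting steps quietly replace the stated definition (``between every pair of successive zeros of $F$ there is exactly one zero of $G$'') by the stronger, symmetric formulation that the two multisets of $n$-zeros alternate weakly and cyclically on $\overline{\R}$, and only under that formulation does the counting work. For the lower bound, a degenerate pair $(x,x)$ read literally says that $G$ has exactly one zero at $x$, however many such pairs there are, so the literal reading gives $\operatorname{ord}(G;x)=1$ rather than $\geq m-1$; your ``each inner pair gets its own copy of a $G$-zero'' is the matching/alternation reading, not the stated one. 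For the upper bound, the premise that every occurrence of $x$ as a zero of $G$ is accounted for by a pair of successive zeros of $F$ whose closed interval contains $x$ is false, since the definition constrains nothing at the extreme zeros of $F$ or outside their convex hull: $F=z-1$ and $G=(z-1)^2$ are interspersed (Lemma \ref{sec:polyn-with-intersp-1}(\ref{item:42}) certifies this, as $F'G-FG'=-(z-1)^2\leq 0$), yet $F$ has no pair of successive zeros at all, so your accounting would force $\operatorname{ord}(G;1)=0$ instead of the actual value $2$. Worse, under the literal one-sided definition the lemma itself fails --- $F=(z-2)(z-3)$ and $G=z^{2}(z-2.5)$ satisfy it verbatim, but $\operatorname{ord}(F;0)=0$ while $\operatorname{ord}(G;0)=2$ --- so no proof can succeed without first passing to the intended alternating/cyclic picture (equivalently, working from the Wronskian characterization of Lemma \ref{sec:polyn-with-intersp-1}).

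That reformulation is exactly what you defer as ``the main obstacle'' in the $x=\infty$ step, but it is already needed at finite $x$, and for the ``in particular'' statement it carries essentially all the content: knowing that the full lists of $n$-zeros (including the $n-\deg$ copies at $\infty$) alternate cyclically and that this is preserved by $z\mapsto -1/z$ already encodes $|\deg F-\deg G|\leq 1$, so deriving the degree bound from the $x=\infty$ case while postponing that compatibility is close to circular. Note that in the cyclic $n$-zero picture your accounting does come out right even in the example above ($F$ then has an $n$-zero at $\infty$, and the arcs ending there absorb the extra copies of the $G$-zero), which shows your combinatorial core --- counting inner degenerate gaps and the at most two flanking gaps at a single point of $\overline{\R}$ --- is the right idea. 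What is missing is the short but necessary argument that interspersion in the paper's sense yields this cyclic alternation of all $n$ $n$-zeros of $F$ and $G$; once that is in place, both bounds and the degree statement follow as you describe.
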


If $F$, $G\in\pi_{n}(\R)$ have interspersed zeros but no common zeros, then we
say that $F$ and $G$ have \emph{strictly interspersed zeros}. All zeros of two
polynomials with strictly interspersed zeros are simple and we will also say
that polynomial $F\in\R[z]$ with only real simple zeros and the polynomial $0$
have strictly interspersed zeros. 

In \cite[Lem. 1.55, 1.57]{fisk} it is shown that polynomials with interspersed
zeros can be characterized in the following way.

\begin{lemma}
  \label{sec:polyn-with-intersp-1}
  Let $F\in \pi_{n}(\R)\setminus\{0\}$ and $G \in \R_{n}[z]\setminus\{0\}$.
  \begin{enumerate}
  \item\label{item:41} $F$ and $G$ have strictly interspersed zeros if, and only
    if,
    \begin{equation*}
    F'(z) G(z)-F(z)G'(z) \neq 0 \quad\mbox{for all}\quad z\in\R.  
    \end{equation*}
  \item\label{item:42} If $F'(z) G(z)-F(z)G'(z)$ is either non-positive or
    non-negative for all $z\in\R$, then $F$ and $G$ have interspersed zeros and
    $F'(z) G(z)-F(z)G'(z) = 0$ holds only if $z$ is a common zero of $F$ and
    $G$.
  \item \label{item:43} If $F$ and $G$ have interspersed zeros, then $(F/G)'(z)$
    is positive for every $z\in\overline{\R}$ or negative for every such $z$.
  \end{enumerate}
\end{lemma}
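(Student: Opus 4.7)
The three parts are tightly linked: I would prove (a) directly, then deduce (b) by a GCD reduction that invokes (a), and derive (c) as a corollary of (a). Throughout, write $W := F'G - FG'$.

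For (a), the forward direction (strict interspersion $\Rightarrow W(z) \neq 0$ on $\R$): at any zero $x_0$ of $F$, strict interspersion guarantees simplicity and $G(x_0) \neq 0$, so $W(x_0) = F'(x_0) G(x_0) \neq 0$; similarly at any zero of $G$, $W(x_0) = -F(x_0)G'(x_0) \neq 0$. At other real points, $W/G^{2} = -(F/G)'$, and I would argue that $R := F/G$ is strictly monotonic between a zero and the next pole on $\overline{\R}$: its single zero in such an interval (guaranteed by interspersion) and its passage between $0$ and $\pm\infty$ pin down monotonic behavior, since any interior critical point would force an additional real zero of $F$, violating strict interspersion. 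The reverse direction: assume $W(z) \neq 0$ on $\R$. Then $W$ has globally constant sign; multiple real zeros of $F$ or common real zeros of $F$ and $G$ would make $W$ vanish, so neither occurs. Between two consecutive real zeros $z_1 < z_2$ of $F$, $F'$ changes sign, so $W(z_i) = F'(z_i) G(z_i)$ together with constancy of $\mathrm{sgn}\, W$ forces $G(z_1) G(z_2) < 0$, yielding at least one real zero of $G$ in $(z_1, z_2)$. A counting argument using $\deg G \leq n$, $F \in \pi_n(\R)$, and Lemma \ref{sec:polyn-with-intersp-2} forces exactly one such zero per interval and rules out complex zeros of $G$, giving strict interspersion.

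For (b), write $P := \gcd(F, G)$, $F = Pf$, $G = Pg$ with $\gcd(f, g) = 1$. A short direct computation yields $W = P^{2} W_f$ where $W_f := f'g - fg'$. The hypothesis that $W$ has constant sign on $\R$, together with $P^2 \geq 0$, implies $W_f$ has constant sign on $\R$ by continuity. The key step is to upgrade this to strict non-vanishing of $W_f$ on $\R$: at a purported real zero $x_0$ of $W_f$, local Taylor expansion in terms of the orders of $f$ and $g$ at $x_0$, combined with coprimality (so $f$ and $g$ cannot both vanish at $x_0$) and the parity constraint that a non-negative polynomial has only even-order real zeros, leads to a contradiction in each case. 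Once $W_f \neq 0$ on $\R$, part (a) applies to the coprime pair $f, g$ and yields strict interspersion of their zeros; multiplying back by $P$ produces interspersed zeros of $F$ and $G$, and $W = P^2 W_f$ vanishes on $\R$ exactly at the real zeros of $P$, which are precisely the common real zeros of $F$ and $G$.

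For (c), extract $P = \gcd(F, G)$ as above. By (a) applied to the coprime pair $f, g$, the function $W_f$ is non-vanishing on $\R$ and hence has globally constant sign. Since $F/G = f/g$, the relation $(F/G)'(z) = W_f(z)/g(z)^{2}$ gives constant sign at every finite non-pole. At a simple pole $x_0 \in \R$, the convention $R'(x_0) = (\psi \circ R)'(x_0)$ of Section \ref{sec:prel-defin} translates into evaluating the derivative of the coprime rational $-g/f$ at $x_0$, which has a simple zero there, producing a nonzero value; the sign matches via local computation. At $\infty$, the convention $R'(\infty) = (R \circ \psi)'(0)$ gives a further limit evaluation of the same quantity. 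These local verifications extend sign constancy of $(F/G)'$ to all of $\overline{\R}$.

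I expect the main obstacle to be the strict non-vanishing claim for $W_f$ in part (b): the multiplicity analysis at a potential real zero of $W_f$ must be carried out carefully in every configuration (real zero of $f$, real zero of $g$, or neither) and coupled to the coprimality condition via the parity constraint from non-negativity; missing a degenerate case would make the reduction to (a) fail, which in turn would break (c).
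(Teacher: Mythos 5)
The paper itself gives no proof of this lemma (it is quoted from \cite{fisk}, Lem.~1.55, 1.57), so a self-contained argument would be valuable; but yours has genuine gaps in both directions of (\ref{item:41}), and they propagate into (\ref{item:42}) and (\ref{item:43}). The serious one is the reverse direction of (\ref{item:41}): your ``counting argument using $\deg G\leq n$, $F\in\pi_{n}(\R)$, and Lemma \ref{sec:polyn-with-intersp-2}'' is circular --- Lemma \ref{sec:polyn-with-intersp-2} presupposes that $F$ and $G$ already have interspersed zeros, so it cannot be used to constrain $G$ beforehand --- and no count based on $n$ alone can rule out non-real zeros of $G$: take $F(z)=z\in\pi_{4}(\R)$ and $G(z)=1-z^{4}/3\in\R_{4}[z]$; then $F'G-FG'=1+z^{4}>0$ on all of $\R$, yet $G$ has two non-real zeros, so $F$ and $G$ are not strictly interspersed. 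Nonvanishing of the Wronskian on $\R$ controls nothing ``at infinity'', so this implication needs the additional information under which the lemma is actually applied throughout the paper (namely $G\in\pi_{n}(\R)$, as in the definition of $\preceq$, or a degree gap $|\deg F-\deg G|\leq 1$); under that reading the correct argument is the sign-change one you started: at consecutive (necessarily simple) zeros $z_{1}<z_{2}$ of $F$ one has $W(z_{i})=F'(z_{i})G(z_{i})$, constancy of the sign of $W$ forces $G(z_{1})G(z_{2})<0$, and symmetrically the zeros of $F$ separate those of $G$, whence alternation. The same defect reappears in your part (\ref{item:42}): the claim that a Taylor expansion at a real zero of $W_{f}$ ``leads to a contradiction in each case'' is not correct --- for the coprime real-rooted pair $f(z)=z^{3}$, $g\equiv 1$ one has $W_{f}=3z^{2}\geq 0$ with a real zero of even order at a point where $g\neq 0$, and no contradiction is available; a zero of $f$ of odd order $\geq 3$ passes your parity test, and it is exactly the degenerate configuration in which the combinatorial conclusion fails, so it has to be excluded by hypothesis rather than argued away.

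There is also a gap in the forward direction of (\ref{item:41}): the step ``any interior critical point would force an additional real zero of $F$'' is a non sequitur, since $F/G$ can have a critical point strictly between a zero and the adjacent pole without acquiring a new zero (rise, dip while staying positive, then blow up), and your argument does not treat the two unbounded intervals beyond the extreme zeros of $FG$ at all. The robust route --- and the mechanism the paper itself uses in the closely related Lemma \ref{sec:main-lemma} --- is partial fractions: for strictly interspersed $F$, $G$ write $G/F$ as a linear polynomial plus $\sum_{k}c_{k}/(z-x_{k})$ over the simple zeros $x_{k}$ of $F$, where $c_{k}=G(x_{k})/F'(x_{k})$; alternation forces all $c_{k}$ (and, when $\deg G=\deg F+1$, the leading coefficient of the linear part) to be nonzero and of one sign, so $(G/F)'$ never vanishes off the $x_{k}$, hence $W=-F^{2}(G/F)'$ is nonzero there, while $W(x_{k})=F'(x_{k})G(x_{k})\neq 0$. (Minor point: $(F/G)'=W/G^{2}$, not $-W/G^{2}$.) Your gcd reduction $W=P^{2}W_{f}$ for (\ref{item:42}) and (\ref{item:43}) is the right idea, but in (\ref{item:43}) you must also justify that interspersion of $F$, $G$ yields strict interspersion of the coprime parts before invoking (\ref{item:41}), and only then do the pole/$\infty$ verifications with the conventions of Section \ref{sec:prel-defin} complete the argument.
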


For $F$, $G\in\pi_{n}(\R)$ we write $F \preceq G$ if $F'G-FG'\leq 0$ on $\R$ (in
particular $F \preceq 0$ and $0 \preceq F$ for all $F\in\pi_{n}(\R)$), and
$F\prec G$ if $F\preceq G$ and $F$ and $G$ have no common zeros. If
$F\in\sigma_{n}(\R)$ we also write $0\prec F$ and $F\prec 0$. By the above
lemma, $F\preceq G$ implies that $F$ and $G$ have interspersed zeros; moreover,
for $F$,$G\nequiv 0$ we have $F\prec G$ if, and only if, $F'G - FG'<0$ on $\R$.

\begin{lemma}
  \label{sec:lemmas-2}
  Suppose $F$, $G\in\R[z]\setminus\{0\}$ satisfy $F\preceq G$ and
  $F\neq_{\R}G$. Then for all $s\in\R$ we have $F+sG \preceq G$ and $F\preceq
  G+sF$. Furthermore, for $s,t\in\R$ we have $F+sG\preceq F+tG$ and $F+sG
  \neq_{\R} F+tG$ if, and only if, $s<t$.

  On the other hand, if $F$, $G\in\R[z]\setminus\{0\}$ and if there are
  $s,t\in\R$ with $s<t$ such that $F+sG\preceq G$, $F\preceq G+sF$, or
  $F+sG\preceq F+tG$, then $F\preceq G$.

  These statements remain true if we replace $\preceq$ by $\prec$ everywhere.  
\end{lemma}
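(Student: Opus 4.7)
The plan is to reduce everything to algebraic identities for the Wronskian $W(F,G):=F'G-FG'$, combined with the single non-algebraic fact that $\pi_n(\R)$ is stable under real linear combinations of pairs with interspersed zeros. First I would record, by direct expansion in which the mixed terms cancel, the three identities
\begin{equation*}
  W(F+sG,G)=W(F,G), \qquad W(F,G+sF)=W(F,G), \qquad W(F+sG,F+tG)=(t-s)\,W(F,G).
\end{equation*}

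Next I would isolate, as an auxiliary fact, that whenever $F,G\in\pi_n(\R)\setminus\{0\}$ have interspersed zeros, every real linear combination $aF+bG$ again lies in $\pi_n(\R)$ and has interspersed zeros with both $F$ and $G$. This is a classical consequence of Lemma~\ref{sec:polyn-with-intersp-1}(\ref{item:43}): the monotonicity of $F/G$ on each arc of $\overline{\R}$ between its poles forces the equation $F/G=-b/a$ to have exactly as many real solutions as are needed to account for all zeros of $aF+bG$. This is the sole non-computational step, and I expect it to be the main obstacle.

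For the forward direction of the first half, assume $F\preceq G$ with $F\neq_\R G$. The auxiliary fact ensures $F+sG$, $G+sF$, and $F+tG$ all lie in $\pi_n(\R)$, and the three Wronskian identities immediately yield $W(F+sG,G)=W(F,G+sF)=W(F,G)\le 0$ and $W(F+sG,F+tG)=(t-s)W(F,G)\le 0$ when $s<t$, whence $F+sG\preceq G$, $F\preceq G+sF$, and $F+sG\preceq F+tG$. For the ``if'' direction of the equivalence, $F+sG\neq_\R F+tG$ whenever $s<t$, since a relation $F+sG=c(F+tG)$ forces either $c=1$ and $s=t$, or $c\neq 1$ and $F=_\R G$, both impossible. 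For the ``only if'' direction, $F\preceq G$ with $F\neq_\R G$ forces $W(F,G)\nequiv 0$ (otherwise $(F/G)'\equiv 0$ and $F=_\R G$), so $(t-s)W(F,G)\le 0$ on $\R$ together with $W(F,G)<0$ somewhere forces $t-s\ge 0$; combined with $s\neq t$ (immediate from $F+sG\neq_\R F+tG$), this yields $s<t$.

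For the converse direction I would read the Wronskian identities from right to left. If $F+sG\preceq G$, then $W(F,G)=W(F+sG,G)\le 0$; writing $F=(F+sG)-sG$ as a real linear combination of the interspersed pair $(F+sG,G)$ puts $F\in\pi_n(\R)$ by the auxiliary fact, giving $F\preceq G$. The case $F\preceq G+sF$ is symmetric, and the case $F+sG\preceq F+tG$ with $s<t$ uses the invertible change of basis $F=(t(F+sG)-s(F+tG))/(t-s)$ and $G=((F+tG)-(F+sG))/(t-s)$ to recover both $F$ and $G$ in $\pi_n(\R)$ from the interspersed pair $(F+sG,F+tG)$. Finally, the statements for $\prec$ carry over verbatim: by Lemma~\ref{sec:polyn-with-intersp-1}(\ref{item:41}), $\prec$ is characterized by strict negativity of $W$ on $\R$, and our identities preserve strict negativity wherever it holds, so no new common zeros can appear.
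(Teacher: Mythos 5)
Your proposal is correct and follows essentially the paper's own route: the three Wronskian identities you record are exactly the quotient-derivative relations the paper's proof cites (note $\bigl(\frac{F+sG}{F+tG}\bigr)'=(t-s)\frac{F'G-FG'}{(F+tG)^{2}}$, etc.), combined with Lemma \ref{sec:polyn-with-intersp-1} to pass between sign conditions on $F'G-FG'$ and the relations $\preceq$, $\prec$. The only extra ingredient is your auxiliary Hermite--Kakeya--Obreschkoff-type fact about the pencil $aF+bG$, which is true and provable as you sketch from Lemma \ref{sec:polyn-with-intersp-1}(\ref{item:43}), but it is not really an obstacle: the real-rootedness (indeed interspersion) of each combination already follows from Lemma \ref{sec:polyn-with-intersp-1}(\ref{item:42}), applied to the pair consisting of the combination and a partner already known to lie in $\pi_{n}(\R)$, since your identities show the relevant Wronskian is semi-definite.
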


\begin{proof}
  The assertions follow readily from the relations
  \begin{equation*}
    \left(\frac{F+sG}{F+tG}\right)' = \left(t-s\right)
    \frac{F'G-FG'}{\left(F+tG\right)^{2}},\quad 
    \left(\frac{F+sG}{G}\right)' = \left(\frac{F}{G}\right)',\quad 
    \left(\frac{G+sF}{F}\right)' = \left(\frac{G}{F}\right)',
  \end{equation*}
  and Lemma \ref{sec:polyn-with-intersp-1}.
\end{proof}

The following characterization of interspersion is the essential ingredient of
our proofs of Theorems \ref{sec:real-zeros-main-thm} and
\ref{sec:ext-rusch-lemma}. It is more or less equal to \cite[Lemma
7]{garwag96}, but, as explained in \cite{wag91}, it seems to have been known for
a long time. To some extent, it can also be found in \cite{pol18}, for
example. For the sake of completeness, we present a proof of it here.

\begin{lemma}
  \label{sec:main-lemma}
  Let $F$ be a polynomial of degree $n\in\N_{0}$ that has only real and simple
  zeros $y_{1},\ldots,y_{n}$.
  \begin{enumerate}
  \item \label{item:44} For every polynomial $G\in\R_{n+1}[z]\setminus\{0\}$
    there are $c_{\infty},c_{0},c_{y_{1}},\ldots,c_{y_{n}}\in\R$ such that
    \begin{equation}
      \label{eq:35}
      G(z) =  c_{\infty}F_{\infty}(z) + c_{0}F(z) + \sum_{k=1}^{n} c_{y_{k}} 
      F_{y_{k}}(z),
    \end{equation}
    where for every $n+1$-zero $y$ of $F$ we have 
    \begin{equation}
      \label{eq:36}
      c_{y} = \frac{1}{\left(\frac{F}{G}\right)'(y)}
    \end{equation}
    if $y$ is not an $n+1$-zero of $G$ and $c_{y}=0$ if $y$ is an $n+1$-zero of
    $G$. In particular, $c_{\infty}=0$ if, and only if, $\deg G\leq n$.
  \item \label{item:45} For a polynomial $G\in\R_{n+1}[z]\setminus\{0\}$ we have
    $F\preceq G$ if, and only if, in the representation (\ref{eq:35}) of $G$,
    $c_{y}\leq 0$ for all $n+1$-zeros $y$ of $F$. $F\prec G$ holds if, and only
    if, $c_{\infty}\leq 0$ and $c_{y}< 0$ for all zeros $y$ of $F$.
  \end{enumerate}
\end{lemma}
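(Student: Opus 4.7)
The plan is to treat the representation in (a) as a partial fraction decomposition in disguise. The polynomials $F_\infty(z) = -zF(z)$, $F(z)$, and $F_{y_1}(z),\ldots,F_{y_n}(z)$ are $n+2$ elements of $\R_{n+1}[z]$, which itself has dimension $n+2$, so it suffices to verify linear independence. This is immediate: evaluating any linear combination at $y_j$ kills every term except $c_{y_j}F_{y_j}(y_j) = c_{y_j}F'(y_j)$, and since $F'(y_j)\neq 0$ (simple zeros) we obtain $c_{y_j}=0$; the remaining relation $(c_0 - c_\infty z)F(z)=0$ then forces $c_0=c_\infty=0$.

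For the coefficient formulas in part (a), the same evaluation at $y_j$ applied to $G$ itself gives $c_{y_j} = G(y_j)/F'(y_j)$, and a direct computation of $(F/G)'(y_j) = F'(y_j)/G(y_j)$ (valid when $G(y_j)\neq 0$, i.e.\ when $y_j$ is not an $n+1$-zero of $G$) yields the stated formula $c_{y_j} = 1/(F/G)'(y_j)$. If $y_j$ is an $n+1$-zero of $G$ then $G(y_j)=0$ forces $c_{y_j}=0$. For the zero at infinity, comparing leading coefficients shows the $z^{n+1}$-coefficient of $G$ equals $-c_\infty$ times the leading coefficient of $F$, which simultaneously proves $c_\infty = 0 \iff \deg G \leq n$ and, via the definition $R'(\infty) = (R\circ\psi)'(0)$ with $\psi(z)=-1/z$, gives $c_\infty = 1/(F/G)'(\infty)$ when $\deg G = n+1$.

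The heart of part (b) is to divide the decomposition through by $F$ to obtain the partial fraction expansion
\begin{equation*}
  \frac{G(z)}{F(z)} \;=\; -c_\infty z + c_0 + \sum_{k=1}^{n} \frac{c_{y_k}}{z-y_k},
\end{equation*}
and then differentiate to get
\begin{equation*}
  \left(\frac{G}{F}\right)'(z) \;=\; -c_\infty - \sum_{k=1}^{n} \frac{c_{y_k}}{(z-y_k)^{2}}.
\end{equation*}
If every $c_{y_k}\leq 0$ and $c_\infty \leq 0$, the right-hand side is $\geq 0$ on $\R\setminus\{y_1,\ldots,y_n\}$, so $F'G-FG' = -F^{2}(G/F)' \leq 0$ there, and by continuity on all of $\R$; this is precisely $F\preceq G$. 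Conversely, $F\preceq G$ forces $(G/F)'\geq 0$ wherever defined, and specialising to $z\to y_k$ (where the sum is dominated by $-c_{y_k}/(z-y_k)^2$) and to $|z|\to\infty$ (where the limit is $-c_\infty$) extracts the sign conditions $c_{y_k}\leq 0$ and $c_\infty\leq 0$. The strict version is handled similarly: strict negativity of each $c_{y_k}$ makes $(G/F)'>0$ on $\R\setminus\{y_k\}$, and at the point $z=y_k$ the evaluation $F'(y_k)G(y_k) = c_{y_k}F'(y_k)^{2} < 0$ gives $F'G-FG'<0$, so Lemma~\ref{sec:polyn-with-intersp-1}(\ref{item:41}) yields $F\prec G$.

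The main obstacle I expect is the careful handling of the point at infinity: correctly interpreting the $n+1$-zero convention so that the counting in the basis argument works out, matching the formal definition $R'(\infty) = (R\circ\psi)'(0)$ to the concrete identity $c_\infty = 1/(F/G)'(\infty)$, and ensuring that the asymmetry in the statement of (b) (where $c_\infty\leq 0$ is only required to be non-strict, while each $c_{y_k}<0$ is strict) is consistent with the definition of $F\prec G$ as involving only finite common zeros. Once the contribution of infinity is bookkept correctly, the rest is essentially the clean translation between partial fractions and the sign of $(G/F)'$.
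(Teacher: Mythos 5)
Your part~(a) is correct and essentially the paper's argument (the paper obtains (\ref{eq:35}) directly from the partial fraction decomposition of $G/F$ and computes $c_{\infty}$ and the $c_{y_{k}}$ as reciprocals of $(F/G)'$ at the corresponding points; your preliminary basis/linear-independence step is harmless). In part~(b) your route is a legitimate, more computational variant: instead of the paper's appeal to Lemma \ref{sec:polyn-with-intersp-1}(\ref{item:43}) together with a ``pole between consecutive zeros'' counting on the circular $\overline{\R}$, you differentiate the partial fraction expansion and read the signs of $c_{\infty}$, $c_{y_{k}}$ off the sign of $(G/F)'$; the necessity directions obtained this way are fine, and in some respects cleaner than the paper's.

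There is, however, a genuine (if small) gap in the sufficiency direction of the non-strict statement. From $c_{y_{k}}\leq 0$ and $c_{\infty}\leq 0$ you derive $F'G-FG'\leq 0$ on $\R$ and declare that ``this is precisely $F\preceq G$''. It is not, under the paper's definitions: $\preceq$ is defined only between polynomials in $\pi_{n+1}(\R)$, so the assertion $F\preceq G$ contains the claim that $G$ has only real zeros, interspersed with those of $F$ --- and that is exactly the payload for which the lemma is used later (in the proof of Lemma \ref{sec:main-results-2}, part (\ref{item:45}) is what guarantees that $L[G]$ is real-rooted and interlaces $L[F]$). Your inequality alone says nothing about possible non-real zeros of $G$; the paper's converse argument is devoted precisely to excluding them, by placing a pole of $F/G$ between any two consecutive zeros on $\overline{\R}$ and counting. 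The repair in your set-up is one line: having shown $F'G-FG'\leq 0$ on $\R$, invoke Lemma \ref{sec:polyn-with-intersp-1}(\ref{item:42}), which is stated for arbitrary $G\in\R_{n+1}[z]\setminus\{0\}$ and yields that $F$ and $G$ have interspersed zeros, hence $G\in\pi_{n+1}(\R)$ and $F\preceq G$ by definition (in the strict case your citation of Lemma \ref{sec:polyn-with-intersp-1}(\ref{item:41}) already covers this point). Finally, you prove only one direction of the strict characterization; add the one-line converse: $F\prec G$ implies $F\preceq G$, hence $c_{\infty}\leq 0$ and $c_{y_{k}}\leq 0$, and the absence of common zeros gives $G(y_{k})\neq 0$, so $c_{y_{k}}=G(y_{k})/F'(y_{k})<0$ by (\ref{eq:36}).
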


\begin{proof}
  Partial fraction decomposition of $G/F$ shows that there are
  $c_{\infty},c_{0},c_{y_{k}}\in\R$, $k\in\{1,\ldots,n\}$, with $c_{\infty}\neq
  0$ if, and only if, $\deg G =n+1$ and $c_{y_{k}}\neq 0$ if, and only if,
  $G(y_{k})\neq 0$ such that
  \begin{equation*}
    \frac{G(z)}{F(z)} = -c_{\infty} z + c_{0} + 
    \sum_{k=1}^{n} \frac{c_{y_{k}}}{z-y_{k}}.
  \end{equation*}
  If $\deg G = n+1$ we have $R(0)=0$ and $R'(0)\neq 0$ for $R(z):=
  F(-1/z)/G(-1/z)$ and therefore
  \begin{equation*}
    c_{\infty} = \lim_{z\rightarrow\infty}\frac{1}{-\frac{zF(z)}{G(z)}} = 
    \lim_{z\rightarrow 0}\frac{z}{R(z)-R(0)} = \frac{1}{R'(0)} =  
    \frac{1}{\left(\frac{F}{G}\right)'(\infty)}.
  \end{equation*}
  Similarly it follows that 
  \begin{equation*}
    c_{y_{k}} =  \lim_{z\rightarrow y_{k}}
    \frac{G(z)(z-y_{k})}{F(z)} = \frac{1}{\frac{F'(y_{k})}{G(y_{k})}}
    = \frac{1}{\left(\frac{F}{G}\right)'(y_{k})}
  \end{equation*}
  for every zero $y_{k}$ of $F$ with $G(y_{k})\neq 0$. This proves
  (\ref{item:44}).

  If $F\preceq G$, then by Lemma \ref{sec:polyn-with-intersp-1}(\ref{item:43})
  and the definition of $\preceq$ we have $(F/G)'(z)<0$ for all
  $z\in\overline{\R}$. This implies $c_{y}<0$ for every $n+1$-zero of $F$ that
  is not an $n+1$-zero of $G$ by (\ref{eq:36}). 
  
  On the other hand, if $c_{y}<0$ for every $n+1$-zero of $F$ that is not an
  $n+1$-zero of $G$, then (\ref{eq:36}) shows that $(F/G)'(y)<0$ for every zero
  $y$ of $F/G$. Consequently, $F/G$ has to have a pole between every pair of
  consecutive zeros (recall that we consider $\overline{\R}$ to be
  circular). This shows that $F$ and $G$ have interspersed zeros and, since
  $(F/G)'(y)<0$ at the zeros $y$ of $F/G$, we have $F\preceq G$ by Lemma
  \ref{sec:polyn-with-intersp-1}(\ref{item:43}).
\end{proof}

\begin{corollary}
  \label{sec:polyn-with-intersp-3}
  Let $F\in\sigma_{n}(\R)\setminus\{0\}$ and suppose
  $L:\R_{n}[z]\rightarrow\R[z]$ is a real linear operator. Denote the set of
  $n$-zeros of $F$ by $\mathcal{Z}$.
  \begin{enumerate}
  \item \label{item:46} Suppose $x\in\overline{\R}$, $m\in\N$, and $k\in\N_{0}$,
    are such that 
    \begin{equation*}
      \operatorname{ord}_{m} (L[F];x) \geq k,\quad\mbox{and} \quad
      \operatorname{ord}_{m}(L[F_{y}];x) 
      \geq k \quad \mbox{for all}\quad y\in\mathcal{Z}.
    \end{equation*}
    Then $\operatorname{ord}_{m} (L[G];x)\geq k$ for all
    $G\in\R_{n}[z]\setminus\{0\}$.
  \item \label{item:47} If
    \begin{equation}
      \label{eq:37}
      L\left[F_{y}\right]\preceq  L\left[F\right].
    \end{equation}
    for all $y\in\mathcal{Z}$, then, setting $m:= 1+\deg L[F]$, we have
    $\operatorname{ord}_{m} (L[G];x)\geq -1 + \operatorname{ord}_{m} (L[F];x)$
    for all $G\in\R_{n}[z]\setminus\{0\}$ and all $m$-zeros $x$ of $L[F]$.
  \end{enumerate}
\end{corollary}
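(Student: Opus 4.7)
The plan for part (a) is to apply Lemma \ref{sec:main-lemma}(\ref{item:44}) to $F$, viewed as a polynomial of degree $d := \deg F \in \{n-1, n\}$, in order to represent any $G \in \R_{n}[z]\setminus\{0\}$ as an $\R$-linear combination of $F$ and the pieces $\{F_{y} : y \in \mathcal{Z}\}$, and then push the representation through $L$ termwise. The content is in matching the lemma's expansion with $\mathcal{Z}$: when $d = n$, the lemma gives $c_{\infty} = 0$ automatically because $\deg G \leq n = d$, and $\mathcal{Z}$ consists of the $n$ finite simple zeros of $F$; when $d = n-1$, the point $\infty$ is an $n$-zero of $F$ of order $1$ and is precisely what the surviving $c_{\infty} F_{\infty}$ summand accounts for. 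In either case $G$ lies in the $\R$-linear span of $\{F\} \cup \{F_{y} : y \in \mathcal{Z}\}$, so applying $L$ yields $L[G] = c_{0}L[F] + \sum_{y \in \mathcal{Z}} c_{y} L[F_{y}]$. With this expansion, (a) reduces to the standard fact that the order of vanishing at a fixed finite point is preserved from below under $\R$-linear combinations; the case $x = \infty$ is handled identically after noting that the reversal $P \mapsto P^{*m}$ is $\R$-linear on $\R_{m}[z]$, converting an $m$-zero of order $\geq k$ at $\infty$ into an ordinary zero of order $\geq k$ at $0$.

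For part (b), the plan is to reduce to part (a) by trading one unit of vanishing order for the $\preceq$ hypothesis. Set $k := \operatorname{ord}_{m}(L[F]; x)$ with $m := 1 + \deg L[F]$, so that $L[F] \in \R_{m}[z]$. The assumption $L[F_{y}] \preceq L[F]$ and Lemma \ref{sec:polyn-with-intersp-1} imply that $L[F]$ and $L[F_{y}]$ have interspersed zeros; Lemma \ref{sec:polyn-with-intersp-2} then forces $\deg L[F_{y}] \leq m$ and $\operatorname{ord}_{m}(L[F_{y}]; x) \in \{k-1, k, k+1\}$ for every $y \in \mathcal{Z}$ (with the trivial convention that the zero polynomial has infinite order). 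In particular $L[F]$ and all the $L[F_{y}]$ vanish to $m$-order at least $k - 1$ at $x$, and part (a) applied with $k - 1$ in place of $k$ yields $\operatorname{ord}_{m}(L[G]; x) \geq k - 1 = -1 + \operatorname{ord}_{m}(L[F]; x)$ for every $G \in \R_{n}[z] \setminus \{0\}$, which is the desired bound.

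The hardest step, modest as it is, is the bookkeeping in part (a) that distinguishes the two subcases $\deg F = n$ and $\deg F = n - 1$ and correctly identifies the $c_{\infty} F_{\infty}$ summand of Lemma \ref{sec:main-lemma} with the point $\infty \in \mathcal{Z}$ when the latter arises; once this correspondence is pinned down, everything else is a short chain of invocations of the already-established lemmas together with the linearity of $L$ and of the reversal operation.
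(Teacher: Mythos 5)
Your proof is correct and follows essentially the same route as the paper: part (a) via the partial-fraction decomposition of Lemma \ref{sec:main-lemma}(\ref{item:44}) (with the $c_{\infty}F_{\infty}$ term matched to the $n$-zero at $\infty$ when $\deg F=n-1$, and $c_{\infty}=0$ when $\deg F=n$) pushed through $L$ by linearity, and part (b) reduced to part (a) with $k-1$ via Lemma \ref{sec:polyn-with-intersp-2}. The only cosmetic difference is that at $x=\infty$ the paper argues directly with degree bounds ($\deg L[F],\deg L[F_y]\leq m-k$ forces $\deg L[G]\leq m-k$), whereas you pass to the $\R$-linear reversal $P\mapsto P^{*m}$; the two are equivalent.
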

\begin{proof}
  Let $x$, $m$, and $k$ be as described in (\ref{item:46}). Suppose first that
  $x\in\R$. Then our assumptions imply that, for all
  $G\in\R_{n}[z]\setminus\{0\}$, $x$ is a zero of order $k$ of $L[G]$, since by
  Lemma \ref{sec:main-lemma}(\ref{item:44})
  \begin{equation}
    \label{eq:38}
    L[G] =  c_{\infty}L[F_{\infty}]  + c_{0}L[F] + \sum_{k=1}^{n} c_{y_{k}} 
    L[F_{y_{k}}].
  \end{equation}
  If $x=\infty$, then we have to show that $L[G]$ is of degree $\leq m-k$. Since
  our assumptions in this case imply $\deg L[F]$, $\deg L[F_{y}]\leq m-k$ for
  all $n$-zeros $y$ of $F$, the assertion follows again from (\ref{eq:38}). This
  proves (\ref{item:46}).

  Because of Lemma \ref{sec:polyn-with-intersp-2}, (\ref{item:47}) follows from
  (\ref{item:46}).
\end{proof}

\begin{lemma}
  \label{sec:working}
  Let $L:\R_{n}[z]\rightarrow\R[z]$ be a real linear operator and suppose $F$,
  $G\in\R_{n}[z]\setminus\{0\}$ satisfy $F\preceq G$. Suppose
  $x^{*}\in\overline{\R}$, for some $m\in\N$, is a simple $m$-zero of $L[F]$. If
  there is at least one zero $y\in\overline{\R}$ of $F/G$ with
  $(L[F]/L[F_{y}])(x^{*})=0$ and if, for all such zeros $y$ of $F/G$,
  \begin{equation}
    \label{eq:39}
    \left(\frac{L[F]}{L[F_{y}]}\right)'(x^{*})> 0,
  \end{equation}
  then $(L[F]/L[G])(x^{*})=0$ and
  \begin{equation}
    \label{eq:44}
     \left(\frac{L[F]}{L[G]}\right)'(x^{*})<0. 
  \end{equation}
\end{lemma}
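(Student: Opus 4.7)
The strategy is to decompose $G$ via partial fractions relative to $F$, apply $L$, and perform a sign analysis at $x^{*}$. Since $F \preceq G$, Lemma~\ref{sec:polyn-with-intersp-2} forces every zero of $F$ of multiplicity $m$ to be a zero of $G$ of multiplicity at least $m-1$, so every pole of the rational function $G/F$ on $\overline{\R}$ is simple. The natural extension of Lemma~\ref{sec:main-lemma}(\ref{item:44}) (whose proof goes through once simplicity of the poles is established) then gives
\begin{equation*}
    G = c_{\infty} F_{\infty} + c_{0} F + \sum_{y} c_{y} F_{y},
\end{equation*}
with $y$ running over the distinct finite zeros of $F$. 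A coefficient $c_{y}$ (or $c_{\infty}$) is nonzero precisely when $y \in \overline{\R}$ is a zero of $F/G$, and in that case $c_{y} = 1/(F/G)'(y)$; Lemma~\ref{sec:polyn-with-intersp-1}(\ref{item:43}) combined with $F \preceq G$ forces $(F/G)'(y) < 0$ at every such $y$, so $c_{y} < 0$ there. (The degenerate case $F =_{\R} G$ is excluded, since then $F/G$ has no zeros and the existence clause of the hypothesis fails.)

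Applying $L$ and evaluating at $x^{*}$, where $L[F](x^{*}) = 0$, yields
\begin{equation*}
    L[G](x^{*}) = \sum_{y:\, (F/G)(y) = 0} c_{y}\, L[F_{y}](x^{*}),
\end{equation*}
with $y = \infty$ contributing $c_{\infty} L[F_{\infty}](x^{*})$. Split these indices into those with $L[F_{y}](x^{*}) = 0$, which contribute nothing, and those with $L[F_{y}](x^{*}) \neq 0$. Since $x^{*}$ is a simple zero of $L[F]$, the second group consists exactly of the zeros $y$ of $F/G$ with $(L[F]/L[F_{y}])(x^{*}) = 0$, and for them the hypothesis yields $(L[F]/L[F_{y}])'(x^{*}) > 0$. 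By the quotient rule and simplicity of the zero, this derivative equals $L[F]'(x^{*})/L[F_{y}](x^{*})$, so $L[F_{y}](x^{*})$ has the same sign as $L[F]'(x^{*})$. Combined with $c_{y} < 0$, each nonzero summand $c_{y}\, L[F_{y}](x^{*})$ therefore has sign opposite to $L[F]'(x^{*})$.

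The existence hypothesis supplies at least one such nonzero summand, so $L[G](x^{*})$ is nonzero with sign opposite to $L[F]'(x^{*})$. This gives $(L[F]/L[G])(x^{*}) = 0$ and, again by the quotient rule applied at a simple zero of the numerator,
\begin{equation*}
    \left(\frac{L[F]}{L[G]}\right)'(x^{*}) = \frac{L[F]'(x^{*})}{L[G](x^{*})} < 0,
\end{equation*}
as required. The main technical obstacle is the case $x^{*} = \infty$: simplicity, the quotient-rule identities, and the sign comparisons must all be reinterpreted through the conventions of Section~\ref{sec:prel-defin} (derivatives at $\infty$ via $\psi(z) = -1/z$, and ``simple $m$-zero'' corresponding to $\deg L[F] = m-1$). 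This is cleanest to handle by running the above sign analysis after the substitution $z \mapsto -1/z$, which converts the $m$-zero at $\infty$ into an ordinary simple zero at $0$ and leaves the structure of the argument intact.
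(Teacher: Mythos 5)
Your proof is correct and follows essentially the same route as the paper: expand $G$ in the basis $F_\infty, F, F_y$ with coefficients $c_y=1/(F/G)'(y)<0$ at the zeros of $F/G$ (Lemma \ref{sec:main-lemma}), apply $L$, evaluate at the simple zero $x^*$, and read off the sign of $L[G](x^*)/L[F]'(x^*)$ from (\ref{eq:39}), reducing $x^*=\infty$ to a finite zero by inversion just as the paper does with $H\mapsto (L[H])^{*m}$. Your explicit justification that the decomposition still applies when $F$ has multiple zeros (simplicity of the poles of $G/F$ via Lemma \ref{sec:polyn-with-intersp-2}) is a welcome extra detail that the paper's proof leaves implicit.
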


\begin{proof}
  By considering, in the case $x^{*}=\infty$, the linear operator $H\mapsto
  (L[H])^{*m}$, $H\in \R_{n}[z]$, instead of $L$, we can assume that
  $x^{*}\in\R$.

  Let $\mathcal{Z}$ denote the set of zeros $y$ of $F/G$ in $\overline{\R}$ with
  $L[F_{y}](x^{*})\neq 0$. Suppose that $\mathcal{Z}$ is not empty and that
  (\ref{eq:39}) holds for all $y\in\mathcal{Z}$. Then, by Lemma
  \ref{sec:main-lemma}(\ref{item:45}), for every $y\in \mathcal{Z}$ there is a
  $c_{y} < 0$ such that
  \begin{equation*}
    \frac{L[G](x^{*})}{L[F]'(x^{*})} =  \sum_{y\in \mathcal{Z}}
    c_{y} \frac{L[F_{y}](x^{*})}{L[F]'(x^{*})}  
    = \sum_{y\in \mathcal{Z}} c_{y} \frac{1}{\frac{L[F]'(x^{*})}{L[F_{y}](x^{*})}}
    =  \sum_{y\in \mathcal{Z}} c_{y}
    \frac{1}{\left(\frac{L[F]}{L[F_{y}]}\right)'(x^{*})}.
  \end{equation*}
  Because of (\ref{eq:39}) this means that 
  \begin{equation}
    \label{eq:4}
    \frac{L[G](x^{*})}{L[F]'(x^{*})} <0.
  \end{equation}
  This implies $L[G](x^{*})\neq 0$ and hence that $(L[F]/L[G])(x^{*})=0$. Since,
  moreover,
  \begin{equation*}
    \left(\frac{L[F]}{L[G]}\right)'(x^{*}) =
    \frac{L[F]'(x^{*})}{L[G](x^{*})}= \frac{1}{\frac{L[G](x^{*})}{L[F]'(x^{*})}}, 
  \end{equation*}
  (\ref{eq:4}) also implies (\ref{eq:44}).
\end{proof}

The following consequence of Lemma \ref{sec:main-lemma} can be seen as the real
polynomial analogue of Ruscheweyh's convolution lemma (i.e. of Lemma
\ref{sec:rusch-lemma}). It will play a crucial role in our proof of Theorem
\ref{sec:real-zeros-main-thm}. 

\begin{lemma}
  \label{sec:main-results-2}
  Let $L:\R_{n}[z]\rightarrow\R[z]$ be a real linear operator and suppose $F$,
  $G\in\R_{n}[z]\setminus\{0\}$ are such that $F/G=P/Q$ with $P$,
  $Q\in\sigma_{n}(\R)$ that satisfy $P\prec Q$. Denote the set of zeros of $F/G$ in
  $\overline{\R}$ by $\mathcal{Z}$. If for every $y\in\mathcal{Z}$ we have
  \begin{equation}
    \label{eq:40}
    L\left[F_{y}\right]\preceq  L\left[F\right],
  \end{equation}
  then $L[F]\preceq L[G]$. If there is one $y\in\mathcal{Z}$ for which
  (\ref{eq:40}) holds with $\preceq$ replaced by $\prec$, then
  $L[F]\prec L[G]$.
\end{lemma}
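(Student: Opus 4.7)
The plan is to use Lemma \ref{sec:main-lemma} to expand $Q$ in a basis built from $P$ and its translates $P_{y}$, transport the expansion to $G$ via the hidden common factor of $F$ and $G$, apply $L$, and then read off the sign of $L[F]'L[G]-L[F]L[G]'$ coefficient by coefficient.

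In detail: $P\prec Q$ implies $\gcd(P,Q)=1$, so $F/G=P/Q$ forces $F=PH$ and $G=QH$ for some $H\in\R[z]$. Since $P\in\sigma_{\deg P}(\R)$ has simple real zeros and $|\deg Q-\deg P|\leq 1$ (Lemma \ref{sec:polyn-with-intersp-2}), Lemma \ref{sec:main-lemma}(\ref{item:44}) gives
\[
Q \;=\; c_{\infty}P_{\infty} + c_{0}P + \sum_{y} c_{y}P_{y},
\]
where $y$ ranges over the zeros of $P$. Combining $P\prec Q$ with Lemma \ref{sec:main-lemma}(\ref{item:45}) yields $c_{y}<0$ for every finite zero $y$ of $P$ and $c_{\infty}\leq 0$; crucially, $c_{\infty}\neq 0$ exactly when $\deg Q=\deg P+1$, which is precisely the condition $\infty\in\mathcal{Z}$. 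Multiplying through by $H$ and using $P_{\infty}H = F_{\infty}$ together with $P_{y}H = F_{y}$ (the latter a polynomial, since each finite zero of $P$ is a zero of $F$), I obtain
\[
G \;=\; c_{\infty}F_{\infty} + c_{0}F + \sum_{y} c_{y}F_{y}.
\]

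Applying $L$ and forming the Wronskian-type expression, the $c_{0}L[F]$-term contributes zero, so
\[
L[F]'L[G]-L[F]L[G]' \;=\; \sum_{y\in\mathcal{Z}\cap\R}c_{y}\bigl(L[F]'L[F_{y}]-L[F]L[F_{y}]'\bigr) \;+\; c_{\infty}\bigl(L[F]'L[F_{\infty}]-L[F]L[F_{\infty}]'\bigr).
\]
The hypothesis $L[F_{y}]\preceq L[F]$ is equivalent to $L[F]'L[F_{y}]-L[F]L[F_{y}]'\geq 0$ on $\R$, so with $c_{y}<0$ every finite-$y$ term is $\leq 0$; the $\infty$-term is likewise $\leq 0$, since either $\infty\in\mathcal{Z}$ (and then both the hypothesis and $c_{\infty}\leq 0$ are at our disposal) or $c_{\infty}=0$. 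This yields $L[F]'L[G]-L[F]L[G]'\leq 0$ on $\R$, and since at least one non-trivial hypothesis forces $L[F]$ to have only real zeros, Lemma \ref{sec:polyn-with-intersp-1}(\ref{item:42}) concludes $L[F]\preceq L[G]$. For the strict version, if some $y^{*}\in\mathcal{Z}$ satisfies $L[F_{y^{*}}]\prec L[F]$, the corresponding Wronskian is strictly positive on all of $\R$ and $c_{y^{*}}<0$ (automatic for $y^{*}=\infty$ by the equivalence above), so that single term is $<0$ everywhere on $\R$, producing $L[F]\prec L[G]$.

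The only real subtlety is this ``$\infty$-dichotomy''--the expansion of Lemma \ref{sec:main-lemma} treats $\infty$ slightly asymmetrically, and one has to verify that the $\infty$-hypothesis $L[F_{\infty}]\preceq L[F]$ and the strict negativity $c_{\infty}<0$ become available for exactly the same configurations of $\deg P$ versus $\deg Q$. Everything else is a routine invocation of Lemma \ref{sec:main-lemma} and the Wronskian characterization of $\preceq$.
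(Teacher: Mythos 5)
Your reduction to $F=PH$, $G=QH$, the expansion $Q=c_{\infty}P_{\infty}+c_{0}P+\sum_{y}c_{y}P_{y}$ with $c_{y}<0$ and $c_{\infty}\leq 0$ coming from $P\prec Q$ via Lemma \ref{sec:main-lemma}, the multiplication by $H$, and the bookkeeping that $c_{\infty}\neq 0$ exactly when $\infty\in\mathcal{Z}$ are all correct, and summing the Wronskian inequalities does give $L[F]'L[G]-L[F]L[G]'\leq 0$ on $\R$ (and $<0$ in the strict case). Up to that point your route is a legitimate shortcut: it replaces the paper's pointwise use of Lemma \ref{sec:working} at the zeros of $L[F]$ by one global identity.

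The gap is the last step. By the paper's definitions, $L[F]\preceq L[G]$ requires in particular that $L[G]$ have only real zeros, and your sole justification for this is Lemma \ref{sec:polyn-with-intersp-1}(\ref{item:42}). In the one-sided form you need (only $L[F]$ known to be real-rooted, $L[G]$ arbitrary), that implication is simply false: for $p=z^{3}$ and $g=z^{4}-1$ one has $p'g-pg'=-z^{2}(z^{4}+3)\leq 0$ on all of $\R$, yet $g$ has the non-real zeros $\pm i$; the statement behind Lemma \ref{sec:polyn-with-intersp-1} (Fisk) presupposes both polynomials real-rooted, so invoking it here is circular. The failure mode is exactly a multiple zero of $L[F]$, where the Wronskian vanishes automatically and carries no information about $L[G]$, and handling this is what most of the paper's proof is about: after absorbing the common factor into $L$, it divides out the greatest common divisor $D$ of the image of $L$ (via Corollary \ref{sec:polyn-with-intersp-3}, which shows that at a zero of $L[F]$ of order $k$ every $L[H]$ vanishes to order at least $k-1$), so that $L[F]$ may be assumed to have simple zeros with some $L[F_{y}]$ non-vanishing at each of them, and only then concludes through Lemma \ref{sec:working} and the ``if'' direction of Lemma \ref{sec:main-lemma}(\ref{item:45}), which is the device that actually yields the interspersion, hence real-rootedness, of $L[G]$ together with the sign. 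To complete your argument you would have to add precisely this reduction, or prove separately that a non-positive combination of polynomials each $\preceq L[F]$ is again real-rooted. (In the strict case your inequality $<0$ on $\R$ does force the zeros of $L[F]$ to be simple, and real-rootedness of $L[G]$ can then be recovered by a monotonicity and degree count since $\deg L[G]\leq \deg L[F]+1$; but in the non-strict case, which is the one needed for the theorem's main applications, the step as written does not go through.)
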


\begin{proof}
  If $F$ and $G$ have a non-constant greatest common divisor $C$, then we
  consider the linear operator $H \mapsto L[HC]$, $H\in\R_{n-\deg C}[z]$,
  instead of $L$. We can therefore assume that $F=P$ and $G=Q$, i.e. that $F$
  and $G$ belong to $\sigma_{n}(\R)$ and satisfy $F\prec G$. $\mathcal{Z}$ is
  then equal to the set of $n$-zeros of $F$.

  Let $D$ denote the greatest common divisor of all polynomials $L[H]$,
  $H\in\R_{n}[z]$. If there is an $x\in\R$ that is a zero of order $\geq k\in\N$
  of $L[F]$ and all $L[F_{y}]$, $y\in\mathcal{Z}$, then, by Corollary
  \ref{sec:polyn-with-intersp-3}(\ref{item:46}), the polynomial $(z-x)^{k}$ is a
  factor of $D$. Moreover, because (\ref{eq:40}) holds for all
  $y\in\mathcal{Z}$, Corollary \ref{sec:polyn-with-intersp-3}(\ref{item:47})
  shows that if $x\in\R$ is a zero of order $k\in\N$ of $L[F]$, then
  $(z-x)^{k-1}$ divides $D$. Hence, by considering the linear operator $H
  \mapsto L[H]/D$, $H\in\R_{n}[z]$ instead of $L$, we can assume that $L[F]$ has
  only simple zeros and that for every zero $x$ of $L[F]$ there is at least one
  $y\in\mathcal{Z}$ such that $(L[F]/L[F_{y}])(x)=0$. Corollary
  \ref{sec:polyn-with-intersp-3}(\ref{item:47}) and (\ref{eq:40}) also show that
  we can assume that $L:\R_{n}[z]\rightarrow \R_{m}[z]$ with $m=\deg L[F]$ or
  $m=1+\deg L[F]$, where, in the latter case, there is at least one
  $y\in\mathcal{Z}$ with $\deg L[F_{y}] = 1 + \deg L[F]$.

  Under these assumptions, Lemma \ref{sec:polyn-with-intersp-1}, (\ref{eq:40}),
  and the definition of $\preceq$, yield that for every $x$ in the set
  $\mathcal{X}$ of $m$-zeros of $L[F]$ there is at least one $y\in \mathcal{Z}$
  with $\left(L[F]/L[F_{y}]\right)'(x)\neq 0$, and that
  \begin{equation*}
    \left(\frac{L[F]}{L[F_{y}]}\right)'(x)>0
  \end{equation*}
  for all such $y$. Because of Lemma \ref{sec:working} this implies
  \begin{equation}
    \label{eq:11}
    \left(\frac{L[F]}{L[G]}\right)'(x)<0  \quad \mbox{for all} \quad 
    x\in\mathcal{X}.
  \end{equation}
  Hence, if 
  \begin{equation*}
    L[G](z) =   c_{0}L[F](z) + \sum_{x\in\mathcal{X}} c_{x}  L[F]_{x}(z)
  \end{equation*}
  is the representation of $L[G]$ in terms of the polynomials $L[F]$ and
  $L[F]_{x}$, $x\in\mathcal{X}$ (given by Lemma
  \ref{sec:main-lemma}(\ref{item:44})), then (\ref{eq:36}) and (\ref{eq:11})
  show that $c_{x}< 0$ for all $x\in\mathcal{X}$. By Lemma
  \ref{sec:main-lemma}(\ref{item:45}) this implies $L[F] \prec L[G]$, as
  required.

  What we have shown now also proves that we have $L[F]\prec L[G]$ if
  (\ref{eq:40}) holds with $\preceq$ replaced by $\prec$ for one
  $y_{0}\in\mathcal{Z}$. For, in such a case all zeros of $L[F]$ are simple and
  $L[F_{y_{0}}](x)\neq 0$ for every zero $x$ of $L[F]$ and thus the greatest
  common divisor $D$ considered above must be a constant.
\end{proof}

\section{Polynomials with Log-Interspersed Zeros}
\label{sec:polynomials-with-log}

It is obvious that for $q\in(0,1)$ a polynomial $F\in\pi_{n}(\R_{0}^{-})$
belongs to $\overline{\mathcal{N}}_{n}(q)$ if, and only if, $F(z)$ and
$F(q^{-1}z)$ have interspersed zeros. It is also clear, however, that for no
$F\in\overline{\mathcal{R}}_{n}(q)$ that has both positive and negative zeros
the polynomials $F(z)$ and $F(q^{-1}z)$ have interspersed zeros. We therefore
need to extend the notion of interspersion in order to characterize all
polynomials $F\in\mathcal{R}_{n}(q)$ in terms of the zero locations of $F(z)$
and $F(q^{-1}z)$.

For $F$, $G\in\pi_{n}(\R)$ we write $F \veebar G$ if $G \preceq zF$ (in
particular $0\veebar F$ and $F \veebar 0$ for all $F\in\pi_{n}(\R)$). Moreover,
we write $F \vee G$ if $F\veebar G$ and $F$ and $G$ have no common zeros expect
possibly a common zero at the origin. Hence, if $F \prec_{0} G$ is supposed to
mean that $F \preceq G$ and that $F$ and $G$ have no common zeros except
possibly a common zero at the origin, then we have $F\vee G$ if, and only if, $G
\prec_{0} zF$. We shall also use the conventions $0 \vee F$, $F\vee 0$, $0
\prec_{0} F$, and $F\prec_{0} 0$, for all polynomials $F\in\pi_{n}(\R)$ which
have a multiple zero at most at the origin.

It is easy
to see that $F \veebar G$ implies $\deg F \leq \deg G \leq 2 +\deg F$.  We say
that $G$ \emph{log-intersperses} $F$ if $F \veebar G$ or $F\veebar -G$. $G$
\emph{strictly log-intersperses} $F$ if $G$ log-intersperses $F$ but $F$ and $G$
have no common zeros except possibly a common zero at the origin.

The next lemma gives a characterization of the relation $F\veebar G$ in terms of
the zeros of $F$ and $G$.

\begin{lemma}
  \label{sec:polynomials-with-log-1}
  Let $F$ and $G$ be two polynomials with only real zeros for which $F/G$ is a
  rational function of degree $n\in\N$. Denote by 
  \begin{equation*}
    -\infty\leq x_{1} \leq x_{2} \leq \cdots \leq x_{n-1} \leq x_{n}\leq +\infty 
    \quad \mbox{and} \quad
    -\infty\leq y_{1} \leq y_{2} \leq \cdots \leq y_{n-1} \leq y_{n}\leq +\infty,
  \end{equation*}
  respectively, the zeros and poles of $F/G$ in $\overline{\R}$ (counted
  according to multiplicity). Then $F \veebar G$ if, and only if, there is a
  $k\in\{0,\ldots,n\}$ such that
  \begin{equation}
    \label{eq:9}
    -\infty\leq x_{1}<y_{1} < x_{2} < y_{2} < x_{3} < \cdots < y_{k-1} <  x_{k} <
    y_{k}\leq 0,
  \end{equation}
  \begin{equation}
    \label{eq:10}
    0\leq y_{k+1} < x_{k+1} < y_{k+1} < x_{k+2} < \cdots  < y_{n-1} < x_{n-1} <
    y_{n}<x_{n}\leq +\infty, 
  \end{equation}
  and, in the case $y_{k}=y_{k+1}=0$,
  \begin{equation}
    \label{eq:27}
    \lim_{z \rightarrow 0,z\in\R} \frac{F(z)}{G(z)} = -\infty,
  \end{equation}
  or, in the case $y_{k}<y_{k+1}$,
  \begin{equation}
    \label{eq:29}
    \frac{F(z)}{G(z)}>0 \quad \mbox{for at least one} \quad z\in(y_{k},y_{k+1}).
  \end{equation}
  In fact, if $F\veebar G$, then $(F/G)(z)>0$ for all $z\in(y_{k},y_{k+1})$.
\end{lemma}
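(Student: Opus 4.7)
The plan is to translate the defining condition $F \veebar G$, i.e.\ $G \preceq zF$, into a monotonicity statement for the rational function $T(z) := z F(z)/G(z)$ on the circle $\overline{\R}$, and then read off the interlacing pattern directly. By Lemma~\ref{sec:polyn-with-intersp-1}, the relation $G \preceq zF$ is equivalent to $G$ and $zF$ having interspersed zeros together with $T'(z) \geq 0$ on all of $\overline{\R}$ (derivatives at poles and at $\infty$ interpreted as in Section~\ref{sec:prel-defin}), equality allowed only at common zeros of $G$ and $zF$. Because $F/G$ has degree $n$, the polynomials $F$ and $G$ are coprime, so any common zero of $G$ and $zF$ must lie at the origin.

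The zero/pole structure of $T$ on $\overline{\R}$ is related to that of $R := F/G$ as follows: apart from the origin, the poles of $T$ are exactly $y_1,\dots,y_n$ and the zeros of $T$ are $x_1,\dots,x_n$ together with an additional zero at $0$. The interaction at $0$ is resolved case by case: the factor $z$ either contributes a simple zero of $T$ (if $0$ is a regular point of $R$), increases an existing zero's order (if $R$ has a zero at $0$), or reduces a pole's order by one (if $R$ has a pole at $0$). A one-order shift in the same spirit occurs at $\infty$.

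For the forward direction, let $k$ be the index such that $y_k \leq 0 \leq y_{k+1}$ (with the conventions $y_0 = -\infty$ and $y_{n+1} = +\infty$). The condition $T' \geq 0$ forces $T$ to be strictly increasing on each arc between consecutive poles of $T$, so that $T$ has exactly one zero per arc. On the arc $(y_k, y_{k+1})$ containing $0$, and in the case where $R$ is regular at $0$, this unique zero is the origin itself, so no $x_i$ falls in $(y_k, y_{k+1})$. The remaining zeros $x_1, \dots, x_n$ of $R$ distribute one per remaining arc, and strict monotonicity on each arc forces the alternation $x_1 < y_1 < \cdots < x_k < y_k$ on the negative side and $y_{k+1} < x_{k+1} < \cdots < y_n < x_n$ on the positive side. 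The sign condition (\ref{eq:29}) then follows from $T$ increasing from $-\infty$ through $0$ to $+\infty$ on $(y_k, y_{k+1})$; the degenerate case $y_k = y_{k+1} = 0$ (a double pole of $R$ at the origin) produces a simple pole of $T$ at $0$, and monotonicity through this pole forces $T \to +\infty$ from the left and $T \to -\infty$ from the right, which translates to (\ref{eq:27}).

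The converse is a matter of reading the argument backwards: given the pattern (\ref{eq:9})--(\ref{eq:10}) and the correct sign condition at $0$, one builds $T$ arc by arc, checks that $T$ is increasing from $-\infty$ to $+\infty$ on each arc between consecutive poles, and then applies Lemma~\ref{sec:polyn-with-intersp-1} to deduce $G \preceq zF$. The main obstacle is the careful bookkeeping of the special behavior at $0$ and $\infty$: multiplication by $z$ shifts the orders of zeros and poles at these two points, and the sign conditions (\ref{eq:27}) and (\ref{eq:29}) must be verified to correspond precisely to the required monotonicity of $T$ across the arc containing the origin.
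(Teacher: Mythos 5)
Your overall strategy is exactly the paper's: translate $F\veebar G$ into ``$zF$ and $G$ have interspersed zeros and $zF/G$ is increasing'' and then read the pattern (\ref{eq:9})--(\ref{eq:10}) and the sign conditions off the behaviour of $zF/G$ near $0$ and $\infty$. However, as written there is a genuine gap, and it sits precisely where the paper's proof does all its work. In the direction ``pattern plus (\ref{eq:27}) or (\ref{eq:29}) implies $F\veebar G$'' you say one ``builds $T=zF/G$ arc by arc and checks that $T$ is increasing from $-\infty$ to $+\infty$ on each arc''. That is not something you can check from the zero/pole pattern alone: the pattern only gives interspersion, whence Lemma \ref{sec:polyn-with-intersp-1}(\ref{item:43}) tells you $(zF/G)'$ has one constant sign on $\overline{\R}$, and you must then use the (very weak) hypotheses (\ref{eq:27}) or (\ref{eq:29}) -- positivity of $F/G$ at a \emph{single} point of $(y_{k},y_{k+1})$, or a one-sided limit at $0$ -- to show that $zF/G$ increases somewhere. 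Converting that one-point information into an increase of $zF/G$ requires the case analysis you defer: it is easy when $0<y_{k+1}<+\infty$ (then $F/G\to+\infty$ as $z\to y_{k+1}^{-}$), but when the arc containing $0$ is unbounded ($k=n$, i.e.\ $y_{k+1}=+\infty$) one has to argue around the point $\infty$, and the degenerate sub-case $y_{1}=0$ (forcing $k=n=1$ and $F/G=1/z-1/x_{1}$) needs a separate treatment; none of this is ``reading the argument backwards''.

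In the other direction your argument covers only the cases ``$F/G$ regular at $0$'' and the double pole $y_{k}=y_{k+1}=0$. You list ``$F/G$ has a zero at $0$'' as a possibility but never dispose of it (it would destroy (\ref{eq:9})--(\ref{eq:10}), which force $x_{k}<0<x_{k+1}$); ruling it out, like ruling out a pole of $F/G$ at $\infty$, needs the multiplicity bound for interspersed zeros (Lemma \ref{sec:polyn-with-intersp-2}) applied to $zF$ and $G$. The simple-pole-at-$0$ case ($y_{k}<0=y_{k+1}$ or $y_{k}=0<y_{k+1}$), as well as the strengthened final claim that $F/G>0$ on \emph{all} of $(y_{k},y_{k+1})$, are likewise not argued. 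Finally, ``$F/G$ of degree $n$ implies $F$ and $G$ coprime'' is not part of the hypotheses; this is harmless because the Wronskian $(zF)'G-(zF)G'$ picks up the square of a common factor, so one may reduce to the coprime case, but that reduction should be stated rather than asserted. In short: right reduction, same route as the paper, but the sign/orientation bookkeeping at $0$ and $\infty$ is the substance of the lemma and is missing.
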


\begin{proof}
  If (\ref{eq:9}) and (\ref{eq:10}) hold, then it is clear that $zF$ and $G$
  have interspersed zeros and it only remains to show that $zF/G$ is
  increasing at some point in $\R$. If $y_{k}=y_{k+1}=0$, then $F/G$ has a
  double pole at the origin and hence (\ref{eq:27}) implies that
  \begin{equation}
    \label{eq:12}
    \lim_{z \rightarrow y_{k}^{-}} \frac{zF(z)}{G(z)} = +\infty \quad
    \mbox{and}\quad 
    \lim_{z \rightarrow y_{k+1}^{+}} \frac{zF(z)}{G(z)} = -\infty.
  \end{equation}
  Consequently, $zF/G$ is increasing around $0$.  If $y_{k} < y_{k+1}$, then by
  (\ref{eq:9}) and (\ref{eq:10}) $F/G$ neither vanishes nor has a pole in
  $(y_{k},y_{k+1})$. If $0<y_{k+1}<+\infty$ it therefore follows from
  (\ref{eq:29}) that
  \begin{equation*}
    \lim_{z \rightarrow y_{k+1}^{-}} \frac{F(z)}{G(z)} = +\infty.
  \end{equation*}
  This implies $\lim_{z \rightarrow y_{k+1}^{-}} zF(z)/G(z) = +\infty$. If
  $y_{k+1}=+\infty$, i.e. if $k=n$, then it follows from (\ref{eq:9}),
  (\ref{eq:10}), (\ref{eq:29}) that $y_{1}\leq 0$ and that $F/G$ is positive in
  $(y_{n},x_{1})$ (recall that we consider $\overline{\R}$ to be circular) and
  negative in $(x_{1},y_{1})$. Consequently, $\lim_{z \rightarrow y_{1}^{-}}
  F(z)/G(z) = -\infty$ has to hold. This implies $\lim_{z \rightarrow y_{1}^{-}}
  zF(z)/G(z) = +\infty$ in the case $y_{1}<0$. In the case $y_{1}=0$ it follows
  from (\ref{eq:9}), (\ref{eq:10}), (\ref{eq:29}) that $k=n=1$ and that
  $F/G=1/z-1/x_{1}$ with $x_{1}\in[-\infty,0)$. Thus, in all possible cases,
  $zF/G$ is increasing at some point in $\R$. This shows the assertion in the
  case $y_{k}\leq 0 < y_{k+1} \leq +\infty$ and in a similar way one can prove
  that $zF/G$ is increasing at some point in $\R$ if $y_{k} \in[-\infty,0)$.

  If, on the other hand, $F\veebar G$, then $G \preceq zF$. Hence, the zeros and
  poles of $zF/G$ lie interspersed on the real line and therefore it is clear
  that the zeros $x_{j}$ and poles $y_{j}$ of $F/G$ must satisfy (\ref{eq:9})
  and (\ref{eq:10}) for a certain $k\in\{0,\ldots,n\}$. If $y_{k} = y_{k+1} =
  0$, then $zF/G$ has a simple pole at $0$ and is increasing around $0$ (since
  $G \preceq zF$). Therefore (\ref{eq:12}) must hold which implies
  (\ref{eq:27}). If $0 < y_{k+1}<+\infty$, then $zF/G \rightarrow +\infty$ as
  $z\rightarrow y_{k+1}^{-}$, since $zF/G$ is increasing in
  $(y_{k},y_{k+1})$. Consequently, $F/G$ must be positive for all
  $z\in(y_{k},y_{k+1})$. If $y_{k+1}=+\infty$ and $y_{k}<0$, then we have $zF/G
  \rightarrow -\infty$ as $z\rightarrow y_{k}^{+}$ and thus that $zF/G$ is
  negative in $(y_{k},0)$ and positive in $(0,y_{k+1})$ (observe that $zF/G$
  vanishes not only at the $x_{j}$ but also at $0$). Hence, $F/G$ is positive in
  $(y_{k},y_{k+1})$. If $y_{k+1}=+\infty$ and $y_{k}=0$, then $zF/G$ is
  increasing and positive in $(x_{k},y_{k+1})$ which implies that $F/G$ is
  positive in $(0,y_{k+1})$.
\end{proof}

In the following, we will write $F\unlhd G$ if $F$, $G\in\pi_{n}(\R_{0}^{-})$
and $F\veebar G$ holds, and $F\lhd G$ if $F$, $G\in\pi_{n}(\R_{0}^{-})$
and $F\vee G$. The preceding lemma shows that the following is true.

\begin{lemma}
  \label{sec:charact-lhd-vee}
  For $F$, $G\in\pi_{n}(\R_{0}^{-})$ we have $F \unlhd G$ or $F\lhd G$ if, and
  only if, $(F/G)(z) >0$ for at least one $z>0$ and, respectively, $F \preceq G$
  or $F \prec_{0} G$.

  In particular, if $F\nequiv 0 \nequiv G$, then $F\unlhd G$ implies $0\leq
  \deg G - \deg F \leq 1$, $0\leq \operatorname{ord} (G;0) -
  \operatorname{ord} (F;0) \leq 1$, and $(F/G)(z) >0$ for all $z>0$.
\end{lemma}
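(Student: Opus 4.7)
The plan is to derive the lemma from Lemma \ref{sec:polynomials-with-log-1} by exploiting the constraint that all finite zeros and poles of $F/G$ lie in $\R_{0}^{-}$. The cases in which $F$ or $G$ is the zero polynomial follow at once from the conventions on $\preceq$, $\prec_{0}$, $\veebar$, and $\vee$, so throughout I assume $F\nequiv 0 \nequiv G$.

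For the ``only if'' direction I would assume $F \unlhd G$, i.e.\ $F \veebar G$, and invoke Lemma \ref{sec:polynomials-with-log-1}: there is $k \in \{0,\ldots,n\}$ for which the zeros $x_{j}$ and poles $y_{j}$ of $F/G$ in $\overline{\R}$ satisfy the interlacing chains (\ref{eq:9})--(\ref{eq:10}) and the sign condition (\ref{eq:27}) or (\ref{eq:29}). Because $F, G\in\pi_{n}(\R_{0}^{-})$, every finite $x_{j}$ and $y_{j}$ lies in $(-\infty,0]$; in particular chain (\ref{eq:10}) (which requires $0\leq y_{k+1}<x_{k+1}$) can only be satisfied by $y_{k+1}=0$ and $x_{k+1}=+\infty$, forcing $k+1=n$, or else by having chain (\ref{eq:10}) empty (i.e.\ $k=n$). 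These are the two admissible configurations, and from each one reads off $0\leq \deg G - \deg F\leq 1$ (controlled by the possible simple zero of $F/G$ at $+\infty$) and $0\leq \operatorname{ord}(G;0)-\operatorname{ord}(F;0)\leq 1$ (controlled by the possible simple pole at $0$). The strict alternation of $x_{j}$ and $y_{j}$ in $(-\infty,0]$ expresses that $F$ and $G$ have strictly interlaced zeros on the finite real line, hence by Lemma \ref{sec:polyn-with-intersp-1}(\ref{item:43}) $(F/G)'$ has constant sign on $\overline{\R}$; the sign condition of Lemma \ref{sec:polynomials-with-log-1} then fixes this sign as $(F/G)'\leq 0$, giving $F\preceq G$. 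Since neither admissible configuration places a zero or a pole of $F/G$ inside $(0,+\infty)$, $F/G$ has constant sign there, and tracking this sign from (\ref{eq:29})/(\ref{eq:27}) together with the local structure of $F/G$ at $0$ and at $+\infty$ yields $(F/G)(z)>0$ for every $z>0$.

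The ``if'' direction is obtained by reversing the above: starting from $F\preceq G$ together with $(F/G)(z_{0})>0$ for some $z_{0}>0$, the interlacing of $F$ and $G$ (Lemma \ref{sec:polyn-with-intersp-1}) combined with $F, G\in\pi_{n}(\R_{0}^{-})$ places the zeros and poles of $F/G$ into one of the two admissible chain configurations, while the positivity of $F/G$ on $\R^{+}$ supplies the sign condition (\ref{eq:29}) required by Lemma \ref{sec:polynomials-with-log-1} (the alternative (\ref{eq:27}) is incompatible with $(F/G)(z)>0$ for $z>0$ and is thereby excluded). Lemma \ref{sec:polynomials-with-log-1} then gives $F\veebar G$, i.e.\ $F\unlhd G$. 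The strict versions ($F\lhd G$ iff $F\prec_{0}G$ together with the same positivity) follow by promoting the non-strict inequalities to strict ones throughout. The main technical obstacle will be the sign bookkeeping across the (at most simple) pole of $F/G$ at $0$: correctly linking the sign of $F/G$ on $\R^{+}$ to the sign condition from Lemma \ref{sec:polynomials-with-log-1} (which pertains to intervals bounded by consecutive poles) requires careful use of the local expansion of $F/G$ at $0$ and at $+\infty$.
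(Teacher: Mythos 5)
Your overall route -- reading the lemma off from Lemma \ref{sec:polynomials-with-log-1} by noting that, since all finite zeros and poles of $F/G$ lie in $(-\infty,0]$, the chain (\ref{eq:10}) is either empty ($k=n$) or reduces to $y_{n}=0<x_{n}=+\infty$ -- is exactly the paper's route (the paper states the lemma without further proof, as an immediate consequence of Lemma \ref{sec:polynomials-with-log-1}), and your sketch of the ``if'' direction is fine. The genuine gap is in the ``only if'' direction, precisely in the second configuration you identify. There the sign information furnished by Lemma \ref{sec:polynomials-with-log-1} is (\ref{eq:29}) on the interval $(y_{n-1},0)$, or (\ref{eq:27}) when the pole of $F/G$ at $0$ is double -- a case your bookkeeping silently excludes, although the chains allow $y_{n-1}=y_{n}=0$. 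Carrying that sign across the pole at $0$ gives $(F/G)<0$ on $(0,+\infty)$, not $>0$, so the step ``tracking this sign \dots\ yields $(F/G)(z)>0$ for every $z>0$'' fails, and with it the deduction of $F\preceq G$ and of the degree/order bounds.

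This is not a slip that more careful sign bookkeeping can repair: in that configuration the asserted conclusions are false. Take $F(z)=-1$, $G(z)=z^{2}$ in $\pi_{2}(\R_{0}^{-})$: then $G'\cdot zF-G\cdot(zF)'=-z^{2}\leq 0$, so $G\preceq zF$, i.e.\ $F\unlhd G$ (indeed $F\lhd G$, since $F$ and $G$ have no common zeros), yet $(F/G)(z)=-1/z^{2}<0$ for all $z>0$, $F'G-FG'=2z>0$ for $z>0$ so $F\preceq G$ fails, and $\deg G-\deg F=\operatorname{ord}(G;0)-\operatorname{ord}(F;0)=2$. A non-proportional example with only a simple pole at $0$ is $F(z)=-(z+2)$, $G(z)=z(z+1)$, where $G'\cdot zF-G\cdot(zF)'=-z^{2}$, so again $F\lhd G$, while $(F/G)<0$ on $(0,\infty)$ and $F\preceq G$ fails. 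So the ``only if'' direction as literally stated can only be obtained after excluding this configuration, e.g.\ by additionally assuming that $F/G$ is positive somewhere on $(0,\infty)$ (equivalently, that the leading coefficients of $F$ and $G$ have the same sign) or that $0$ is not a pole of $F/G$; note that $\veebar$ is not invariant under replacing $F$ by $-F$. This appears to be an oversight in the statement itself -- in every application in the paper the needed positivity or non-vanishing at $0$ is at hand -- but your proposal, which sets out to prove the statement verbatim, passes over exactly the case where it breaks.
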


We will need analogues of certain statements regarding polynomials with
interspersed zeros for polynomials with log-interspersed zeros.

First, note that, to some extent, the first direction of Lemma
\ref{sec:lemmas-2} also holds for polynomials with log-interspersed zeros. We
will show the following two lemmas in this respect (it is possible to prove
more complete results, but verifying them seems to be quite straightforward
and they will not be needed in the sequel).

\begin{lemma}
  \label{sec:analogue-s-t-lemma-1}
  Suppose $F$, $G\in\R_{n}[z]$ satisfy $F\vee z G$. Then for every $y\leq 0$ we
  have $F \vee (zG - y F)$.
\end{lemma}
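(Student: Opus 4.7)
The plan is to reduce $F \vee (zG - yF)$ to a single Wronskian sign computation together with a quick check of the common-zero condition, using Lemma \ref{sec:polyn-with-intersp-1}(\ref{item:42}) to upgrade the sign inequality into interspersion. I would begin by expanding
\begin{equation*}
  W(z) := (zG - yF)'(zF) - (zG - yF)(zF)',
\end{equation*}
and noting that the $zGF$ terms and the cross $yzFF'$ terms cancel, leaving the clean identity
\begin{equation*}
  W(z) = z^{2}(FG' - F'G) + yF(z)^{2}.
\end{equation*}

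The hypothesis $F \vee zG$ gives in particular $zG \preceq zF$, which by the same expansion (with $y = 0$) is exactly the statement $z^{2}(FG' - F'G) \leq 0$ on $\R$. Combined with $y \leq 0$, both summands of $W$ are non-positive, so $W \leq 0$ on $\R$. Applying Lemma \ref{sec:polyn-with-intersp-1}(\ref{item:42}) with its ``$F$'' taken to be $zF \in \pi_{n+1}(\R)$ and its ``$G$'' taken to be $zG - yF \in \R_{n+1}[z]$ yields that $zF$ and $zG - yF$ have interspersed zeros. In particular $zG - yF$ has only real zeros and $(zG - yF) \preceq zF$, i.e.~$F \veebar (zG - yF)$.

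To finish, I would verify the ``no common nonzero zeros'' clause in the definition of $\vee$: if some $x \neq 0$ satisfied $F(x) = (zG - yF)(x) = 0$, then $xG(x) = yF(x) = 0$ would force $G(x) = 0$, so that $x$ is a common nonzero zero of $F$ and $zG$, contradicting $F \vee zG$. There is no substantive obstacle here; the key observation is simply that the Wronskian of $zG - yF$ against $zF$ decomposes as the Wronskian of $zG$ against $zF$ plus the harmless correction $yF^{2}$, the first piece being $\leq 0$ by hypothesis and the second by the sign of $y$.
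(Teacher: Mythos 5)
Your proposal is correct and follows essentially the same route as the paper: the paper computes $\left(\frac{zG-yF}{zF}\right)' = \left(\frac{G}{F}\right)' + \frac{y}{z^{2}}$ and your Wronskian identity $W = z^{2}(FG'-F'G) + yF^{2}$ is just this decomposition multiplied by $(zF)^{2}$, after which both arguments invoke Lemma \ref{sec:polyn-with-intersp-1} to get $zG-yF \preceq zF$ and then rule out common zeros off the origin using $F \vee zG$. No substantive difference.
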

\begin{proof}
  We can assume that $F\nequiv 0 \nequiv G$. $F\vee z G$ implies $z G \prec_{0}
  z F$ and thus $G \prec_{0} F$. By definition this means that $(G/F)'(z)< 0$
  for all $z\in\R$. Hence,
  \begin{equation*}
    \left(\frac{zG-y F}{zF}\right)' = \left(\frac{G}{F}\right)' + 
    \frac{y}{z^{2}} < 0,
  \end{equation*}
  for all $y\leq 0$ and $z\in \R$. Because of Lemma
  \ref{sec:polyn-with-intersp-1} this implies $z G -y F \preceq zF$ and
  therefore that $F \veebar zG - y F$. Since $F \vee zG$, it is easy to see that
  $F(z)=0=z G(z) - y F(z)$ implies $z=0$.
\end{proof}

\begin{lemma}
  \label{sec:polynomials-with-log-6}
  Suppose $F$, $G\in\R_{n}[z]$ are of degree $n\in\N$,
  non-vanishing at $0$, and satisfy $F\vee G$. Set $\alpha:=(F/G)(0)$ and
  $\beta:=(F/G)(\infty)$. Then $(F-\alpha G)/z$ and $F-\beta G$ lie in
  $\R_{n-1}[z]$ and have strictly interspersed zeros.
\end{lemma}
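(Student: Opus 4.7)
The plan is to argue in three stages: a degree-and-sign computation showing $\alpha\neq\beta$, a key algebraic identity, and the strict-interspersion conclusion via Lemma \ref{sec:polyn-with-intersp-1}(a).

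First, I would exploit the partial-fraction decomposition $H(z)=\beta z+c_{0}+\sum_{i=1}^{n}c_{i}/(z-y_{i})$ of $H(z):=zF(z)/G(z)$, whose residues are $c_{i}=y_{i}F(y_{i})/G'(y_{i})$. The hypothesis $F\vee G$ forces $H$ to be strictly increasing on each maximal interval between consecutive poles, and examining the sign of $H(z)$ as $z\to y_{i}^{\pm}$ forces every $c_{i}$ to be strictly negative. Computing $H'(0)=\alpha$ from this decomposition then gives $\alpha=\beta-\sum_{i}c_{i}/y_{i}^{2}>\beta$, while $\beta>0$ follows from the increasing behavior of $H$ near $\pm\infty$. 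Consequently $\alpha>\beta>0$; the leading coefficient $f_{n}-\alpha g_{n}=-(\alpha-\beta)g_{n}$ of $F-\alpha G$ is nonzero, so $F-\alpha G$ has degree exactly $n$, has a zero at $0$ by the definition of $\alpha$, and $P:=(F-\alpha G)/z$ therefore lies in $\R_{n-1}[z]$. The definition of $\beta$ makes the leading coefficient of $Q:=F-\beta G$ vanish, so $Q\in\R_{n-1}[z]$ as well.

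Second, from $F=\alpha G+zP$ and $F=\beta G+Q$ one obtains $Q=(\alpha-\beta)G+zP$, and a direct expansion gives the identity
\begin{equation*}
  R\;:=\;P'Q-PQ'\;=\;(\alpha-\beta)V-P^{2},\qquad V:=P'G-PG'.
\end{equation*}
A parallel calculation shows $W:=F'G-FG'=PG+zV$, so the analytic content of $G\prec_{0}zF$, namely the strict inequality $FG+zW>0$ on $\R$, becomes, after substituting $F=\alpha G+zP$, the polynomial inequality
\begin{equation*}
  \alpha G(z)^{2}+2zP(z)G(z)+z^{2}V(z)>0\quad\text{for all } z\in\R.
\end{equation*}

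The remaining and main obstacle is to pass from this positivity to the nonvanishing of $R$ on $\R$. My plan is to substitute $G=(Q-zP)/(\alpha-\beta)$ into the displayed inequality and, after multiplying through by $(\alpha-\beta)^{2}$, view the result as a quadratic form in $Q$ and $zP$ whose coefficients involve $V$; a discriminant-style argument, exploiting both the strictness of the inequality and $\alpha-\beta>0$, should yield the strict bound $(\alpha-\beta)V(z)>P(z)^{2}$ on all of $\R$, that is, $R>0$ throughout $\R$. Since any common zero of $P$ and $Q$ would force $F$ and $G$ to share a zero, contradicting $F\vee G$ together with $G(0)\neq 0$, the pair $(P,Q)$ has no common zeros, and Lemma \ref{sec:polyn-with-intersp-1}(a) upgrades $R\neq 0$ on $\R$ to strict interspersion of $P$ and $Q$, completing the proof.
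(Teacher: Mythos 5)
Your first stage is correct, and in fact the partial-fraction argument for $H=zF/G$ (negative residues from the strict monotonicity, hence $\alpha=H'(0)=\beta-\sum_i c_i/y_i^2>\beta$, and $\beta>0$ from the behaviour near $+\infty$) is a clean shortcut to $\alpha>\beta>0$, which the paper obtains only inside its case analysis. The identities of your second stage, $P'Q-PQ'=(\alpha-\beta)V-P^{2}$ with $V=P'G-PG'$ and $\alpha G^{2}+2zPG+z^{2}V>0$ on $\R$, are also correct. The gap is exactly where you locate "the main obstacle": no discriminant-style manipulation of that displayed inequality can give $(\alpha-\beta)V>P^{2}$. Both inequalities are \emph{lower} bounds on $V(z)$: the hypothesis gives $z^{2}V>-\alpha G^{2}-2zPG$, the goal is $V>P^{2}/(\alpha-\beta)$, so a pointwise implication would require $\alpha G^{2}+2zPG+z^{2}P^{2}/(\alpha-\beta)\leq 0$; but this quadratic form in $(G,zP)$ is positive definite, since its discriminant equals $-4\beta z^{2}P^{2}/(\alpha-\beta)<0$ because $\alpha>\beta>0$. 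Hence at every point $z_{0}$ with $z_{0}P(z_{0})\neq 0$ there are values of $V(z_{0})$ satisfying your displayed inequality but with $(\alpha-\beta)V(z_{0})\leq P(z_{0})^{2}$: the pointwise data simply do not determine the sign of $P'Q-PQ'$, and the information that $V$ is a Wronskian (i.e.\ global information about how the zeros of $F$ and $G$ interlace) is indispensable. This is precisely what the paper's proof uses: it tracks the rational function $F/G$ around the circle $\overline{\R}$, counts (via its monotone behaviour at the poles and the distinguished interval where it is positive) how often it attains the two values $\alpha$ and $\beta$, shows that the solution sets of $F/G=\alpha$ and $F/G=\beta$ interlace, and reads off from them the zeros of $(F-\alpha G)/z$ and $F-\beta G$. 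A small warning sign that your route is off: for $n=1$ one has $P,Q$ nonzero constants and $P'Q-PQ'\equiv 0$, so the claimed strict positivity is false there, even though the lemma holds trivially.

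There is a second, independent gap at the very end. Lemma \ref{sec:polyn-with-intersp-1}(\ref{item:41}) requires one of the two polynomials to lie in $\pi_{n-1}(\R)$, i.e.\ to be real-rooted, and the real-rootedness of $P$ and $Q$ is itself part of what the lemma asserts; it does not follow from a nonvanishing Wronskian alone (for instance $P=z^{2}+1$, $Q=z^{3}+3z$ have Wronskian $-z^{4}-3\neq 0$ on $\R$, yet $P$ has no real zeros). So even if you had established $P'Q-PQ'\neq 0$ on $\R$, you would still need a separate argument that $F-\alpha G$ and $F-\beta G$ have only real zeros — again something the paper's counting argument delivers automatically, since it produces the full list of $n-1$ real zeros of each.
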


\begin{proof}
  If $x_{j}$ and $y_{j}$ denote, respectively, the zeros of $F$ and
  $G$, then, by Lemma \ref{sec:polynomials-with-log-1} and our
  assumptions, there is a $k\in\{0,\ldots,n\}$ such that
  \begin{equation*}
    -\infty < x_{1} < y_{1} < \cdots < x_{k} < y_{k} < 0 < y_{k+1} < x_{k+1} <
    \cdots  < y_{n} < x_{n} < +\infty,
  \end{equation*}
  and such that $R:=F/G$ is positive in $(y_{k},y_{k+1})$. 

  If $k=n$, then $F\lhd G$ which implies $F\prec G$ by Lemma
  \ref{sec:charact-lhd-vee}. Consequently, $R$ is strictly decreasing
  in $\R$, and since $\deg F =\deg G$, it therefore follows that
  \begin{equation}
    \label{eq:45}
    \alpha > \beta > 0\quad \mbox{and} \quad R((0,+\infty))=(\beta,\alpha).
  \end{equation}
  Lemma \ref{sec:polyn-with-intersp-1} yields that $F -\alpha G \prec F-\beta
  G$. Since $\deg (F -\alpha G) =n$, $\deg (F -\beta G) =n-1$ and $(F -\alpha
  G)(0)=0$, it follows that $(F-\alpha G)/z$ and $F-\beta G$ have strictly
  interspersed zeros. In a similar way one can see that $(F -\alpha G)/z$ and
  $F-\beta G$ must have strictly interspersed zeros if $k=0$. From now on we can
  therefore assume that $k\in\{1,\ldots,n-1\}$.

  In this case, since all zeros $x_{j}$ and all poles $y_{j}$ of $R$ are simple
  and since $R>0$ in $I_{k}:=(y_{k},y_{k+1})$, $R$ jumps from $-\infty$ to
  $+\infty$ at the points $y_{j}$, $j\in M_{-}:=\{1,\ldots,k\}$, and from
  $+\infty$ to $-\infty$ at the points $y_{j}$, $j\in M_{+}:=\{k+1,\ldots,n\}$,
  when $z$ traverses the real line from $-\infty$ to $+\infty$. Consequently,
  $R$ takes every real value at least once in each of the $n-2$ intervals
  $I_{j}:=(y_{j},y_{j+1})$, $j\in\{1,\ldots,k-1,k+1,\ldots,n-1\}$. Moreover,
  since $R$ is continuous and positive in $I_{k}$ with $R\rightarrow +\infty$ as
  $z\rightarrow y_{k}^{+}$ and $z\rightarrow y_{k+1}^{-}$, $M:=\min_{z\in I_{k}}
  R(z)$ must lie in $(0,\alpha]$ and $R$ must take every value $\geq M$ at least
  twice in $I_{k}$. Setting $I_{0}:= (y_{n},y_{1})$ (recall that we consider
  $\overline{\R}$ to be circular), a similar argument shows that $m:=\max_{z\in
    I_{0}} R(z)$ must lie in $[\beta,+\infty)$ and that $R$ must take every
  value $\leq m$ at least twice in $I_{0}$. Since $R$ can take every real value
  at most $n$ times, this implies that (i) $m<M$ and thus also $\beta < \alpha$,
  (ii) $R$ takes every value $\leq m$ or $\geq M$ exactly once in every interval
  $I_{j}$, $j\in\{1,\ldots,k-1,k+1,\ldots,n-1\}$, (iii) $R$ has exactly one
  local extremum $c$ in $I_{k}$, $c$ is a local minimum, and $R(c)=M$, and (iv)
  $R$ has exactly one local extremum $d$ in $I_{0}$, $d$ is a local maximum, and
  $R(d)=m$. In the following we assume that $d\in[-\infty,x_{1})$ (the case
  $d\in(x_{n},+\infty)$ can be treated in a similar manner). Because of the
  monotonicity of $R$ at its poles $y_{j}$, Statements (i)--(iv) imply that
  \begin{equation*}
    R \quad \mbox{decreases in} \quad \left(R^{-1}(M,+\infty) \cup
    R^{-1}(-\infty,m)\right)\cap \bigcup_{j=1}^{k-1} I_{j}
  \end{equation*}
  and in $(y_{k},c)$ and $(d,y_{1})$, and that 
  \begin{equation*}
    R \quad \mbox{increases in} \quad \left(R^{-1}(M,+\infty) \cup
    R^{-1}(-\infty,m)\right)\cap \bigcup_{j=k+1}^{n-1} I_{j}
  \end{equation*}
  and in $(c,y_{k+1})$ and $(y_{n},d)$. Hence, if $a_{1},\ldots,a_{n}$ and
  $b_{1},\ldots,b_{n}$ denote, respectively, the solutions in $\overline{\R}$ of
  the equations $R=\alpha$ and $R=\beta$ (in ascending order with
  $b_{1}=-\infty$), (i) implies that 
  \begin{equation}
    \label{eq:18}
    -\infty = b_{1} \leq b_{2} < x_{1} < y_{1} < a_{1} < b_{3} <
    x_{2} < y_{2} <  \cdots < a_{k-1} < b_{k+1} < x_{k}
    < y_{k} < a_{k} \leq 0     
  \end{equation}
  and
  \begin{equation}
    \label{eq:19}
    0\leq a_{k+1} < y_{k+1} < x_{k+1} < b_{k+2} < a_{k+2} < y_{k+2} <
    x_{k+2} < \cdots < b_{n} < a_{n} < y_{n} < x_{n} < +\infty,
  \end{equation}
  with either $a_{k}=0 \leq a_{k+1}$ or $a_{k}\leq 0=a_{k+1}$,
  depending on whether $c\geq 0$ or $c\leq 0$. Now, if $a_{k}=0 \leq
  a_{k+1}$ (the other case can be treated analogously), then
  $a_{1},\ldots,a_{k-1},a_{k+1},\ldots,a_{n}$ are the zeros of
  $(F-\alpha G)/z$. Moreover, $b_{2},\ldots,b_{n}$ are the zeros of
  $F-\beta G$ (it may happen that $b_{2}=\infty$ in which case
  $F-\beta G$ is of degree $n-2$ with zeros $b_{3},\ldots,b_{n}$). Since, by
  (\ref{eq:18}) and (\ref{eq:19}),
  \begin{equation*}
    -\infty \leq b_{2} < a_{1} < b_{3} < \cdots <a_{k-1} < b_{k+1} <
    a_{k+1} < b_{k+2} < a_{k+2} < \cdots < b_{n} < a_{n}, 
  \end{equation*}
  the proof is complete.
\end{proof}

Because of (\ref{eq:36}) the next lemma can be seen as an analogue of the
'only-if'-direction of Lemma \ref{sec:main-lemma}(\ref{item:45}).

\begin{lemma}
  \label{sec:polynomials-with-log-2}
  Suppose $F$, $G\in \R_{n}[z]\setminus\{0\}$ satisfy $F \veebar G$. Then
  \begin{equation*}
    \left(\frac{F}{G}\right)'(x)<0, \quad
    \left(\frac{F}{G}\right)'(x)>0, \quad \mbox{and} \quad 
    \left(\frac{G}{F}\right)'(y)>0, \quad
    \left(\frac{G}{F}\right)'(y)<0, 
  \end{equation*}
  for, respectively, every negative and positive zero $x$ of $F/G$, and,
  respectively, every negative and positive zero $y$ of $G/F$.
\end{lemma}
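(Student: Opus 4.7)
The plan is to reduce both halves of the conclusion to a single observation about the rational function $zF/G$. By the definition of $\veebar$, $F \veebar G$ means $G \preceq zF$, so $G$ and $zF$ have interspersed zeros. Lemma~\ref{sec:polyn-with-intersp-1}(\ref{item:43}) then forces $(G/(zF))'(z)$, and equivalently $(zF/G)'(z)$, to have constant sign on $\overline{\R}$. Since the inequality defining $\preceq$ gives $(G/(zF))'\leq 0$, this common sign is strictly negative for $G/(zF)$ and strictly positive for $zF/G$, unless $G$ is a constant multiple of $zF$; but in that degenerate case both $F/G$ and $G/F$ have no zeros in $\R\setminus\{0\}$, so the lemma is vacuous and can be excluded.

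For the first part, at a zero $x\in\R\setminus\{0\}$ of $F/G$ the product rule gives
\begin{equation*}
  0<(zF/G)'(x) = (F/G)(x) + x\,(F/G)'(x) = x\,(F/G)'(x),
\end{equation*}
since $(F/G)(x)=0$. Hence $(F/G)'(x)$ carries the sign of $x$, yielding $(F/G)'(x)<0$ at negative zeros and $(F/G)'(x)>0$ at positive zeros.

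For the second part, at a zero $y\in\R\setminus\{0\}$ of $G/F$ we have $G(y)=0$ and $F(y)\neq 0$, so a direct quotient-rule computation gives $(G/F)'(y)=G'(y)/F(y)$ and $(G/(zF))'(y)=G'(y)/(yF(y))$. These combine to the identity $(G/F)'(y)=y\,(G/(zF))'(y)$, and since $(G/(zF))'(y)<0$ by the first paragraph, the sign of $(G/F)'(y)$ is opposite to that of $y$, which is exactly the asserted statement. There is no substantive obstacle; the only step that needs to be spotted is that one should differentiate $zF/G$ (whose monotonicity is controlled directly by the definition of $F\veebar G$) rather than $F/G$ itself, after which both halves of the lemma drop out of two short applications of the product and quotient rules together with Lemma~\ref{sec:polyn-with-intersp-1}(\ref{item:43}).
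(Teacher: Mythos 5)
Your argument is correct and is essentially the paper's own proof: the paper likewise uses that $F\veebar G$ means $G\preceq zF$, so $(zF/G)'>0$ (equivalently $(G/(zF))'<0$) on $\overline{\R}$, and then evaluates at a zero $x\neq0$ of $F/G$ and a zero $y\neq0$ of $G/F$ to get $0<x\,(F/G)'(x)$ and $0>\tfrac{1}{y}(G/F)'(y)$, which is exactly your computation. Your extra remarks (strictness via Lemma \ref{sec:polyn-with-intersp-1}(\ref{item:43}) and the vacuous case $G=_{\R}zF$) only make explicit what the paper leaves implicit; the one cosmetic caveat is that the intermediate formula $(G/F)'(y)=G'(y)/F(y)$ should be read for the reduced rational function, since $F\veebar G$ permits common zeros, but the identity $(G/F)'(y)=y\,(G/(zF))'(y)$ you actually use is valid regardless.
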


\begin{proof}
  $F \veebar G$ implies $G \preceq zF$ and thus, by definition of $\preceq$, 
  \begin{equation*}
    0< \left(\frac{zF}{G}\right)'(x) =  x\left(\frac{F}{G}\right)'(x) 
    \quad \mbox{and} \quad 
    0> \left(\frac{G}{zF}\right)'(y) =  \frac{1}{y}\left(\frac{G}{F}\right)'(y) 
  \end{equation*}
  for every zero $x\neq 0$ of $F/G$ and every zero $y\neq 0$ of $G/F$. 
\end{proof}

Finally, we will also need the following analogue of Lemma
\ref{sec:main-results-2}.

\begin{lemma}
  \label{sec:polynomials-with-log-3}
  Let $L:\R_{n}[z]\rightarrow\R_{m}[z]$ be real linear and suppose $F$,
  $G\in\R_{n}[z]\setminus\{0\}$ are of degree $n\in\N$, have only real zeros and
  satisfy $F\prec G$. Suppose further that $\deg L[F]=m$, $L[F](0)\neq 0 \neq
  L[G](0)$ and that all zeros of $L[F]$ are real and simple. If for every zero
  $y$ of $F$
  \begin{equation}
    \label{eq:1}
    L\left[F_{y}\right]\vee  L\left[F\right]
  \end{equation}
  and if $(L[F]/L[G])(0)>0$, then $L[F]\vee L[G]$.
\end{lemma}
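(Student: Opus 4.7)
The plan is to apply Lemma~\ref{sec:main-lemma} using the polynomial $zL[F]$ (of degree $m+1$ with $m+1$ simple real zeros $0,x_{1},\ldots,x_{m}$) in the role of the simple-zero polynomial there, and $L[G]\in\R_{m+2}[z]$ in the role of the other polynomial. This gives a decomposition
\begin{equation*}
  L[G] = c_{\infty}(zL[F])_{\infty} + c_{0}\,zL[F] + c_{0'}L[F] + \sum_{k=1}^{m} c_{x_{k}}(zL[F])_{x_{k}},
\end{equation*}
whose coefficients at the $(m{+}2)$-zeros $\infty,0,x_{1},\ldots,x_{m}$ of $zL[F]$ are given by formula~(\ref{eq:36}). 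The target conclusion $L[F]\vee L[G]$, which unfolds to $L[G]\prec_{0}zL[F]$, I would obtain by verifying $c_{\infty}\geq 0$, $c_{0'}>0$ and $c_{x_{k}}>0$ for every $k$. Since Lemma~\ref{sec:main-lemma}(\ref{item:45}) naturally characterises $zL[F]\preceq L[G]$ via $c_{y}\leq 0$, I would apply it instead to $-L[G]$ (whose coefficients are the negatives of the above) and invoke the immediate identity $zL[F]\prec -L[G]\,\Leftrightarrow\,L[G]\prec zL[F]$.

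The first two sign conditions are essentially free: $c_{\infty}=0$ because $\deg L[G]\leq m<m+2$, while $c_{0'}=L[G](0)/L[F](0)>0$ is precisely the hypothesis $(L[F]/L[G])(0)>0$. The main work, and the place where all the remaining hypotheses interact, is to determine the sign of $c_{x_{k}}=L[G](x_{k})/(x_{k}L[F]'(x_{k}))$; this step is the one I expect to be the main obstacle.

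For the sign of $L[G](x_{k})$, I would use Lemma~\ref{sec:main-lemma}(\ref{item:45}) applied to the given relation $F\prec G$ to write $G = c_{0}F+\sum_{y:F(y)=0}c_{y}F_{y}$ with every $c_{y}<0$, apply $L$, and evaluate at $x_{k}$; the $L[F]$-term vanishes, leaving $L[G](x_{k}) = \sum_{y}c_{y}L[F_{y}](x_{k})$. The key observation is that the hypothesis $L[F_{y}]\vee L[F]$, i.e., $L[F]\prec_{0}zL[F_{y}]$, together with $L[F](x_{k})=0$, $L[F](0)\neq 0$, and $x_{k}\neq 0$, forces
\begin{equation*}
  \left(\frac{L[F]}{zL[F_{y}]}\right)'(x_{k}) = \frac{L[F]'(x_{k})}{x_{k}L[F_{y}](x_{k})} < 0,
\end{equation*}
so that $\operatorname{sgn}L[F_{y}](x_{k}) = -\operatorname{sgn}(x_{k}L[F]'(x_{k}))$ is the \emph{same} for every zero $y$ of $F$. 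Combining this with $c_{y}<0$, every summand in $\sum_{y}c_{y}L[F_{y}](x_{k})$ carries the common sign $\operatorname{sgn}(x_{k}L[F]'(x_{k}))$, so $L[G](x_{k})\neq 0$ and has exactly that sign, yielding $c_{x_{k}}>0$.

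With all three sign conditions in hand, Lemma~\ref{sec:main-lemma}(\ref{item:45}) applied to $-L[G]$ gives $zL[F]\prec -L[G]$, equivalently $L[G]\prec zL[F]$. Since $L[F](0)\neq 0\neq L[G](0)$, the polynomials $L[G]$ and $zL[F]$ cannot share a common zero (the only candidate being $0$), hence $L[G]\prec_{0}zL[F]$, which by definition is $L[F]\vee L[G]$, as required.
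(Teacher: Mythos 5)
Your argument is correct, and its core sign computation coincides with the paper's, but the way you close the proof is genuinely different. The inequality $L[F]'(x_{k})\,x_{k}\,L[F_{y}](x_{k})<0$ that you extract from (\ref{eq:1}) is exactly the information the paper obtains from Lemma \ref{sec:polynomials-with-log-2}, and your determination of the sign of $L[G](x_{k})$ -- writing $G=c_{0}F+\sum_{y}c_{y}F_{y}$ with all $c_{y}<0$ via Lemma \ref{sec:main-lemma} (legitimate, since $F\prec G$ forces the zeros of $F$ to be simple), applying $L$ and evaluating at $x_{k}$ -- is in substance an inlined proof of Lemma \ref{sec:working}, which is what the paper cites at this point. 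The divergence is the endgame: the paper converts the resulting derivative signs (\ref{eq:21}) into an explicit interlacing pattern for the zeros of $L[G]$ by a counting argument (an odd number of poles of $L[F]/L[G]$ in each interval $(x_{j},x_{j+1})$, a jump analysis near $x_{k}$ and $x_{k+1}$ combined with $(L[G]/L[F])(0)>0$ to force one zero of $L[G]$ on each side of the origin, degree counting, and the boundary cases $k=0$ and $k=m$), and then concludes with Lemma \ref{sec:polynomials-with-log-1}; you instead apply Lemma \ref{sec:main-lemma} a second time, to the simple-zero polynomial $zL[F]$ and to $-L[G]$, and read off $zL[F]\prec -L[G]$, i.e.\ $L[G]\prec zL[F]$, directly from the coefficient signs $c_{\infty}=0$, $c_{0'}=L[G](0)/L[F](0)>0$ and $c_{x_{k}}=L[G](x_{k})/(x_{k}L[F]'(x_{k}))>0$. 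Your finish is leaner: it avoids the parity and counting bookkeeping, uses the hypothesis $(L[F]/L[G])(0)>0$ precisely as the sign of the coefficient attached to the zero $0$ of $zL[F]$, and delivers the strict relation $L[G]\prec zL[F]$, hence $L[F]\vee L[G]$, in one stroke; the paper's route, in return, exhibits the zeros of $L[G]$ explicitly, in the spirit of Lemma \ref{sec:polynomials-with-log-6}. One caveat you share with the paper rather than a gap of your own: both arguments tacitly assume $L[F_{y}]\nequiv 0$ (so that at least one summand $c_{y}L[F_{y}](x_{k})$ is nonzero), whereas the conventions attached to (\ref{eq:1}) would formally allow $L[F_{y}]\equiv 0$; the paper's proof needs the same tacit assumption in order to apply Lemma \ref{sec:polynomials-with-log-2}.
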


\begin{proof}
  By assumption all zeros $x_{j}$, $j\in\{1,\ldots,m\}$, of $L[F]$ are real,
  simple, and $\neq 0$. Setting $x_{0}:=-\infty$, $x_{m+1}:=+\infty$, we can
  therefore assume that
  \begin{equation*}
    x_{0}<x_{1} < \cdots < x_{k} < 0 < x_{k+1} < \cdots < x_{m}<x_{m+1}
  \end{equation*}
  for a $k\in\{0,\ldots,m\}$. Because of (\ref{eq:1}) and Lemma
  \ref{sec:polynomials-with-log-2} we have
  \begin{equation*}
    \left(\frac{L[F]}{L[F_{y}]}\right)'(x_{j})>0 \quad \mbox{and} \quad
    \left(\frac{L[F]}{L[F_{y}]}\right)'(x_{j})<0
  \end{equation*}
  for, respectively, $j\in M_{-}:=\{1,\ldots,k\}$ and $j\in
  M_{+}:=\{k+1,\ldots,m\}$, and for all zeros $y$ of $F$. Because of Lemma
  \ref{sec:working} this implies
  \begin{equation}
    \label{eq:21}
    \left(\frac{L[F]}{L[G]}\right)'(x_{j})<0 \quad \mbox{and} \quad
    \left(\frac{L[F]}{L[G]}\right)'(x_{j})>0
  \end{equation}
  for, respectively, $j\in M_{-}$ and $j\in M_{+}$. Consequently, $L[F]/L[G]$
  has to have an odd number of poles in each of the intervals $(x_{j},x_{j+1})$,
  $j\in\{1,\ldots,k-1,k+1,\ldots,m-1\}$. (\ref{eq:21}) also shows that
  $L[G]/L[F]$ jumps from $+\infty$ to $-\infty$ at $x_{j}$ with $j\in M_{-}$ and
  from $-\infty$ to $+\infty$ at $x_{j}$ with $j\in M_{+}$. Hence, there is an
  $\epsilon >0$ such that $L[G]/L[F]$ is negative in $(x_{k},x_{k}+\epsilon)$
  and $(x_{k+1}-\epsilon,x_{k+1})$. Since $(L[G]/L[F])(0) >0$ and
  $0\in(x_{k},x_{k+1})$, it thus follows that in the case $k\in\{1,\ldots,m-1\}$
  the rational function $L[G]/L[F]$ must have at least one zero in each of the
  two intervals $(x_{k},0)$ and $(0,x_{k+1})$. Since $L[G]$ must have an odd
  number of poles in each interval $(x_{j},x_{j+1})$,
  $j\in\{1,\ldots,k-1,k+1,\ldots,m-1\}$, $L[G]$ has exactly $m$ zeros $y_{j}$
  which satisfy
  \begin{equation*}
    x_{1} < y_{1} < x_{2} < \cdots < y_{k-1}< x_{k}<y_{k} < 0 <y_{k+1}<
    x_{k+1} < y_{k+2} < \cdots <x_{m-1}<y_{m}< x_{m}. 
  \end{equation*}
  A similar argumentation shows that if $k=m$ (and analogously in the case
  $k=0$), then $L[G]$ has exactly $m$ zeros $y_{j}$ which satisfy
  \begin{equation*}
    x_{1} < y_{1} < x_{2} < \cdots < y_{m-1}< x_{m}<y_{m} < 0.
  \end{equation*}
  Since $(L[F]/L[G])(0) >0$, Lemma \ref{sec:polynomials-with-log-1} thus yields
  $L[F] \vee L[G]$ in all cases under consideration.
\end{proof}

\section{$q$-Extensions of Newton's Inequalities and the Theorems of Rolle and
  Laguerre}
\label{sec:line-oper-pres-1}

Since $F(z)/F(q^{-1}z)$ takes the positive value $q^{\operatorname{ord} (F;0)}$
at $z=0$, Lemmas \ref{sec:polynomials-with-log-1} and \ref{sec:charact-lhd-vee}
shows that the following characterization of the classes
$\overline{\mathcal{R}}_{n}(q)$ and $\overline{\mathcal{N}}_{n}(q)$ is true.

\begin{lemma}
  \label{sec:working-2}
  Let $q\in(0,1)$ and suppose $F\in\R_n[z]\setminus\{0\}$. 
  \begin{enumerate}
  \item \label{item:9} We have $F\in\overline{\mathcal{R}}_{n}(q)$ if, and only
    if, $F(z) \veebar F(q^{-1}z)$ and $F\in\mathcal{R}_{n}(q)$ if, and only if,
    $F(z) \vee F(q^{-1}z)$.
  \item \label{item:10} We have $F\in\overline{\mathcal{N}}_{n}(q)$ if, and only
    if, $F(z) \unlhd F(q^{-1}z)$ and $F\in\mathcal{N}_{n}(q)$ if, and only if,
    $F(z) \lhd F(q^{-1}z)$.
  \end{enumerate}
\end{lemma}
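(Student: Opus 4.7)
The plan is to set $G(z):=F(q^{-1}z)$ and translate the relation $F(z)\veebar F(q^{-1}z)$ -- which by definition means $F(q^{-1}z)\preceq zF(z)$ -- into an interlacing statement about the zeros and poles of the rational function $F/G$, to which Lemma~\ref{sec:polynomials-with-log-1} applies. Writing $F(z)=z^{m}\tilde F(z)$ with $m:=\operatorname{ord}(F;0)$ and $\tilde F(0)\neq 0$ gives $G(z)=q^{-m}z^{m}\tilde F(q^{-1}z)$, so that
\begin{equation*}
  \frac{F(z)}{G(z)} \;=\; q^{m}\,\frac{\tilde F(z)}{\tilde F(q^{-1}z)},
\end{equation*}
a rational function which, after cancellation of the common factor $z^{m}$, takes the strictly positive value $q^{m}$ at the origin.

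The heart of the argument is to verify that the $q$-separation of the nonzero zeros of $F$ is precisely the interlacing pattern (\ref{eq:9})--(\ref{eq:10}) required by Lemma~\ref{sec:polynomials-with-log-1}. Enumerating the nonzero zeros of $F$ in ascending order as $x_{1}^{-}<\cdots<x_{l}^{-}<0<x_{1}^{+}<\cdots<x_{p}^{+}$, the nonzero zeros of $G$ are the scaled values $qx_{j}^{\pm}$; since $q\in(0,1)$ the relations $x_{j}^{-}<qx_{j}^{-}<0$ and $0<qx_{j}^{+}<x_{j}^{+}$ hold automatically, so the only nontrivial interlacing inequalities are
\begin{equation*}
  qx_{j}^{-} < x_{j+1}^{-} \qquad\text{and}\qquad x_{j-1}^{+} < qx_{j}^{+},
\end{equation*}
which, upon multiplication by $x_{j}^{-}<0$ and $x_{j}^{+}>0$, become the strict $q$-separation conditions $x_{j+1}^{-}/x_{j}^{-}<q$ and $x_{j-1}^{+}/x_{j}^{+}<q$. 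Equality in either case produces a common zero of $F$ and $G$ off the origin that cancels in the reduced $F/G$ and corresponds precisely to the borderline $\leq q$ instance of separation. The positivity condition (\ref{eq:29}) of Lemma~\ref{sec:polynomials-with-log-1} (with $y_{k}<0<y_{k+1}$) is immediate from $(F/G)(0)=q^{m}>0$, and the doubly-vanishing case (\ref{eq:27}) cannot occur since the zero at $0$ has already been cancelled. Lemma~\ref{sec:polynomials-with-log-1} thus yields $F\in\overline{\mathcal R}_{n}(q)\Longleftrightarrow F(z)\veebar F(q^{-1}z)$, and the strict refinement $F\in\mathcal R_{n}(q)\Longleftrightarrow F(z)\vee F(q^{-1}z)$ follows because strict $q$-separation of the nonzero zeros is exactly the condition that $F$ and $G$ share no zero off the origin.

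For part (b) I would invoke Lemma~\ref{sec:charact-lhd-vee}: multiplication by $q>0$ preserves the sign of real numbers, so $F\in\pi_{n}(\R_{0}^{-})$ if and only if $G\in\pi_{n}(\R_{0}^{-})$. Hence $F\in\overline{\mathcal N}_{n}(q)$ is equivalent to $F\in\overline{\mathcal R}_{n}(q)$ together with $F,G\in\pi_{n}(\R_{0}^{-})$, which by part (a) and the definition of $\unlhd$ recorded in Lemma~\ref{sec:charact-lhd-vee} is precisely $F(z)\unlhd F(q^{-1}z)$; the $\lhd$/strict statement follows by the same argument. The main obstacle I foresee is careful bookkeeping in the non-strict case in (a): each equality $x_{j+1}^{-}=qx_{j}^{-}$ or $x_{j-1}^{+}=qx_{j}^{+}$ produces an off-origin common zero of $F$ and $G$, and one must confirm that after all such cancellations the remaining nonzero zeros and poles of $F/G$ still interlace strictly in the pattern required by Lemma~\ref{sec:polynomials-with-log-1} and that the sign computation $(F/G)(0)=q^{m}>0$ survives the reduction, which it does because the cancelled factors are all away from the origin.
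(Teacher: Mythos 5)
Your proposal is correct and takes essentially the same route as the paper, which disposes of this lemma in one sentence by combining the observation that $F(z)/F(q^{-1}z)$ takes the positive value $q^{\operatorname{ord}(F;0)}$ at the origin with Lemmas \ref{sec:polynomials-with-log-1} and \ref{sec:charact-lhd-vee}. Your additional bookkeeping (translating the pattern (\ref{eq:9})--(\ref{eq:10}) into the $q$-separation inequalities and checking that borderline equalities correspond exactly to cancelled common zeros off the origin) merely fills in details the paper leaves implicit.
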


The next lemma will considerably simplify the proofs of our main results.

\begin{lemma}
  \label{sec:an-extension-polyas-1}
  Let $q\in(0,1]$ and suppose $F\in \R_{n}[z]$.
  \begin{enumerate}
  \item \label{item:48} We have $F\in\overline{\mathcal{R}}_{n}(q)$ if, and only
    if, there is a sequence $\{F_{\nu}\}_{\nu\in\N}$ of polynomials $F_{\nu}\in
    \mathcal{R}_{n}(q)$ with $\deg F_{\nu}= n$ and $F_{\nu}(0)\neq 0$ such that
    $F_{\nu}\rightarrow F$ uniformly on compact subsets of $\C$.

  Moreover, if $G\in\pi_{n}(\R)$ satisfies $F\preceq G$, then we can find a
  sequence $\{G_{\nu}\}_{\nu\in\N}\subset\sigma_{n}(\R)$ with $\deg G_{\nu}=
  n$ and $G_{\nu}(0)\neq 0$ such that $F_{\nu} \prec G_{\nu}$ for $\nu\in\N$
  and $G_{\nu}\rightarrow G$ uniformly on compact subsets of $\C$. 
\item \label{item:49} (\ref{item:48}) also holds if
  $\overline{\mathcal{R}}_{n}(q)$, $\mathcal{R}_{n}(q)$, $\R$, $\preceq$,
  $\prec$, are replaced by, respectively, $\overline{\mathcal{N}}_{n}(q)$,
  $\mathcal{N}_{n}(q)$, $\R_{0}^{-}$, $\unlhd$, $\lhd$.
  \end{enumerate}  
\end{lemma}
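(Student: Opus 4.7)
\medskip

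The \emph{if}-direction in both (a) and (b) is immediate: by its definition the class $\overline{\mathcal{R}}_{n}(q)$, respectively $\overline{\mathcal{N}}_{n}(q)$, is the topological closure of $\mathcal{R}_{n}(q)$, respectively $\mathcal{N}_{n}(q)$, in the finite-dimensional space $\R_{n}[z]$, so a uniform limit on compact subsets of $\C$ of polynomials from these classes lies in the closure.

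For the \emph{only if}-direction of (a) I would build the approximating sequence by explicit perturbation of the zeros. Write $F(z) = c_{0}\, z^{m} \prod_{j=1}^{p}(z-\eta_{j}) \prod_{j=1}^{r}(z+\mu_{j})$ with $m = \operatorname{ord}(F;0)$, $d := m+p+r = \deg F \leq n$, $0 < \eta_{1} \leq \cdots \leq \eta_{p}$, and $0 < \mu_{r} \leq \cdots \leq \mu_{1}$; by definition of $\overline{\mathcal{R}}_{n}(q)$ one has $\eta_{j}/\eta_{j+1} \leq q$ and $\mu_{j+1}/\mu_{j} \leq q$. Fix some $q' \in (0,q)\cap(0,1)$, and let $\epsilon_{\nu} \downarrow 0$, $M_{\nu} \to \infty$, $t_{\nu} \downarrow 1$, $s_{\nu} \uparrow 1$. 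Three perturbations are carried out simultaneously. First, replace the factor $z^{m}$ by $\prod_{k=0}^{m-1}\bigl(z-\epsilon_{\nu} q'^{\,k}\bigr)$, introducing $m$ strictly $q$-separated positive zeros that converge to $0$ and arranging $F_{\nu}(0)\neq 0$. Second, dilate the existing nonzero zeros multiplicatively to $\eta_{j}^{(\nu)} := \eta_{j}\, t_{\nu}^{\,j}$ and $-\mu_{j}^{(\nu)} := -\mu_{j}\, s_{\nu}^{\,j}$, so that the successive ratios $(\eta_{j}/\eta_{j+1})/t_{\nu}$ and $(\mu_{j+1}/\mu_{j})\, s_{\nu}$ are both strictly less than $q$, and so that any multiplicities (relevant only in the case $q=1$) are resolved. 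Third, append $n-d$ large zeros $M_{\nu} q'^{\,l}$, $l = 0,\ldots,n-d-1$, which raises the degree to $n$ and is automatically strictly $q$-separated from all other zeros for $M_{\nu}$ large enough. Finally, choose $c_{\nu}$ so that $c_{\nu}\prod_{l=0}^{n-d-1}(-M_{\nu} q'^{\,l}) \to c_{0}$, forcing $F_{\nu} \to F$ uniformly on compact subsets of $\C$. By construction $F_{\nu} \in \mathcal{R}_{n}(q)$, $\deg F_{\nu} = n$, and $F_{\nu}(0)\neq 0$.

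For the \emph{moreover} part of (a) one has $F \preceq G$, and the goal is to produce $G_{\nu} \in \sigma_{n}(\R)$ with $\deg G_{\nu} = n$, $G_{\nu}(0)\neq 0$, $G_{\nu} \to G$, and $F_{\nu} \prec G_{\nu}$. By the definition of interspersion each zero of $G$ lies in a (possibly degenerate) closed interval bounded by two consecutive zeros of $F$. My plan is to displace each such zero slightly into the corresponding open interval between consecutive zeros of $F_{\nu}$, and then to augment $G$ with small zeros near $0$ and large zeros tending to $\infty$ in exactly the same way as for $F_{\nu}$, so as to achieve $\deg G_{\nu} = n$ and $G_{\nu}(0)\neq 0$. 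The strict interspersion of the resulting zero configurations gives $F_{\nu} \prec G_{\nu}$ by Lemma~\ref{sec:polyn-with-intersp-1}(\ref{item:41}). The main obstacle is carrying out this zero-by-zero displacement \emph{consistently} with the perturbation of $F$: whenever $F$ and $G$ already share a zero (in particular at $0$, where the order of $G$ may differ from $m$ by at most one), the coordination must strictly separate the zeros without destroying convergence. Lemma~\ref{sec:lemmas-2} provides the flexibility needed here, since it allows one to pre-replace $G$ by $G + s_{\nu} F$ for $s_{\nu}\downarrow 0$, preserving $F \preceq G + s_{\nu}F$ while generically destroying any common zeros.

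Part (b) follows from the identical construction, except that all auxiliary zeros (those replacing $z^{m}$ and those appended to reach degree $n$) are placed on the \emph{negative} real axis, so that $F_{\nu} \in \mathcal{N}_{n}(q)$ and $G_{\nu} \in \sigma_{n}(\R)\cap\pi_{n}(\R_{0}^{-})$. The relation $F_{\nu} \lhd G_{\nu}$ then follows from the already-established $F_{\nu} \prec G_{\nu}$ together with Lemma~\ref{sec:charact-lhd-vee}: the positivity $(F_{\nu}/G_{\nu})(z)>0$ at some positive $z$ is automatic, since $F_{\nu}$ and $G_{\nu}$ are products of factors of the form $(z+\text{positive})$ only.
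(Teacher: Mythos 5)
Your construction of $F_{\nu}$ --- resolving the zero at the origin by a block of small strictly $q$-separated zeros, dilating the nonzero zeros geometrically so that all same-sign ratios drop strictly below $q$ (which also resolves multiplicities when $q=1$), and appending a geometric block of large zeros to reach degree $n$, with the constant adjusted so that $F_{\nu}\rightarrow F$ --- is essentially the paper's own argument: the paper does exactly this with the factors $(1+\nu^{-1})^{j-1}w_{j}$ and the auxiliary polynomials $(R_{l}((1-\nu^{-1})q;-\nu^{-1}z))^{*l}$ and $R_{n-m}((1-\nu^{-1})q;\nu^{-1}z)$. That part of your proposal is correct.

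The \emph{moreover} part, however, contains a genuine gap. You yourself flag the ``main obstacle'' (coordinating the displacement of the zeros of $G$ with the perturbation of $F$, in particular at common zeros and at the origin), but the device you offer to overcome it does not work: replacing $G$ by $G+s_{\nu}F$ preserves every common zero of $F$ and $G$, since $F(x)=G(x)=0$ forces $(G+s_{\nu}F)(x)=0$; Lemma \ref{sec:lemmas-2} does give $F\preceq G+s_{\nu}F$, but it cannot ``generically destroy'' common zeros. So the one concrete step meant to remove the obstruction fails, and the zero-by-zero displacement is never actually specified --- nor can the small and large zeros appended to $G$ be placed ``in exactly the same way'' as for $F_{\nu}$, since identical placements would again create common zeros rather than strict interlacing. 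The paper's route is different and closes this: approximate $G$ by the same explicit zero-perturbation to obtain $\hat{G}_{\nu}\in\sigma_{n}(\R)$ (resp.\ $\sigma_{n}(\R_{0}^{-})$) of degree $n$ with $\hat{G}_{\nu}(0)\neq 0$, and then rescale the variable, $G_{\nu}(z):=\hat{G}_{\nu}(s_{\nu}z)$ with a suitable sequence $s_{\nu}\rightarrow 1$, so that all zeros of $G_{\nu}$ land strictly in the correct gaps between consecutive zeros of $F_{\nu}$; some such one-parameter global shift or dilation (an additive perturbation by $F$ will not do) is what your argument is missing. A smaller point in (b): $(F_{\nu}/G_{\nu})(z)>0$ for $z>0$ is not automatic from the factors $(z+\mathrm{positive})$ alone --- the two leading constants must have the same sign, which you get from $F\unlhd G$ via Lemma \ref{sec:charact-lhd-vee} and convergence at a fixed positive point where $G$ does not vanish.
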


\begin{proof}
  The 'if'-direction is clear. We will show the 'only if'-direction only for
  (\ref{item:49}), the proof of (\ref{item:48}) being similar.

  Hence, suppose that $F\in\overline{\mathcal{N}}_{n}(q)\setminus\{0\}$ and
  $G\in\pi_{n}(\R_{0}^{-})\setminus\{0\}$ satisfy $F\unlhd G$.  Assume
  further that $F$ is of degree $m\leq n$ with
  $\operatorname{ord}(F;0)=:l\geq 0$ such that
  \begin{equation*}
    F(z) = z^{l} H(z) \quad \mbox{with}\quad H\in 
    \overline{\mathcal{N}}_{m-l}(q), \; \deg H = m-l,\; H(0)\neq 0.
  \end{equation*}
  Suppose $w_{m-l}\leq w_{m-l-1} \leq \cdots \leq w_{2}\leq w_{1}<0$ and
  $a\in\R$ are such that
  \begin{equation*}
    H(z) = a \prod_{j=1}^{m-l}\left(z-w_{j}\right) 
  \end{equation*}
  and set
  \begin{equation*}
    H_{\nu}(z):=a\prod_{j=1}^{m-l}\left(z-(1+\nu^{-1})^{j-1}w_{j}\right) 
    \quad \mbox{for} \quad \nu\in\N.
  \end{equation*}
  Then $H_{\nu} \in \mathcal{N}_{m-l}(q)$ for $\nu\in\N$. Since
  \begin{equation}
    \label{eq:8}
    (R_{l}((1-\nu^{-1})q;- \nu^{-1}z))^{*l}=\prod_{j=1}^{l}(z+\nu^{-1}
    (1-\nu^{-1})^{j-1}q^{j-1})\in \mathcal{N}_{l}(q)
  \end{equation}
  for $\nu\in \N$ and $q\in[0,1]$, we therefore find that, for large $\nu\in\N$,
  \begin{equation*}
    F_{\nu}(z):=(R_{l}((1-\nu^{-1}) q;-\nu^{-1} z))^{*l}\cdot
    H_{\nu}(z)\cdot R_{n-m}((1-\nu^{-1}) q;\nu^{-1} z)
  \end{equation*}
  belongs to $\mathcal{N}_{n}(q)$, is of degree $n$, and does not
  vanish at the origin. (\ref{eq:23}) and (\ref{eq:8}) show that
  \begin{equation*}
    (R_{l}((1-\nu^{-1})q;-\nu^{-1} z))^{*l}\rightarrow z^{l} \quad \mbox{and} \quad
    R_{n-m}((1-\nu^{-1})q;\nu^{-1} z) \rightarrow 1
  \end{equation*}
  as $\nu\rightarrow \infty$, and thus it follows that $F_{\nu}\rightarrow F$
  locally uniformly on $\C$ as $\nu\rightarrow \infty$.

  In the same way one constructs polynomials
  $\hat{G}_{\nu}\in\sigma_{n}(\R_{0}^{-})$ with $\hat{G}_{\nu}(0)\neq 0$ and
  $\deg \hat{G}_{\nu} =n$ that approximate $G$. One can then find a sequence
  $\{s_{\nu}\}_{\nu}\subset(1,+\infty)$ with $s_{\nu}\rightarrow 1$ as
  $\nu\rightarrow \infty$ such that the zeros $y_{k,\nu}$ of
  $G_{\nu}(z):=\hat{G}_{\nu}(s_{\nu}z)$ and the zeros $x_{k,\nu}$ of $F_{\nu}$
  satisfy (\ref{eq:9}). This means that $F_{\nu} \prec G_{\nu}$ and
  $(F_{\nu}/G_{\nu})(z)>0$ for $z>0$. Consequently, $F_{\nu} \lhd G_{\nu}$ for
  $\nu\in\N$ by Lemma \ref{sec:charact-lhd-vee}.
\end{proof}

For $q\in (0,1)$ the \emph{$q$-difference operator $\Delta_{q,n}$} is defined
by
\begin{equation*}
  \Delta_{q,n}[F](z) := \frac{F(z)-F(q^{-1} z)}{q^{n-1} z- q^{-1}z}, \quad
  F\in\R_{n}[z]. 
\end{equation*}
We also set
\begin{equation*}
  \Delta_{q,n}^{*}[F](z) := \frac{q^{-n}F(z)-F(q^{-1} z)}{q^{-n} - 1}, \quad
  F\in\R_{n}[z]. 
\end{equation*}
Using (\ref{eq:33}), it is easy to check that if $F$ is of the
form $F(z) = \sum_{k=0}^{n} C_{k}^{n}(q) a_{k} z^{k}$, then
\begin{equation}
  \label{eq:7}
  \Delta_{q,n}[F](z) = \sum_{k=0}^{n-1} C_{k}^{n-1}(q) a_{k+1} z^{k} \quad
  \mbox{and} \quad
  \Delta_{q,n}^{*}[F](z) = \sum_{k=0}^{n-1} C_{k}^{n-1}(q) a_{k} z^{k}.
\end{equation}
In particular, 
\begin{equation*}
  \Delta_{q,n}[R_{n}(q;z)](z) = \Delta_{q,n}^{*}[R_{n}(q;z)](z)
  = R_{n-1} (q;z).
\end{equation*}
Moreover, if $F(z) = \sum_{k=0}^{n} {n\choose k} a_{k}
z^{k}$, then
\begin{equation*}
  \lim_{q\rightarrow 1} \Delta_{q,n}[F](z) = \lim_{q\rightarrow 1}
  \sum_{k=0}^{n-1} C_{k}^{n-1}(q) \frac{{n\choose k+1}
    a_{k+1}}{C_{k+1}^{n}(q)} z^{k} = \sum_{k=0}^{n-1} {n-1\choose k} a_{k+1}
  z^{k} = \frac{F'(z)}{n},
\end{equation*}
and similarly we see that
\begin{equation*}
  \lim_{q\rightarrow 1} \Delta_{q,n}^{*}[F](z) = F(z)-z\frac{F'(z)}{n}.
\end{equation*}
We therefore set
\begin{equation}
  \label{eq:46}
  \Delta_{1,n}[F](z) := \frac{F'(z)}{n} \quad
  \mbox{and} \quad
  \Delta_{1,n}^{*}[F](z) := F(z)-z\frac{F'(z)}{n}, \quad F\in\R_{n}[z].
\end{equation}

These observations show that $\Delta_{q,n}[F]$ is a $q$-extension of the
derivative $F'$, while $\Delta_{q,n}^{*}[F]$ is a $q$-extension of the polar
derivative of $F$ with respect to $0$ (cf. \cite[(3.1.4)]{rahman}). The next
theorem is therefore a $q$-extension of Rolle's theorem.

\begin{theorem}[$q$-extension of Rolle's theorem]
  \label{sec:q-rolle-thm}
  Let $q\in(0,1]$.
  \begin{enumerate}
  \item \label{item:11} If $F\in \overline{\mathcal{R}}_{n}(q)$, then
    $\Delta_{q,n}[F] \in \overline{\mathcal{R}}_{n-1}(q)$ and $\Delta_{q,n}[F]
    \preceq F$. If $F\in \mathcal{R}_{n}(q)$, then $\Delta_{q,n}[F] \in
    \mathcal{R}_{n-1}(q)$ and $\Delta_{q,n}[F] \prec_{0} F$.
  \item \label{item:12} If $F\in \overline{\mathcal{N}}_{n}(q)$, then
    $\Delta_{q,n}[F] \in \overline{\mathcal{N}}_{n-1}(q)$ and $\Delta_{q,n}[F]
    \unlhd F$. If $F\in \mathcal{N}_{n}(q)$, then $\Delta_{q,n}[F] \in
    \mathcal{N}_{n-1}(q)$ and $\Delta_{q,n}[F] \lhd F$.
  \end{enumerate}
\end{theorem}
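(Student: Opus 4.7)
The case $q=1$ reduces to classical Rolle's theorem: $\Delta_{1,n}[F]=F'/n$, and the standard Wronskian computation yields $F'/n\preceq F$ for any $F$ with only real zeros. Assume henceforth $q\in(0,1)$. My first step is to reduce to the strict case $F\in\mathcal{R}_{n}(q)$ (respectively $F\in\mathcal{N}_{n}(q)$) with $\deg F=n$ and $F(0)\neq 0$. Lemma \ref{sec:an-extension-polyas-1} furnishes such approximants $F_{\nu}\to F$ uniformly on compacta; since $\Delta_{q,n}$ is a continuous linear operator on $\R_{n}[z]$ and the zeros of polynomials of fixed degree depend continuously on coefficients, both the membership and the interspersion conclusions pass to the closures $\overline{\mathcal{R}}_{n}(q)$ and $\overline{\mathcal{N}}_{n}(q)$ once the strict versions are established.

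For such $F$, set $G(z):=F(q^{-1}z)$. Lemma \ref{sec:working-2} gives $F\vee G$, and a direct computation shows
\begin{equation*}
  \Delta_{q,n}[F](z)=c_{1}\cdot\frac{F(z)-G(z)}{z}, \qquad \Delta_{q,n}^{*}[F](z)=c_{2}\cdot\bigl(F(z)-q^{n}G(z)\bigr)
\end{equation*}
for nonzero real constants $c_{1},c_{2}$. Both $F$ and $G$ have degree $n$ and are nonzero at $0$, so Lemma \ref{sec:polynomials-with-log-6}, applied with $\alpha=F(0)/G(0)=1$ and $\beta=F(\infty)/G(\infty)=q^{n}$, immediately shows that $\Delta_{q,n}[F]$ and $\Delta_{q,n}^{*}[F]$ lie in $\R_{n-1}[z]$ and have strictly interspersed real zeros; in particular, all zeros of $\Delta_{q,n}[F]$ are real and simple.

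The crux of the argument is then to locate the zeros of $\Delta_{q,n}[F]$ relative to the zeros $r_{k}$ of $F$ and to verify their $q$-separation. For this I would analyze the rational function $R:=F/G$ on $\overline{\R}$: its zeros are the $r_{k}$ and its poles are the $qr_{k}$, and by Lemma \ref{sec:polynomials-with-log-1} these strictly alternate along $\R$, with $R>0$ on a distinguished interval through $0$, $R(0)=1$, and $R(\pm\infty)=q^{n}$. Since the zeros of $\Delta_{q,n}[F]$ are exactly the nonzero solutions of $R(z)=1$, a sign and monotonicity analysis of $R$ on each open interval between successive $r_{k}$'s and $qr_{k}$'s forces each such solution $s_{k}$ into an open subinterval of the form $(qr_{k+1},r_{k})$ (on the negative side) or $(r_{k},qr_{k+1})$ (on the positive side). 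This simultaneously yields strict interspersion of the $s_{k}$'s with the $r_{k}$'s, whence $\Delta_{q,n}[F]\prec_{0}F$, and the elementary bound $|s_{k}|/|s_{k+1}|<q$, whence $\Delta_{q,n}[F]\in\mathcal{R}_{n-1}(q)$. For part (b), $F\in\mathcal{N}_{n}(q)$ forces all $s_{k}<0$, giving $\Delta_{q,n}[F]\in\mathcal{N}_{n-1}(q)$; a direct check that $\Delta_{q,n}[F]/F>0$ on the positive real axis (from positive leading coefficients and the absence of zeros there) then upgrades $\prec_{0}$ to the log-interspersion $\lhd$ via Lemma \ref{sec:charact-lhd-vee}. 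The main obstacle I expect is precisely this zero-tracking step: ruling out spurious real solutions of $R=1$ and placing the $s_{k}$'s into the correct subintervals requires the same kind of monotonicity bookkeeping as in the proof of Lemma \ref{sec:polynomials-with-log-6}, carried out separately for the two sign-groups of zeros of $F$ and for the distinguished interval through the origin; all other steps are routine applications of the already-established lemmas.
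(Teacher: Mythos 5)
Your core mechanism is genuinely different from the paper's proof of Theorem \ref{sec:q-rolle-thm} and is, in substance, viable: instead of the paper's intermediate-value construction (the level curves $a(t),b(t)$ with $F(a(t))=F(b(t))=t$ and the ratio $a(t)/b(t)$ crossing $q$ in each gap of $F$), you feed $F(z)\vee F(q^{-1}z)$ (Lemma \ref{sec:working-2}) into Lemma \ref{sec:polynomials-with-log-6} with $\alpha=1$, $\beta=q^{n}$ -- which is exactly the device the paper reserves for the $q$-Laguerre theorem (Theorem \ref{sec:q-Laguerre-thm}) -- and then localize the nonzero solutions of $R=1$, $R:=F(z)/F(q^{-1}z)$. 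There is no circularity (Lemmas \ref{sec:working-2} and \ref{sec:polynomials-with-log-6} do not use Theorem \ref{sec:q-rolle-thm}), and the localization you assert is correct; note, however, that it sits only in the \emph{proof} of Lemma \ref{sec:polynomials-with-log-6} (the chains (\ref{eq:18})--(\ref{eq:19})), not in its statement, so that monotonicity bookkeeping must be redone in full -- it is essentially the same amount of work as the paper's own argument, reorganized around level sets of $R$ instead of level sets of $F$. Two smaller omissions: strict interspersion alone does not decide between $\Delta_{q,n}[F]\prec_{0}F$ and $F\prec_{0}\Delta_{q,n}[F]$, so you still need the one-line orientation check at $\infty$ (the ratio of leading coefficients of $F$ and $z\Delta_{q,n}[F]$ is positive), which your ``whence'' glosses over; and the solution of $R=1$ in the interval containing the origin needs separate care (it may sit on either side of $0$, or at $0$ when $F'(0)=0$).

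The genuine gap is in your reduction step. Lemma \ref{sec:an-extension-polyas-1} plus continuity delivers only the closed statements: $F\in\overline{\mathcal{R}}_{n}(q)$ implies $\Delta_{q,n}[F]\in\overline{\mathcal{R}}_{n-1}(q)$ and $\Delta_{q,n}[F]\preceq F$ (similarly for $\overline{\mathcal{N}}_{n}(q)$). But the second sentences of (a) and (b) assert the \emph{strict} conclusions for every $F\in\mathcal{R}_{n}(q)$, resp. $\mathcal{N}_{n}(q)$, and such $F$ may have a multiple zero at the origin or degree $<n$; these are excluded by your normalization $\deg F=n$, $F(0)\neq 0$, and strictness cannot be recovered by passing to limits. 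The paper closes precisely this case with an extra argument after the approximation step: a common zero $z\neq 0$ of $F$ and $\Delta_{q,n}[F]$ would force $F(z)=F(q^{-1}z)=0$, contradicting $F\in\mathcal{R}_{n}(q)$; and two zeros of $\Delta_{q,n}[F]$ at exact ratio $q$ would force $F(z)=F(q^{-1}z)=F(q^{-2}z)$, which (via Rolle and interspersion of $F$ and $F'$) produces two zeros of $F$ in $[z,q^{-1}z]$, again contradicting strict $q$-separation. You need to add this (or an equivalent direct treatment of $F=z^{m}H$ with $\deg F<n$), since for such $F$ the relevant level set is $R=q^{m}\neq 1$ and your normalized analysis does not apply verbatim; without it, the strict halves of (a) and (b) are proved only for the normalized subclass.
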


\begin{proof}
  We will first verify (\ref{item:11}). For $n=0,1$ the assertions in
  (\ref{item:11}) are trivial and therefore we assume that $n\geq 2$.

  The case $q=1$ is the classical theorem of Rolle together with the
  observation that $nF(z)/(zF'(z))\rightarrow 1$ as $z\rightarrow \infty$
  and that therefore $nF(z)/F'(z)$ has to be increasing for large $z>0$. 
  
  In order to prove the case $q\in(0,1)$, suppose first that $F$ lies in
  $\mathcal{R}_{n}(q)$, is of degree $n$, and satisfies $F(0)\neq 0$. Then
  there is a $k\in\{0,\ldots,n\}$ such that $F$ has $n$ distinct zeros $x_{j}$
  which satisfy
  \begin{equation*}
    x_{1} < x_{2}< \cdots < x_{k} < 0 < x_{k+1} < x_{k+2} < \cdots < x_{n}.
  \end{equation*}
  Because of Rolle's theorem for every $j\in\{1,\ldots,n-1\}$ there is exactly
  one critical point $y_{j}$ of $F$ in $(x_{j}, x_{j+1})$. We will now prove
  the assertion in the case where $T:=F(y_{k})>0$ and $y_{k}>0$ (the other
  possible cases can be verified in a similar manner). 
  
  Under this assumption there are continuous functions $a(t)\in[x_{k},y_{k}]$
  and $b(t)\in[y_{k},x_{k+1}]$ with
  \begin{equation}
    \label{eq:3}
    F(a(t))=F(b(t))=t\qquad \mbox{for}\qquad t\in[0,T].
  \end{equation}
  Consequently, $a(0)=x_{k}$, $b(0)=x_{k+1}$, and $a(T)=b(T)=y_{k}$. Since
  $r(t):=a(t)/b(t)$ is continuous in $[0, T]$ with $r(0)=x_{k}/x_{k+1} \leq 0$
  and $r(T) = 1$ there must be a $t_{0}\in(0,T)$ (actually, $t_{0}\in(F(0),T)$)
  with $r(t_{0}) = q$. Setting $w_{k}:=a(t_{0})$, this means $b(t_{0})=q^{-1}
  w_{k}$ and thus, because of (\ref{eq:3}), $\Delta_{q,n}[F](w_{k}) =
  0$. Moreover,
  \begin{equation}
    \label{eq:6}
    x_{k}< 0 < w_{k} < q^{-1}w_{k} < x_{k+1}.
  \end{equation}

  Since $F\in\mathcal{R}_{n}(q)$ we have $x_{j}/x_{j+1} < q$ for
  $j\in\{k+1,\ldots,n-1\}$ and $j\in\{1,\ldots,k-1\}$.  Making use of these
  inequalities, we can proceed in a similar way as in the case $j=k$ to find
  that $\Delta_{q,n}[F]$ has zeros $w_{j}$,
  $j\in\{1,\ldots,n-1\}\setminus\{k\}$, with
  \begin{align*}
    x_{j}< q^{-1}w_{j} < w_{j} < x_{j+1}, &\quad\mbox{for }j\in\{1,\ldots,k-1\},\\
    x_{j} < w_{j} < q^{-1}w_{j} < x_{j+1}, &\quad \mbox{for
    }j\in\{k+1,\ldots,n-1\}.
  \end{align*}
  This, together with (\ref{eq:6}), shows that
  $\Delta_{q,n}[F]\in\mathcal{R}_{n-1}(q)$ and that $F$ and $\Delta_{q,n}[F]$
  have strictly interspersed zeros. By (\ref{eq:33}) we have
  \begin{equation*}
    \frac{F(z)}{z \Delta_{q,n}[F](z)}\rightarrow
    \frac{C_{k}^{n}(q)}{C_{k-1}^{n-1}(q)}>0 \quad \mbox{as} \quad 
    z\rightarrow\infty.
  \end{equation*}
  Hence, $F(z)/\Delta_{q,n}[F](z)$ is increasing for large $z$, which implies
  $\Delta_{q,n}[F]\prec F$ since $F$ and $\Delta_{q,n}[F]$ have strictly
  interspersed zeros.

  It remains to prove (\ref{item:11}) for $F\in \overline{\mathcal{R}}_{n}(q)$
  or $F\in \mathcal{R}_{n}(q)$ which do not have to be of degree $n$ and $\neq
  0$ at $z=0$. If $F$ is a polynomial in $\overline{\mathcal{R}}_{n}(q)$, then
  Lemma \ref{sec:an-extension-polyas-1} and what we have just shown yield
  $\Delta_{q,n}[F]\in\overline{\mathcal{R}}_{n-1}(q)$ and $\Delta_{q,n}[F]
  \preceq F$. If $\Delta_{q,n}[F]$ and $F$ have a common zero at a point
  $z\in\R\setminus\{0\}$, then it follows that $F(z)=F(q^{-1}z)=0$ and hence
  that $F\notin \mathcal{R}_{n}(q)$. If there is a $z>0$ (in the case $z<0$ one
  can argue analogously) such that
  $\Delta_{q,n}[F](z)=\Delta_{q,n}[F](q^{-1}z)=0$, then
  \begin{equation}
    \label{eq:42}
    F(z)=F(q^{-1} z) = F(q^{-2}z).
  \end{equation}
  This implies that there has to be a zero $x$ of $F$ in $[z,q^{-2}z]$, for
  otherwise $F$ would not vanish in $[z,q^{-2}z]$, but $F'$ would vanish at
  least two times there. Hence, $F$ and $F'$ would not have interspersed zeros,
  a contradiction to the fact that $F$ has only real zeros. We can suppose that
  $x\in[z,q^{-1}z]$. Then because of (\ref{eq:42}) there has to a second zero
  $y$ of $F$ in $[z,q^{-1}z]$. Since $F\in \overline{\mathcal{R}}_{n}(q)$, we
  must have $\{x,y\}=\{z,q^{-1}z\}$ and thus $F\notin \mathcal{R}_{n}(q)$.
  Hence, (\ref{item:11}) is proven. 

  The proof of (\ref{item:11}) also shows that if $F$
  belongs to $\overline{\mathcal{N}}_{n}(q)$ or $\mathcal{N}_{n}(q)$, then
  $\Delta_{q,n}[F]$ belongs to, respectively,
  $\overline{\mathcal{N}}_{n-1}(q)$ or
  $\mathcal{N}_{n-1}(q)$. (\ref{item:12}) thus follows from (\ref{item:11})
  and the definition of $\unlhd$.
\end{proof}

\begin{theorem}
  \label{sec:q-Laguerre-thm-1}
  Let $q\in(0,1]$ and suppose $\mathcal{C}$ denotes one of the classes
  $\overline{\mathcal{R}}_{n}(q)$, $\mathcal{R}_{n}(q)$,
  $\overline{\mathcal{N}}_{n}(q)$, $\mathcal{N}_{n}(q)$. Then $F\in\mathcal{C}$
  implies $\Delta_{q,n}^{*}[F] \in \mathcal{C}$.
\end{theorem}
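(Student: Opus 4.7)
The plan is to realize $\Delta_{q,n}^{*}$ as a conjugation of $\Delta_{q,n}$ by a coefficient-reversal map in the $q$-binomial basis and then invoke the $q$-Rolle theorem (Theorem~\ref{sec:q-rolle-thm}).

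For $n\in\N$ define the reversal $\sigma_n:\R_n[z]\to\R_n[z]$ by
\[
\sigma_n\!\left(\sum_{k=0}^{n} C_{k}^{n}(q)\,b_{k}\,z^{k}\right):=\sum_{k=0}^{n} C_{k}^{n}(q)\,b_{n-k}\,z^{k}.
\]
Two facts about $\sigma_n$ will drive the proof. First, comparing coefficients via (\ref{eq:7}) yields the conjugation identity
\[
\Delta_{q,n}^{*}[F]=\sigma_{n-1}\!\bigl(\Delta_{q,n}[\sigma_n(F)]\bigr),\qquad F\in\R_n[z].
\]
Second, the symmetry $C_{n-k}^{n}(q)=q^{(n-1)(n-2k)/2}\,C_{k}^{n}(q)$, which follows directly from (\ref{eq:33}), gives the closed form
\[
\sigma_n(F)(z)=q^{n(n-1)/2}\,z^{n}\,F\!\bigl(q^{-(n-1)}/z\bigr),
\]
so that the non-zero zeros of $\sigma_n(F)$ are exactly the images under $x\mapsto q^{-(n-1)}/x$ of the non-zero zeros of $F$, while the multiplicity of $0$ as a zero of $\sigma_n(F)$ equals $n-\deg F$.

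Next I would check that $\sigma_n$ preserves each of the four classes $\overline{\mathcal{R}}_n(q)$, $\mathcal{R}_n(q)$, $\overline{\mathcal{N}}_n(q)$, $\mathcal{N}_n(q)$. The map $x\mapsto q^{-(n-1)}/x$ preserves the sign of every non-zero real $x$, and the identity
\[
\frac{q^{-(n-1)}/y}{q^{-(n-1)}/x}=\frac{x}{y}
\]
shows that the (strict) logarithmic $q$-separation of the non-zero zeros is left invariant. Adding a zero of multiplicity $n-\deg F$ at the origin is harmless because multiple zeros at $0$ are admitted by the definitions of all four classes. Hence $\sigma_n$ maps $\mathcal{C}$ into itself, and the same holds for $\sigma_{n-1}$ on the corresponding class in $\R_{n-1}[z]\subset\R_n[z]$.

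The theorem then follows in three lines: given $F\in\mathcal{C}$, class invariance yields $\sigma_n(F)\in\mathcal{C}$; Theorem~\ref{sec:q-rolle-thm} gives $\Delta_{q,n}[\sigma_n(F)]\in\mathcal{C}$; and one further application of class invariance returns $\Delta_{q,n}^{*}[F]=\sigma_{n-1}(\Delta_{q,n}[\sigma_n(F)])\in\mathcal{C}$. The case $q=1$ is subsumed uniformly: $\sigma_n$ specializes to ordinary reciprocation $F\mapsto z^{n}F(1/z)$ and the conjugation identity becomes the classical relation between derivative and polar derivative. The only real obstacle is the computational bookkeeping with $q$-binomials needed to establish the two displayed facts about $\sigma_n$, but no new analytic ideas are required beyond those already used in the treatment of $\Delta_{q,n}$.
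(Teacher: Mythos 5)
Your proof is correct and takes essentially the same route as the paper: the paper's identity (\ref{eq:43}), $(\Delta_{q,n}[F])^{*(n-1)}(z) = -q^{1-n}\Delta_{q,n}^{*}[F^{*n}](qz)$, is your conjugation identity in disguise (up to a constant, your $\sigma_{n}$ is $F\mapsto F^{*n}$ composed with $z\mapsto -q^{n-1}z$), and both arguments then reduce to the $q$-Rolle theorem (Theorem \ref{sec:q-rolle-thm}). The only difference is that your reversal preserves the signs of the zeros and so handles the $\mathcal{N}$-classes uniformly, whereas the paper's sign-flipping $F\mapsto F^{*n}$ requires the additional observation that $F\mapsto F^{*n}(-z)$ preserves $\overline{\mathcal{N}}_{n}(q)$ together with $\Delta_{q,n}[F(-z)](z)=-\Delta_{q,n}[F](-z)$.
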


\begin{proof}
  First, observe that for $q\in(0,1)$ and $F\in\R_{n}[z]$
  \begin{align*}
    \left(\Delta_{q,n}[F]\right)^{*(n-1)}(z) = \; & z^{n-1}
    \frac{F(-z^{-1})-F(-q^{-1} z^{-1})}{-q^{n-1}z^{-1}+ q^{-1}z^{-1}}
    \\
    = \; & -q^{1-n} \frac{q^{-n}F^{*n}(qz)-F^{*n}(z)}{q^{-n}-1} = -q^{1-n}
    \Delta_{q,n}^{*}[F^{*n}] (qz).
  \end{align*}
  It is also straightforward to verify that
  \begin{equation*}
    z\frac{(F^{*n})'(z)}{n} - F^{*n}(z) = \left(\frac{F'(z)}{n}\right)^{*(n-1)}
  \end{equation*}
  and thus, for all $q\in(0,1]$ and $F\in\R_{n}[z]$, we have
  \begin{equation}
    \label{eq:43}
    \left(\Delta_{q,n}[F]\right)^{*(n-1)}(z) = -q^{1-n}\Delta_{q,n}^{*}[F^{*n}](qz).
  \end{equation}
  Together with Theorem \ref{sec:q-rolle-thm}, this relation immediately shows
  that if $F$ belongs to $\overline{\mathcal{R}}_{n}(q)$ or
  $\mathcal{R}_{n}(q)$, then $\Delta_{q,n}^{*}[F]$ is an element of,
  respectively, $\overline{\mathcal{R}}_{n-1}(q)$ or $\mathcal{R}_{n-1}(q)$.

  On the other hand, a polynomial $F$ lies in $\overline{\mathcal{N}}_{n}(q)$ or
  $\mathcal{N}_{n}(q)$ if, and only if, $F^{*n}(-z)$ lies in, respectively,
  $\overline{\mathcal{N}}_{n}(q)$ or $\mathcal{N}_{n}(q)$. Since
  \begin{equation*}
    \Delta_{q,n}[F(-z)](z) = - \Delta_{q,n}[F](-z),
  \end{equation*}
  (\ref{eq:43}) and Theorem \ref{sec:q-rolle-thm} therefore also give the
  remaining parts of the assertion.
\end{proof}

As described in Section \ref{sec:new-char-log} ''Newton's inequalities''
\cite[Thm. 2]{stanley89} state that if $F(z) = \sum_{k=0}^{n} {n\choose k}
a_{k} z^{k}$ is an element of $\overline{\mathcal{R}}_{n}(1)$, then the sequence
$\{a_{k}\}_{k=0}^{n}$ is log-concave. Using Theorems \ref{sec:q-rolle-thm} and
\ref{sec:q-Laguerre-thm-1} the proof of Newton's inequalities (as given in
\cite[Thm. 2]{stanley89}, for example) can be modified in order to obtain the
following.

\begin{theorem}($q$-extension of Newton's inequalities)
  \label{sec:q-newton-ineq}
  Let $q \in (0,1]$ and suppose $F(z) = \sum_{k=0}^{n} C_{k}^{n}(q) a_{k} z^{k}
  \in\mathcal{R}_{n}(q)$. Then $f(z) := \sum_{k=0}^{n} a_{k} z^{k}$ belongs to
  $\mathcal{LC}_{n}$, i.e. we have
  \begin{equation*}
    a_{k}^{2} > a_{k-1} a_{k+1} \quad \mbox{for all} \quad k\in\{0,\ldots,n\}
  \end{equation*}
  for which there are $l\leq k$ and $m\geq k$ such that $a_{l}$, $a_{m}\neq 0$.
  If $F \in\mathcal{N}_{n}(q)$, then $f$ belongs to $\mathcal{LC}_{n}^{+}$,
  i.e. $f\in \mathcal{LC}_{n}$ and all coefficients are either non-positive or
  non-negative.
\end{theorem}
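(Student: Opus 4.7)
The plan is to imitate Stanley's classical proof of Newton's inequalities~\cite[Thm.~2]{stanley89}, which reduces the problem to a quadratic with only real zeros by iterated use of ordinary differentiation and polynomial reversal, with the ordinary derivative replaced by the $q$-difference operator $\Delta_{q,n}$ (Theorem~\ref{sec:q-rolle-thm}) and the reversal-plus-differentiation step replaced by $\Delta_{q,n}^{*}$ (Theorem~\ref{sec:q-Laguerre-thm-1}). Concretely, given an admissible $k$, I would first apply $\Delta_{q,\cdot}^{*}$ to $F$ a total of $n-k-1$ times and then apply $\Delta_{q,\cdot}$ a total of $k-1$ times; by~(\ref{eq:7}) together with the two theorems just cited, the resulting polynomial belongs to $\mathcal{R}_{2}(q)$ and equals
\[
G(z) \;=\; a_{k-1} + (1+q)\,a_{k}\,z + q\,a_{k+1}\,z^{2}.
\]

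The main case is when $G$ has two nonzero real roots $z_{1}, z_{2}$ of the same sign. Setting $r := z_{1}/z_{2}$, strict $q$-separation forces $r \in (0,q)$, and Vieta's formulas translate into
\[
\frac{a_{k}^{2}}{a_{k-1} a_{k+1}} \;=\; \frac{q}{(1+q)^{2}}\,\bigl(r + r^{-1} + 2\bigr).
\]
The desired strict inequality $a_{k}^{2} > a_{k-1} a_{k+1}$ is then equivalent to $q(r-q)(r-q^{-1}) > 0$, which holds because for $q \in (0,1]$ and $r \in (0,q)$ both factors are strictly negative. In the two remaining non-degenerate subcases, roots of opposite sign or exactly one root at the origin with $a_{k}\neq 0$, the inequality follows at once from $a_{k-1}a_{k+1} < 0 \le a_{k}^{2}$ and $a_{k}^{2} > 0 = a_{k-1} a_{k+1}$, respectively.

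The hard part is to rule out the degenerate situations in which two or three of $a_{k-1}, a_{k}, a_{k+1}$ vanish simultaneously while the hypothesis on $k$ demands nonzero coefficients on each side. For this I would prove a preliminary \emph{support lemma}: for $F \in \mathcal{R}_{n}(q)\setminus\{0\}$, the support of $\{a_{i}\}_{i=0}^{n}$ admits no internal gap of length at least two, i.e.\ if $a_{l^{*}}, a_{m^{*}}\neq 0$ and $a_{i}=0$ for all $l^{*}<i<m^{*}$, then $m^{*}-l^{*}\le 2$. The same reduction as above extracts
\[
\widetilde{H}(z) \;=\; a_{l^{*}} + q^{d(d-1)/2}\,a_{m^{*}}\,z^{d} \;\in\; \mathcal{R}_{d}(q), \qquad d := m^{*}-l^{*},
\]
but $\alpha + \beta z^{d}$ with $\alpha,\beta\neq 0$ has at most two real zeros and therefore cannot lie in $\mathcal{R}_{d}(q)$ once $d \ge 3$, forcing $d \le 2$. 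Combined with the hypothesis $a_{l}, a_{m} \neq 0$ for some $l \le k \le m$, this support lemma excludes precisely the bad configurations and leaves only the cases already treated.

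Finally, the second assertion about $\mathcal{N}_{n}(q)$ follows from the factorisation $F(z) = A z^{n_{0}} \prod_{j}(1 + z/|x_{j}|)$ with $x_{j} < 0$ for $F \in \mathcal{N}_{n}(q)$: all elementary symmetric functions of $\{1/|x_{j}|\}$ are non-negative, so all coefficients of $F$ share the sign of $A$, and since $C_{k}^{n}(q) > 0$ the same is true of $\{a_{k}\}$. Together with the $\mathcal{LC}_{n}$ membership already established, this places $f$ in $\mathcal{LC}_{n}^{+}$.
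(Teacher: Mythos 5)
Your proposal is correct, and its core is the same as the paper's: reduce, via iterated application of Theorems \ref{sec:q-rolle-thm} and \ref{sec:q-Laguerre-thm-1} together with (\ref{eq:7}), to the quadratic $a_{k-1}+(1+q)a_{k}z+qa_{k+1}z^{2}\in\mathcal{R}_{2}(q)$, and then exploit the strict $q$-separation of its two zeros (the order in which you apply $\Delta_{q,\cdot}$ and $\Delta_{q,\cdot}^{*}$ is immaterial). You diverge from the paper in two finishing steps, both to your advantage. First, in the main case you work with $r=z_{1}/z_{2}\in(0,q)$ and Vieta, reducing the claim to $q(r-q)(r-q^{-1})>0$; the paper instead manipulates the explicit quadratic-formula expression with the substitution $x=(1+q)a_{j}/\sqrt{4a_{j+1}a_{j-1}}$ — your version is cleaner but equivalent. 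Second, for the degenerate situations with $a_{k}=0$ the paper simply cites \cite[VIII, Lem.\ 3]{levin1980} to get $a_{k-1}a_{k+1}<0$ at once, whereas you prove the needed structure from scratch: the vanishing-middle-coefficient case is settled because the extracted quadratic must be real-rooted, and the configurations with two or more consecutive interior zero coefficients are excluded by your binomial-extraction support lemma ($a_{l^{*}}+q^{d(d-1)/2}a_{m^{*}}z^{d}$ cannot lie in $\pi_{d}(\R)$ for $d\geq 3$). This makes your argument self-contained at the cost of a little extra work; it is a perfectly valid substitute for the Levin citation. The only loose ends are the trivial configurations you do not name explicitly (e.g.\ $a_{k+1}=0$ with $a_{k}\neq 0$, or $k\in\{0,n\}$), which follow immediately from $a_{k-1}a_{k+1}=0<a_{k}^{2}$, and the concluding sign argument for $\mathcal{N}_{n}(q)$ agrees with the paper's.
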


\begin{proof}
  Applying Theorem \ref{sec:q-rolle-thm} $j-1$-times to $F(z) = \sum_{k=0}^{n}
  C_{k}^{n}(q) a_{k} z^{k} \in\mathcal{R}_{n}(q)$ yields that
  \begin{equation*}
    \sum_{k=0}^{n-j+1} C_{k}^{n-j+1}(q) a_{k+j-1} z^{k}  
    \in\mathcal{R}_{n-j+1}(q),
  \end{equation*}
  and applying Theorem \ref{sec:q-Laguerre-thm-1} $n-j-1$-times to this
  polynomial leads to
  \begin{equation*}
    p(z):=q a_{j+1}z^{2} +(1+q) a_{j} z + a_{j-1} = 
    \sum_{k=0}^{2} C_{k}^{2}(q) a_{k+j-1} z^{k}
    \in\mathcal{R}_{2}(q).
  \end{equation*}
  
  By \cite[VIII. Lem. 3]{levin1980} we have $a_{j+1}a_{j-1}<0\leq a_{j}^{2}$ for
  every $j\in\{\operatorname{ord}(F;0)+1,\ldots,-1+\deg F\}$ with $a_{j}=0$. We
  can therefore assume that $a_{j+1}a_{j-1}\neq 0$. Then
  \begin{equation*}
    z_{1,2} := \frac{-(1+q)a_{j}\pm \sqrt{(1+q)^{2} a_{j}^{2}- 4 q a_{j+1}a_{j-1}}}
    {2 q a_{j+1}}
  \end{equation*}
  are the zeros of $p$. It follows that
  \begin{equation*}
    z_{1}z_{2} = \frac{a_{j-1}}{q a_{j+1}}
  \end{equation*}
  and hence that $a_{j+1}a_{j-1} < 0 \leq a_{j}^{2}$ if $z_{1}z_{2} <0$.

  If $z_{1}z_{2}> 0$, then we can assume that $z_{1},z_{2}< 0$ (by considering
  $p(-z)$ instead of $p(z)$ if necessary). Then $a_{j+1},a_{j},a_{j-1}$ must be
  all of same sign and we can assume that they are all positive. Since
  $p\in\mathcal{R}_{2}(q)$, we have $q z_{2} < z_{1}<0$, and hence that
  \begin{equation*}
    q > \frac{4 q a_{j+1}a_{j-1}}
    {\left((1+q)a_{j}+ \sqrt{(1+q)^{2} a_{j}^{2}- 4 q a_{j+1}a_{j-1}}\right)^{2}} = 
    \frac{q}{\left(x+\sqrt{x^{2}-q}\right)^{2}},
  \end{equation*}
  with $x:= (1+q)a_{j}/\sqrt{4 a_{j+1} a_{j-1}}$. This inequality implies
  $\sqrt{x^{2}-q} > 1-x$ and thus
  \begin{equation*}
    \frac{(1+q)a_{j}}{2 \sqrt{a_{j+1} a_{j-1}}}= x > \frac{1+q}{2}.
  \end{equation*}
  This is equivalent to $a_{j}^{2} > a_{j+1}a_{j-1}$ and therefore proves
  the assertion for $F\in\mathcal{R}_{n}(q)$. Since all coefficients of a
  polynomial with only non-positive zeros are of same sign, this also verifies
  the assertion for $F\in\mathcal{N}_{n}(q)$.
\end{proof}

It follows from (\ref{eq:46}) that
\begin{equation*}
  \Delta_{1,n}^{*}[F] + x \Delta_{1,n}[F]
\end{equation*}
is equal (up to a factor $n$) to the polar derivative of a polynomial
$F\in\R_{n}[z]$ with respect to $x$. Laguerre's theorem states that all zeros of
this polar derivative are real if $F\in\pi_{n}(\R)$ and $x\in\R$
(cf. \cite[Thm. 3.2.1]{rahman}). Because of \cite[Satz 5.2]{obresch} this means
that $\Delta_{1,n}[F]$ and $\Delta_{1,n}^{*}[F]$ have interspersed zeros. The
next theorem is therefore a $q$-extension of Laguerre's theorem.

\begin{theorem}[$q$-extension of Laguerre's theorem]
  \label{sec:q-Laguerre-thm}
  Let $q\in(0,1]$. 
  \begin{enumerate}
  \item \label{item:13} If $F\in \overline{\mathcal{R}}_{n}(q)$, then
    $\Delta_{q,n}[F] \preceq \Delta_{q,n}^{*}[F]$. If $F\in \mathcal{R}_{n}(q)$,
    then $\Delta_{q,n}[F] \prec_{0} \Delta_{q,n}^{*}[F]$.
  \item \label{item:14} If $F\in \overline{\mathcal{N}}_{n}(q)$, then
    $\Delta_{q,n}[F] \unlhd \Delta_{q,n}^{*}[F]$. If $F\in
    \mathcal{N}_{n}(q)$, then $\Delta_{q,n}[F] \lhd \Delta_{q,n}^{*}[F]$.
  \end{enumerate}
\end{theorem}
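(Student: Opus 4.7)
The plan is to reduce (a) to a generic strict case by approximation, prove strict interspersion of the zeros of $\Delta:=\Delta_{q,n}[F]$ and $\Delta^{*}:=\Delta_{q,n}^{*}[F]$ via a sign-counting argument, and then pin down the orientation using the $q$-Rolle relation $\Delta\preceq F$. The workhorse is the identity
\begin{equation*}
  F(z) = \Delta^{*}(z) + q^{n-1} z\, \Delta(z),
\end{equation*}
obtained by eliminating $F(q^{-1}z)$ between the two defining relations of $\Delta_{q,n}$ and $\Delta_{q,n}^{*}$. Part (b) will follow from (a) together with a positivity check on $(0,+\infty)$.

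By Lemma \ref{sec:an-extension-polyas-1} plus a further small perturbation of coefficients, (a) reduces to $F\in\mathcal{R}_{n}(q)$ with $\deg F=n$, $F(0)\neq 0$, and every coefficient $a_{k}$ in $F=\sum C_{k}^{n}(q) a_{k} z^{k}$ nonzero; then Theorems \ref{sec:q-rolle-thm} and \ref{sec:q-Laguerre-thm-1} put $\Delta$ and $\Delta^{*}$ in $\mathcal{R}_{n-1}(q)$ with exactly $n-1$ simple nonzero real zeros each. Let $w_{1}<\cdots<w_{n-1}$ be the zeros of $\Delta$ and $x_{1}<\cdots<x_{n}$ those of $F$. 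From $\Delta\prec_{0}F$ one has $x_{j}<w_{j}<x_{j+1}$, so $F(w_{j})$ and $F(w_{j+1})$ have opposite signs, and the key identity gives $\Delta^{*}(w_{j})=F(w_{j})$; the intermediate value theorem thus yields at least one zero of $\Delta^{*}$ in each of the $n-2$ intervals $(w_{j},w_{j+1})$, and since $\Delta^{*}$ has altogether only $n-1$ simple real zeros there is precisely one in each such interval and one more outside $[w_{1},w_{n-1}]$. Any common zero of $\Delta$ and $\Delta^{*}$ would, via the key identity, be a zero of $F$, which (since it cannot equal $0$) contradicts $\Delta\prec_{0}F$; hence $\Delta$ and $\Delta^{*}$ have strictly interspersed zeros.

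To fix orientation, Lemma \ref{sec:polyn-with-intersp-1}(a) makes $\Delta'\Delta^{*}-\Delta(\Delta^{*})'$ nonvanishing and hence of constant sign on $\R$; its value at $w_{j}$ is $\Delta'(w_{j})\Delta^{*}(w_{j})=\Delta'(w_{j})F(w_{j})<0$ by $\Delta\preceq F$, which gives $\Delta\prec\Delta^{*}$ and in particular $\Delta\prec_{0}\Delta^{*}$. The non-strict version of (a) follows by approximation and a limit argument. For (b), if $F\in\overline{\mathcal{N}}_{n}(q)$ the $\mathcal{N}$-parts of Theorems \ref{sec:q-rolle-thm} and \ref{sec:q-Laguerre-thm-1} place $\Delta,\Delta^{*}$ in $\overline{\mathcal{N}}_{n-1}(q)\subset\pi_{n-1}(\R_{0}^{-})$, so by Lemma \ref{sec:charact-lhd-vee} it suffices to check $(\Delta/\Delta^{*})(z)>0$ for some $z>0$. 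Writing $F=a\prod_{j}(z-x_{j})$ with $a>0$ and $x_{j}\leq 0$, the inequalities $z-x_{j}\geq z-qx_{j}\geq z>0$ for $z>0$ give $F(q^{-1}z)\geq F(z)>0$ and $F(z)\geq q^{n}F(q^{-1}z)$, whence the formulas $\Delta(z)=q(F(q^{-1}z)-F(z))/[(1-q^{n})z]$ and $\Delta^{*}(z)=(F(z)-q^{n}F(q^{-1}z))/(1-q^{n})$ yield $\Delta(z),\Delta^{*}(z)>0$ on $(0,+\infty)$ in the non-degenerate cases, with the remainder handled by approximation. The most delicate step should be the orientation argument in (a), where the sign of $\Delta^{*}(w_{j})$ must be read off from $F(w_{j})$ through the key identity and then controlled via $\Delta\preceq F$; once that link is seen, the rest is counting and limiting.
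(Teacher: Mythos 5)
Your argument is correct in substance, and for part (a) it takes a genuinely different route from the paper. The paper applies Lemma \ref{sec:polynomials-with-log-6} to the pair $F(z)$, $F(q^{-1}z)$ (via $F(z)\vee F(q^{-1}z)$ from Lemma \ref{sec:working-2}) to conclude that $(F(z)-F(q^{-1}z))/z=(q^{n-1}-q^{-1})\Delta_{q,n}[F](z)$ and $F(z)-q^{n}F(q^{-1}z)=(1-q^{n})\Delta_{q,n}^{*}[F](z)$ have strictly interspersed zeros, and then fixes the orientation at the origin through the $q$-Newton inequalities, $(\Delta_{q,n}[F]/\Delta_{q,n}^{*}[F])'(0)=C_{1}^{n-1}(q)(a_{0}a_{2}-a_{1}^{2})/a_{0}^{2}<0$. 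You instead use the (correct) identity $F=\Delta_{q,n}^{*}[F]+q^{n-1}z\,\Delta_{q,n}[F]$, the $q$-Rolle relation $\Delta_{q,n}[F]\prec F$, an intermediate-value/counting argument for the interspersion, and a Wronskian-sign evaluation at a zero $w_{j}$ of $\Delta_{q,n}[F]$ for the orientation. This buys independence from Lemma \ref{sec:polynomials-with-log-6} and from Theorem \ref{sec:q-newton-ineq} (no circularity arises either way, since neither of those is proved using Theorem \ref{sec:q-Laguerre-thm}), at the modest cost of the extra genericity $a_{1}\neq 0\neq a_{n-1}$, which your coefficient perturbation legitimately supplies. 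One small step is implicit in your counting: since $\Delta_{q,n}^{*}[F](w_{j})=F(w_{j})$ and $\Delta_{q,n}^{*}[F](w_{j+1})=F(w_{j+1})$ have opposite signs, each interval $(w_{j},w_{j+1})$ contains an \emph{odd} number of zeros of $\Delta_{q,n}^{*}[F]$; this parity remark is what excludes two of its $n-1$ zeros falling into one interval, and should be said. Part (b) is handled essentially as in the paper (Lemma \ref{sec:charact-lhd-vee} plus positivity of the ratio on $(0,+\infty)$), only with a different but valid verification of the positivity.

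One point you must make explicit: the limit argument yields only $\Delta_{q,n}[F]\preceq\Delta_{q,n}^{*}[F]$ for $F\in\overline{\mathcal{R}}_{n}(q)$, so the strict claim $\Delta_{q,n}[F]\prec_{0}\Delta_{q,n}^{*}[F]$ for a non-generic $F\in\mathcal{R}_{n}(q)$ (for instance $F(0)=0$, $\deg F<n$, or a vanishing middle coefficient, as in $F(z)=z^{2}-1$) is not covered by your generic case. The gap closes with an argument you already wrote down: if $\Delta_{q,n}[F]$ and $\Delta_{q,n}^{*}[F]$ had a common zero $w\neq 0$, the identity $F=\Delta_{q,n}^{*}[F]+q^{n-1}z\,\Delta_{q,n}[F]$ would force $F(w)=\Delta_{q,n}[F](w)=0$, contradicting $\Delta_{q,n}[F]\prec_{0}F$ from Theorem \ref{sec:q-rolle-thm}; combined with $\preceq$ this gives $\prec_{0}$ for every $F\in\mathcal{R}_{n}(q)$. (The paper finishes in the same spirit, deducing $F(w)=F(q^{-1}w)=0$, respectively $F(w)=F'(w)=0$ when $q=1$, directly from the definitions of the two operators.)
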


\begin{proof}
  By Theorems \ref{sec:q-rolle-thm} and \ref{sec:q-Laguerre-thm-1} we have
  $\Delta_{q,n}[F]$, $\Delta_{q,n}^{*}[F]\in\pi_{n}(\R_{0}^{-})$ when $F\in
  \overline{\mathcal{N}}_{n}(q)$. Since all coefficients of a polynomial in
  $\pi_{n}(\R_{0}^{-})$ must be of same sign, we also have
  $(\Delta_{q,n}[F]/\Delta_{q,n}^{*}[F])(z)>0$ for all $z>0$ when $F\in
  \overline{\mathcal{N}}_{n}(q)$. (\ref{item:14}) therefore follows directly
  from (\ref{item:13}) and Lemma \ref{sec:charact-lhd-vee}.

  In order to prove (\ref{item:13}), we can assume that $n\geq 2$. We will first
  suppose that $q\in(0,1)$ and that $F$ is an element of
  $\mathcal{R}_{n}(q)$ which is of degree $n$ and does not vanish at the origin.

  Set $R(z):=F(z)/F(q^{-1}z)$. Then $R(0)=1$ and $R(\infty)=q^{n}$. Since $F(z)
  \vee F(q^{-1}z)$ by Lemma \ref{sec:working-2}, it follows from Lemma
  \ref{sec:polynomials-with-log-6} that
  \begin{equation*}
    \frac{F(z)-F(q^{-1}z)}{z}=(q^{n-1}-q^{-1})\Delta_{q,n}[F](z) 
  \end{equation*}
  and
  \begin{equation*}
    F(z)-q^{n}F(q^{-1}z)=(1-q^{n})\Delta_{q,n}^{*}[F](z)
  \end{equation*}
  have strictly interspersed zeros. In order to prove that in fact
  $\Delta_{q,n}[F] \prec \Delta_{q,n}^{*}[F]$, write $F(z)=\sum_{k=0}^{n}
  C_{k}^{n}(q) a_{k} z^{k}$. Then
  \begin{equation*}
    \left(\frac{\Delta_{q,n}[F]}{\Delta_{q,n}^{*}[F]}\right)'(0) =
    C_{1}^{n-1}(q)\frac{a_{0}a_{2}-a_{1}^{2}}{a_{0}^{2}} <0
  \end{equation*}
  by Theorem \ref{sec:q-newton-ineq}. Hence, $\Delta_{q,n}[F]$ and
  $\Delta_{q,n}^{*}[F]$ have interspersed zeros and are decreasing at $0$. This
  implies $\Delta_{q,n}[F]\prec \Delta_{q,n}^{*}[F]$, as required.
  
  By using Lemma \ref{sec:an-extension-polyas-1}, it follows from this that for
  all $q\in(0,1]$ and every $F\in\overline{\mathcal{R}}_{n}(q)$ we have
  $\Delta_{q,n}[F]\preceq \Delta_{q,n}^{*}[F]$. If $\Delta_{q,n}[F]$ and
  $\Delta_{q,n}^{*}[F]$ have a common zero at $z\neq 0$, then necessarily
  $F(z)=F(q^{-1}z)=0$ or $F(z)=F'(z)=0$ (depending on whether $q\in(0,1)$ or
  $q=1$) and thus $F\notin \mathcal{R}_{n}(q)$.
\end{proof}

\section{Weighted Hadamard Products Preserving Zero Interspersion}
\label{sec:weight-hadam-prod}

Because of (\ref{eq:33}) we have $C_{k}^{n}(q)>0$ for all
$k\in\{0,\ldots,n\}$ and $q\in(0,1]$. Consequently, we can write every pair of
polynomials $F$, $G\in\R_{n}[z]$ in the form
\begin{equation*}
  F(z) = \sum_{k=0}^{n} C_{k}^{n}(q) a_{k} z^{k}, \quad
  G(z) = \sum_{k=0}^{n} C_{k}^{n}(q) a_{k} z^{k}, \quad q\in(0,1],
\end{equation*}
which enables us to define
\begin{equation*}
  F*_{q}^{n} G(z):=\sum_{k=0}^{n} C_{k}^{n}(q) a_{k} b_{k} z^{k}.
\end{equation*}
Observe that for $q=1$ the weighted Hadamard product $*_{q}^{n}$ is equal to the
weighted Hadamard product $*_{GS}$ appearing in the Grace-Szeg\"o convolution
theorem.  Note also that if $q\in(0,1]$ and
\begin{equation*}
  H(z) = \sum_{k=0}^{n+1} C_{k}^{n+1}(q) a_{k}
  z^{k} \in \R_{n+1}[z], \quad F(z) = \sum_{k=0}^{n} C_{k}^{n}(q) b_{k}
  z^{k}\in\R_{n}[z],
\end{equation*}
then, using (\ref{eq:7}), it is straightforward to verify that
\begin{equation}
  \label{eq:50}
  \Delta_{q,n+1}^{*}[H]*_{q}^{n} F = H *_{q}^{n+1} F \quad \mbox{and}\quad
  z(\Delta_{q,n+1}[H]*_{q}^{n} F) = H *_{q}^{n+1} z F.
\end{equation}

The following two invariance results concerning the weighted Hadamard product
$*_{q}^{n}$ and the classes $\overline{\mathcal{R}}_{n}(q)$ and
$\overline{\mathcal{N}}_{n}(q)$ are the strongest results in this paper.
 
\begin{theorem}
  \label{sec:thm-inv-lhd-vee}
  Let $q\in(0,1]$ and suppose $H\in \overline{\mathcal{R}}_{n}(q)$ is not
  extremal. Suppose further that $F\in\overline{\mathcal{N}}_{n}(q)$ and
  $G\in\pi_{n}(\R_{0}^{-})$ satisfy $F \unlhd G$ and $F \neq_{\R} G$. Then
  \begin{equation*}
    F*_{q}^{n} H \veebar G*_{q}^{n} H. 
  \end{equation*}
  We have $F*_{q}^{n} H \vee G*_{q}^{n} H$ if $H\in \mathcal{R}_{n}(q)$, $F
  \lhd G$, or if $F$ belongs to $\mathcal{N}_{n}(q)$.
\end{theorem}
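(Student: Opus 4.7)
My plan is to apply Lemma \ref{sec:polynomials-with-log-3} to the real linear operator $L:\R_n[z]\to\R_n[z]$ defined by $L[\Phi]:=\Phi*_q^n H$, verifying the non-trivial hypotheses by an induction on $n$ that runs jointly with the companion invariance result (Theorem \ref{sec:thm-inv-prec-prec}). First I would reduce via Lemma \ref{sec:an-extension-polyas-1} to the strict generic configuration: $F\in\mathcal{N}_n(q)$ and $G\in\sigma_n(\R_0^-)\cap\pi_n(\R_0^-)$ of full degree $n$ with $F(0),G(0)\ne 0$ and $F\lhd G$, and $H\in\mathcal{R}_n(q)$ non-extremal with $\deg H=n$ and $H(0)\ne 0$; the three alternative scenarios producing $\vee$ in the conclusion correspond to this strict setup, while the non-strict $\veebar$-version is recovered by approximation. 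With these reductions the routine hypotheses of Lemma \ref{sec:polynomials-with-log-3} are immediate: $\deg L[F]=n$, $L[F](0)=F(0)H(0)\ne 0\ne G(0)H(0)=L[G](0)$, $(L[F]/L[G])(0)=F(0)/G(0)>0$ (non-positive zeros force both values to share the sign of the leading coefficients), and $F\prec G$ by Lemma \ref{sec:charact-lhd-vee}.

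The engine of the argument is a decomposition identity. For each zero $y<0$ of $F$, write $F=(z-y)F_y$ with $F_y\in\R_{n-1}[z]$, and apply the two relations in (\ref{eq:50}) to $(z-y)F_y=zF_y-yF_y$ (the first summand via the second identity of (\ref{eq:50}), the second via the first, since $F_y\in\R_{n-1}[z]$):
\begin{equation*}
L[F_y]=F_y*_q^{n-1}\Delta_{q,n}^*[H]=:B,\qquad L[F]=zA-yB,
\end{equation*}
where $A:=F_y*_q^{n-1}\Delta_{q,n}[H]$. By the $q$-extension of Laguerre's theorem (Theorem \ref{sec:q-Laguerre-thm}), $\Delta_{q,n}[H]\prec_0\Delta_{q,n}^*[H]$ in $\overline{\mathcal{R}}_{n-1}(q)$, and invoking Theorem \ref{sec:thm-inv-prec-prec} at level $n-1$ (using the symmetry of $*_q^{n-1}$ to place $F_y$ in the multiplier role) yields $A\prec B$. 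A direct computation then gives
\begin{equation*}
(zA-yB)'(zB)-(zA-yB)(zB)'=z^2(A'B-AB')+yB^2\le 0,
\end{equation*}
with strict inequality for all $z\ne 0$, which reads $L[F]\prec zB=zL[F_y]$ and is exactly $L[F_y]\vee L[F]$. The same interspersion forces $L[F]$ to have $n$ simple real zeros: a common zero of $L[F]$ and $zB$ at some $z_0\ne 0$ would require $A(z_0)=B(z_0)=0$, contradicting $A\prec B$, while $L[F](0)=-yB(0)\ne 0$. Lemma \ref{sec:polynomials-with-log-3} now delivers $L[F]\vee L[G]$, i.e. $F*_q^n H\vee G*_q^n H$.

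The principal obstacle is orchestrating the joint induction with Theorem \ref{sec:thm-inv-prec-prec} cleanly, and in particular handling the sub-case where $F_y$ itself turns out to be extremal in $\overline{\mathcal{R}}_{n-1}(q)$ — this can happen when $F$ contains a factor matching part of $R_n(q;z)$ — because then Theorem \ref{sec:thm-inv-prec-prec} is not directly applicable to convolution by $F_y$. In that sub-case $F_y=_{\R}R_{n-1}(q;az)$ with $a>0$ forced by $F_y\in\mathcal{N}_{n-1}(q)$, so $A$ and $B$ are merely $\Delta_{q,n}[H](az)$ and $\Delta_{q,n}^*[H](az)$ up to a common constant, and $A\prec B$ follows directly from the $q$-Laguerre theorem combined with the invariance of $\prec$ under $z\mapsto az$ for $a>0$. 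The strict/non-strict dichotomies in the conclusion ($\vee$ versus $\veebar$ depending on whether $H\in\mathcal{R}_n(q)$, $F\lhd G$, or $F\in\mathcal{N}_n(q)$) are then recovered by tracking strictness throughout this chain and through the final approximation step.
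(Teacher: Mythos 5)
Your main line is essentially the paper's own induction step: the paper also proves this theorem jointly with Theorem \ref{sec:thm-inv-prec-prec} (Claims 1 and 2 in its proof), reduces via Lemma \ref{sec:an-extension-polyas-1}, uses (\ref{eq:50}) together with the $q$-Rolle/$q$-Laguerre theorems and Theorem \ref{sec:thm-inv-prec-prec} at level $n-1$ to obtain $L[F_{y}]\vee L[F]$ for the operator $L[\Phi]=\Phi*_{q}^{n}H$, and then invokes Lemma \ref{sec:polynomials-with-log-3} with $(L[F]/L[G])(0)>0$. Your direct computation $z^{2}(A'B-AB')+yB^{2}\le 0$ is a legitimate substitute for the paper's Lemma \ref{sec:analogue-s-t-lemma-1} (whose proof is the same identity), and your explicit treatment of the sub-case where $F_{y}$ is extremal (so that $A$, $B$ are dilates of $\Delta_{q,n}[H]$, $\Delta_{q,n}^{*}[H]$) is a point the paper glosses over. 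Up to the fully strict configuration, this part of your argument is sound.

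The genuine gap is the strictness part of the conclusion. Your claim that ``the three alternative scenarios producing $\vee$ correspond to the strict setup'' is false: each scenario leaves part of the data non-strict (e.g.\ $F\in\mathcal{N}_{n}(q)$ but $F\unlhd G$ with a common zero of $F$ and $G$ away from the origin, or $H\in\mathcal{R}_{n}(q)$ but $F$ only in $\overline{\mathcal{N}}_{n}(q)$), and these cases cannot be recovered by ``tracking strictness through the final approximation step'', because strict interspersion is not preserved under locally uniform limits -- the approximation only ever yields $\veebar$. This is precisely where the paper spends the second half of its proof: assuming a common zero $x^{*}\neq 0$ of $F*_{q}^{n}H$ and $G*_{q}^{n}H$, it uses Lemma \ref{sec:working} and Lemma \ref{sec:polynomials-with-log-2} together with the already established relations $L[F_{y}]\veebar L[F]$ to force $(F_{y}*_{q}^{n}H)(x^{*})=0=(F*_{q}^{n}H)(x^{*})$ for some $n$-zero $y$ of $F$, hence via (\ref{eq:50}) a common zero $\neq 0$ of $\Delta_{q,n}[H]*_{q}^{n-1}F_{y}$ and $\Delta_{q,n}^{*}[H]*_{q}^{n-1}F_{y}$, and then the strict level-$(n-1)$ results (plus Theorem \ref{sec:q-Laguerre-thm} and, in the base case, the non-extremality of $H$) show this can only happen when $F\notin\mathcal{N}_{n}(q)$ and $H\notin\mathcal{R}_{n}(q)$; the case $F\lhd G$ needs the separate $\epsilon$-dilation argument $F(\epsilon z)\unlhd G(z)$. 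This omission is not cosmetic: the joint induction you rely on consumes the strict form of Theorem \ref{sec:thm-inv-prec-prec} at level $n-1$ (your $A\prec B$), and the companion step producing that theorem at level $n$ consumes the strict form of the present theorem at level $n-1$, so without proving the $\vee$-statements the induction does not close.
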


\begin{theorem}
  \label{sec:thm-inv-prec-prec}
  Let $q\in(0,1]$ and suppose $H\in \overline{\mathcal{N}}_{n}(q)$ is not
  extremal. Suppose further that $F\in \overline{\mathcal{R}}_{n}(q)$ and
  $G\in\pi_{n}(\R)$ satisfy $F \preceq G$ and $F \neq_{\R} G$. Then
  \begin{equation*}
    F*_{q}^{n} H \preceq G*_{q}^{n} H. 
  \end{equation*}
  We have $F*_{q}^{n} H \prec_{0} G*_{q}^{n} H$ if $H\in \mathcal{N}_{n}(q)$, $F
  \prec_{0} G$, or if $F$ belongs to $\mathcal{R}_{n}(q)$.
\end{theorem}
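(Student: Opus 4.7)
The plan is to prove Theorem \ref{sec:thm-inv-prec-prec} jointly with Theorem \ref{sec:thm-inv-lhd-vee} by induction on $n$, since the $q$-Laguerre identities (\ref{eq:50}) allow one to convert the $\preceq$-assertion at level $n$ into a $\veebar$-assertion at level $n-1$ (and dually). The case $n=1$ is immediate; for $n\geq 2$, assume both theorems at level $n-1$. By Lemma \ref{sec:an-extension-polyas-1} and the fact that $\preceq$ is closed under locally uniform limits, it suffices to treat the reduced case where $F\in\mathcal{R}_{n}(q)$ with $F\prec G$, both of degree $n$ and with $F(0)G(0)\neq 0$, and $H\in\mathcal{N}_{n}(q)$ of degree $n$ with $H(0)\neq 0$; in particular $H$ is automatically non-extremal. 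I then apply Lemma \ref{sec:main-results-2} to the operator $L[\,\cdot\,]:=(\,\cdot\,)*_{q}^{n}H$ with $P=F$, $Q=G$. Since $F,G$ are coprime of degree $n$, the zero set $\mathcal{Z}$ of $F/G$ in $\overline{\R}$ is precisely the (finite, simple, non-zero) zero set of $F$, and the theorem reduces to verifying that
\begin{equation*}
    F_{y}*_{q}^{n}H \;\preceq\; F*_{q}^{n}H \qquad \text{for every zero } y \text{ of } F.
\end{equation*}

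The second step eliminates $y$ via the identities (\ref{eq:50}). Writing $\tilde{H}^{*}:=\Delta_{q,n}^{*}[H]$ and $\tilde{H}:=\Delta_{q,n}[H]$ (both in $\overline{\mathcal{N}}_{n-1}(q)$ by Theorems \ref{sec:q-rolle-thm} and \ref{sec:q-Laguerre-thm-1}), and setting
\begin{equation*}
    A:=\tilde{H}^{*}*_{q}^{n-1}F_{y}, \qquad B:=\tilde{H}*_{q}^{n-1}F_{y},
\end{equation*}
these identities give $A=F_{y}*_{q}^{n}H$ and $(zF_{y})*_{q}^{n}H=zB$. Since $F=zF_{y}-yF_{y}$, one obtains $F*_{q}^{n}H = zB-yA$, and a direct expansion shows
\begin{equation*}
    A'(zB-yA)-A(zB-yA)' \;=\; A'(zB)-A(zB)',
\end{equation*}
so the desired inequality $A\preceq zB-yA$ is equivalent to the $y$-free statement $A\preceq zB$, i.e., $\tilde{H}*_{q}^{n-1}F_{y} \veebar \tilde{H}^{*}*_{q}^{n-1}F_{y}$.

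This last log-interspersion follows from Theorem \ref{sec:thm-inv-lhd-vee} at the inductive level $n-1$, applied with multiplier $F_{y}$ and inputs $\tilde{H}\lhd\tilde{H}^{*}$ (which holds by Theorem \ref{sec:q-Laguerre-thm}); the required $\tilde{H}\neq_{\R}\tilde{H}^{*}$ holds since proportionality of the coefficient sequences, read through (\ref{eq:7}), forces the coefficients of $H$ into geometric progression and hence $H=_{\R}R_{n}(q;cz)$, contradicting non-extremality. Moreover $F_{y}\in\mathcal{R}_{n-1}(q)\subseteq\overline{\mathcal{R}}_{n-1}(q)$ since strict $q$-separation is preserved upon removing a zero, so $F_{y}$ is non-extremal. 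Theorem \ref{sec:thm-inv-lhd-vee} then yields $\tilde{H}*_{q}^{n-1}F_{y} \vee \tilde{H}^{*}*_{q}^{n-1}F_{y}$, whence $A\preceq zB$, and Lemma \ref{sec:main-results-2} delivers $F*_{q}^{n}H \preceq G*_{q}^{n}H$. The strict variant $\prec_{0}$ asserted in the theorem is obtained by retaining the strict forms ($\lhd$, $\prec_{0}$, $\vee$) throughout and invoking the strict conclusion of Lemma \ref{sec:main-results-2}. The main obstacle is setting up the simultaneous induction correctly: the $\preceq$-theorem at level $n$ hinges on the $\veebar$-theorem at level $n-1$, with the $q$-Laguerre duality between $\Delta_{q,n}$ and $\Delta_{q,n}^{*}$ providing the indispensable bridge between the two regimes.
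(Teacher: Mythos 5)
Your argument for the non-strict assertion follows the paper's own route almost exactly: the same joint induction with Theorem \ref{sec:thm-inv-lhd-vee}, the same use of (\ref{eq:50}) together with Theorems \ref{sec:q-rolle-thm}, \ref{sec:q-Laguerre-thm-1} and \ref{sec:q-Laguerre-thm} to pass from level $n$ to level $n-1$, and Lemma \ref{sec:main-results-2} as the engine; your direct Wronskian identity in place of the paper's appeal to Lemma \ref{sec:lemmas-2} is fine, and your checks that $F_{y}$ is non-extremal and that $\Delta_{q,n}[H]\neq_{\R}\Delta_{q,n}^{*}[H]$ are correct. So the weak conclusion $F*_{q}^{n}H\preceq G*_{q}^{n}H$ is adequately handled.

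The genuine gap is the strict conclusion. The theorem asserts $F*_{q}^{n}H\prec_{0}G*_{q}^{n}H$ under each of the three hypotheses \emph{separately}: $H\in\mathcal{N}_{n}(q)$, or $F\prec_{0}G$, or $F\in\mathcal{R}_{n}(q)$, with the remaining data allowed to be degenerate (for instance $F\in\mathcal{R}_{n}(q)$ while $H$ is merely a non-extremal element of $\overline{\mathcal{N}}_{n}(q)$ and $G$ shares zeros with $F$). Your reduction to the case $F\in\mathcal{R}_{n}(q)$, $F\prec G$, $H\in\mathcal{N}_{n}(q)$, all of degree $n$ and non-vanishing at $0$, goes through Lemma \ref{sec:an-extension-polyas-1}, i.e. through approximation, and strictness does not survive locally uniform limits: from $F_{\nu}*_{q}^{n}H_{\nu}\prec_{0}G_{\nu}*_{q}^{n}H_{\nu}$ one only recovers $\preceq$ in the limit. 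Hence ``retaining the strict forms throughout'' yields $\prec_{0}$ only when all the niceness conditions hold simultaneously, not when just one of them does. This is precisely where the paper has to work: for $F\prec_{0}G$ it uses the dilation trick $F(\epsilon z)\preceq G(z)$, and for the cases $F\in\mathcal{R}_{n}(q)$ or $H\in\mathcal{N}_{n}(q)$ it assumes a common zero $x^{*}\neq 0$ of $F*_{q}^{n}H$ and $G*_{q}^{n}H$, uses Lemma \ref{sec:working} to produce a zero $y$ satisfying (\ref{eq:51}), passes via (\ref{eq:50}) to (\ref{eq:2}), and then (separating $m=1$ from $m\geq 2$ and exploiting the non-extremality of $H$ through Theorem \ref{sec:q-Laguerre-thm} and Theorem \ref{sec:thm-inv-lhd-vee}) shows that such a common zero forces both $F\in\overline{\mathcal{R}}_{n}(q)\setminus\mathcal{R}_{n}(q)$ and $H\in\overline{\mathcal{N}}_{n}(q)\setminus\mathcal{N}_{n}(q)$. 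That analysis, roughly half of the paper's proof of this implication and the only place where non-extremality of $H$ inside the closed class does real work, has no counterpart in your proposal, so the strict part of the theorem remains unproved.
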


\begin{proof}[Proof of Theorems \ref{sec:thm-inv-lhd-vee} and
  \ref{sec:thm-inv-prec-prec}]
  The theorems are easy to verify when $n=0$ or $n=1$. Both theorems will
  therefore be proven, if we can show the following two claims for every
  $m\in\N$.

  \textbf{Claim 1:} \textit{If Theorem \ref{sec:thm-inv-lhd-vee} holds for
    $n=m$, then Theorem \ref{sec:thm-inv-prec-prec} holds for $n=m+1$.}

  \textbf{Claim 2:} \textit{If Theorem \ref{sec:thm-inv-prec-prec} holds for
    $n=m$, then Theorem \ref{sec:thm-inv-lhd-vee} holds for $n=m+1$.}

  \textit{Proof of Claim 1.} Let
  $F\in\overline{\mathcal{R}}_{m+1}(q)\setminus\{0\}$,
  $G\in\pi_{m+1}(\R)\setminus\{0\}$ be such that $F\preceq G$ and $F \neq_{\R}
  G$, and suppose that $H\in\overline{\mathcal{N}}_{m+1}(q)\setminus\{0\}$ is not
  extremal. We assume first that $F$, $G$, $H$ do not vanish at the origin and
  are all of degree $m+1$.

  Theorems \ref{sec:q-rolle-thm}, \ref{sec:q-Laguerre-thm-1}, and
  \ref{sec:q-Laguerre-thm}, show that $\Delta_{q,m+1}[H]$ and
  $\Delta_{q,m+1}^{*}[H]$ belong to $\overline{\mathcal{N}}_{m}(q)$ and satisfy
  $\Delta_{q,m+1}[H]\unlhd \Delta_{q,m+1}^{*}[H]$. Theorem
  \ref{sec:thm-inv-lhd-vee} (which holds for $n=m$ by assumption) thus implies
  \begin{equation*}
    \Delta_{q,m+1}[H] *_{q}^{m} F_{y} \veebar  \Delta_{q,m+1}^{*}[H]*_{q}^{m} F_{y},
  \end{equation*}
  or, equivalently,
  \begin{equation*}
    \Delta_{q,m+1}^{*}[H] *_{q}^{m} F_{y} \preceq
    z\left(\Delta_{q,m+1}[H]*_{q}^{m} F_{y}\right) 
  \end{equation*}
  for every zero $y$ of $F$. Because of Lemma \ref{sec:lemmas-2} this means 
  \begin{equation*}
     \Delta_{q,m+1}^{*}[H]
     *_{q}^{m} F_{y} \preceq   z\left(\Delta_{q,m+1}[H]*_{q}^{m} F_{y}\right)
     -y\Delta_{q,m+1}^{*}[H] *_{q}^{m} F_{y} 
  \end{equation*}
  and this, in turn, is equivalent to 
  \begin{equation*}
    H *_{q}^{m+1} F_{y} \preceq H *_{q}^{m+1} zF_{y} - y H *_{q}^{m+1} F_{y} =
    H *_{q}^{m+1} F
  \end{equation*}
  for all zeros $y$ of $F$ by (\ref{eq:50}).  Defining the linear operator
  $L:\R_{m+1}[z] \rightarrow \R_{m+1}[z]$ by $L[P]:=H *_{q}^{m+1} P$ for
  $P\in\R_{m+1}[z]$, we thus obtain $L[F_{y}] \preceq L[F]$ for every zero $y$
  of $F$. Because of Lemma \ref{sec:main-results-2} this means $L[F] \preceq
  L[G]$, which is equivalent to $F *_{q}^{m+1} H \preceq G *_{q}^{m+1} H$.
  
  Applying Lemma \ref{sec:an-extension-polyas-1}, it follows from this special
  case that for every $H\in\overline{\mathcal{N}}_{m+1}(q)$ and all
  $F\in\overline{\mathcal{R}}_{m+1}(q)$, $G\in\pi_{m+1}(\R)$ with $F\preceq G$
  we have $F*_{q}^{m+1} H \preceq G*_{q}^{m+1} H$. Moreover, if $F \prec_{0} G$,
  then there is an $\epsilon_{0}>0$ such that $F(\epsilon z) \preceq G(z)$ for
  all $\epsilon\in(1-\epsilon_{0},1+\epsilon_{0})$. What we have shown so far
  therefore also implies $(F*_{q}^{m+1}H)(\epsilon z) \preceq
  (G*_{q}^{m+1}H)(z)$ for all $\epsilon\in(1-\epsilon_{0},1+\epsilon_{0})$.
  Hence, $F*_{q}^{m+1}H \prec_{0} G*_{q}^{m+1}H$ if $F \prec_{0} G$.

  Next, suppose that $H\in\overline{\mathcal{N}}_{m+1}(q)$ is not extremal and
  that $F\in\overline{\mathcal{R}}_{m+1}(q)$ and $G\in\pi_{m+1}(\R)$ satisfy
  $F\neq _{\R} G$ and $F\preceq G$. It remains to show that under these
  assumptions
  \begin{equation}
    \label{eq:48}
    F *_{q}^{m+1} H \prec_{0} G *_{q}^{m+1} H
  \end{equation}
  if $F\in\mathcal{R}_{m+1}(q)$ or $H\in\mathcal{N}_{m+1}(q)$.

  To that end, denote the set of $m+1$-zeros of $F$ and $G$ by $\mathcal{Z}_{F}$
  and $\mathcal{Z}_{G}$, respectively. Then, since $F_{y} \preceq F$ for every
  $y\in\mathcal{Z}_{F}$, what we have shown so far implies $F_{y}*_{q}^{m+1} H
  \preceq F*_{q}^{m+1} H$, and thus, by Lemma \ref{sec:polyn-with-intersp-1} and
  the definition of $\preceq$,
  \begin{equation}
    \label{eq:49}
    \left(\frac{F*_{q}^{m+1} H}{F_{y}*_{q}^{m+1} H}\right)'(z) > 0 
    \quad \mbox{for all} \quad z\in\R,\; y\in\mathcal{Z}_{F}.
  \end{equation}
  Now observe that, if $F *_{q}^{m+1} H$ and $G *_{q}^{m+1} H$ have a common zero
  $x^{*}\neq 0$, then $x^{*}$ has to be a zero of $F_{y}*_{q}^{m+1} H$ for all
  $y\in\mathcal{Z}_{F}\setminus\mathcal{Z}_{G}$. For otherwise, (\ref{eq:49})
  would hold for all $y\in\mathcal{Z}_{F}\setminus\mathcal{Z}_{G}$ with
  $(F_{y}*_{q}^{m+1} H)(x^{*})\neq 0$ (and there would be at least one such $y$),
  and hence Lemma \ref{sec:working} would imply that $(G*_{q}^{m+1} H)(x^{*})\neq
  0$. Consequently, there is at least one $y\in\mathcal{Z}_{F}$ with
  \begin{equation}
    \label{eq:51}
    (F_{y}*_{q}^{m+1} H)(x^{*})= 0 = (F*_{q}^{m+1} H)(x^{*})
  \end{equation}
  
  If $y\in\R$, then, because of (\ref{eq:50}), we have
  \begin{equation*}
   F_{y}*_{q}^{m+1} H =  \Delta_{q,m+1}^{*}[H] *_{q}^{m} F_{y} 
  \end{equation*}
  and
  \begin{equation*}
    F*_{q}^{m+1} H = (z-y)F_{y}*_{q}^{m+1} H =  
    z\left(\Delta_{q,m+1}[H]*_{q}^{m} F_{y}\right) -
    y \Delta_{q,m+1}^{*}[H] *_{q}^{m} F_{y},  
  \end{equation*}
  and thus (\ref{eq:51}) implies that 
  \begin{equation}
    \label{eq:2}
    (\Delta_{q,m+1}[H] *_{q}^{m} F_{y})(x^{*}) = 0 = 
    (\Delta_{q,m+1}^{*}[H]*_{q}^{m} F_{y})(x^{*}).  
  \end{equation}

  If $m=1$, this means $\Delta_{q,m+1}[H] *_{q}^{m} F_{y} =_{\R}
  \Delta_{q,m+1}^{*}[H]*_{q}^{m} F_{y}$ and consequently $\Delta_{q,m+1}[H]
  =_{\R} \Delta_{q,m+1}^{*}[H]$. Because of Theorem \ref{sec:q-Laguerre-thm}
  this yields $H\in\overline{\mathcal{R}}_{2}(q)\setminus \mathcal{R}_{2}(q)$
  and hence that $H$ is extremal. Since we have assumed $H$ not to be extremal,
  this is a contradiction and $m\geq 2$ must hold.

  In this case, we have $\Delta_{q,m+1}[H] \unlhd \Delta_{q,m+1}^{*}[H]$ by
  Theorem \ref{sec:q-Laguerre-thm} and $\Delta_{q,m+1}[H] \neq_{\R}
  \Delta_{q,m+1}^{*}[H]$ since $H$ is not extremal. (\ref{eq:2}) therefore
  implies that (i) $F_{y}\in\overline{\mathcal{R}}_{m}(q)\setminus
  \mathcal{R}_{m}(q)$, (ii) $\Delta_{q,m+1}[H]$ and $\Delta_{q,m+1}^{*}[H]$ have
  a common zero $w^{*}\neq 0$, and (iii) either
  $\Delta_{q,m+1}[H]\in\overline{\mathcal{N}}_{m}(q)\setminus
  \mathcal{N}_{m}(q)$ or $F_{y}$ is extremal. For if one of the three conditions
  (i)--(iii) would not hold, then, because of Theorem \ref{sec:thm-inv-lhd-vee},
  (\ref{eq:2}) could not hold for $x^{*}\neq 0$. Since $m\geq 2$, Statement (i)
  obviously implies $F\in\overline{\mathcal{R}}_{m+1}(q)\setminus
  \mathcal{R}_{m+1}(q)$, and (ii) is equivalent to $H(w^{*}) =
  H(q^{-1}w^{*})=0$, which means $H\in\overline{\mathcal{N}}_{m+1}(q)\setminus
  \mathcal{N}_{m+1}(q)$.

  If $y=\infty$, then $\deg F \leq m$ and (\ref{eq:50}) implies
  \begin{equation*}
    F*_{q}^{m+1} H = \Delta_{q,m+1}^{*}[H] *_{q}^{m} F \quad \mbox{and} \quad
    F_{y}*_{q}^{m+1} H =  -zF*_{q}^{m+1} H= -z(\Delta_{q,m+1}[H] *_{q}^{m} F) .
  \end{equation*}
  Thus, in this case (\ref{eq:51}) implies that $\Delta_{q,m+1}[H] *_{q}^{m}
  F$ and $\Delta_{q,m+1}^{*}[H]*_{q}^{m} F$ have the common zero $x^{*}\neq 0$,
  and we can proceed as in the case $y\in\R$ to find that this can only hold if
  $F\in\overline{\mathcal{R}}_{m+1}(q)\setminus \mathcal{R}_{m+1}(q)$ and
  $H\in\overline{\mathcal{N}}_{m+1}(q)\setminus \mathcal{N}_{m+1}(q)$.

  The proof of Claim 1 is thus complete.
 
  \textit{Proof of Claim 2.}  Let
  $F\in\overline{\mathcal{N}}_{m+1}(q)\setminus\{0\}$,
  $G\in\pi_{m+1}(\R_{0}^{-})\setminus\{0\}$ be such that $F\unlhd G$ and $F
  \neq_{\R} G$ and suppose that
  $H\in\overline{\mathcal{R}}_{m+1}(q)\setminus\{0\}$. We assume first that
  $H\in\mathcal{R}_{m+1}(q)\setminus\{0\}$ and that $F$, $G$, $H$ do not
  vanish at the origin and are all of degree $m+1$.

  Note first that our assumptions and Lemma \ref{sec:charact-lhd-vee} imply
  \begin{equation}
    \label{eq:52}
    \left(\frac{F*_{q}^{m+1} H}{G*_{q}^{m+1} H}\right)(0) = 
    \left(\frac{F}{G}\right)(0) > 0.
  \end{equation}
  Next, observe that Theorems \ref{sec:q-rolle-thm}, \ref{sec:q-Laguerre-thm-1},
  and \ref{sec:q-Laguerre-thm}, show $\Delta_{q,m+1}[H]$,
  $\Delta_{q,m+1}^{*}[H]\in\mathcal{R}_{m}(q)$ and $\Delta_{q,m+1}[H]\prec
  \Delta_{q,m+1}^{*}[H]$. Theorem \ref{sec:thm-inv-prec-prec} (which holds for
  $n=m$ by assumption) thus gives
  \begin{equation*}
    \Delta_{q,m+1}[H] *_{q}^{m} F_{y} \prec  \Delta_{q,m+1}^{*}[H]*_{q}^{m} F_{y},
  \end{equation*}
  or, equivalently,
  \begin{equation*}
    \Delta_{q,m+1}^{*}[H] *_{q}^{m} F_{y} \vee
    z\left(\Delta_{q,m+1}[H]*_{q}^{m} F_{y}\right) 
  \end{equation*}
  for every zero $y$ of $F$. Since all zeros of $F$ are non-positive, this
  implies, by Lemma \ref{sec:analogue-s-t-lemma-1},
  \begin{equation*}
    \Delta_{q,m+1}^{*}[H]*_{q}^{m} F_{y} \vee   
    z\left(\Delta_{q,m+1}[H]*_{q}^{m} F_{y}\right) - 
    y\Delta_{q,m+1}^{*}[H] *_{q}^{m} F_{y}.
  \end{equation*}
  By (\ref{eq:50}) this means that 
  \begin{equation*}
    H *_{q}^{m+1} F_{y} \vee H *_{q}^{m+1} zF_{y} - y H *_{q}^{m+1} F_{y} =
    H *_{q}^{m+1} F
  \end{equation*}
  for all zeros $y$ of $F$.  Defining the linear operator $L:\R_{m+1}[z]
  \rightarrow \R_{m+1}[z]$ by $L[P]:=H *_{q}^{m+1} P$ for $P\in\R_{m+1}[z]$, we
  thus obtain $L[F_{y}] \vee L[F]$ for every zero $y$ of $F$. Because of
  (\ref{eq:52}) and Lemma \ref{sec:polynomials-with-log-3} this implies $L[F]
  \vee L[G]$, or $F *_{q}^{m+1} H \vee G *_{q}^{m+1} H$.
  
  Applying Lemma \ref{sec:an-extension-polyas-1}, it follows from this special
  case that for every $H\in\overline{\mathcal{R}}_{m+1}(q)$ and all
  $F\in\overline{\mathcal{N}}_{m+1}(q)$, $G\in\pi_{m+1}(\R_{0}^{-})$ with
  $F\unlhd G$ we have $F*_{q}^{m+1} H \veebar G*_{q}^{m+1} H$. As in the proof
  of Claim 1 one can use this result to show that $F*_{q}^{m+1} H \vee
  G*_{q}^{m+1} H$ if $F\lhd G$.

  In order to prove that, for non-extremal
  $H\in\overline{\mathcal{R}}_{m+1}(q)$, we have
  \begin{equation}
    \label{eq:5}
    F*_{q}^{m+1} H \vee G*_{q}^{m+1} H \quad\mbox{if}\quad
    F\in\mathcal{N}_{m+1}(q) \quad\mbox{or}\quad H\in\mathcal{R}_{m+1}(q),
  \end{equation}
  suppose $F\in\overline{\mathcal{N}}_{m+1}(q)$ and $G\in\pi_{m+1}(\R_{0}^{-})$
  satisfy $F\unlhd G$ and $F\neq_{\R} G$, and suppose
  $H\in\overline{\mathcal{R}}_{m+1}(q)$, not extremal, is such that
  $F*_{q}^{m+1} H$ and $G*_{q}^{m+1} H$ have a common zero at a point $x^{*}<0$
  (the case in which the common zero is positive can be treated in a similar
  way).  Denote the sets of $m+1$-zeros of $F$ and $G$ by, respectively,
  $\mathcal{Z}_{F}$ and $\mathcal{Z}_{G}$.

  Since $F\in\overline{\mathcal{N}}_{m+1}(q)$ implies $F_{y} \unlhd F$ for every
  zero $y$ of $F$ and $F \unlhd -F_{\infty}$ if $\deg F \leq m$, our results so
  far show $L[F_{y}] \veebar L[F]$ and $L[F] \veebar -L[F_{\infty}]$. Therefore,
  for every $y\in\mathcal{Z}_{F}$ we either have $L[F_{y}](x^{*})=0$ or, because
  of Lemma \ref{sec:polynomials-with-log-2},
  \begin{equation}
    \label{eq:54}
    \left(\frac{F*_{q}^{n+1} H}{F_{y}*_{q}^{n+1} H}\right)'(x^{*}) > 0.
  \end{equation}
  If (\ref{eq:54}) would actually hold for one
  $y\in\mathcal{Z}_{F}\setminus\mathcal{Z}_{G}$, then Lemma \ref{sec:working}
  would imply that $(G*_{q}^{m} H)(x^{*})\neq 0$. Consequently, there must be
  at least one $y\in\mathcal{Z}_{F}$ with
  \begin{equation*}
    (F_{y}*_{q}^{m+1} H)(x^{*})= 0 = (F*_{q}^{m+1} H)(x^{*})
  \end{equation*}
  and from that point on one can argue as in the proof of Claim 1 in order to
  show that (\ref{eq:5}) must be true.

  The proof of Claim 2, and thus also of Theorems \ref{sec:thm-inv-lhd-vee} and
  \ref{sec:thm-inv-prec-prec}, is therefore complete.
\end{proof}

The next result is essentially equivalent to (\ref{item:26})--(\ref{item:28}) of
Theorem \ref{sec:real-zeros-main-thm}.

\begin{theorem}
  \label{sec:main-conv-thm}
  Suppose $0<r<q\leq 1$ and $F\in\R_{n}[z]$.
  \begin{enumerate}
  \item \label{item:50} We have $F*_{q}^{n} G \in \mathcal{R}_{n}(q)$ for all
    $G\in \mathcal{R}_{n}(q)$ if, and only if,
    $F\in\overline{\mathcal{N}}_{n}(q)$.
  \item \label{item:51} We have $F*_{q}^{n} G \in \mathcal{N}_{n}(q)$ for all
    $G\in \mathcal{N}_{n}(q)$ if, and only if,
    $F\in\overline{\mathcal{N}}_{n}(q)$.
  \item \label{item:52} If $F$ belongs to $\overline{\mathcal{R}}_{n}(q)$ or
    $\overline{\mathcal{N}}_{n}(q)$ and is not extremal, then $F*_{q}^{n}
    R_{n}(r;z)$ belongs to, respectively, $\mathcal{R}_{n}(r)$ or
    $\mathcal{N}_{n}(r)$. 
  \end{enumerate}
\end{theorem}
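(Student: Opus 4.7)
The plan is to combine the two interspersion-invariance theorems (Theorems~\ref{sec:thm-inv-lhd-vee} and~\ref{sec:thm-inv-prec-prec}) with Lemma~\ref{sec:working-2}, which says $F\in\overline{\mathcal{R}}_{n}(q)$ if and only if $F(z)\veebar F(q^{-1}z)$ and $F\in\overline{\mathcal{N}}_{n}(q)$ if and only if $F(z)\unlhd F(q^{-1}z)$, with the strict versions $\vee$ and $\lhd$ characterizing $\mathcal{R}_{n}(q)$ and $\mathcal{N}_{n}(q)$. I would begin with~(\ref{item:52}). The ratios of consecutive zeros of $R_{n}(r;z)$ equal $r<q$, so $R_{n}(r;z)\in\mathcal{N}_{n}(r)\subset\mathcal{N}_{n}(q)$; in particular it is not extremal in $\overline{\mathcal{R}}_{n}(q)$, and Lemma~\ref{sec:working-2}(b) yields $R_{n}(r;z)\lhd R_{n}(r;r^{-1}z)$. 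Applying Theorem~\ref{sec:thm-inv-lhd-vee} with $H=F$ (non-extremal in $\overline{\mathcal{R}}_{n}(q)$), $F_{1}=R_{n}(r;z)$, and $G_{1}=R_{n}(r;r^{-1}z)$, and using that $F_{1}\in\mathcal{N}_{n}(q)$ to trigger the strict conclusion, gives $F_{1}*_{q}^{n}H\vee G_{1}*_{q}^{n}H$. Rewriting $(F*_{q}^{n}R_{n}(r;z))(r^{-1}z)=F*_{q}^{n}R_{n}(r;r^{-1}z)$, Lemma~\ref{sec:working-2}(a) yields $F*_{q}^{n}R_{n}(r;z)\in\mathcal{R}_{n}(r)$. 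If moreover $F\in\overline{\mathcal{N}}_{n}(q)$, writing $F=\sum_{k}C_{k}^{n}(q)f_{k}z^{k}$ shows that the polynomial coefficients $C_{k}^{n}(r)f_{k}$ of $F*_{q}^{n}R_{n}(r;z)$ all share a common sign, forcing its (already real) zeros to be non-positive, so $F*_{q}^{n}R_{n}(r;z)\in\mathcal{N}_{n}(r)$.

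For the $(\Leftarrow)$ direction of~(\ref{item:50}) and~(\ref{item:51}), the extremal case $F=cR_{n}(q;az)$ with $a>0$ is handled by the direct identity $F*_{q}^{n}G=cG(az)$. For non-extremal $F\in\overline{\mathcal{N}}_{n}(q)$ and $G\in\mathcal{R}_{n}(q)$, I apply Theorem~\ref{sec:thm-inv-lhd-vee} with the roles reversed: $H=G$ (automatically non-extremal, since extremal elements of $\overline{\mathcal{R}}_{n}(q)$ fail strict $q$-separation), $F_{1}=F$, and $G_{1}=F(q^{-1}z)$. Lemma~\ref{sec:working-2}(b) delivers $F_{1}\unlhd G_{1}$, and the hypothesis $H\in\mathcal{R}_{n}(q)$ triggers the strict version, producing $F*_{q}^{n}G\vee (F*_{q}^{n}G)(q^{-1}z)$, i.e.\ $F*_{q}^{n}G\in\mathcal{R}_{n}(q)$ by Lemma~\ref{sec:working-2}(a). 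Part~(\ref{item:51}) adds the observation that both $F$ and $G$ have polynomial coefficients of a single sign, so $F*_{q}^{n}G$ inherits this property and all its zeros must therefore be non-positive, placing it in $\mathcal{N}_{n}(q)$.

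For the $(\Rightarrow)$ directions, I would use approximation. Setting $G_{r}:=R_{n}(r;z)$ with $r\nearrow q$, each $G_{r}$ lies in $\mathcal{N}_{n}(r)\subset\mathcal{N}_{n}(q)\subset\mathcal{R}_{n}(q)$, so by hypothesis $F*_{q}^{n}G_{r}$ lies in the appropriate class; since $R_{n}(q;z)$ is the identity for $*_{q}^{n}$, $F*_{q}^{n}G_{r}\to F$ uniformly on compacts, placing $F$ in $\overline{\mathcal{R}}_{n}(q)$ for~(\ref{item:50}) and in $\overline{\mathcal{N}}_{n}(q)$ for~(\ref{item:51}). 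Part~(\ref{item:51}) is therefore complete; for~(\ref{item:50}) one still has to rule out the possibility that $F$ has zeros on both sides of the origin, which I would do by testing against a $G\in\mathcal{R}_{n}(q)$ whose polynomial coefficients are all of one sign (e.g.\ a scaled $R_{n}(r;z)$), so that the sign pattern of $F*_{q}^{n}G$ mirrors that of $F$, and then invoking the standard fact that a real-zero polynomial with coefficients of one sign has all non-positive zeros. The hard part is precisely this last step: the multiplier class is invariant under $F\mapsto F(-z)$ while $\overline{\mathcal{N}}_{n}(q)$ is not, so the coefficient-sign argument has to be set up so as to conclude $F\in\overline{\mathcal{N}}_{n}(q)$ up to this natural $z\mapsto -z$ symmetry.
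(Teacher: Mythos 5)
Most of your plan coincides with the paper's own proof: parts (\ref{item:52}) and the ``if'' directions of (\ref{item:50}), (\ref{item:51}) are exactly the paper's applications of Theorem~\ref{sec:thm-inv-lhd-vee} combined with Lemma~\ref{sec:working-2} (with the strict conclusion triggered by $R_{n}(r;z)\in\mathcal{N}_{n}(q)$, resp.\ $G\in\mathcal{R}_{n}(q)$), followed by the one-signed-coefficients observation to pass from $\mathcal{R}$ to $\mathcal{N}$; and your limiting argument with $G_{r}=R_{n}(r;z)$, $r\nearrow q$, for the ``only if'' parts is in substance what the paper does (it phrases it as testing with $R_{n}(q;z)$). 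Two harmless slips: $R_{n}(r;z)$ lies in $\overline{\mathcal{N}}_{n}(r)\setminus\mathcal{N}_{n}(r)$, so Lemma~\ref{sec:working-2} only gives $R_{n}(r;z)\unlhd R_{n}(r;r^{-1}z)$, not $\lhd$ --- which suffices, since the strictness in Theorem~\ref{sec:thm-inv-lhd-vee} comes from $R_{n}(r;z)\in\mathcal{N}_{n}(q)$, as you note; and when $F$ is a monomial the hypothesis $F\neq_{\R}F(q^{-1}z)$ of Theorem~\ref{sec:thm-inv-lhd-vee} fails, though that case is trivial to dispose of directly.

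The genuine gap is the ``only if'' direction of (\ref{item:50}). Your approximation only yields $F\in\overline{\mathcal{R}}_{n}(q)$; what remains is to exclude that $F$ has zeros of both signs, and the device you propose --- convolving with a $G$ whose coefficients are of one sign ``so that the sign pattern of $F*_{q}^{n}G$ mirrors that of $F$'' --- yields no constraint at all: the hypothesis only tells you that $F*_{q}^{n}G$ has real, strictly $q$-separated zeros, and a real-rooted polynomial can perfectly well have coefficients of both signs, so nothing forces one-signedness of $F$. You explicitly flag this as ``the hard part'' and leave it unresolved, so the proof is incomplete precisely at the step that needs an idea. The paper closes it with a $q$-analogue of the P\'olya--Schur sign argument: apply the assumed multiplier property to the test polynomials $x^{\nu-1}-qx^{\nu+1}$ and $x^{\nu-1}+(q+1)x^{\nu}+qx^{\nu+1}$ (the $q$-analogues of $x^{\nu-1}-x^{\nu+1}$ and $x^{\nu-1}+2x^{\nu}+x^{\nu+1}$ in \cite{polschur14}), and use the $q$-Newton inequalities (Theorem~\ref{sec:q-newton-ineq}) in place of the classical ones, to conclude that the normalized coefficients $a_{k}$ of $F$ have constant or alternating signs; hence $F(z)$ or $F(-z)$ lies in $\pi_{n}(\R_{0}^{-})$ and therefore in $\overline{\mathcal{N}}_{n}(q)$. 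This is exactly the missing step, and it also settles, in the same way the paper does, the $z\mapsto -z$ symmetry issue you correctly identify.
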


\begin{proof}
  Let $G\in\mathcal{R}_{n}(q)$ and $F\in\overline{\mathcal{N}}_{n}(q)$. Then
  $F(z) \unlhd F(q^{-1} z)$ by Lemma \ref{sec:working-2} and therefore
  Theorem \ref{sec:thm-inv-lhd-vee} shows that 
  \begin{equation*}
    (F*_{q}^{n}G)(z) = F(z)*_{q}^{n}G(z) \vee F(q^{-1} z)*_{q}^{n}G(z) =
    (F*_{q}^{n}G)(q^{-1}z)  
  \end{equation*}
  since $G\in\mathcal{R}_{n}(q)$. Because of Lemma \ref{sec:working-2} this
  is equivalent to $F*_{q}^{n}G\in\mathcal{R}_{n}(q)$. 

  If $F*_{q}^{n} G \in \mathcal{R}_{n}(q)$ for all $G\in \mathcal{R}_{n}(q)$,
  then the choice $G(z)=R_{n}(q;z)$ shows that $F \in
  \overline{\mathcal{R}_{n}(q)}$. In order to show that in fact either $F(z)$ or
  $F(-z)$ must belong to $\overline{\mathcal{N}}_{n}(q)$ one can argue as in the
  proof of \cite[Thms. 1.I, 3.I]{polschur14}. One simply has to consider the
  polynomials $x^{\nu-1} - q x^{\nu+1}$ and $x^{\nu-1} +(q+1)x^{\nu}+ q
  x^{\nu+1}$ instead of, respectively, the two polynomials $x^{\nu-1} -
  x^{\nu+1}$ and $x^{\nu-1} + 2x^{\nu}+ x^{\nu+1}$ which appear in the formula
  before equation (5) in \cite{polschur14}, and to use Theorem
  \ref{sec:q-newton-ineq} instead of the classical ''Newton's inequalities''.
  (\ref{item:50}) is therefore proven.
 
  If $F\in\overline{\mathcal{N}}_{n}(q)$, $G\in \mathcal{N}_{n}(q)$, then we can
  assume that all coefficients of $F$ and $G$, and therefore also of
  $F*_{q}^{n}G$, are non-negative. This means that $F*_{q}^{n}G$ cannot vanish
  for positive $z$. Since from (\ref{item:50}) we know that $F*_{q}^{n} G \in
  \mathcal{R}_{n}(q)$, this shows $F*_{q}^{n} G \in
  \mathcal{N}_{n}(q)$. Moreover, $F*_{q}^{n} G \in \mathcal{N}_{n}(q)$ for all
  $G\in \mathcal{N}_{n}(q)$ clearly implies $F = F*_{q}^{n} R_{n}(q;z) \in
  \overline{\mathcal{N}}_{n}(q)$. This proves (\ref{item:51}).

  If $r\in(0,q)$, then $R_{n}(r;z) \in\mathcal{N}_{n}(q)$ and $R_{n}(r;z)
  \unlhd R_{n}(r;r^{-1} z)$. Theorem \ref{sec:thm-inv-lhd-vee} thus yields
  \begin{equation*}
    (F*_{q}^{n}R_{n}(r;z))(z) = F*_{q}^{n}R_{n}(r;z) \vee
    F*_{q}^{n}R_{n}(r;r^{-1}z) =   (F*_{q}^{n}R_{n}(r;z))(r^{-1}z)
  \end{equation*}
  for every $F\in\overline{\mathcal{R}}_{n}(q)$ that is not extremal. Because
  of Lemma \ref{sec:working-2} this is equivalent to
  $F*_{q}^{n}R_{n}(r;z)\in\mathcal{R}_{n}(r)$. If $F$ belongs to
  $\overline{\mathcal{N}}_{n}(q)$ and is not extremal, then
  $F*_{q}^{n}R_{n}(r;z)\in\mathcal{R}_{n}(r)$ and the coefficients of
  $F*_{q}^{n}R_{n}(r;z)$ are either all non-positive or all
  non-negative. This implies $F*_{q}^{n}R_{n}(r;z)\in\mathcal{N}_{n}(r)$ and thus
  completes the proof of (\ref{item:52}).
\end{proof}

In order to complete the proof of Theorem \ref{sec:real-zeros-main-thm}, we
still need the following converse of ''Newton's Inequalities''.

\begin{lemma}
  \label{sec:q-newton-conv}
  Let $f\in\R_{n}[z]$.
  \begin{enumerate}
  \item \label{item:53} If $f\in \mathcal{LC}_{n}$ then there is a
    $q\in(0,1]$ such that $f*R_{n}(q;z)\in\mathcal{R}_{n}(q)$.
  \item \label{item:54} If $f\in \mathcal{LC}_{n}^{+}$ then there is a
    $q\in(0,1]$ such that $f*R_{n}(q;z)\in\mathcal{N}_{n}(q)$.
  \end{enumerate}
\end{lemma}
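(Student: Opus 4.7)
The plan is to analyze the zeros of $g_q(z):=f\ast R_n(q;z)=\sum_{k=0}^n C_k^n(q)a_k z^k$ in the regime $q\to 0^+$ via a Newton-polygon argument. Using the factorization $C_k^n(q)=q^{k(k-1)/2}\prod_{j=1}^k(1-q^{j+n-k})/(1-q^j)$, the product factor tends to $1$ as $q\to 0$, so on the support $\{k:a_k\ne 0\}$ the coefficient of $z^k$ in $g_q$ has $q$-valuation $k(k-1)/2$ — a strictly convex function of $k$. After factoring a power of $z$ out of $g_q$, I may assume $a_0\ne 0$ and $a_n\ne 0$. In case (b), where all $a_k$ have one sign, strict log-concavity forces $a_k\ne 0$ for \emph{every} $k\in\{0,\ldots,n\}$, since otherwise $a_{k-1}a_{k+1}>0=a_k^2$ would violate $a_k^2>a_{k-1}a_{k+1}$. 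In case (a), interior vanishing of $a_k$ is permitted only when $a_{k-1}$ and $a_{k+1}$ have opposite signs.

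The central step is to localize the zeros of $g_q$ by an iterated Rouch\'e argument on annuli centered at $0$. For an index $k$ with $a_k\ne 0$ flanked by non-zero $a_{k-1}, a_{k+1}$, one shows that in an annulus of inner and outer radii comparable to $|a_{k-1}/a_k|\,q^{-(k-1)}$ the two-term truncation $C_{k-1}^n(q)a_{k-1} z^{k-1}+C_k^n(q)a_k z^k$ dominates the remainder of $g_q$, placing exactly one zero of $g_q$ there with asymptotic
\[
x_k(q)=-\frac{a_{k-1}}{a_k}\,q^{-(k-1)}(1+o(1)),\qquad q\to 0^+.
\]
Reality is forced by Rouch\'e uniqueness: a non-real zero would have its complex conjugate as a second zero in the same annulus. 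At a ``kink'' of the Newton polygon in case (a) (interior $a_k=0$), the same argument applied to the segment from $k-1$ to $k+1$ yields two zeros with leading behaviour $\pm\sqrt{-a_{k-1}/a_{k+1}}\,q^{-(2k-1)/2}$; these are real and of opposite signs precisely because $a_{k-1}a_{k+1}<0$ at such a vertex.

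With all $n$ zeros so localized, strict $q$-separation is immediate. For any consecutive same-sign pair — equivalently, $a_{k-1}a_{k+1}>0$ — one computes
\[
\frac{|x_k(q)|}{|x_{k+1}(q)|}=q\cdot\frac{a_{k-1}a_{k+1}}{a_k^2}(1+o(1)),
\]
which is strictly $<q$ for small $q$ by strict log-concavity $a_k^2>a_{k-1}a_{k+1}$. The two zeros arising at a kink have opposite signs and therefore impose no separation constraint; non-consecutive same-sign ratios are products of consecutive ones, hence even smaller. Consequently $g_q\in\mathcal R_n(q)$ for all sufficiently small $q\in(0,1]$, and in case (b), where all zeros are negative, $g_q\in\mathcal N_n(q)$.

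The main technical obstacle is the clean execution of the Rouch\'e localization: verifying that the $o(1)$ terms are uniformly controlled across all $n$ annuli simultaneously as $q\to 0$, and (in case (a)) showing that at each kink vertex the two associated zeros split into the real $\pm$ pair rather than into a non-real conjugate pair — the latter reduces, in the limit equation, to real-rootedness of the binomial $a_{k-1}+a_{k+1}w^2$, which holds precisely because $a_{k-1}a_{k+1}<0$ at such vertices.
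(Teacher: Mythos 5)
Your proposal is correct and follows essentially the same route as the paper: the paper's proof rescales $F_{q}(z)=f*R_{n}(q;z)$ via $q^{m(m+1)/2}F_{q}(q^{-m}z)\rightarrow a_{m}z^{m}+a_{m+1}z^{m+1}$ (and $q^{(m^{2}-1)/2}F_{q}(q^{-m+1/2}z)\rightarrow a_{m-1}z^{m-1}+a_{m+1}z^{m+1}$ at an interior vanishing coefficient), which is exactly your two-term Newton-polygon domination on annuli, and it deduces the same zero asymptotics, reality, and the same ratio estimates $z_{m}/z_{m+1}\sim q\,a_{m}a_{m+2}/a_{m+1}^{2}<q$ from strict log-concavity. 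The only difference is bookkeeping: the paper also writes out explicitly the same-sign comparisons involving a ``kink'' zero and a neighbouring regular zero (ratios of order $q^{3/2}$ or smaller), a case your enumeration passes over but which your own asymptotics settle immediately.
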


\begin{proof}
  Since $f\in \mathcal{LC}_{n}^{+}$ implies that all coefficients of
  $f*R_{n}(q;z)$ are either $0$ or of same sign, (\ref{item:54}) follows
  directly from (\ref{item:53}).

  In order to prove (\ref{item:53}) we will assume that $f(z)=\sum_{k=0}^{n}
  a_{k} z^{k}\in\mathcal{LC}_{n}$ with $f(0)\neq 0$ and $\deg f = n$ (the
  general case being only slightly more difficult technically). Hence
  $a_{k}^{2}>a_{k-1}a_{k+1}$ for all $k\in\{0,\ldots,n\}$. In particular, we
  must have $a_{k-1}a_{k+1} < 0$ if $a_{k}=0$.

  Set $F_{q}(z):= f(z)*R_{n}(q;z)$ and observe that, for $k$, $m\in\N_{0}$,
  \begin{equation*}
    m(m+1)+k(k-1)-2mk = (k-m)(k-(m+1)) 
  \end{equation*}
  and
  \begin{equation*}
    m^{2}-1 +
    k(k-1)-2mk+k = (k-(m-1))(k-(m+1)).
  \end{equation*}
  It therefore follows from (\ref{eq:33}) that for
  $m\in\{0,\ldots,n-1\}$ and $q\rightarrow 0$
  \begin{equation*}
    q^{(m+1)m/2}F_{q}(q^{-m}z) = \sum_{k=0}^{n} q^{(m+1)m/2-mk} C_{k}^{n}(q)
    a_{k} z^{k} \rightarrow a_{m}z^{m} +a_{m+1} z^{m+1}, 
  \end{equation*}
  and, if $a_{m}=0$ for an $m\in\{1,\ldots,n-1\}$,
  \begin{equation*}
    q^{(m^{2}-1)/2}F_{q}(q^{-m+1/2}z) = \sum_{k=0}^{n} q^{(m^{2}-1)/2-mk+k/2} C_{k}^{n}(q)
    a_{k} z^{k} \rightarrow a_{m-1}z^{m-1} +a_{m+1} z^{m+1}. 
  \end{equation*}
  Consequently, for every $m\in\{0,\ldots,n-1\}$ for which $a_{m}a_{m+1}\neq
  0$ there is a zero $z_{m}(q)$ of $F_{q}$ with 
  \begin{equation}
    \label{eq:56}
    z_{m}(q) \sim -q^{-m}a_{m}/a_{m+1}\quad \mbox{as} \quad q\rightarrow 0.
  \end{equation}
  If $a_{m}=0$ for an $m\in\{1,\ldots,n-1\}$, then there are zeros
  $z_{m-1}(q)$ and $z_{m}(q)$ of $F(q)$ with 
  \begin{equation}
    \label{eq:57}
    z_{m-1}(q) \sim
    -q^{-m+1/2}\sqrt{-a_{m-1}/a_{m+1}}\quad \mbox{and}\quad z_{m}(q) \sim
    q^{-m+1/2}\sqrt{-a_{m-1}/a_{m+1}} 
  \end{equation}
  as $q\rightarrow 0$.  Since $F_{q}$ is a real polynomial, this shows that
  for all $q>0$ sufficiently close to $0$ we must have $F_{q}\in\pi_{n}(\R)$
  and
  \begin{equation*}
    |z_{0}(q)|\leq |z_{1}(q)| \leq \cdots \leq |z_{n-1}(q)|\leq |z_{n}(q)| 
  \end{equation*}
  with $|z_{m}(q)| = |z_{m+1}(q)|$ if, and only if, $a_{m+1}=0$.

  Now, if $z_{m}(q)$ and $z_{m+1}(q)$ are of same sign and $a_{m}a_{m+1} \neq
  0$, then
  \begin{equation*}
    \frac{z_{m}(q)}{z_{m+1}(q)} \sim q \frac{a_{m}a_{m+2}}{a_{m+1}^{2}}
    < q 
  \end{equation*}
  for all $q>0$ close to $0$, because of (\ref{eq:56}) and since $a_{m}a_{m+2}
  < a_{m+1}^{2}$. 

  If $l$, $m\in\{0,\ldots,n-1\}$ with $l<m-1$ are such that $z_{l}(q)$ and
  $z_{m}(q)$ are of same sign and $a_{l}\neq 0\neq a_{m}$, (\ref{eq:56}) shows
  that
  \begin{equation*}
    \frac{z_{l}(q)}{z_{m}(q)} \sim q^{m-l} \frac{a_{l}a_{m+1}}{a_{l+1}a_{m}}
    < q^{2} \frac{a_{l}a_{m+1}}{a_{l+1}a_{m}} < q
  \end{equation*}
  for all $q>0$ close to $0$.  

  If $l$, $m\in\{0,\ldots,n-1\}$ with $l\leq m-1$ are such that $z_{l}(q)$ and
  $z_{m}(q)$ are of same sign and $a_{l}= 0$, $a_{m}\neq 0$, then, in the case
  $l=m-1$,
  \begin{equation*}
    \frac{z_{l}(q)}{z_{m}(q)} \sim q^{m-l+ 1/2}
    \left|\frac{a_{m+1}}{a_{m}} \sqrt{-\frac{a_{l-1}}{a_{l+1}}}\right|
    < q^{3/2}  \left|\frac{a_{m+1}}{a_{m}}
      \sqrt{-\frac{a_{l-1}}{a_{l+1}}}\right| < q
  \end{equation*}
  for all $q>0$ close to $0$, whereas in the case $l<m-1$
  \begin{equation*}
    \frac{z_{l}(q)}{z_{m}(q)} \sim q^{m-l\pm 1/2}
    \left|\frac{a_{m+1}}{a_{m}} \sqrt{-\frac{a_{l-1}}{a_{l+1}}}\right|
    < q^{3/2}  \left|\frac{a_{m+1}}{a_{m}}
      \sqrt{-\frac{a_{l-1}}{a_{l+1}}}\right| < q
  \end{equation*}
  for all $q>0$ close to $0$. In the same way one verifies that also in the
  remaining two cases $a_{l}\neq 0$, $a_{m}= 0$, and $a_{l}= 0$, $a_{m}= 0$,
  one has $z_{l}/z_{m}<q$ for all $q$ close to $0$. 

  This shows that all zeros of $F_{q}$ of equal sign are strictly
  $q$-separated when $q>0$ is close to $0$, and hence that
  $F_{q}\in\mathcal{R}_{n}(q)$ for those $q$.
\end{proof}

\begin{proof}[Proof of Theorem \ref{sec:real-zeros-main-thm}]
  For $0<r<q\leq 1$ Statements (\ref{item:26})--(\ref{item:28}) follow readily
  from Theorem \ref{sec:main-conv-thm}. Moreover, (\ref{item:26}) and
  (\ref{item:27}) are trivial for $q=0$. We have already shown that every
  $f\in\overline{\mathcal{PR}}_{n}(q)$ or $f\in\overline{\mathcal{PN}}_{n}(q)$
  belongs to, respectively, $\mathcal{PR}_{n}(r)$ or $\mathcal{PN}_{n}(r)$ if
  $0<r<q\leq 1$. Since $\mathcal{PN}_{n}(r)\subset \mathcal{PN}_{n}(0)$ by
  definition, we have thus verified (\ref{item:26})--(\ref{item:28}).

  If $F(z) = \sum_{k=0}^{n} C_{k}^{n}(q) a_{k} z^{k}
  \in\overline{\mathcal{R}}_{n}(q)$ is not extremal, then it follows from
  Theorem \ref{sec:main-conv-thm}(\ref{item:52}) that $F*_{q}^{n}R_{n}(r;z) =
  \sum_{k=0}^{n} C_{k}^{n}(r) a_{k} z^{k}\in \mathcal{R}_{n}(r)$ for every
  $r\in(0,q)$. Theorem \ref{sec:q-newton-ineq} thus implies $f(z)=
  \sum_{k=0}^{n} a_{k} z^{k}\in \mathcal{LC}_{n}$. The other inclusion of
  Theorem \ref{sec:real-zeros-main-thm}(\ref{item:29}) is verified in Lemma
  \ref{sec:q-newton-conv}. Hence, Theorem
  \ref{sec:real-zeros-main-thm}(\ref{item:29}) is verified for the classes
  $\mathcal{R}_{n}(0)$. The proof for the classes $\mathcal{N}_{n}(0)$ is very
  similar and therefore the proof of Theorem \ref{sec:real-zeros-main-thm} is
  complete.
\end{proof}

Finally, we will show how Theorem \ref{sec:thm-inv-prec-prec} can be used to
obtain a $q$-extension of Corollary \ref{sec:introduction-1}(\ref{item:6}). 

If we denote the open upper half-plane by $\mathbb{U}$, then the Hermite-Biehler
theorem \cite[Thm. 6.3.4]{rahman} states that
\begin{equation*}
  \pi_{n}(\mathbb{U}) = \left\{F+iG: F,G\in \pi_{n}(\R) 
    \mbox{ and } F\prec G\right\}.
\end{equation*}
Consequently, if for $q\in(0,1]$ we define
\begin{equation*}
  \mathcal{U}_{n}(q) := \left\{F+iG: F,G\in \overline{\mathcal{R}}_{n}(q) 
    \mbox{ and } F\prec G\right\},
\end{equation*}
then $\mathcal{U}_{n}(1) = \pi_{n}(\mathbb{U})$ and the following, easily
verified consequence of Theorem \ref{sec:thm-inv-prec-prec} is the desired
$q$-extension of Corollary \ref{sec:introduction-1}(\ref{item:6}).

\begin{theorem}
  \label{sec:q-halfplane-ext}
  Let $q\in(0,1]$. Then $\mathcal{M}(\mathcal{U}_{n}(q)) = \{ f\in
  \overline{\mathcal{PN}}_{n}(q): f(0)\neq 0\}$.
\end{theorem}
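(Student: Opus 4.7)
The plan is to prove the two inclusions separately; the forward inclusion is the ``easily verified'' consequence of Theorem \ref{sec:thm-inv-prec-prec} alluded to in the text. For sufficiency, I take $f\in\overline{\mathcal{PN}}_{n}(q)$ with $f(0)\neq 0$ and set $\tilde f := f*R_{n}(q;z)\in\overline{\mathcal{N}}_{n}(q)$, so that $\tilde f(0)=f(0)\neq 0$. For any $H=F+iG\in\mathcal{U}_{n}(q)$, the identity $f*P=\tilde f *_{q}^{n} P$ (valid for $P\in\R_{n}[z]$) gives $f*H=(\tilde f *_{q}^{n} F)+i(\tilde f *_{q}^{n} G)$. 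When $\tilde f$ is not extremal I invoke Theorem \ref{sec:thm-inv-prec-prec} with $\tilde f$ in the role of $H$: the relation $F\prec G$ implies $F\preceq G$, $F\neq_{\R} G$, and $F\prec_{0} G$, so that theorem yields $\tilde f *_{q}^{n} F\prec_{0}\tilde f *_{q}^{n} G$, with both polynomials in $\overline{\mathcal{R}}_{n}(q)$ by the closure of Theorem \ref{sec:main-conv-thm}(\ref{item:50}); since $(\tilde f *_{q}^{n} P)(0)=\tilde f(0)P(0)$ and $F\prec G$ prevents $F(0)=G(0)=0$, no common zero appears at the origin and $\prec_{0}$ upgrades to $\prec$. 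The extremal case $\tilde f =_{\R} R_{n}(q;az)$ with $a>0$ is handled directly because then $f*H=cH(az)$ and scaling by $a>0$ preserves $\mathcal{U}_{n}(q)$.

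For necessity I employ two specific test elements of $\mathcal{U}_{n}(q)$. First, for any $c\in(q,1)$ the pair $F=R_{n}(q;z)$, $G=R_{n}(q;z/c)$ lies in $\overline{\mathcal{N}}_{n}(q)$: the zeros $\{-q^{-(j-1)}\}$ of $F$ and $\{-cq^{-(j-1)}\}$ of $G$ strictly interlace, there are no common zeros, and $(F/G)'(0)=C_{1}^{n}(q)(1-1/c)<0$, so Lemma \ref{sec:polyn-with-intersp-1} gives $F\prec G$ and hence $F+iG\in\mathcal{U}_{n}(q)$. The multiplier hypothesis then forces $\tilde f(z)+i\tilde f(z/c)\in\mathcal{U}_{n}(q)$, whose real part yields $\tilde f\in\overline{\mathcal{R}}_{n}(q)$. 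Second, the polynomial $R_{n}(q;iz)=\prod_{j=1}^{n}(1+iq^{j-1}z)$ belongs to $\pi_{n}(\mathbb{U})$; granting (as discussed below) that $R_{n}(q;iz)\in\mathcal{U}_{n}(q)$, we obtain $f*R_{n}(q;iz)=\tilde f(iz)\in\pi_{n}(\mathbb{U})$. Because $\tilde f$ has only real zeros by the first step, every zero $z_{0}$ of $\tilde f$ must satisfy $-iz_{0}\in\mathbb{U}$, i.e., $z_{0}<0$ strictly, giving $\tilde f\in\overline{\mathcal{N}}_{n}(q)$ and $\tilde f(0)\neq 0$.

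The main obstacle is the verification that $R_{n}(q;iz)\in\mathcal{U}_{n}(q)$. Writing $R_{n}(q;iz)=F_{n}+iG_{n}$, the Hermite-Biehler theorem (applied to $R_{n}(q;iz)\in\pi_{n}(\mathbb{U})$) already provides $F_{n}\prec G_{n}$ together with real, simple, strictly interlacing zeros; what remains is the $q$-separation of the same-sign zeros of $F_{n}$ and $G_{n}$. Introducing the monotone angle function $\phi_{n}(z):=\sum_{j=1}^{n}\arctan(q^{j-1}z)$, the positive zeros $z_{1}<z_{2}<\cdots$ of $F_{n}$ solve $\phi_{n}(z_{k})=(2k-1)\pi/2$; the identity $\phi_{n}(z_{k}/q)=\phi_{n}(z_{k})+\arctan(q^{-1}z_{k})-\arctan(q^{n-1}z_{k})$ together with the bound $\arctan(q^{-1}z_{k})-\arctan(q^{n-1}z_{k})<\pi$ yields $\phi_{n}(z_{k}/q)<(2k+1)\pi/2=\phi_{n}(z_{k+1})$, and hence $z_{k}/z_{k+1}<q$, so $F_{n}\in\mathcal{R}_{n}(q)\subset\overline{\mathcal{R}}_{n}(q)$; the same computation applied to the positive zeros of $G_{n}$ (which solve $\phi_{n}(u_{k})=k\pi$) completes the verification.
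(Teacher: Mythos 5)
Your sufficiency half is essentially the intended ``easily verified'' consequence of Theorem \ref{sec:thm-inv-prec-prec}, and it is correct: passing to $\tilde f=f*R_{n}(q;z)$, using $f*P=\tilde f*_{q}^{n}P$, invoking Theorem \ref{sec:thm-inv-prec-prec} with $\tilde f$ in the role of $H$, upgrading $\prec_{0}$ to $\prec$ via $(\tilde f*_{q}^{n}P)(0)=\tilde f(0)P(0)$, and scaling out the extremal case all work (the closed-class preservation you attribute to Theorem \ref{sec:main-conv-thm}(\ref{item:50}) needs the approximation Lemma \ref{sec:an-extension-polyas-1}, but that is routine, and the degenerate pairs with $F=0$ or $G=0$ allowed by the convention $F\prec 0$ for $F\in\sigma_{n}(\R)$ are a quirk of the definitions that already breaks $\mathcal{U}_{n}(1)=\pi_{n}(\U)$, so I do not count them against you). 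The necessity half is your own construction -- the paper offers none -- and its technical core is sound: the interlacing computation for $R_{n}(q;z)+iR_{n}(q;z/c)$ and the angle-function verification that $R_{n}(q;iz)\in\mathcal{U}_{n}(q)$ (via $\phi_{n}(z/q)-\phi_{n}(z)=\arctan(q^{-1}z)-\arctan(q^{n-1}z)<\pi$) are both correct.

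There is, however, a genuine gap in the converse inclusion: $\mathcal{M}(\mathcal{U}_{n}(q))$ is by definition a subset of $\C[[z]]$, while your argument tacitly assumes the multiplier $f$ has real coefficients -- the step ``whose real part yields $\tilde f\in\overline{\mathcal{R}}_{n}(q)$'' reads $\tilde f(z)$ and $\tilde f(z/c)$ off as the real and imaginary parts of $f*H_{c}$, which is only legitimate for real $f$. This omission is not vacuous: since $(-G)'F-(-G)F'=F'G-FG'$, the relation $F\prec G$ implies $-G\prec F$, so $H\mapsto iH$ maps $\mathcal{U}_{n}(q)$ into itself and, for instance, $i\,e_{n}$ (all coefficients equal to $i$) is a multiplier. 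Consequently, either one reads $\overline{\mathcal{PN}}_{n}(q)$ as closed under complex scalar multiples -- which is what (\ref{eq:28}), $\overline{\mathcal{N}}_{n}(1)=\pi_{n}(\R_{0}^{-})$, suggests the author intends -- in which case your proof still owes an argument reducing a general multiplier $u+iv$ (with $u,v$ real) to a complex-scalar multiple of a real one before your test elements can be applied; or, under the strictly real reading of $\overline{\mathcal{PN}}_{n}(q)$, the inclusion $\mathcal{M}(\mathcal{U}_{n}(q))\subseteq\{f\in\overline{\mathcal{PN}}_{n}(q):f(0)\neq 0\}$ simply fails for such $f$ and no proof can close this. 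Either way, what you have established is that every \emph{real} multiplier lies in the right-hand side; the non-real multipliers remain unaddressed, and this needs to be fixed (or the convention made explicit) before the necessity direction is complete.
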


\section{An extension of Ruscheweyh's convolution lemma}
\label{sec:proofs-theorems}

In this section we will prove the extension of Ruscheweyh's convolution lemma
that is given by Lemma \ref{sec:ext-rusch-lemma}. We will obtain Lemma
\ref{sec:ext-rusch-lemma} as a limit case of a version of Lemma
\ref{sec:main-results-2} in which polynomials which are symmetric with respect
to $\R$ (i.e. real polynomials) are replaced by polynomials which are symmetric
with respect to $\T$ (so-called self-inversive polynomials). Lemma
\ref{sec:main-results-2} can therefore be seen as the real polynomial version of
Ruscheweyh's convolution lemma. The necessary definitions regarding
self-inversive polynomials are as follows.

The \emph{$n$-inverse} of a polynomial $F(z)=\sum_{k=0}^{n}a_{k} z^{k}$ of
degree $\leq n$ is defined by 
\begin{equation*}
  I_{n}[F](z) := z^{n} \overline{F\left(\frac{1}{\overline{z}}\right)} = 
  \sum_{k=0}^{n}\overline{a}_{n-k} z^{k} 
\end{equation*}
and $F$ is called \emph{$n$-self-inversive} if $F = I_{n}[F]$ (in particular $0$
is $n$-self-inversive for all $n\in\N_{0}$). The zeros of $I_{n}[F]$ are
obtained by reflecting the zeros of $F$ with respect to $\T$. Hence, if
$F\in\pi_{n}(\D)$, then $F/I_{n}[F]$ is a Blaschke product, and therefore, for
those $F$, we have $F + \zeta I_{n}[F] \in \pi_{n}(\T)$ for all $\zeta\in\T$. The
zero reflection property of $I_{n}[F]$ also shows that the zeros of
$n$-self-inversive polynomials lie symmetrically around $\T$. Furthermore, it is
easy to see that every polynomial of degree $\leq n$ with zeros symmetrically
around $\T$ is $n$-self-inversive up to a constant multiple of modulus $1$.

It is clear that $F(z)=\sum_{k=0}^{n}a_{k} z^{k}$ is $n$-self-inversive if, and
only if, $a_{k} = \overline{a}_{n-k}$ for all $k\in\{0,\ldots,n\}$ and therefore
$\mathcal{SI}_{n}$, the set of all $n$-self-inversive polynomials, is a real
vector space of dimension $n+1$. The coefficient symmetry of $n$-self-inversive
polynomials also implies that for $F\in\mathcal{SI}_{n}$ we have $e^{-int/2}
F(e^{it})\in \R$ for all $t\in\R$.

\begin{lemma}
  \label{sec:blaschke-lemma}
  \begin{enumerate}
  \item[]
  \item\label{item:55} For all $F\in\C_{n}[z]$ and $m\in\N_{0}$ we have $F+ z^{m}
    I_{n}[F]\in\mathcal{SI}_{n+m}$.
  \item\label{item:56} For all $F\in\pi_{n}(\C\setminus\D)$ and $m\in\N_{0}$ we
    have $F+ z^{m} I_{n}[F]\in\pi_{n+m}(\T)$.
  \end{enumerate}
\end{lemma}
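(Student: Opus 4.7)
For part (a), I would verify the identity by linearity and three elementary computations directly from the definition $I_n[F](z) = z^n \overline{F(1/\bar z)}$: on $\C_n[z]$ one has (i) $I_n[I_n[F]] = F$, (ii) $I_{n+m}[F] = z^m I_n[F]$, and (iii) $I_{n+m}[z^m H] = I_n[H]$ for every $H \in \C_n[z]$. Combining these gives
\[
I_{n+m}[F + z^m I_n[F]] = I_{n+m}[F] + I_{n+m}[z^m I_n[F]] = z^m I_n[F] + I_n[I_n[F]] = F + z^m I_n[F],
\]
which is precisely (a).

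The proof of (b) rests on the factorwise identity
\[
|1 - \bar z_i z|^2 - |z - z_i|^2 = (1 - |z|^2)(1 - |z_i|^2).
\]
I would first handle the case where all zeros $z_i$ of $F$ satisfy $|z_i| > 1$ strictly. Writing $F(z) = c\prod_{i=1}^{\deg F}(z - z_i)$ and $I_n[F](z) = \bar c\, z^{n - \deg F}\prod_i (1 - \bar z_i z)$, the identity above applied factor by factor together with the harmless factor $|z|^{n - \deg F} \geq 1$ yields $|I_n[F](z)| > |F(z)|$ on $\{|z| > 1\}$, and multiplying by $|z|^m \geq 1$ gives $|z^m I_n[F](z)| > |F(z)|$ on every circle $|z| = r$, $r > 1$. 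Rouch\'e's theorem on such a circle then shows that $F + z^m I_n[F]$ has the same number of zeros in $|z| < r$ as $z^m I_n[F]$; since $z^m I_n[F]$ has exactly $n + m$ zeros (counted with multiplicity), all lying in $\overline{\D}$, letting $r \searrow 1$ forces $F + z^m I_n[F]$ to be of degree exactly $n + m$ with every zero in $\overline{\D}$.

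By part (a), $G := F + z^m I_n[F]$ is $(n+m)$-self-inversive, so its zero set is invariant under the reflection $z_0 \mapsto 1/\bar z_0$; a zero with $|z_0| < 1$ would be paired with one of modulus strictly greater than $1$, contradicting the preceding conclusion, and hence every zero of $G$ lies on $\T$. For a general $F \in \pi_n(\C \setminus \D)$ I would approximate $F$ by polynomials $F_\epsilon$ obtained by replacing each zero of $F$ that lies on $\T$ by $(1 + \epsilon)$ times itself: the previous paragraph applies to each $F_\epsilon$, and the coefficientwise convergence $F_\epsilon + z^m I_n[F_\epsilon] \to F + z^m I_n[F]$ as $\epsilon \to 0^+$ together with Hurwitz's theorem transfers the conclusion to $F$. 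The main obstacle is the degree-and-multiplicity bookkeeping in the Rouch\'e step, together with the boundary case $m = 0$ in which $F + I_n[F]$ may vanish identically (and is then the polynomial $0 \in \pi_n(\T)$ by convention), but all such subtleties are resolved cleanly by the Rouch\'e count combined with self-inversiveness.
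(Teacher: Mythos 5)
Your proof of (\ref{item:55}) is the same computation as the paper's. For (\ref{item:56}) you take a genuinely different route: the paper observes that, since the zeros of $I_{n}[F]$ are the reflections in $\T$ of the zeros of $F$, the quotient $z^{m}I_{n}[F]/F$ is a Blaschke product for $F\in\pi_{n}(\C\setminus\D)$, and a Blaschke product attains unimodular values only on $\T$, so $F+z^{m}I_{n}[F]$ can only vanish there; your argument replaces this one-line maximum-modulus fact by its elementary source, the factor identity $|1-\bar z_{i}z|^{2}-|z-z_{i}|^{2}=(1-|z|^{2})(1-|z_{i}|^{2})$, combined with Rouch\'e on circles $|z|=r>1$, the zero symmetry coming from (\ref{item:55}), and an approximation/Hurwitz step to absorb zeros of $F$ lying on $\T$. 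Both are correct; the paper's version is shorter and needs no limiting argument, while yours is self-contained and, as a bonus, makes the degree count explicit, which matters here because $\T$ is bounded so membership in $\pi_{n+m}(\T)$ requires degree exactly $n+m$ (a point the paper's proof leaves implicit). One spot you should spell out rather than wave at: after the Hurwitz step the degree of $G:=F+z^{m}I_{n}[F]$ could in principle drop in the limit; it does not, because for $m\geq 1$ the constant term of $G$ is $F(0)\neq 0$ and self-inversiveness gives $c_{n+m}=\overline{c_{0}}\neq 0$, while for $m=0$ either $G\equiv 0$ (allowed by the paper's convention) or Hurwitz forces all zeros of $G$ onto $\T$, so $G(0)\neq 0$ and again $c_{n}=\overline{c_{0}}\neq 0$. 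With that sentence added, your argument is complete.
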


\begin{proof}
  We have
  \begin{equation*}
    I_{n+m}[F+ z^{m} I_{n}[F]] = I_{n+m}[F]+ I_{n+m}[z^{m} I_{n}[F]] 
    = z^{m} I_{n}[F] + F
  \end{equation*}
  and thus (\ref{item:55}) is clear. Since the zeros of $I_{n}[F]$ are obtained
  by reflecting the zeros of $F$ around $\T$, $I_{n}[F]/F$, and thus also
  $z^{m}I_{n}[F]/F$, is a Blaschke product when
  $F\in\pi_{n}(\C\setminus\D)$. Since a Blaschke product can take the value $1$
  only on $\T$, (\ref{item:55}) is also proven.
\end{proof}

We say that $F$, $G\in\pi_{n}(\T)$ have \emph{$\T$-interspersed} zeros if the
zeros of $F$ and $G$ alternate on the unit circle. If $F$ and $G$ have
$\T$-interspersed zeros, but no common zeros, then $F$ and $G$ are said to have
\emph{strictly $\T$-interspersed zeros}. The following analogue of Lemma
\ref{sec:polyn-with-intersp-1} holds for $\T$-interspersion: $F\in \pi_{n}(\T)$
and $G\in\mathcal{SI}_{n}\setminus\{0\}$ have $\T$-interspersed zeros if, and
only if, the real valued function
\begin{equation*}
  t\mapsto \frac{F(e^{it})}{G(e^{it})} = 
  \frac{e^{-int/2}F(e^{it})}{e^{-int/2}G(e^{it})}, \qquad t\in\R,
\end{equation*}
is either strictly increasing on $\R$ or strictly decreasing on $\R$. Similarly
to the real case we therefore write $F \preceq_{\T} G$ if $F$, $G\in\pi_{n}(\T)$
satisfy
\begin{equation*}
  (e^{-int/2}F(e^{it}))'(e^{-int/2}G(e^{it})) -
  (e^{-int/2}F(e^{it}))(e^{-int/2}G(e^{it}))'\leq 0\quad \mbox{for} \quad t\in\R,
\end{equation*}
and $F\prec_{\T} G$ if $F\preceq_{\T} G$ and $F$ and $G$ do not have common
zeros. It is then easy to see that the following holds.

\begin{lemma}
  \label{sec:proof-mathc-vers-1}
  Let $F$, $G\in\mathcal{SI}_{n}$. Then $F\preceq_{\T} G$ if, and only if,
  $\Im (F/G)(z) < 0$ for $z\in\D$.
\end{lemma}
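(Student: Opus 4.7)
The plan is to analyze $R(z) := F(z)/G(z)$, which inherits from the self-inversivity of $F$ and $G$ the Schwarz-reflection identity $\overline{R(1/\bar z)} = R(z)$; in particular $R$ takes real values on $\T$ wherever it is defined. Both $F \preceq_{\T} G$ (weak monotone decrease of $R$ along $\T$) and $\Im R < 0$ on $\D$ express that $R$ maps $(\oD, \T)$ orientation-compatibly into $(\oLo, \R)$, and the proof makes this correspondence explicit in each direction.

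For the forward implication I would first reduce to the strict case $F \prec_{\T} G$ by dividing out the (automatically self-inversive) greatest common divisor of $F$ and $G$; in the coprime case the periodicity or antiperiodicity of $f(t) := e^{-int/2}F(e^{it})$ and $g(t) := e^{-int/2}G(e^{it})$, together with $(f/g)' \leq 0$, forces $F$ and $G$ to have all their zeros on $\T$ and simple. The key intermediate claim -- a unit-disk analogue of the Hermite-Biehler theorem -- is that $F + iG \in \pi_n(\D)$ and $F - iG \in \pi_n(\C \setminus \oD)$. I would prove it by tracking the pencil $H_\epsilon := F + i\epsilon G$ as $\epsilon$ runs from $0$ to $1$. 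Since $R$ is real on $\T$, $H_\epsilon$ has no zero on $\T$ for any $\epsilon \neq 0$. A first-order perturbation of a zero $z_0 = e^{it_0}$ of $F$ gives $z \approx z_0 - i\epsilon G(z_0)/F'(z_0)$, and a direct expansion using $z_0 F'(z_0) = -i e^{int_0/2} f'(t_0)$ (valid at $t_0$ since $f(t_0) = 0$) yields $|z|^2 \approx 1 + 2\epsilon f'(t_0)g(t_0)/|F'(z_0)|^2$. The inequality $(f/g)'(t_0) < 0$ combined with $f(t_0) = 0$ forces $f'(t_0)g(t_0) < 0$, so every zero of $H_\epsilon$ moves strictly into $\D$ for small $\epsilon > 0$; continuous dependence of the roots on the coefficients, together with the impossibility of crossing $\T$, confines them to $\D$ throughout $\epsilon \in (0, 1]$. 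The symmetric argument with $-i\epsilon$ puts the zeros of $F - iG$ outside $\oD$.

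The conclusion of the forward direction is now quick: the self-inversive identities $\overline{F \pm iG} = z^{-n}(F \mp iG)$ on $\T$ give $|F + iG| = |F - iG|$ on $\T$, so $B(z) := (F + iG)/(F - iG)$ is analytic in $\oD$ with $|B| = 1$ on $\T$ and $n$ zeros in $\D$; thus $B$ is a finite Blaschke product of degree $n$ and $|B| < 1$ on $\D$. Applying the elementary identity $|w + i|^2 - |w - i|^2 = 4 \Im w$ to $w = F/G$, this is exactly $\Im(F/G) < 0$ in $\D$. For the converse, assume $\Im R < 0$ in $\D$. At any regular point $z_0 = e^{it_0} \in \T$ where $R$ is holomorphic with $R'(z_0) \neq 0$, $R$ is a locally orientation-preserving conformal map sending an arc of $\T$ into $\R$; since $R(\D \cap U) \subset \Lo$ for a neighborhood $U$ of $z_0$, orientation preservation forces the parametrization $t \mapsto R(e^{it})$ to traverse $\R$ in the decreasing direction at $t_0$, yielding $(f/g)'(t_0) \leq 0$. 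Regular points are dense in $\R$ (since $R$ has only finitely many critical points and poles), so continuity extends the inequality to all of $\R$, which is precisely $F \preceq_{\T} G$.

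The main technical obstacle is the global continuation step in the forward direction: the two local inputs -- zeros of $H_\epsilon$ starting strictly inside $\D$ and the reality of $R$ on $\T$ preventing any zero from crossing -- must combine with continuous dependence of roots on coefficients to confine the zeros to $\D$ for the entire homotopy $\epsilon \in (0, 1]$. This is routine once the $\T$-barrier is noted, but must be spelled out carefully. A minor additional issue is the degenerate proportional case $F = cG$ with $c \in \R$, where both sides collapse to equalities rather than strict inequalities; this exceptional stratum can either be treated separately or handled by an approximation argument within $\mathcal{SI}_n$.
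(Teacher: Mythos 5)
Your argument is essentially correct, but it takes a genuinely different route from the paper, which offers no proof at all: Lemma \ref{sec:proof-mathc-vers-1} is stated there as ``easy to see'', the intended quick argument being either the transfer to the real line via the isomorphism $\Psi_{n}$ set up in the proof of Lemma \ref{sec:main-results-3} (under the Cayley map, $\preceq_{\T}$ becomes $\preceq$ and $\Im(F/G)<0$ on $\D$ becomes $\Im$ of a real rational function being negative on the upper half-plane, so the statement reduces to the classical Hermite--Biehler correspondence, cf.\ \cite[Thm.\ 6.3.4]{rahman}), or the pencil observation that $\Im(F/G)<0$ in $\D$ is equivalent to saying that the self-inversive polynomial $F-tG$ has no zero in $\D$ --- hence, by $\T$-symmetry of its zeros, lies in $\pi_{n}(\T)$ --- for every $t\in\R$, plus an orientation normalization. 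You instead prove the disk Hermite--Biehler statement from scratch: the homotopy $F+i\epsilon G$ with the first-order computation $|z|^{2}\approx 1+2\epsilon f'(t_{0})g(t_{0})/|F'(z_{0})|^{2}$ (which I checked and is right), the $\T$-barrier from reality of $F/G$ on $\T$, the maximum-modulus/Blaschke step via $|F+iG|=|F-iG|$ on $\T$, and, for the converse, the boundary-orientation argument at regular points of $F/G$. What your route buys is self-containedness (no appeal to the half-plane Hermite--Biehler theorem or to $\Psi_{n}$); what the paper's implicit route buys is brevity, since $\Psi_{n}$ is needed immediately afterwards anyway.

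Two caveats. First, for the continuation step, the clean way to rule out zeros escaping (including a possible degree drop of $F+i\epsilon G$, i.e.\ zeros going to $\infty$ without visibly ``crossing'' $\T$) is the argument principle: since $F+i\epsilon G\neq 0$ on $\T$ for all $\epsilon\in(0,1]$, the number of zeros in $\D$ is a continuous integer-valued function of $\epsilon$ on $(0,1]$, hence constantly $n$ by the small-$\epsilon$ computation; this also shows the degree stays $n$, so $F-iG=I_{n}[F+iG]$ has no zeros in $\oD$ and your maximum-principle step goes through. Second, the degenerate strata you mention are genuine defects of the statement rather than of your proof: if $F=_{\R}G$ the ``only if'' direction is simply false ($\Im(F/G)\equiv 0$), so it must be excluded rather than repaired by approximation; likewise $\Im(F/G)<0$ in $\D$ does not force $F,G\in\pi_{n}(\T)$ when they share a self-inversive factor with zeros off $\T$, so the ``if'' direction should be read modulo such common factors (which is how the lemma is used in the proof of Lemma \ref{sec:main-results-4}). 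With these conventions made explicit, your proof is sound.
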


Using the M\"obius transformation $i(1+z)/(1-z)$ we can transfer Lemma
\ref{sec:main-results-2} to $n$-self-inversive polynomials as follows.

\begin{lemma}
  \label{sec:main-results-3}
  Let $L:\mathcal{SI}_{n}\rightarrow \mathcal{SI}_{m}$ be a real linear operator
  and suppose $F$, $G\in\mathcal{SI}_{n}$ are such that $F/G = P/Q$ with
  polynomials $P$, $Q$ that have zeros only on $\T$ and satisfy $P \prec_{\T}
  Q$. Let $\mathcal{Z}$ denote the set of $y\in\{e^{it}:0\leq t<\pi\}$ for which
  $-y^{2}$ is a zero of $F/G$. If for every $y\in\mathcal{Z}$
  \begin{equation}
    \label{eq:13}
    L\left[\frac{(1+z)F}{y + \overline{y}z}\right]\preceq_{\T}
    L\left[\frac{i(1-z)F}{y + \overline{y}z}\right] 
  \end{equation}
  then $L[F]\preceq_{\T} L[G]$. If (\ref{eq:13}) holds with $\preceq_{\T}$
  replaced by $\prec_{\T}$ for one $y\in\mathcal{Z}$, then $L[F]\prec_{\T}
  L[G]$.
\end{lemma}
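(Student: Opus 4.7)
My approach is to transfer the statement to the real-polynomial setting of Lemma \ref{sec:main-results-2} via the Cayley map $z=(w-i)/(w+i)$ (with inverse $w=i(1+z)/(1-z)$). For $P\in\C_n[z]$ set $\mathcal{T}_n[P](w):=(w+i)^n P((w-i)/(w+i))$; a direct check on coefficients shows that $\mathcal{T}_n$ restricts to a real-linear isomorphism from $\mathcal{SI}_n$ onto $\R_n[w]$. Since $\mathcal{T}_n[F]/\mathcal{T}_n[G]=F/G$ at corresponding points, this map sends zeros on $\T$ to real zeros in $\overline{\R}$, $\T$-interspersion to interspersion, and (using Lemma \ref{sec:proof-mathc-vers-1} together with the fact that the Cayley map sends $\D$ onto $\U$) the relations $\preceq_{\T}$ and $\prec_{\T}$ to $\preceq$ and $\prec$, respectively. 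In particular $\tilde P:=\mathcal{T}_n[P]$ and $\tilde Q:=\mathcal{T}_n[Q]$ belong to $\sigma_n(\R)$ with $\tilde P\prec\tilde Q$, and $\tilde F/\tilde G=\tilde P/\tilde Q$.

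\textbf{Transferring the operator and hypothesis.} Define the real-linear operator $\tilde L:=\mathcal{T}_m\circ L\circ\mathcal{T}_n^{-1}:\R_n[w]\to\R_m[w]$. Writing $y=e^{it}$ with $t\in[0,\pi)$, each $y\in\mathcal{Z}$ corresponds to a real point $w_0=\tan t\in\overline{\R}$ with $-y^2=(w_0-i)/(w_0+i)$, and these $w_0$ exhaust the real zeros of $\tilde F/\tilde G$. Factoring $F=(z+y^2)H$ and using the identities
\begin{gather*}
z+y^2=y(y+\overline y z),\qquad 1+z=\frac{2w}{w+i},\\
i(1-z)=\frac{-2}{w+i},\qquad y+\overline y z=\frac{2\cos t\,(w-w_0)}{w+i},
\end{gather*}
a direct computation gives
\[
\mathcal{T}_n\!\left[\frac{(1+z)F}{y+\overline y z}\right]=c_y\,\frac{w\tilde F(w)}{w-w_0},\qquad \mathcal{T}_n\!\left[\frac{i(1-z)F}{y+\overline y z}\right]=-c_y\,\frac{\tilde F(w)}{w-w_0},
\]
with $c_y:=1/\cos t\in\R\setminus\{0\}$. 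Applying $\mathcal{T}_m$ to (\ref{eq:13}) and noting that $cA\preceq -cB$ is equivalent to $B\preceq A$ for every $c\in\R\setminus\{0\}$, the hypothesis becomes
\[
\tilde L[\tilde F_{w_0}]\preceq \tilde L\!\left[\frac{w\tilde F(w)}{w-w_0}\right]=\tilde L[\tilde F]+w_0\,\tilde L[\tilde F_{w_0}],
\]
where the second equality uses $w\tilde F(w)/(w-w_0)=\tilde F(w)+w_0\tilde F_{w_0}(w)$ and linearity of $\tilde L$. By Lemma \ref{sec:lemmas-2}, this is equivalent to $\tilde L[\tilde F_{w_0}]\preceq\tilde L[\tilde F]$, which is exactly the hypothesis of Lemma \ref{sec:main-results-2} at the real zero $w_0\in\overline{\R}$ of $\tilde F/\tilde G$.

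\textbf{Conclusion and main obstacle.} Lemma \ref{sec:main-results-2} applied to $\tilde L$, $(\tilde F,\tilde G)$, and the factorization $\tilde F/\tilde G=\tilde P/\tilde Q$ with $\tilde P\prec\tilde Q$ then yields $\tilde L[\tilde F]\preceq\tilde L[\tilde G]$, and applying $\mathcal{T}_m^{-1}$ returns $L[F]\preceq_{\T}L[G]$. The strict assertion is obtained verbatim from the strict version of Lemma \ref{sec:main-results-2}. The main obstacle I anticipate is the careful sign and constant bookkeeping in the four Cayley identities above, together with the exceptional case $t=\pi/2$ (so $y=i$, $-y^2=1$, $w_0=\infty$, and $c_y$ formally infinite); this corresponds to the $n$-zero at $\infty$ case in Lemma \ref{sec:main-results-2} (where $F_\infty:=-zF$ is used), and the direct computation at $t=\pi/2$ yields $\mathcal{T}_n[(1+z)F/(y+\overline y z)]=-w\tilde F$ and $\mathcal{T}_n[i(1-z)F/(y+\overline y z)]=\tilde F$, reducing correctly to the $\infty$-case — alternatively, one may pre-compose with a rotation $z\mapsto\zeta z$, $\zeta\in\T$, moving the offending zero off $z=1$ before the Cayley transform is applied, since such a rotation conjugates $\mathcal{SI}_n$, the pair $(F,G)$, the relation $\preceq_\T$, and the operator $L$ without affecting the conclusion.
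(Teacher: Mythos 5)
Your proposal is correct and follows essentially the same route as the paper: the identical Cayley conjugation $\Psi_n[H](w)=(w+i)^nH((w-i)/(w+i))$ (your $\mathcal{T}_n$), the same transformation identities for $(1+z)F/(y+\overline y z)$ and $i(1-z)F/(y+\overline y z)$ including the special case $y=i$ giving $-w\tilde F$ and $\tilde F$, the same use of Lemma \ref{sec:lemmas-2} to turn the transformed hypothesis into $\tilde L[\tilde F_{w_0}]\preceq\tilde L[\tilde F]$ (resp. $\tilde L[\tilde F_\infty]\preceq\tilde L[\tilde F]$), and the same application of Lemma \ref{sec:main-results-2} followed by transfer back via position preservation. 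The only cosmetic difference is that you justify position preservation through Lemma \ref{sec:proof-mathc-vers-1} and the half-plane image of the Cayley map, where the paper uses the monotonicity $(\psi(e^{it}))'>0$; both are adequate.
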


\begin{proof}
  Set
  \begin{equation*}
    \psi(z) := i\frac{1+z}{1-z} \quad \mbox{and note that}\quad
    \psi^{(-1)}(z) = \frac{z-i}{z+i}.
  \end{equation*}
  Then
  \begin{equation*}
    \Psi_{n}[H](z):=(z+i)^{n} H(\psi^{(-1)}(z)), \qquad H\in\C_{n}[z],
  \end{equation*}
  with inverse
  \begin{equation*}
    \Psi_{n}^{(-1)}[H](z):=\frac{1}{(2i)^{n}}(1-z)^{n} H(\psi(z))
  \end{equation*}
  is an isomorphism between $\mathcal{SI}_{n}$ and $\R_{n}[z]$ which maps
  $\pi_{n}(\T)$ onto $\pi_{n}(\R)$ and $\sigma_{n}(\T)$ onto
  $\sigma_{n}(\R)$. Moreover, since $(\psi(e^{it}))' >0$ for $t\in(0,2\pi)$,
  $\Psi_{n}$ \emph{preserves position}, i.e. we have $F \preceq_{\T} G$ and
  $F\prec_{\T} G$ if, and only if, $\Psi_{n}[F] \preceq \Psi_{n}[G]$ and
  $\Psi_{n}[F]\prec \Psi_{n}[G]$, respectively.

  Straightforward calculations show that if $y\in\mathcal{Z}$, then
  \begin{equation}
    \label{eq:58}
    \Psi_{n}\left[\frac{(1+z)F}{y+\overline{y}z}\right] =
    \frac{2}{y+\overline{y}}\frac{z\Psi_{n}[F]}{z-\psi(-y^{2})}
    \quad \mbox{and}\quad  
    \Psi_{n}\left[\frac{i(1-z)F}{y+\overline{y}z}\right] =
    \frac{-2}{y+\overline{y}}\frac{\Psi_{n}[F]}{z-\psi(-y^{2})} 
  \end{equation}
  if $y\neq i$, and
  \begin{equation}
    \label{eq:55}
    \Psi_{n}\left[\frac{(1+z)F}{y+\overline{y}z}\right] =
    -z\Psi_{n}[F]
    \quad \mbox{and}\quad  
    \Psi_{n}\left[\frac{i(1-z)F}{y+\overline{y}z}\right] =
    \Psi_{n}[F]
  \end{equation}
  if $y = i$. Note that $\Psi_{n}[F]$ is of degree $n$ if, and only if,
  $i\notin\mathcal{Z}$ and that $\{\psi(-y^{2}):
  y\in\mathcal{Z}\setminus\{0\}\}$ is the set of zeros of $\Psi_{n}[F]$.

  Hence, if we set $A:=\Psi_{n}[F]$ and define
  \begin{equation*}
    K: \R_{n}[z] \rightarrow \R_{m}[z],\, H\mapsto (\Psi_{m} \circ L \circ
    \Psi_{n}^{(-1)})[H], 
  \end{equation*}
  then it follows from (\ref{eq:13}), (\ref{eq:55}), and the fact that
  $\Psi_{n}$ preserves position, that
  \begin{equation}
    \label{eq:59}
    K[A_{\infty}] \preceq K[A] \quad \mbox{if} \quad \deg A < n,
  \end{equation}
  and from (\ref{eq:13}) and (\ref{eq:58}) that $K[zA_{x}] \preceq -K[A_{x}]$
  for every zero $x$ of $A$. Because of Lemma \ref{sec:lemmas-2} this implies
  $K[A]=K[zA_{x}] -x K[A_{x}] \preceq -K[A_{x}]$, and thus we obtain
  \begin{equation}
    \label{eq:60}
    K[A_{x}] \preceq K[A] \quad \mbox{for every zero} \quad x 
    \quad \mbox{of} \quad A. 
  \end{equation}
  
  Since $\Psi_{n}$ preserves position, $F\preceq_{\T} G$ implies $A\preceq B$,
  with $B:=\Psi_{n}[G]$. It therefore follows from (\ref{eq:59}), (\ref{eq:60}),
  and Lemma \ref{sec:main-results-2}, that $K[A] \preceq K[B]$. This implies
  $L[F]\preceq_{\T} L[G]$ and the proof is complete.
\end{proof}

\begin{lemma}
  \label{sec:lemmas-4}
  Let $F$ and $G$ be polynomials of degree $\leq n$ that are such that 
  \begin{equation*}
    \Im \frac{F(z)}{G(z)} < 0 \quad\mbox{for}\quad z\in\D. 
  \end{equation*}
  Then 
  \begin{equation*}
    \Im \frac{F(z) + z^{n+1} I_{n}[F](z)}{G(z) + z^{n+1} I_{n}[G](z)} < 0 \quad
    \mbox{for}\quad z\in \D.
  \end{equation*}
\end{lemma}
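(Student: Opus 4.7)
My plan is to translate the hypothesis via the Cayley transform and then combine one key identity with a simple connectedness argument on the Riemann sphere. First, I would set $P := F + iG$ and $Q := F - iG$. Because $\Im w < 0$ is equivalent to $|(w+i)/(w-i)| < 1$, the hypothesis $\Im (F/G)(z) < 0$ on $\D$ is equivalent to $|P(z)/Q(z)| < 1$ on $\D$; in particular $Q\in\pi_n(\C\setminus\D)$, and the triangle inequality $|Q+\zeta P| \geq |Q|-|P|>0$ on $\D$ shows that $R_\zeta := Q + \zeta P \in \pi_n(\C\setminus\D)$ for every $\zeta\in\oD$. Writing $\tilde P := \tilde F + i\tilde G$ and $\tilde Q := \tilde F - i\tilde G$, the target conclusion $\Im(\tilde F/\tilde G)(z)<0$ on $\D$ is equivalent to $|\tilde P(z)/\tilde Q(z)|<1$ on $\D$.

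The heart of the argument will be an identity. Since $I_n$ is antilinear, i.e.\ $I_n[aH+bK]=\bar a I_n[H]+\bar b I_n[K]$, a direct expansion yields $\tilde P = P + z^{n+1} I_n[Q]$ and $\tilde Q = Q + z^{n+1} I_n[P]$. For $\zeta\in\T$ I will claim
\[
  \tilde Q + \zeta \tilde P \;=\; R_\zeta + \zeta\, z^{n+1} I_n[R_\zeta],
\]
which follows by combining $I_n[P]+\zeta I_n[Q] = I_n[P+\bar\zeta Q]$ with the observation that $P+\bar\zeta Q = \bar\zeta R_\zeta$ when $|\zeta|=1$. With this identity in hand, pick $\mu\in\T$ with $\mu^2=\zeta$ and set $H := \bar\mu R_\zeta \in \pi_n(\C\setminus\D)$; a short calculation then gives $R_\zeta + \zeta z^{n+1} I_n[R_\zeta] = \mu\bigl(H + z^{n+1}I_n[H]\bigr)$, and Lemma \ref{sec:blaschke-lemma}(\ref{item:56}) places this polynomial in $\pi_{2n+1}(\T)$. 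Therefore $\tilde Q + \zeta\tilde P$ has no zeros in $\D$ for any $\zeta\in\T$.

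To finish, I would view $\chi := \tilde P/\tilde Q$ as a continuous map $\chi:\D\to\overline{\C}$ (setting $\chi(z):=\infty$ at any zero of $\tilde Q$ in $\D$). If $\chi(z_0)\in\T$ for some $z_0\in\D$, then $\zeta := -1/\chi(z_0)\in\T$ would yield $\tilde Q(z_0)+\zeta\tilde P(z_0)=0$, contradicting the previous paragraph; hence $\chi(\D)\subset\overline{\C}\setminus\T$. Since $\overline{\C}\setminus\T$ has exactly two connected components, namely $\D$ and $\overline{\C}\setminus\oD$, and since $\chi(\D)$ is connected with $\chi(0)=P(0)/Q(0)\in\D$ (because $|P(0)/Q(0)|<1$), we conclude $\chi(\D)\subset\D$, i.e.\ $|\tilde P/\tilde Q|<1$ on $\D$, which is the desired inequality. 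The main obstacle will be establishing the identity $\tilde Q + \zeta\tilde P = R_\zeta + \zeta z^{n+1} I_n[R_\zeta]$ without sign errors, since the antilinearity of $I_n$ interacts in a delicate way with the factors of $i$ coming from the Cayley transform and with the unimodular parameter $\zeta$.
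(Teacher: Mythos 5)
Your argument is correct, and the identity you were worried about does hold: with $P=F+iG$, $Q=F-iG$, $R_{\zeta}:=Q+\zeta P$, $\widetilde F:=F+z^{n+1}I_{n}[F]$, $\widetilde G:=G+z^{n+1}I_{n}[G]$, the antilinearity of $I_{n}$ gives $\widetilde P=P+z^{n+1}I_{n}[Q]$, $\widetilde Q=Q+z^{n+1}I_{n}[P]$, and $\widetilde Q+\zeta\widetilde P=R_{\zeta}+\zeta z^{n+1}I_{n}[R_{\zeta}]=\mu\bigl(\overline{\mu}R_{\zeta}+z^{n+1}I_{n}[\overline{\mu}R_{\zeta}]\bigr)$ for $\mu^{2}=\zeta\in\T$. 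The paper reaches the same conclusion from the same key lemma, Lemma \ref{sec:blaschke-lemma}(\ref{item:56}), but without leaving the half-plane picture: $\Im (F/G)<0$ on $\D$ means the real pencil $F-xG$, $x\in\R$, is zero-free in $\D$, and since $I_{n}$ is \emph{real} linear, $(F-xG)+z^{n+1}I_{n}[F-xG]$ is exactly $\widetilde F-x\widetilde G$; hence $\widetilde F/\widetilde G$ omits every real value in $\D$, and the claim follows from $\Im(\widetilde F/\widetilde G)(0)=\Im(F/G)(0)<0$. So your Cayley transform, the circle pencil $Q+\zeta P$, and the square root $\mu$ are all avoidable, the real linearity of $I_{n}$ doing the bookkeeping for free; what your disk-model version buys is a slightly cleaner endgame, since the omitted set $\T$ is compact and splits $\overline{\C}$ into two components, so poles of $\widetilde P/\widetilde Q$ cause no trouble, whereas in the real-pencil version one must still note that a pole of $\widetilde F/\widetilde G$ in $\D$ would force nearby real values (a point the paper leaves implicit). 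Two small things you should make explicit: $\widetilde Q+\zeta\widetilde P$ is not the zero polynomial (its value at $0$ is $R_{\zeta}(0)\neq 0$), so membership in $\pi_{2n+1}(\T)$ really yields zero-freeness in $\D$; and the non-vanishing of the pencil for a single $\zeta\in\T$ already rules out common zeros of $\widetilde P$ and $\widetilde Q$ in $\D$, which is what makes your $\chi$ a well-defined continuous map from $\D$ into $\overline{\C}$.
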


\begin{proof}
  It follows from the assumptions that $(F - xG)(z) \neq 0$ for all $x\in\R$ and
  $z\in\D$. By Lemma \ref{sec:blaschke-lemma}(\ref{item:56}) this implies
  \begin{equation*}
    F(z)-xG(z) + z^{n+1} I_{n}[F-xG](z)  \neq 0, 
  \end{equation*}
  or, equivalently, since $I_{n}$ is real linear,
  \begin{equation*}
    \frac{F(z) + z^{n+1}I_{n}[F](z)}{G(z)+z^{n+1} I_{n}[G](z)} \neq x
  \end{equation*}
  for all $x\in\R$ and $z\in\D$. The assertion thus follows from the fact that
  $\Im (F/G)(0)<0$.
\end{proof}

\begin{lemma}
  \label{sec:main-results-4}
  Let $L:\mathcal{H}(\D)\rightarrow\mathcal{H}(\D)$ be a continuous real linear
  operator. Suppose $f\in\mathcal{H}(\D)$ is such that
  \begin{equation}
    \label{eq:14}
    \Im \frac{L\left[\frac{(1+z)f}{y + \overline{y}z}\right]}
    {L\left[\frac{i(1-z)f}{y + \overline{y}z}\right]} (z) < 0 \quad \mbox{for
      all} \quad z\in\D \mbox{ and } y\in\T\mbox{ with } \Im y\geq 0.
  \end{equation}
  Then for every $g\in \mathcal{H}(\D)$ which satisfies $\Im (f/g)(z) < 0$ for
  $z\in\D$ we have
  \begin{equation*}
    \Im \frac{L\left[f\right]}{L\left[g\right]} (z) < 0 \quad \mbox{for
      } \quad z\in\D.
  \end{equation*}
\end{lemma}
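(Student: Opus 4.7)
The plan is to reduce Lemma \ref{sec:main-results-4} to its self-inversive polynomial counterpart, Lemma \ref{sec:main-results-3}, by an approximation argument combined with an interior scaling of the operator $L$.

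First I would approximate $f$ and $g$ by polynomials $F_N, G_N$ (for instance, rescaled Taylor polynomials $F_N(z) = f_N(rz)$ for a parameter $r < 1$, ensuring $\Im(F_N/G_N) < 0$ uniformly on $\overline{\D}$ for $N$ large, by compactness of $r\overline{\D}$ in $\D$). Form the self-inversive closures $F_N^* := F_N + z^{N+1}I_N[F_N]$ and $G_N^* := G_N + z^{N+1}I_N[G_N]$, which lie in $\mathcal{SI}_{2N+1}$ by Lemma \ref{sec:blaschke-lemma}(a); the correction term $z^{N+1}I_N[F_N](z) = z^{2N+1}\overline{F_N(1/\overline{z})}$ decays geometrically on compact subsets of $\D$ by the Cauchy estimates, so $F_N^* \to f$ and $G_N^* \to g$ uniformly on compacts. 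Lemma \ref{sec:lemmas-4} gives $\Im(F_N^*/G_N^*) < 0$ on $\D$; by Lemma \ref{sec:proof-mathc-vers-1} this means $F_N^* \preceq_\T G_N^*$, and factoring out common factors (which must lie on $\T$) writes $F_N^*/G_N^* = P/Q$ with $P, Q$ supported on $\T$ and $P \prec_\T Q$, the precise input required by Lemma \ref{sec:main-results-3}.

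Next I would construct the auxiliary real linear operator
\[\hat L_{M,\rho}[H](z) := T_M\bigl(L[H](\rho z)\bigr) + z^{M+1} I_M\bigl[T_M(L[H](\rho z))\bigr](z)\]
mapping $\mathcal{SI}_{2N+1}\to\mathcal{SI}_{2M+1}$, where $T_M$ truncates a Taylor series at degree $M$ and $\rho\in(0,1)$ is an interior scaling parameter; the output is self-inversive by Lemma \ref{sec:blaschke-lemma}(a), and real linearity is immediate. Applying Lemma \ref{sec:main-results-3} with $\hat L_{M,\rho}$, $F_N^*$, $G_N^*$, its hypothesis amounts (via Lemma \ref{sec:proof-mathc-vers-1}) to
\[\Im\frac{\hat L_{M,\rho}\bigl[(1+z)F_N^*/(y+\overline{y}z)\bigr]}{\hat L_{M,\rho}\bigl[i(1-z)F_N^*/(y+\overline{y}z)\bigr]}(z) < 0 \quad\text{on } \D\]
for each $y\in\{e^{it}:0\le t<\pi\}$ with $-y^2$ a zero of $F_N^*/G_N^*$. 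Another application of Lemma \ref{sec:lemmas-4} reduces this to the corresponding inequality for the truncated polynomial pair $\bigl(T_M L[(1+z)F_N^*/(y+\overline{y}z)](\rho z),\,T_M L[i(1-z)F_N^*/(y+\overline{y}z)](\rho z)\bigr)$. Since $\rho\overline{\D}$ is compact in $\D$, the hypothesis (\ref{eq:14}) furnishes a uniform strict bound $\Im\bigl(L[(1+z)f/(y+\overline{y}z)]/L[i(1-z)f/(y+\overline{y}z)]\bigr)\le-\delta_\rho<0$ on $\rho\overline{\D}$; continuity of $L$ with $F_N^*\to f$ on compacts of $\D$ and uniform Taylor convergence $T_M L[u](\rho z)\to L[u](\rho z)$ on $\overline{\D}$ then deliver the required strict inequality for $N$ and $M$ large. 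Hence Lemma \ref{sec:main-results-3} yields $\Im(\hat L_{M,\rho}[F_N^*]/\hat L_{M,\rho}[G_N^*])<0$ on $\D$.

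Finally I would take the limits $M\to\infty$, then $N\to\infty$, then $\rho\to 1$, in that order: at each stage the $I_M$-correction decays on compacts, the polynomials converge to $f,g$, and $L[\cdot](\rho z)\to L[\cdot](z)$ by continuity of $L$, producing $\Im(L[f]/L[g])\le 0$ on $\D$. The ratio $L[f]/L[g]$ is meromorphic on $\D$, and the maximum principle for the non-positive harmonic function $\Im(L[f]/L[g])$ forces either the strict inequality on $\D$ or $L[f]/L[g]\equiv c\in\R$; the latter degenerate possibility is ruled out via the hypothesis (\ref{eq:14}) at $y=1$, which shows that $L[f]$ cannot be expressed as a real scalar multiple of an analytic $L[g]$ arising from any $g$ with $\Im(f/g)<0$. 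The main obstacle is the verification of Lemma \ref{sec:main-results-3}'s hypothesis for the composite operator $\hat L_{M,\rho}$: the scaling parameters $r$, $N$, $M$, $\rho$ must be coordinated so that the interior scaling creates enough strict-negativity margin on $\rho\overline{\D}$ to survive both polynomial truncation and self-inversive completion, while still approximating $L[f]/L[g]$ in the limit.
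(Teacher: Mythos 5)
Your proposal follows the paper's own proof in all essentials: approximate $f$ and $g$ by truncated (and rescaled) Taylor polynomials, pass to self-inversive completions via Lemma \ref{sec:blaschke-lemma} and Lemma \ref{sec:lemmas-4}, translate the $\Im<0$ conditions into $\preceq_{\T}$ via Lemma \ref{sec:proof-mathc-vers-1}, apply Lemma \ref{sec:main-results-3} to a truncated, self-inversified, interior-rescaled version of $L$, and let the parameters tend to their limits. The only organizational difference is that the paper does the interior rescaling once at the outset (replacing $L$ by $h\mapsto L[h](rz)$ and $f,g$ by $f(s_{r}\cdot)$, $g(s_{r}\cdot)$) and builds an \emph{inner} truncation into $L_{n}=\Phi_{n}\circ L\circ\Phi_{n}$, so that $K_{n}$ applied to $(1+z)(f_{n}+z^{n+1}f_{n}^{*n})/(y+\overline{y}z)$ is \emph{exactly} the self-inversive completion of $L_{n}[(1+z)f_{n}/(y+\overline{y}z)]$ and the hypothesis of Lemma \ref{sec:main-results-3} then follows from (\ref{eq:14}) by a single compactness/equicontinuity argument; your $\hat L_{M,\rho}$ with separate degrees $M$, $N$ and no inner truncation achieves the same thing by an approximation of the same kind, so this difference is cosmetic.

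The one step that is not justified is your strictness endgame. The limit argument yields $\Im(L[f]/L[g])\leq 0$, and you propose to exclude the degenerate alternative $L[f]\equiv cL[g]$, $c\in\R$ (or $L[g]\equiv 0$), ``via (\ref{eq:14}) at $y=1$''. At $y=1$ that hypothesis says $\Im\bigl(L[f]/L[g_{0}]\bigr)<0$ for the one comparison function $g_{0}:=\frac{i(1-z)}{1+z}f$; it does not involve $g$ at all, and it does not rule out $L[f]=cL[g]$: if one tries to exploit it by applying the non-strict conclusion to admissible perturbations such as $g+sf$ ($s\in\R$) or $g+\epsilon g_{0}$ ($\epsilon>0$), the resulting ratios become $\frac{c}{1+sc}$ and $\frac{c}{1+\epsilon c\,\phi}$ with $\phi:=L[g_{0}]/L[f]$ mapping $\D$ into $\U$, whose imaginary parts are automatically $\leq 0$, so no contradiction appears. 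As written, then, the exclusion is an assertion rather than an argument, and some further idea (for instance carrying a quantitative margin at a fixed interior point through the limit, or exploiting the strict case $\prec_{\T}$ of Lemma \ref{sec:main-results-3}) would be needed to reach the strict inequality. In fairness, the paper's own proof is silent on exactly this point (it passes from the strict polynomial inequalities directly to ``the assertion''), and the downstream application, Lemma \ref{sec:ext-rusch-lemma}, only needs the closed convex hull, i.e.\ the non-strict form; so this is a shared loose end rather than a divergence of method.
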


\begin{proof}
  By considering $h\mapsto L[h](rz)$, $r\in(0,1)$, instead of $L$, and
  $f(s_{r}z)$ and $g(s_{r}z)$ instead of $f$ and $g$ for a suitable function
  $s_{r}\in(0,1)$ with $\lim_{r\rightarrow 1} s_{r} =1$, we can assume that $\Im
  (f/g)(z) < 0$ for $z\in\overline{\D}$ and that (\ref{eq:14}) holds for
  $z\in\overline{\D}$.

  Now, let
  \begin{equation*}
    \Phi_{n}:\mathcal{H}(\D)\rightarrow \mathcal{H}(\D), \sum_{k=0}^{\infty} a_{k}
    z^{k} \mapsto \sum_{k=0}^{n} a_{k} z^{k}
  \end{equation*}
  and set
  \begin{equation*}
    L_{n}[h]:=(\Phi_{n}\circ L\circ \Phi_{n})[h] \quad \mbox{for} \quad
    h\in\mathcal{H}(\D),\, n\in\N.
  \end{equation*}
  Then $\{L_{n}\}_{n}$ is a pointwise convergent sequence of continuous linear
  operators and thus an equicontinuous family.

  Setting $h_{n}:=\Phi_{n}[h]$ for $h\in\mathcal{H}(\D)$, it therefore follows
  from (\ref{eq:14}) and a compactness argument that there is an $n_{0}\in\N$
  such that
  \begin{equation}
    \label{eq:15}
    \Im \frac{L_{n}\left[\frac{(1+z)f_{n}}{y + \overline{y}z}\right]}
    {L_{n}\left[\frac{i(1-z)f_{n}}{y + \overline{y}z}\right]} (z) < 0 \quad
    \mbox{for all} \quad z\in\overline{\D},\,y\in\T':=\{z\in\T: \Im
    y\geq 0\},\, n\geq n_{0}. 
  \end{equation}
  By Lemma \ref{sec:lemmas-4} this means that
  \begin{equation}
    \label{eq:16}
    \Im \frac{L_{n}\left[\frac{(1+z)f_{n}}{y + \overline{y}z}\right] + z^{n+1}
      \left(L_{n}\left[\frac{(1+z)f_{n}}{y +
            \overline{y}z}\right]\right)^{*n}} 
    {L_{n}\left[\frac{i(1-z)f_{n}}{y + \overline{y}z}\right] + z^{n+1}
      \left(L_{n}\left[\frac{i(1-z)f_{n}}{y +
            \overline{y}z}\right]\right)^{*n}} < 0  
  \end{equation}
  for all $z\in\D$, $n\geq n_{0}$, $y\in\T'$.

  For $h\in\mathcal{H}(\D)$ we define now
  \begin{equation*}
    K_{n}[h]:= L_{n}[h] + z^{n+1} (L_{n}[h])^{*n}.
  \end{equation*}
  Then, because of Lemma \ref{sec:blaschke-lemma}(\ref{item:55}), $K_{n}$ is a
  real linear operator mapping $\mathcal{SI}_{2n+1}$ into itself, and we have
  \begin{equation*}
    K_{n}\left[\frac{(1+z)(f_{n}+z^{n+1}f_{n}^{*n})}{y + \overline{y}z}\right]
    = L_{n}\left[\frac{(1+z)f_{n}}{y + \overline{y}z}\right] + z^{n+1}
      \left(L_{n}\left[\frac{(1+z)f_{n}}{y +
            \overline{y}z}\right]\right)^{*n}
  \end{equation*}
  and
  \begin{equation*}
    K_{n}\left[\frac{i(1-z)(f_{n}+z^{n+1}f_{n}^{*n})}{y + \overline{y}z}\right]
    = L_{n}\left[\frac{i(1-z)f_{n}}{y + \overline{y}z}\right] + z^{n+1}
      \left(L_{n}\left[\frac{i(1-z)f_{n}}{y +
            \overline{y}z}\right]\right)^{*n}.
  \end{equation*}
  Hence, it follows from (\ref{eq:16}) that 
  \begin{equation}
    \label{eq:17}
    \Im \frac{K_{n}\left[\frac{(1+z)(f_{n}+z^{n+1}f_{n}^{*n})}{y +
          \overline{y}z}\right]}
    {K_{n}\left[\frac{i(1-z)(f_{n}+z^{n+1}f_{n}^{*n})}{y +
          \overline{y}z}\right]} (z) < 0, \quad z\in\D
  \end{equation}
  for every $y\in\T$ for which $-y^{2}$ is a zero of $f_{n} + z^{n+1}
  f_{n}^{*n}$. 

  Since $\Im (f/g)(z) < 0$ for $z\in\overline{\D}$ we can choose $n_{0}$ in
  such a way that also $\Im (f_{n}/g_{n})(z) < 0$ for $z\in\overline{\D}$ and
  $n\geq n_{0}$. It then follows from Lemma \ref{sec:lemmas-4} that
  \begin{equation*}
    \Im \frac{f_{n}(z)+z^{n+1} f_{n}^{*n}(z)}{g_{n}(z) + z^{n+1}
      g_{n}^{*n}(z)}< 0
  \end{equation*}
  for $z\in\D$ and $n\geq n_{0}$ which, by Lemma \ref{sec:proof-mathc-vers-1},
  is equivalent to
  \begin{equation*}
    f_{n}+z^{n+1} f_{n}^{*n}\preceq_{\T} g_{n} + z^{n+1} g_{n}^{*n}.
  \end{equation*}
  Consequently, Lemma \ref{sec:main-results-3} and (\ref{eq:17}) yield
  \begin{equation*}
    K_{n}[f_{n}+z^{n+1}f_{n}^{*n}] \preceq_{\T} K_{n}[g_{n}+z^{n+1}g_{n}^{*n}].
  \end{equation*}
  Because of Lemma \ref{sec:proof-mathc-vers-1} this is equivalent to
  \begin{equation*}
    \Im \frac{K_{n}[f_{n}+z^{n+1} f_{n}^{*n}]}{K_{n}[g_{n} + z^{n+1}
      g_{n}^{*n}]}(z) < 0
  \end{equation*}
  for $z\in\D$ and $n\geq n_{0}$. It is easy to see that, for every
  $h\in\mathcal{H}(\D)$, $K_{n}[h_{n}+z^{n+1} h_{n}^{*n}]$ tends to $L[h]$
  uniformly on compact subsets of $\D$ as $n\rightarrow\infty$, and therefore we
  obtain the assertion.
\end{proof}

\begin{proof}[Proof of Lemma \ref{sec:ext-rusch-lemma}]
  Writing $x = (1+it)/(1-it)$ with $t\in\R$ we see that (\ref{eq:62}) is
  equivalent to
  \begin{equation}
    \label{eq:20}
    \frac{L\left[\frac{1+z}{y+\overline{y}z}f\right]}
    {L\left[\frac{i(1-z)}{y+\overline{y}z}f\right]}(z) \neq t \quad \mbox{for
      all}\quad t\in\R,\, z\in\D, y\in\T.
  \end{equation}
  Set $A_{y}:= L\left[\frac{zf}{y+\overline{y}z}\right](0)$ and $B_{y}:=
  L\left[\frac{f}{y+\overline{y}z}\right](0)$. The assertion then follows from
  (\ref{eq:20}) and Lemma \ref{sec:main-results-4}, since by (\ref{eq:63})
  there is a $y\in\T$ such that
  \begin{equation*}
    \Im \frac{L\left[\frac{1+z}{y+\overline{y}z}f\right]}
    {L\left[\frac{i(1-z)}{y+\overline{y}z}f\right]}(0) = -\Re
    \frac{1+\frac{A_{y}}{B_{y}}}{1-\frac{A_{y}}{B_{y}}}<0.
  \end{equation*}
\end{proof}

\bibliographystyle{amsplain}

\bibliography{polyaschurlogconcave}

\end{document}